\newtheorem{Thm}{Theorem}[section]
\newtheorem{Def}[Thm]{Definition}
\newtheorem{Eg}[Thm]{Example}
\newtheorem{Lem}[Thm]{Lemma}
\newtheorem{Prop}[Thm]{Proposition}
\newtheorem{Cor}[Thm]{Corollary}
\newtheorem{Rem}[Thm]{Remark}
\numberwithin{equation}{section}
\newcommand{\AAa}{\mathbb{A}}
\newcommand{\BB}{\mathbb{B}}
\newcommand{\NN}{\mathbb{N}}
\newcommand{\RR}{\mathbb{R}}
\newcommand{\SSp}{\mathbb{S}}
\newcommand{\ZZ}{\mathbb{Z}}
\newcommand{\cA}{\mathcal{A}}
\newcommand{\cB}{\mathcal{B}}
\newcommand{\cF}{\mathcal{F}}
\newcommand{\cG}{\mathcal{G}}
\newcommand{\cI}{\mathcal{I}}
\newcommand{\cK}{\mathcal{K}}
\newcommand{\cL}{\mathcal{L}}
\newcommand{\cM}{\mathcal{M}}
\newcommand{\cR}{\mathcal{R}}
\newcommand{\cS}{\mathcal{S}}
\newcommand{\cU}{\mathcal{U}}
\newcommand{\cV}{\mathcal{V}}
\newcommand{\cZ}{\mathcal{Z}}
\newcommand{\scB}{\mathscr{B}}
\newcommand{\scD}{\mathscr{D}}
\newcommand{\scE}{\mathscr{E}}
\newcommand{\scH}{\mathscr{H}}
\newcommand{\scM}{\mathscr{M}}
\newcommand{\scR}{\mathscr{R}}
\newcommand{\scT}{\mathscr{T}}
\newcommand{\scV}{\mathscr{V}}
\newcommand{\scX}{\mathscr{X}}
\newcommand{\bfA}{\textbf{A}}
\newcommand{\bfF}{\textbf{F}}
\newcommand{\bfG}{\textbf{G}}
\newcommand{\bfI}{\textbf{I}}
\newcommand{\bfL}{\textbf{L}}
\newcommand{\bfM}{\textbf{M}}
\newcommand{\spt}{\text{spt}}
\author{Zhihan Wang} 
\address{Department of Mathematics, Princeton University, Princeton, NJ 08544}
\email{zhihanw@math.princeton.edu}
\title{Deformations of Singular Minimal Hypersurfaces I, Isolated Singularities}
\begin{document}

   \begin{abstract}
   Locally stable minimal hypersurface could have singularities in dimension$\geq 7$ in general, locally modeled on stable and area-minimizing cones in the Euclidean spaces. In this paper, we present different aspects of how these singularities may affect the local behavior of minimal hypersurfaces. 

   First, given a non-degenerate minimal hypersurface with strictly stable and strictly minimizing tangent cone at each singular point, under any small perturbation of the metric, we show the existence of a nearby minimal hypersurface under new metric. For a generic choice of perturbation direction, we show the entire smoothness of the resulting minimal hypersurface.  
   
   Second, given a strictly stable minimal hypersurface $\Sigma$ with strictly minimizing tangent cone at each singular point, we show that $\Sigma$ is uniquely homologically minimizing in its neighborhood. 
   
   Lastly, given a family of stable minimal hypersurfaces multiplicity $1$ converges to a stable minimal hypersurface $\Sigma$ in an eight manifold, we show that there exists a non-trivial Jacobi field on $\Sigma$ induced by these converging hypersurfaces, which generalizes a result by Ben Sharp in smooth case; Moreover, positivity of this Jacobi field implies smoothness of the converging sequence. 
   \end{abstract}

  \maketitle   
  \tableofcontents


    \section{Introduction} \label{Sec, intro}
     Minimal hypersurface has been frequently studied in recent years in dimension between $2$ and $6$ through Almgren-Pitts min-max theory, see for example \cite{MarquesNeves14, MarquesNeves16_Index, MarquesNeves17_Infinite, MarquesNevesSong19, IrieMarquesNeves18, Song18_YauConj}. 

     In higher dimensions, the minimal hypersurfaces given by min-max constructions are known to have codimension $\geq 7$ singular set, locally modeled on stable minimal hypercones in the Euclidean spaces, see \cite{SchoenSimon81}. Two natural questions are then to study the structures of singular set, as well as how they affect the "behavior" of minimal hypersurfaces. In a measure theoretic sense, the former question was addressed in \cite{Almgren00_BigReg, Simon93_Cylin, Simon93_SurveyRect, White97, CheegerNaber13_Stra_Min_Surf, NaberValtorta20_MS_Rect}, providing a natural stratification of singular set together with a quantitative volume estimate, curvature estimate and rectifiability for each stratum.

     In this article, we study the latter question above in different aspect. \\
     
\noindent   \textbf{$\bullet$ Local Moduli.}
      \cite{White91, White17} studied the space $\Gamma$ of Riemannian metrics and smooth minimal surfaces pairs $(g, \Sigma)$, which admits a natural Banach manifold structure and makes the projection onto the first variable a Fredholm mapping with index $0$. 
      
      When $\Sigma$ is allowed to have singularities, no local structure result of $\Gamma$ is known in the literature. 
      In fact, when $g$ is Euclidean, possible singular structures for minimal surfaces in a neighborhood of $ \Sigma$ near singular set of $\Sigma$ was studied in \cite{CaffHardtSimon84, SimonSolomon86, ColomboEdelenSpolaor17,  EdelenSpolaor19}, for certain classes of singularity models. And it's shown that nearby minimal surface can have different singularity type from $\Sigma$, making it hard to study the local structure of $\Gamma$ using parametric approach.
      
      In this paper, we shall address the following local existence problem,\\     
     \textsl{\textbf{Question:} Given a closed singular minimal hypersurfaces $\Sigma_0$ in a Riemannian manifold $(M, g_0)$ and a smooth perturbed metric $g$ of $g_0$, is there always at least one, possibly singular, hypersurface $\Sigma \subset M$ minimal with respect to $g$ and staying close to $\Sigma_0$ ? }
          
     Our first result provide partial answers to the question above when the singular set are \textbf{strongly isolated}.
     \begin{Def}
      Call $p\in Sing(\Sigma)$ \textbf{strongly isolated}, if the tangent cone of $\Sigma$ at $p$ is of multiplicity $1$ and singular only at $0$. 
     \end{Def}

     \begin{Thm} \label{Thm_Intro, main thm, deform}
      Suppose $\Sigma \subset (M, g)$ be an optimal regular, locally stable, two-sided minimal hypersurface with only strongly isolated singularities. Also suppose that 
      \begin{enumerate}
       \item[(1)] $\Sigma$ is nondegenerate
       \item[(2)] the tangent cone at each singularity is strictly minimizing.
       \item[(3)] the tangent cone at each singularity is strictly stable 
      \end{enumerate}
      Then for every $\epsilon > 0$, there's some $\delta > 0$ depending on $\Sigma, M, g, \epsilon$, such that if $g'$ is a smooth metric with $\|g'-g\|_{C^4} \leq \delta $, then there exists a hypersurface $\Sigma' \subset M$ minimal, locally stable under $g'$ with optimal regularity lying in the $\epsilon$-neighborhood of $\Sigma$ and satisfying \[
       \bfF(\Sigma', \Sigma) \leq \epsilon  \]
     \end{Thm}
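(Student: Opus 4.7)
The plan is to construct $\Sigma'$ by solving a perturbation problem on $\Sigma$ itself, handled differently on the smooth part and near each of the finitely many strongly isolated singularities. Fix small geodesic balls $B_\rho(p_i)$ about each singular point $p_i$, so that on the complement $\Sigma \setminus \bigcup_i B_\rho(p_i)$ the hypersurface is a smooth, two-sided minimal hypersurface with boundary, while on each $B_\rho(p_i)$ it is $C^1$-close to the strictly stable, strictly minimizing tangent cone $C_i$. Candidates for $\Sigma'$ will be obtained as normal graphs $\mathrm{graph}(u)$ of sections $u$ with prescribed asymptotic behavior near each $p_i$.

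On the smooth part, standard theory applies directly. The minimal surface equation under $g'$ linearizes at $(u,g')=(0,g)$ to the Jacobi operator $L_\Sigma$. For any prescribed Dirichlet data $\varphi$ on $\partial B_\rho(p_i) \cap \Sigma$ and any $g'$ close to $g$, Schauder estimates and the implicit function theorem produce a unique small solution $u^{\mathrm{out}}_{g',\varphi}$ realizing a smooth minimal hypersurface with that boundary trace.

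Near each singularity I would work in weighted Hölder spaces $C^{k,\alpha}_\tau$ on $\Sigma \cap B_\rho(p_i)$ whose weight $\tau$ is chosen to avoid the indicial roots of the Jacobi operator on the link of $C_i$. Assumption (3) supplies a positive spectral gap for this link operator, separating the admissible weights from the indicial set and making $L_\Sigma$ Fredholm of index zero on these spaces; assumption (2), combined with a Hardt-Simon type foliation of $C_i$ by smooth strictly minimizing hypersurfaces, is what ensures that the solutions obtained correspond to honestly minimizing (hence optimally regular, by Schoen-Simon) hypersurfaces rather than merely critical points, and provides the control up to the singular point. The IFT then yields, for each small $\varphi$ and $g'$, an inner minimal hypersurface $u^{\mathrm{in}}_{g',\varphi,i}$ matching $\varphi$ on the boundary of $B_\rho(p_i)$.

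Matching the inner and outer pieces reduces to solving a nonlinear equation on boundary traces by the implicit function theorem: the linearization of the Dirichlet-to-Neumann jump at $\varphi = 0$ is exactly the global Jacobi operator of $\Sigma$ read through $\partial B_\rho(p_i)$, which assumption (1) renders invertible. The matched surface is the desired $\Sigma'$; optimal regularity and local stability transfer from the inner and outer pieces, and $\bfF$-closeness follows from smallness of $u^{\mathrm{out}}$ and $u^{\mathrm{in}}$. The principal obstacle, and where the bulk of the technical work lies, is the inner problem: one must identify a weighted Hölder framework in which \emph{both} the Jacobi operator is Fredholm of index $0$ \emph{and} the nonlinear mean-curvature operator is a smooth map, while simultaneously showing that the Fredholm solutions coincide with area-minimizing perturbations of $C_i$ so that nearby minimal graphs inherit optimal regularity at the singular point.
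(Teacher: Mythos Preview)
Your gluing/IFT strategy is \emph{not} the approach the paper takes, and the paper explicitly explains why. The key difficulty is your ``inner problem'': you assume that for each small boundary trace $\varphi$ on $\partial B_\rho(p_i)$ and each nearby metric $g'$ there is a \emph{unique} area-minimizing (or even minimal) hypersurface in $B_\rho(p_i)$ with that boundary, depending smoothly on $(\varphi,g')$, so that the implicit function theorem applies. But this is an open problem, even for Simons cones---see the paper's introduction (``Unlike its proof in case $Sing(\Sigma)=\emptyset$, theorem~\ref{Thm_Intro, main thm, deform} is not derived by fix point method when singularities exist\ldots the uniqueness of such nearby minimal hypersurfaces\ldots is not known, even when tangent cones are Simons cones'') and problem (P4.1) at the end. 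The weighted-space linear theory you invoke (Caffarelli--Hardt--Simon) gives Fredholmness for $L_\Sigma$, but the \emph{nonlinear} inner problem is not known to be a smooth Banach-manifold map with invertible linearization in any weight where the solutions are the actual minimizers; the moduli of minimal hypersurfaces asymptotic to a given cone can be genuinely nontrivial (cf.\ problem (P1.4) and the discussion of \cite{Chan97}). Without uniqueness/smooth dependence for the inner piece, the Dirichlet-to-Neumann matching argument cannot be set up.

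The paper instead proceeds variationally. When $ind(\Sigma)=0$, theorem~\ref{Thm_One-sided Perturb, Main Thm, mean convex foliation} produces a mean-convex foliation of a neighborhood $U$, so $[\Sigma]$ is the unique homological area-minimizer in $U$ and a compactness argument gives $\Sigma'$ (remark~\ref{Rem_One-sided Perturb_MainThm Deform true in stable case}). When $ind(\Sigma)\geq 1$, the paper establishes a strong min-max property for $\Sigma$ (theorem~\ref{Thm_Strong min-max, Main}), then runs the min-max theory for the \emph{obstacle problem} \cite{WangZH2020_Obst} inside a suitable mean-convex neighborhood $Clos(U_{\bar k})$ under $g'$, producing a constrained embedded minimal hypersurface $V_{g'}$. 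A rigidity lemma (lemma~\ref{Lem_Rigidity c-Min}) forces any such constrained minimizer under $g$ to equal $|\Sigma|$; by compactness, $spt(V_{g'})$ eventually lies in the interior of $U_{\bar k}$, so $V_{g'}$ is an honest $g'$-minimal hypersurface close to $\Sigma$. This min-max route gives existence without ever needing uniqueness of the local inner problem---which is exactly the gap in your plan.
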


     The notions of nondegeneracy, index, strict stability and strictly minimizing property are discussed in section \ref{Sec, Prelim} and \ref{Sec, Linear Ana}. Note that the nondegeneracy condition is a generic condition, see proposition \ref{Prop_Linear, Basic prop for L^2-noncon} (6). Also note that theorem \ref{Thm_Intro, main thm, deform} fails if we drop the assumption (2), see proposition \ref{Prop_App, Deform Thm fails without minimizing assump}.
     
     Examples of minimal hypercones have been constructed in \cite{HsiangLawson71, FerusKarcherMunzner81, FerusKarcher85}, many of which has been verified to be area-minimizing by \cite{Lawson72, Simoes74, Lawlor91, SterlingWang94}. It's also worth mentioning that all examples of area-minimizing hypercones in the literature above are proved to be strictly stable and strictly minimizing, see \cite{SterlingWang94, TangZhang20}.

     
     Unlike its proof in case $Sing(\Sigma) = \emptyset$, theorem \ref{Thm_Intro, main thm, deform} is not derived by fix point method when singularities exist. In fact, unlike in the smooth setting, the uniqueness of such nearby minimal hypersurfaces under a fixed perturbed metric is not known, even when tangent cones are Simons cones. See the problem (P4.1) in section \ref{Sec, Apps and Discussion}. \\

\noindent   \textbf{$\bullet$ Generic Regularity.}
     When $\Sigma$ is the unique homologically area-minimizer in some homology class of $M$, theorem \ref{Thm_Intro, main thm, deform} is automatic by a compactness argument. 
     By appropriately choosing the perturbed metric $g'$ , \cite{Smale93} was able to establish the smoothness of homologically area-minimizer for $C^{\infty}$-generic metric in dimension $8$; Combined with a similar perturbation argument locally,\cite{ChodoshLioukumovichSpoloar20_GenericReg} used min-max approach to show that in generic Riemannian $8$-manifold with positive Ricci curvature, there exists a closed embedded smooth minimal hypersurface. 
     We prove a local analogue of these regularity theorems in section \ref{Sec, Apps and Discussion}.
     \begin{Thm} \label{Thm_Intro, generic smooth}
      Let $\Sigma\subset (M, g)$ be as in theorem \ref{Thm_Intro, main thm, deform} satisfying (1)-(3). 
      
      Then for $C^k$-generic $f\in C^{\infty}(M)$ on $M$ ($k\geq 4$), if $\{g_t = g(1+ tf) + o_4(t)\}_{t\in (-1, 1)}$ is a family of smooth Riemannian metric near $g$, 
      then for each sufficiently small $t$, there's a closed embedded \textbf{smooth} minimal hypersurface $\Sigma_t\subset (M ,g_t)$, and $\Sigma_t\to \Sigma$ in varifold sense when $t\to 0$. 
     \end{Thm}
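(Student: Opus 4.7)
The plan is to combine Theorem \ref{Thm_Intro, main thm, deform} with a Hardt--Simon type analysis at each singularity, in the spirit of Smale's conformal desingularization argument (as extended by Chodosh--Liokumovich--Spolaor). Theorem \ref{Thm_Intro, main thm, deform} directly produces, for each sufficiently small $t$, a locally stable minimal hypersurface $\Sigma_t\subset(M,g_t)$ with $\bfF(\Sigma_t,\Sigma)$ small; the remaining task is to show that for $C^k$-generic $f$, the hypersurface $\Sigma_t$ is smooth for every small nonzero $t$.

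First I would localize the problem. Enumerate the strongly isolated singularities $p_1,\dots,p_N$ of $\Sigma$; these are finite in number. Away from $\{p_i\}$, $\Sigma$ is smooth, so by standard elliptic perturbation (together with the nondegeneracy assumption (1)) the piece of $\Sigma_t$ in a fixed annular region around each $p_i$ is a small $C^{2,\alpha}$-graph over $\Sigma$ and hence smooth. Upper semicontinuity of density forces any possible singularity of $\Sigma_t$ to lie in $\bigcup_i B_{r_0}(p_i)$ for $t$ small. At each $p_i$, the tangent cone $C_i$ is strictly stable and strictly minimizing, so by the Hardt--Simon foliation there is a smooth one-parameter family $\{S_{i,s}\}_{s\in\RR}$ of area-minimizing hypersurfaces in $\RR^{n+1}$ with $S_{i,0}=C_i$, and every $S_{i,s}$ with $s\ne0$ is smooth and asymptotic to $C_i$.

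Next I would introduce a displacement parameter $s_i(t,f)$ recording which leaf of the Hardt--Simon foliation (in suitable coordinates near $p_i$) best approximates $\Sigma_t\cap B_{r_0}(p_i)$; by construction $s_i(0,f)=0$ and $s_i$ is continuous in $t$. Linearizing the minimal surface equation around $C_i$ and matching the resulting inhomogeneous Jacobi solution against the slowest decaying Hardt--Simon mode (which generates the foliation) produces a leading-order expansion of the form
\[
 s_i(t,f)=|t|^{\alpha_i}\bigl(\sigma_i(f)+o(1)\bigr)\qquad\text{as }t\to 0,
\]
for some exponent $\alpha_i>0$ determined by the indicial roots of the Jacobi operator of $C_i$, where $\sigma_i:C^k(M)\to\RR$ is a bounded linear functional depending only on the $\infty$-jet (in fact on $f$ restricted to a neighborhood) of $f$ at $p_i$. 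Whenever $s_i(t,f)\ne0$, the piece of $\Sigma_t$ inside $B_{r_0}(p_i)$ is a small graph over the smooth leaf $S_{i,s_i(t,f)}$, hence is smooth near $p_i$ by Allard--Schauder regularity.

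The main step is to verify that $\sigma_i\ne0$ for each $i$. I would do this by testing against an explicit $f$: a nonnegative bump function concentrated at $p_i$, chosen to be radially symmetric in normal coordinates adapted to $C_i$. The induced conformal forcing $f\,H_{C_i}$-type term, integrated against the Hardt--Simon Jacobi field via Green's formula, reduces to a single positive integral over the link of $C_i$, so $\sigma_i(f)>0$ for this choice. Consequently each $\{f:\sigma_i(f)=0\}$ is a proper closed hyperplane in $C^k(M)$; their finite union has empty interior, and for $C^k$-generic $f$ one has $\sigma_i(f)\ne0$ simultaneously for all $i=1,\dots,N$. Thus $s_i(t,f)\ne0$ for every $i$ and every small $t\ne0$, so $\Sigma_t$ is globally smooth, and $\Sigma_t\to\Sigma$ in varifold sense follows from applying Theorem \ref{Thm_Intro, main thm, deform} along a sequence $\epsilon_j\downarrow 0$.

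The hard part will be the nontriviality claim $\sigma_i\ne0$: it demands a careful matched asymptotic analysis at the cone tip, identifying the leading obstruction to smoothness with the $L^2$-pairing between the conformal forcing from $f$ and the unique (up to scalar) critical-rate Jacobi field on $C_i$ generating the Hardt--Simon foliation. This is more delicate than in Smale's setting because here the forcing is transmitted only through the ambient metric perturbation, so one must also control the correction coming from solving the linearized equation on the smooth part of $\Sigma$ with Dirichlet data determined by $f$, using the nondegeneracy hypothesis (1) to invert the linearized operator.
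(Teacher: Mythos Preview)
Your overall strategy---produce $\Sigma_t$ via Theorem \ref{Thm_Intro, main thm, deform} and then argue that for generic $f$ the hypersurface is pushed onto a nonzero Hardt--Simon leaf at each singularity---is morally aligned with the paper's, but the paper executes this quite differently and your proposal contains a genuine gap.

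The paper does not introduce a ``displacement parameter'' $s_i(t,f)$ or attempt an expansion of the form $|t|^{\alpha_i}(\sigma_i(f)+o(1))$. Instead it passes through the induced Jacobi field machinery of Section~4: along any subsequence, $\{(\Sigma_t,g_t)\}$ induces a Jacobi field $u$ solving $L_\Sigma u=c\cdot q(fg)$ for some $c\ge 0$; strict stability and strict minimizing of the tangent cones force $u\in\scB_0(\Sigma)$ (via the growth estimate of Corollary~4.13 and Corollary~3.13), and nondegeneracy then pins down $u=L_\Sigma^{-1}(c\,q(fg))$ uniquely up to the scalar $c\ne 0$. Smoothness of $\Sigma_t$ near $p$ is then deduced from Corollary~4.12 whenever $\cA\cR_p(u)=\gamma_1^+(C_p)$, and genericity of this condition in $f$ is Lemma~3.16. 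Thus the ``functional'' detecting smoothness at $p$ is $f\mapsto$ (leading $\gamma_1^+$-coefficient of $L_\Sigma^{-1}(q(fg))$ at $p$), which involves a \emph{global} inversion of $L_\Sigma$ and is not a local pairing at the tip.

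Your nontriviality step is where the proposal actually fails. For a conformal variation $g_t=(1+tf)g$, the forcing on $\Sigma$ is $q(fg)=-\tfrac{n}{2}\nu(f)$, not an ``$f H_{C_i}$'' term. If $f$ is radially symmetric in normal coordinates at $p_i$, then on the tangent cone $\nu\perp x$, so $\nu(f)=\phi'(|x|)\langle\nu,x/|x|\rangle$ vanishes to leading order near $p_i$; your test function therefore produces zero forcing at the scale where you claim to extract a positive integral over the link. More fundamentally, because $\sigma_i(f)$ is the $\gamma_1^+$-coefficient of the \emph{globally} solved $u=L_\Sigma^{-1}(q(fg))$, it is not computable by a local integral at $p_i$, and the paper's genericity argument (Lemma~3.16) proceeds instead by constructing a specific $f$ from the first eigenfunction of a positively perturbed operator. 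Separately, your asserted expansion for $s_i$ and the implication ``$s_i\ne 0\Rightarrow$ smooth'' are exactly the content of the growth-rate dichotomy (Lemma~4.11) together with the asymptotic rate bound for entire stable minimal hypersurfaces (Theorem~4.4); these require substantial work and cannot be taken for granted.
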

     Here $o_4(t)$ means some symmetric $2$-tensor converges to $0$ in $C^4$ after dividing by $t$.\\

\noindent   \textbf{$\bullet$ Induced Jacobi Fields.}
     The proof of regularity in theorem \ref{Thm_Intro, generic smooth} is based on a general result deriving the smoothness of $\Sigma_t$ from the behavior of Jacobi field they induce on $\Sigma$. 

     Recall that by \cite{Sharp17}, if $\Sigma_j$ are a family of distinct minimal hypersurfaces in $(M, g)$ multiplicity $1$ converges to a smooth minimal hypersurface $\Sigma_\infty$, then the convergence is in $C^{\infty}$ graphical sense and the normalization of graphical functions sub-converges to a smooth Jacobi field on $\Sigma$, called an \textbf{induced Jacobi field} on $\Sigma$. As an analogue we have,
     \begin{Thm} \label{Thm_Intro, Induced Jacobi field}
      Let $\{(\Sigma_j, \nu_j)\}_{1\leq j\leq \infty}\subset (M, g)$ be a family of two-sided stable minimal hypersurfaces. 
      Suppose $\Sigma_j$ multiplicity $1$ converges to $\Sigma_{\infty}$ and $\Sigma_\infty$ has only strongly isolated singularities. 

      Then after passing to a subsequence, $\{\Sigma_j\}_{j\geq 1}$ induces a Jacobi field $u\in C^{\infty}(\Sigma)$, i.e. $\big(\Delta_{\Sigma}+ |A_{\Sigma}|^2 + Ric_M(\nu_\infty, \nu_\infty)\big) u = 0$ on $\Sigma$. 
      
      Moreover, if $u>0$ near some singularity $p\in Sing(\Sigma_\infty)$, then for infinitely many $j>>1$, $\Sigma_j$ are smooth near $p$.
     \end{Thm}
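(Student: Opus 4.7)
The plan is to adapt Sharp's argument from the smooth case, with the main new ingredient being the removable-singularity and non-concentration analysis for Jacobi equations at strongly isolated singularities developed in Section~\ref{Sec, Linear Ana}. Away from $\mathrm{Sing}(\Sigma_\infty)$, Allard's regularity together with the Schoen-Simon curvature estimates applied to the stable $\Sigma_j$ gives smooth graphical convergence: on each compact $K\subset \Sigma_\infty\setminus \mathrm{Sing}(\Sigma_\infty)$, one writes $\Sigma_j|_K = \mathrm{graph}_{\nu_\infty}(u_j)$ with $u_j\to 0$ in $C^\infty(K)$. Each $u_j$ solves a minimal surface equation whose linearization at $0$ is $L_\Sigma := \Delta_\Sigma + |A_\Sigma|^2 + \mathrm{Ric}_M(\nu_\infty,\nu_\infty)$. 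Fix a conically weighted norm $\alpha_j$ of $u_j$, adapted to the asymptotic geometry at each strongly isolated singularity (for instance a suitable weighted $L^2$ norm over $\Sigma_\infty$), and set $v_j := u_j/\alpha_j$; then $\|v_j\|=1$ and $L_\Sigma v_j = E_j$ with $E_j$ quadratically small in $u_j$, hence negligible after normalization.

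Interior Schauder estimates for $L_\Sigma$ yield uniform $C^{2,\alpha}_{loc}$ bounds for $v_j$ on $\Sigma_\infty \setminus \mathrm{Sing}(\Sigma_\infty)$, so a subsequence converges in $C^\infty_{loc}$ to some $u$ with $L_\Sigma u=0$ on the regular part. The hard part, and the main obstacle, is to show that $u\not\equiv 0$ and that $u$ extends smoothly across each $p\in\mathrm{Sing}(\Sigma_\infty)$. Both reduce to uniform-in-$j$ control of $v_j$ on annular neighborhoods $A_r(p) := \Sigma_\infty\cap(B_{2r}(p)\setminus B_{r/2}(p))$ at every scale $r\to 0$. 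The plan is to expand $v_j$ on each annulus in the eigenfunctions of the Jacobi operator on the link of the tangent cone at $p$; stability of the (multiplicity-one, singular only at $0$) tangent cone constrains the admissible indicial roots, and an iterative three-annulus argument from Section~\ref{Sec, Linear Ana} then prevents the $L^2$ mass of $v_j$ from escaping into arbitrarily small neighborhoods of $p$, while simultaneously forcing $v_j$ to admit a uniform asymptotic expansion at $p$ along the lowest admissible growth rate. Passing to the limit gives nontriviality of $u$ for the correctly chosen normalization $\alpha_j$, and smoothness of $u$ across $p$ via the removable-singularity theorem for Jacobi fields over strongly isolated stable conical singularities.

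For the second assertion, suppose $u>0$ on a punctured neighborhood of some $p\in\mathrm{Sing}(\Sigma_\infty)$. Local $C^0$ convergence of $v_j \to u$ on each fixed annulus $A_r(p)$ implies $u_j>0$ on $A_r(p)$ for all small $r$ and all large $j$, so $\Sigma_j$ lies strictly on the $+\nu_\infty$-side of $\Sigma_\infty$ in a punctured neighborhood of $p$. A one-sided barrier argument using the smooth Hardt-Simon foliation of the $+\nu_\infty$-side of the tangent cone at $p$, together with stability of $\Sigma_j$, then rules out any singularity of $\Sigma_j$ accumulating at $p$: rescalings of $\Sigma_j$ about $p$ subconverge to a single smooth leaf of the foliation, and by the strong maximum principle this comparison excludes further singularities in a fixed neighborhood of $p$. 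The most delicate step throughout remains the uniform no-concentration estimate for approximate solutions of the Jacobi equation near the singularities, which is delicate precisely because $v_j$ only solves $L_\Sigma v_j = E_j$ up to a quadratically small error and the tangent-cone geometry enters quantitatively via the spectrum of its link.
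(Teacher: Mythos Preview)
Your outline has the right shape but is missing the key nonlinear ingredient that makes the argument work. The three-annulus/growth-rate monotonicity argument on the cone $C_p$ (Corollary~\ref{Cor_Pre, quant growth rate mon for Jac}) does \emph{not} by itself prevent the $L^2$ mass of $v_j$ from concentrating at $p$; it only yields a dichotomy (Lemma~\ref{Lem_Ass Jac, Dichotomy Growth Rate Bd}): either the desired growth-rate monotonicity holds, or some $\delta$-minimal blow-up limit $\tilde V_\infty$ of $\{\Sigma_j\}$ at $p$ has $\cA\cR_\infty(\tilde V_\infty; C_p)\le \sigma$. To rule out the second alternative for $\sigma<\gamma_1^-(C_p)$ one needs Theorem~\ref{Thm_Ass Jac, Asymp Thm of ext. min. graph}\,(2) (equivalently Theorem~\ref{Thm_Intro, Asymp Rate Lower Bd and Regularity}): every \emph{stable} minimal hypersurface in $\RR^{n+1}$ asymptotic to $C_p$ satisfies $\cA\cR_\infty\ge \gamma_1^-$. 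This is a genuinely nonlinear statement about the blow-up limits of the $\Sigma_j$ themselves, not just an indicial-root constraint coming from stability of $C_p$; its proof uses the Jacobi field $x\cdot\nu$ together with Lemma~\ref{Lem_Pre, Diverge of pos Jac field} and Lemma~\ref{Lem_Ass Jac, Pos Jac field on Sigma with AR bd}. Without it your non-concentration step is incomplete. Also, your claim that $u$ extends smoothly across each $p$ is false in general and not asserted by the theorem: $u$ lies only in $\scB(\Sigma)\oplus\RR_{L_\Sigma}\langle\mathrm{Sing}(\Sigma)\rangle$ (Corollary~\ref{Cor_Linear, Global Growth Bd function decomp}) and may carry a Green's-function component blowing up at $p$.

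For the second assertion, your argument has two gaps. First, convergence $v_j\to u$ on each \emph{fixed} annulus does not give $u_j>0$ on a full punctured neighborhood of $p$ for a fixed large $j$; the threshold $j_0(r)$ may diverge as $r\to 0$. Second, invoking the Hardt--Simon foliation presupposes $C_p$ is area-minimizing, which is not assumed here. The paper bypasses both issues: from $u>0$ near $p$ one deduces via a blow-up and Lemma~\ref{Lem_Pre, pos Jacob field} that $\cA\cR_p(u)\in\{\gamma_1^\pm(C_p)\}$; then Corollary~\ref{Cor_Ass Jac, Max growth of ass Jac => smooth} (the dichotomy again, now in the other direction) forces the $\delta$-minimal blow-up limit $\tilde V_\infty$ to satisfy $\cA\cR_\infty(\tilde V_\infty)\le\gamma_1^+$, whence by Theorem~\ref{Thm_Ass Jac, Asymp Thm of ext. min. graph}\,(2) and Lemma~\ref{Lem_Ass Jac, Reg of infty one-side perturb} the limit $\tilde V_\infty$ is smooth and one-sided (and $C_p$ turns out to be one-sided minimizing a posteriori). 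Allard regularity then gives smoothness of $\Sigma_j$ near $p$. The logical order is crucial: one-sidedness and the foliation are \emph{outputs} of the asymptotic-rate analysis on the blow-up limit, not inputs available for a direct barrier argument on $\Sigma_j$.
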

     
     We emphasis that here no further assumption is required on singularities of $\{\Sigma_j\}$. In particular, the theorem is always true in $8$ manifolds. In a upcoming paper joint with Yangyang Li, a slightly generalized version of this theorem is used to get the existence of closed embedded smooth minimal hypersurface in arbitrary $8$-manifolds for generic metric, which generalizes the results in \cite{ChodoshLioukumovichSpoloar20_GenericReg}.
     
     The index of a singular minimal hypersurface is introduced in \cite{Dey19_Cptness}, based on which, theorem \ref{Thm_Intro, Induced Jacobi field} also holds when replacing the stability assumption on $\{\Sigma_j\}_{1\leq j\leq \infty}$ by that they all have the same index. Certain metric variations are also allowed. See theorem \ref{Thm_Ass Jac, Main thm} for the precise statement.
     
     Along the way to theorem \ref{Thm_Intro, Induced Jacobi field}, the first step is to look at minimal hypersurfaces asymptotic to a cone near infinity. 
     This object was studied in \cite{BombieriDeGiorgiGiusti69_Bernstein, HardtSimon85, SimonSolomon86, White89, SterlingWang94, Chan97, Mazet14_MS_Asym_Simons, EdelenSpolaor19}, most focusing on minimizing hypersurfaces and hypercones. In section \ref{Subsec, Ass Jac_Asymp of min surf near infty}, we prove the following generalization of \cite{HardtSimon85} in stable case, which is itself interesting.
     \begin{Thm} \label{Thm_Intro, Asymp Rate Lower Bd and Regularity}
      Let $\Sigma\subset \RR^{n+1}$ be a stable minimal hypersurface with optimal regularity asymptotic to a regular minimal hypercone $C\subset \RR^{n+1}$ near infinity. If write $\Sigma$ as the graph over $C$ of some function $u$ outside a large ball, then we have \[
        \liminf_{R\to +\infty} R^{-2\gamma_1^- - n} \int_{A_{R, 2R}} u^2 > 0    \]
      where $A_{R, 2R}$ is the annulus in $C$ and $-(n-2)<\gamma_1^-\leq -(n-2)/2$ are some geometric constant depending on $C$, defined in section \ref{Subsec, Geom of cone}.
      
      Moreover, if $\Sigma$ lies on one side of $C$ near infinity, then $Sing(\Sigma) = \emptyset$ and it's contained entirely in that side and foliate the component of $\RR^{n+1}\setminus C$ containing it by rescaling.
     \end{Thm}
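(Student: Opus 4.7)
The plan combines a spectral/three-annuli analysis for the decay lower bound with a rescaling/foliation argument for the one-sided case.

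\textbf{Decay lower bound.} Outside a large ball, I would write $\Sigma$ as the normal graph of a function $u$ over $C$, so that minimality reads
\begin{equation*}
L_C u \;=\; Q(u, \nabla u, \nabla^2 u),
\end{equation*}
where $L_C = \Delta_C + |A_C|^2$ is the Jacobi operator of $C$ and $Q$ is quadratic. In polar coordinates $C \cong (0,\infty)\times\Gamma$, expanding $u(r,\theta) = \sum_k u_k(r)\phi_k(\theta)$ in $L^2$-eigenfunctions of the link's Jacobi operator gives, modulo error, Cauchy--Euler ODEs with characteristic exponents $\gamma_k^\pm$ from Section \ref{Subsec, Geom of cone}. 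The analytic heart is a three-annuli dichotomy in the spirit of \cite{Simon93_Cylin}: if $\int_{A_{R,2R}} v^2$ decays strictly faster than $R^{2\gamma_1^- + n}$ on two consecutive scales for a linear solution $v$, then the decay already improves to the next root $\gamma_2^-$ thereafter. Since $u\to 0$ at infinity, the quadratic error $Q(u)$ is absorbed at large scales. Assuming $\liminf_R R^{-2\gamma_1^- - n}\int_{A_{R,2R}} u^2 = 0$, pick $R_j\to\infty$ realizing the infimum, rescale $v_j(x) := u(R_j x)/\alpha_j$ with $\alpha_j := \|u\|_{L^2(A_{R_j,2R_j})}$, and extract a subsequential limit $v_\infty$, a Jacobi field on $C\setminus\{0\}$ with unit $L^2$ norm on $A_{1,2}$ whose $\phi_1$-projection vanishes. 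Iterating the dichotomy kills successively slower modes, while stability of $\Sigma$ controls growth inward, forcing $v_\infty\equiv 0$ — a contradiction.

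\textbf{One-sided case and foliation.} If $\Sigma$ lies on one side of $C$ near infinity then $u$ has a sign, and combining the decay lower bound with its matching upper bound (obtained by the same three-annuli machinery and the stability inequality) gives
\begin{equation*}
u(r,\theta) \;=\; a\, r^{\gamma_1^-}\phi_1(\theta) + o(r^{\gamma_1^-}),
\end{equation*}
with $a\neq 0$; since $\phi_1>0$, the sign of $a$ matches that of $u$ at infinity. Set $\Sigma_\lambda := \lambda\Sigma$. Near infinity, $\Sigma_\lambda$ is the normal graph of $\lambda u(\cdot/\lambda)$ with leading $\phi_1$-coefficient $a\lambda^{1-\gamma_1^-}$, strictly monotone in $\lambda$ because $1-\gamma_1^->0$. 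Hence distinct $\Sigma_\lambda$ are disjoint at infinity; by the strong maximum principle for stationary integer-rectifiable varifolds (valid across the optimal-regularity singularities of $\Sigma$) they are pairwise disjoint globally, and a continuity argument shows $\{\Sigma_\lambda\}_{\lambda>0}$ foliates the component of $\RR^{n+1}\setminus C$ containing $\Sigma$.

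\textbf{Smoothness of $\Sigma$.} The generator of this foliation, $\varphi := X\cdot\nu_\Sigma$ with $X$ the position vector, is a Jacobi field on the regular part of $\Sigma$ with a definite sign. If $p\in Sing(\Sigma)\setminus\{0\}$ existed, dilation invariance along the ray $\RR_{>0}\cdot p$ would force the tangent cone $C_p$ to split off a line in the direction of $p$, and iterated dimension reduction, combined with $\mathrm{dim}\, Sing(C_p) \leq n-7$, would terminate at a hyperplane tangent cone — making $p$ regular by Allard, a contradiction. Since $\Sigma$ is strictly separated from $C$ (hence from $0$), the origin cannot be singular either, so $Sing(\Sigma) = \emptyset$. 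The main obstacle is the three-annuli dichotomy for merely stable $\Sigma$: Hardt--Simon's barrier/comparison arguments are unavailable here, so the quadratic error $Q(u)$ must instead be controlled by the stability inequality on annular cutoffs, delicately balanced against the spectral gap between $\gamma_1^-$ and $\gamma_2^-$; upgrading the lower bound to the matching asymptotic expansion used in Step 2 requires the same care.
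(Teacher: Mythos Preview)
Your smoothness argument contains a genuine error. You claim that if $p\in Sing(\Sigma)\setminus\{0\}$, then ``dilation invariance along the ray $\RR_{>0}\cdot p$ would force the tangent cone $C_p$ to split off a line.'' But $\Sigma$ is \emph{not} dilation-invariant; the fact that $\{\lambda\Sigma\}_{\lambda>0}$ foliates means these are disjoint leaves, not that $\Sigma$ contains rays through the origin. The ray $\RR_{>0}\cdot p$ is transverse to the leaves, so there is no splitting of $C_p$ and no dimension reduction available. This leaves the regularity claim unproved, and since your foliation argument is set up \emph{before} regularity (relying on the singular strong maximum principle of Ilmanen), the logical order compounds the difficulty: covering the whole component by a ``continuity argument'' is delicate when the leaves may still be singular.

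The paper's route is quite different and avoids these issues entirely by working intrinsically on $\Sigma$ rather than with the graph function over $C$. The single object that drives everything is the Jacobi field $\psi:=\nabla_{\RR^{n+1}}(|x|^2/2)\cdot\nu_\Sigma$ on $\Sigma$. For the decay lower bound: if $\cA\cR_\infty(\Sigma)<\gamma_1^-$, Lemma~\ref{Lem_Ass Jac, asymp char} gives $|\psi|\lesssim |x|^{\gamma_2^-+\epsilon}$ at infinity; one then produces a \emph{positive} Jacobi field $v$ on $\Sigma$ (Lemma~\ref{Lem_Ass Jac, Pos Jac field on Sigma with AR bd}, using stability) with $v\gtrsim |x|^{\gamma_1^--\epsilon}$ at infinity and $v\to+\infty$ at $Sing(\Sigma)$ (Lemma~\ref{Lem_Pre, Diverge of pos Jac field}), so $\psi/v\to 0$ at both infinity and the singular set, and the maximum principle forces $\psi\equiv 0$, i.e.\ $\Sigma=C$. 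For the one-sided case: the asymptotics give $\psi>0$ near infinity, stability and the weak maximum principle give $\psi>0$ everywhere, and then Lemma~\ref{Lem_Pre, Diverge of pos Jac field} says $\psi\to\infty$ at any singular point---impossible since $|\psi|\le |x|$. With $\psi>0$ the radial projection $x\mapsto x/|x|$ is a local diffeomorphism $\Sigma\to\SSp^n$, and a short topological lemma makes it a global diffeomorphism onto one spherical cap, yielding both $\Sigma\subset E_+$ and the foliation at once. Your three-annuli outline for the decay bound is plausible in spirit (and related machinery does appear elsewhere in the paper), but the intrinsic $\psi$-argument sidesteps exactly the obstacle you flag---controlling the quadratic error against the $\gamma_1^-/\gamma_2^-$ gap via stability---by never needing it.
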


\noindent   \textbf{$\bullet$ Local Minimizing Property.}
     A natural question by Lawlor \cite{Lawlor91} is that given a (strictly) minimizing hypercone $C\subset \RR^{n+1}$, is every minimal hypersurface $\Sigma$ with $C$ to be the tangent cone at some singularity $p$ minimizing in some neighborhood of $p$?
     When $C$ is strictly stable or the asymptotic rate of $\Sigma$ to $C$ near $p$ is strictly greater than $1$, \cite[theorem 4.4]{HardtSimon85} confirmed this question.
     
     We get a positive answer in this article to a more general version of the question under stability assumption.
     \begin{Thm} \label{Thm_Intro, loc minimiz}
      Suppose $\Sigma \subset (M, g)$ be a stable minimal hypersurface with only strongly isolated singularities satisfying (1) and (2) in theorem \ref{Thm_Intro, main thm, deform}.
      
      Then there's some small neighborhood $U$ of $Clos(\Sigma)$ which admits a mean convex foliation. More precisely, there's a family of piecewise smooth mean convex neighborhood $\{U_t\}_{t\in (0,1)}$ of $Clos(\Sigma)$ such that 
      \begin{enumerate}
       \item[(1)] $U_{t_1} \subset U_{t_2}$ if $t_1\leq t_2$
       \item[(2)] $U = \bigcup_{t\in (0,1)} U_t$, $Clos(\Sigma) = \bigcap_{t\in (0,1]}U_t $; 
       $U_s = \bigcup_{t<s}U_t$; $Clos(U_s) = \bigcap_{t>s} U_t$, $\forall s\in (0,1)$.
      \end{enumerate}    
      In particular, $\Sigma$ is homologically area-minimizing in $U$ and is the unique stationary varifold in $U$ up to multiplicity.
     \end{Thm}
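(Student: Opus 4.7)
\medskip

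The plan is to build the foliation in three stages, one near each singularity and one on the regular part, and glue them together; the mean convexity of the leaves will feed directly into the final minimization and uniqueness claim via the maximum principle.

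\textbf{Stage 1 (leaves near each singularity).} At each $p\in Sing(\Sigma)$ the tangent cone $C_p$ is strictly area-minimizing. By the Hardt--Simon foliation of $C_p$ (in the form established and sharpened by the earlier sections of the paper, notably Theorem \ref{Thm_Intro, Asymp Rate Lower Bd and Regularity}), each component of $\RR^{n+1}\setminus C_p$ is foliated smoothly by $\{\lambda S_p^{\pm}\}_{\lambda>0}$, where $S_p^{\pm}$ is the unique smooth area-minimizing hypersurface on that side asymptotic to $C_p$. Transplanting this family to a small normal chart at $p$, one obtains a one-parameter family of smooth (hence minimal, hence trivially mean convex) area-minimizing hypersurfaces $\Lambda_t^{p,\pm}$ foliating $B_r(p)\setminus\Sigma$, with $\Lambda_t^{p,\pm}\to\{p\}$ as $t\to 0$ and asymptotic to $\Sigma$ at rate governed by $\gamma_1^-$.

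\textbf{Stage 2 (global Jacobi-field leaves on the regular part).} Since $\Sigma$ is stable and nondegenerate, the Jacobi operator $L_\Sigma=\Delta_\Sigma+|A_\Sigma|^2+Ric_M(\nu,\nu)$ is strictly positive. For each side $\pm$ of $\Sigma$, solve $L_\Sigma \phi^{\pm}=0$ on $reg(\Sigma)$ with asymptotic behavior $\phi^{\pm}\sim c_p^{\pm}\, d(\cdot,p)^{\gamma_1^-}$ near each singularity $p$ chosen to match the infinitesimal deformation of $\Sigma$ towards $\Lambda_t^{p,\pm}$ from Stage 1. The existence of such a positive solution comes from weighted Fredholm theory on $reg(\Sigma)$ with singular weights at $Sing(\Sigma)$ governed by the indicial rates $\gamma_1^{\pm}$ introduced in the section on geometry of cones; strict stability plus nondegeneracy ensure the relevant operator is an isomorphism on the appropriate weighted Hölder spaces, and positivity follows from a maximum-principle / first-eigenfunction argument together with the positivity of the prescribed asymptotic data.

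\textbf{Stage 3 (gluing and mean convexity).} Define $\partial U_t$ on the regular part as the graph of $t\phi^+$ (resp.\ $-t\phi^-$) over $reg(\Sigma)$, and replace this graph inside a small ball $B_{r(t)}(p)$ around each singularity by the Hardt--Simon leaf $\Lambda_{c(t)}^{p,\pm}$ with $c(t)$ chosen so that the Stage 1 and Stage 2 pieces match continuously in a cut-off annulus. Let $U_t$ be the region enclosed. The mean curvature of the graph of $t\phi^{\pm}$ is
\[
 H = t\, L_\Sigma\phi^{\pm} + O(t^2)(|\phi^{\pm}|^2+|\nabla\phi^{\pm}|^2) = O(t^2),
\]
and on the part where the leading order does not suffice one perturbs $\phi^{\pm}$ by $\varepsilon\psi^{\pm}$, $\psi^{\pm}$ a strictly positive supersolution supplied by strict stability, making $\partial U_t$ strictly mean convex pointing inward; on the Hardt--Simon pieces mean convexity is automatic. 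Monotonicity $U_{t_1}\subset U_{t_2}$ for $t_1\le t_2$ and the union/intersection properties are then inherited from the strict positivity of $\phi^{\pm}$ and the foliation property of $\{\lambda S_p^{\pm}\}$.

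\textbf{Main obstacle.} The hard step is Stage 3: the asymptotic behavior of the Stage 2 graphs as one approaches each singularity must match, to leading order and sign, the Stage 1 Hardt--Simon leaves so that the glued hypersurface remains mean convex through the interpolation annulus. This requires the sharp asymptotic rate information of Theorem \ref{Thm_Intro, Asymp Rate Lower Bd and Regularity} and the analysis of the indicial roots $\gamma_1^{\pm}$, plus a careful choice of the parameter $c(t)$ to preserve monotonicity; the quadratic error $O(t^2)$ on the regular part and the subtle competition with the $t^{|\gamma_1^-|+1}$-scale asymptotics at singularities is what forces the supersolution perturbation. Once the foliation $\{U_t\}$ is constructed, the minimization and uniqueness conclusions are standard: any stationary integer varifold $V$ in $U$ homologous to $\Sigma$ can be pushed by the mean convex barriers $\partial U_t$ until $spt(V)$ first touches $\overline\Sigma$, and the strong maximum principle at regular contact points together with the strictly minimizing (hence maximum-principle-enforcing) property of the tangent cones at singular contact points forces $V=\Sigma$.
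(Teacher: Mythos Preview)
Your overall architecture matches the paper: exterior graphs over $\Sigma$ away from the singular set, Hardt--Simon type pieces near each singularity, and checking mean convexity at the seam. The gap is in Stage~3. The mean curvature expansion you write,
\[
 H \;=\; t\,L_\Sigma\phi^{\pm} + O(t^2)\bigl(|\phi^{\pm}|^2+|\nabla\phi^{\pm}|^2\bigr),
\]
is not uniform as you approach a singularity: with $\phi^{\pm}\sim\rho^{\gamma_1^-}$ the error is of order $t^2\rho^{2(\gamma_1^- -1)}$, while any first--order supersolution correction $t\varepsilon L_\Sigma\psi^{\pm}$ is at best of order $t\rho^{\gamma_1^- -2}$. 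Since $2(\gamma_1^--1)<\gamma_1^--2$, the quadratic error dominates near each $p$, and the perturbation you propose cannot restore a sign. In other words, graphs of $t\phi^{\pm}$ simply do not have controllable mean curvature in the transition annuli, and the cut--and--paste cannot be made mean convex this way.

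The paper avoids this by never using approximate minimal surfaces on the exterior. Instead it first manufactures the weight $G=\sum_p G_p$, a Green's function for $L_\Sigma+\Lambda$ with $\cA\cR_p(G)=\gamma_1^-$, and then for each large $T$ solves the \emph{full} minimal surface equation over the truncated domain $\Sigma_T=\Sigma\setminus\bigcup_p\cB_{r_p(T)}(p)$ with boundary data taken from a fixed Hardt--Simon leaf (Theorem~\ref{Thm_One-sided Perturb, Ext graph with growth est}); the key output is a pointwise estimate $|u|/\rho+|\nabla u|\le C\,\delta\,G/(T\rho)$ uniform in $T$. So the exterior leaves are \emph{exactly} minimal, and the interior leaves are obtained by solving a Plateau problem in the actual metric (not by transplanting the Euclidean foliation). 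The only place mean convexity has to be checked is along the common boundary, and that is done by a blow--up as $T\to\infty$: the rescaled exterior graph converges to a minimal graph over $C_p\setminus B_1$ which, by the weighted bound and the strict--minimizing hypothesis, lies strictly below the Hardt--Simon leaf outside $\partial B_1$; Hopf's lemma then gives the convex wedge. This blow--up step, together with the uniform weighted estimate that feeds it, is what replaces your Stage~3 argument and is the real technical core.
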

     For a precise description of mean convexity of piecewise smooth domain, see theorem \ref{Thm_One-sided Perturb, Main Thm, mean convex foliation}.
     
     Similar local minimizing property was proved by a contradiction argument in \cite[section 3, lemma 4]{Smale99} assuming  that each tangent cone are strictly stable and strictly minimizing, and that $\Sigma$ is a minimal cone in the Euclidean space near each singularity; By a calibration argument, \cite{ZhangYS18_Loc_Minimizing} obtained similar local minimizing result for singular minimal submanifolds of higher codimension with Lawlor's cones in \cite{Lawlor91} to be the tangent cone, and with specifically chosen metrics away from singular points. \\

    \paragraph{\textbf{Organization of the Article}}
     Section \ref{Sec, Prelim} contains the basic notations and assumption we use in this article. We also present a brief review of analysis on singular minimal surfaces, geometric features of minimal cones and asymptotic conical minimal hypersurfaces. They will be frequently used throughout this article. 
     
     In section \ref{Sec, Linear Ana}, we study the analytic behavior of the linearized minimal surfaces operator, namely, the Jacobi operator $L_{\Sigma}$, on a singular minimal hypersurface $\Sigma$. The key property is the $L^2$-compactness of a family of functions with uniformly bounded $L^2$-norm and uniformly bounded quadratic form associated to $L_{\Sigma}$. Similar compactness has been proved by Schoen-Yau using a uniform $L^2$-nonconcentration property and coercivity in \cite{SchoenYau17_PSC} for the operators associated to the specific quadratic form of minimal slicings and by Lohkamp using the associated skin structure in \cite{Lohkamp19_Potential_I} for the $\cS$-adapted Schr\"{o}dinger operators. In fact, their argument is sufficient to obtain the desired compactness if each tangent cone is strictly stable. A brief review of Schoen-Yau's argument in \cite{SchoenYau17_PSC} is contained in section \ref{Subsec, Linear ana of Jac oper}.
     
     For a general $\Sigma$, the coercivity of $L_{\Sigma}$ might fail. Hence, we establish another $L^2$-compactness in section \ref{Subsec, General linear Analysis} and \ref{Subsec, L^2 noncon for iso sing} when $\Sigma$ has strongly isolated singularities, which allows us to still get spectral decomposition of $L_{\Sigma}$ and define the Green's function. We also derive an asymptotic estimate of Green's function near singularities in section \ref{Subsec, Linear asymp near sing}, which is used in section \ref{Sec, One-sided Perturb} to deduce certain mean convexity of one-sided perturbations of minimal hypersurfaces.
     
     Section \ref{Sec, Asymp & Asso Jac field} studies the induced Jacobi fields of a family of converging stable minimal hypersurfaces. In particular, we prove some more general version of theorem \ref{Thm_Intro, Induced Jacobi field} and \ref{Thm_Intro, Asymp Rate Lower Bd and Regularity}, relating the asymptotic rate of induced Jacobi fields with specific blow-up limit of the converging sequence near each singularity. In particular, we show that the induced Jacobi fields are actually fall in some specific function space described in section \ref{Sec, Linear Ana}.
     
     In section \ref{Sec, One-sided Perturb}, we prove theorem \ref{Thm_Intro, loc minimiz} . The key is an existence of minimal graph over compact subset of regular part with weighted estimate, see theorem \ref{Thm_One-sided Perturb, Ext graph with growth est}. Note that this estimate doesn't require the tangent cone to be minimizing.


     In section \ref{Sec, Strong min-max prop}, inspired by \cite{White94}, we prove an analogue of strong min-max property for $\Sigma$ in theorem \ref{Thm_Intro, main thm, deform} in sufficiently small neighborhood $U$. The proof closely follows the argument in \cite{White94}, except that we have to estimate error terms near singularities using similar argument as \cite{Smale99}. 
     This allows us to construct an admissible family of sweepouts under any perturbed metric, with width close to the volume of $\Sigma$. Then by applying the min-max theory for obstacle problem in \cite{WangZH2020_Obst}, for any slightly perturbed metric $g'$, there's a constrained embedded minimal hypersurface $\Sigma'\subset (Clos(U), g')$. 
     
     To conclude that $\Sigma'$ doesn't intersect $\partial U$ for certain small $U$ and $g$ sufficiently close to $g_0$, we prove in section \ref{Sec, Rigidity of Constraint Minimizer} a local rigidity result for constrained embedded minimal hypersurfaces, as an analogue of \cite[Theorem 5.1]{WangZH2020_Obst}.

     In section \ref{Sec, Apps and Discussion}, we complete the proof of theorem \ref{Thm_Intro, main thm, deform} and theorem \ref{Thm_Intro, generic smooth}. 
     In the end, we list some open problems related with singular minimal hypersurfaces.\\

    \paragraph{\textbf{Acknowledgements}}
     I am grateful to my advisor Fernando Cod\'a Marques for his constant support and guidance. I'm also thankful to Camillo De Lellis, Leon Simon and Brian White for sharing their knowledge about singular minimal hypersurfaces. I would like to thank Otis Chodosh, Yevgeny Liokumovich and Luca Spolaor for the inspiring conversations on deforming stable minimal hypersurfaces. I would also thank Chao Li, Yangyang Li, Zhenhua Liu, Akashdeep Dey, Yongsheng Zhang and Xin Zhou for their interest in this work and consistent encouragements.

    \section{Preliminaries} \label{Sec, Prelim} 
     Throughout this article, let $n\geq 3$, $(M, g)$ will be an $n+1$ dimensional closed oriented Riemannian manifold. We may assume that $(M, g)$ isometrically embedded in some $\RR^L$ if necessary Write 
     \begin{align*}
      Int(A)\ &\ \text{the interior of a subset }A\subset M \\
      Clos(A)\ &\ \text{the closure of a subset }A\subset M \\
      dist_M\ &\ \text{the intrinsic distant function on }M \\
      B^M_r(p)\ &\ \text{the open geodesic ball in } M \text{ of radius } r \text{ centered at }p \\
      B^M_r(E)\ &:=\ \bigcup_{p\in E} B^M_r(p), \text{the r neighborhood of }E\subset M \\
      A^M_{s, r}(p)\ &:=B^M_r(p)\setminus Clos(B^M_s(p))\ \text{the open geodesic annuli in } M \\
      \scH^k\ &\ k\text{-dimensional Hausdorff measure} \\
      \nabla_M\ &\ \text{the Levi-Civita connection on }M \\
      \exp ^M\ &\ \text{the exponential map of }M \\
      inj(M)\ &\ \text{the intrinsic injectivity radius of }M \\
      dvol_M\ &\ \text{the unit volume form on }M \\
      Ric_M\ &\ \text{the Ricci curvature tensor of }M
     \end{align*}
     For an open subset $U\subset M$, let
     \begin{align*}
      \scX_c(U)\ &\ \text{the space of compactly supported vector fields in }U  \\
      e^{tV}\ &\ \text{the one-parameter family of diffeomorphism generated by }V\in \scX_c(U) \\
      \scD^k(U)\ &\ \text{the space of compactly supported smooth k-forms in }U
     \end{align*} 
     For fixed $p\in M$, we usually identify the tangent space $T_p M$ wit $\RR^{n+1}$ and view $B_{inj(M)}(p)\hookrightarrow \RR^{n+1}$ by $exp^M$.
     In an $n+1$ dimensional Euclidean space $\RR^{n+1}$, write
     \begin{align*}
      \BB^{n+1}_r(p)\ &\ \text{the open ball of radius } r \text{ centered at }p \\
      \BB^{n+1}_r\ &:=\ \BB^{n+1}_r(0^{n+1}) \\
      \SSp^n_r(p)\ &\ \text{the sphere of radius } r \text{ centered at }p \\
      \AAa_{s, r}^{n+1}(p)\ &\ \text{the open annuli }\BB_r(p)\setminus Clos(\BB_s(p)) \text{ centered at }p \\
      \eta_{p, r}\ &\ \text{the map between }\RR^{n+1}, \text{ maps }x \text{ to }(x-p)/r \\
      \bfG(V, k)\ &\ \text{the Grassmannian of } k\text{ dimensional unoriented linear subspace in a real vector space }V \\
      \omega_k\ &\ \text{the volume of }k \text{-dimensional unit ball} \\
      \theta^k(x, \mu)\ &\ \text{the k-th density (if exists) of a Radon measure }\mu \text{ at }x, \text{ i.e. }\lim_{r\to 0}\frac{1}{\omega_k r^k}\mu(\BB_r^{n+1}(x))
     \end{align*}
     For $E\subset \RR^{n+1}$, we sometimes write for simplicity $\frac{1}{r}(E-p):= \eta_{p,r}(E)$. 
     
     Let $\Sigma\subset U$ be a two-sided smooth hypersurface with normal field $\nu$ (extended to its neighborhood by $\nabla^M_{\nu}\nu = 0$). Throughout this article, unless stated otherwise, every hypersurface will be two-sided and have \textbf{optimal regularity}, i.e. $\scH^{n-2}(U\cap Clos(\Sigma) \setminus \Sigma) =0$ and $\scH^n \llcorner \Sigma$ is locally finite. Let 
     \begin{align*}
      Reg(\Sigma)\ &:= \{x\in Clos(\Sigma)\cap U: Clos(\Sigma) \text{ is smooth, embedded hypersurface near }x \} \\
      Sing(\Sigma)& := U\cap Clos(\Sigma)\setminus Reg(\Sigma)    
     \end{align*}
     By adding points to $\Sigma$ if necessary, we may identify $\Sigma = Reg(\Sigma)$ for simplicity. Call $\Sigma$ \textbf{closed} in $U$, if $Clos(\Sigma)\subset \subset U$; Call $\Sigma$ \textbf{regular} in $U$, if $Sing(\Sigma)=\emptyset$.
     
     The Fermi coordinate of $M$ near $\Sigma$ is a parametrization of some neighborhood of $\Sigma$ in $M$ by a sub-domain $\Omega\subset\Sigma\times \RR$, \[
       \Omega\ni (x, z) \mapsto exp^M_x(z\cdot\nu(x)) \in M    \]

     The Riemannian metric on $\Sigma$ is induced from $(M, g)$. We usually omit the subscript for differential operators $\nabla_{\Sigma}$, $div_{\Sigma}$, $\nabla^k_{\Sigma}$ and $\Delta_{\Sigma}$ etc. of $\Sigma$. Also write 
     \begin{align*}
      dist_{\Sigma}\ &\ \text{the intrinsic distant on }Clos(\Sigma) \\
      B_r(p)\ &:=\ \Sigma\cap B^M_r(p) \\
      \cB_r(p)\ &:=\ \{y\in \Sigma: dist_{\Sigma}(p, y)<r\} \text{ the intrinsic open ball in }\Sigma,\ p\in Clos(\Sigma)\\
      \vec{H}_{\Sigma}\ &:=\ -div_{\Sigma} (\nu) \cdot \nu = H_{\Sigma}\cdot\nu,\ \text{ the mean curvature vector of }\Sigma \\
      A_{\Sigma}\ &:=\ -\nabla \nu,\ \text{ the 2nd fundamental form of }\Sigma
     \end{align*}
     Recall that $\Sigma$ is called \textbf{minimal} if $H_{\Sigma} = 0$. 

     A minimal hypersurface $(\Sigma,\nu) \subset M$ is called \textbf{stable} in $U$ if  \[
       \frac{d^2}{ds^2}{\Big|}_{s=0}\scH^n(e^{sX}(\Sigma)) \geq 0\ \ \ \ \forall X\in \scX_c(U)
     \]
     By \cite{SchoenSimon81}, this is equivalent to $\scH^{n-7+\epsilon}(Sing(\Sigma)) = 0$, $\forall \epsilon>0$ and that
     \begin{align}
      Q_{\Sigma}(\phi, \phi):=\int_{\Sigma} |\nabla \phi|^2 - (|A_{\Sigma}|^2 + Ric_M(\nu, \nu))\phi^2 \geq 0\ \ \ \ \forall \phi\in C^1_c(\Sigma \cap U)  \label{Pre, quadratic assoc to Jac oper} 
     \end{align}
     Let 
     \begin{align}
      L_{\Sigma}:= \Delta + |A_{\Sigma}|^2 + Ric_M(\nu, \nu)   \label{Pre, Def Jac oper} 
     \end{align}
     be the operator associated to $Q_{\Sigma}$, called \textbf{Jacobi operator}. Any solution $u\in C^2(\Sigma)$ to $L_{\Sigma} u = 0$ is called a \textbf{Jacobi field} on $\Sigma$. 
     
     For $E\subset \Sigma$ and $u: E\to \RR$, let \[
      graph_{\Sigma, g}(u):= \{exp^M_x(u(x)\cdot\nu(x)) : x\in E\}     \]
     be the graph of $u$ over $\Sigma$ under metric $g$, and we shall omit $g$ if there's no confusion. If $E$ is a smooth domain, for sufficiently small $u$, $graph_{\Sigma}(u)$ is a smooth hypersurface, oriented by taking normal field having positive inner product with $\nu$. Let $\scM_{\Sigma}$ be the Euler-Lagrangian operator associated to the area function of $graph_{\Sigma}(u)$, see section \ref{Subsec, Min surface near sing} for precise definition and discussion.

     
     \subsection{Currents, varifolds and convergence} \label{Subsec, Currents & varifolds}
     
      We recall the basic notions of currents and varifolds and refer the readers to \cite{Simon83_GMT} and \cite{Federer69} for details. 
      For $1\leq k\leq n+1$, let
      \begin{align*}
       \bfI_k(M)\ &\ \text{the spce of integral }k \text{ currents on }M \\
       \cZ_k(M)\ &:= \{T\in \bfI_k(M): \partial T = 0\} \\
       \cV_k(M)\ &\ \text{the closure of }\cZ_k(M) \text{ under varifold weak topology} \\
       \cI\cV_k(M)\ &\ \text{the space of integral }k\text{-varifolds on }M
      \end{align*}
      If $\Sigma\hookrightarrow M$ is an immersed $k$-dimensional submanifold with finite volume, then let $|\Sigma|\in \cI\cV_k(M)$ be the associated integral varifolds. If further $\Sigma$ is oriented, let $[\Sigma]\in \bfI(M)$ be the associated integral current. For $T\in \bfI_k(M)$, let $|T|$ and $\|T\|$ be its associated integral varifold and Radon measure correspondingly.
      
      For $U\subset M$ open, let $\cF_{U, g}$ and $\bfM_{U, g}$ be the flat metric and mass norm on $\bfI_k(U)$ with respect to metric $g$. We may omit the subscript $g$ if there's no confusion and the subscript $U$ if $U = M$. 
      Let $\bfF_g$ be the varifolds metric on $\cV_k(M)$, i.e. the metric compatible with weak converges. 
      The $\bfF_g$-metric on integral currents is defined to be $\bfF_g(S, T):= \cF_g(S, T)+ \bfF_g(|S|, |T|)$, $S, T\in \bfI_k(M)$.
      
      Let $spt(V)$ be the support of $V\in \cV_k(M)$; $spt(T):= spt(|T|)$ for $T\in \bfI_k(M)$.
      
      For a proper Lipschitz map $f$, $f_{\sharp}$ be the push forward of varifolds or currents.
      
      For $E\subset M$ measurable, $T\in \bfI_k(M)$ and $V\in \cV_k(M)$, $T\llcorner E$ and $V\llcorner E$ be the restrictions onto $E$.
      
      Let $T\in \cZ_k(M)$, $E\supset \spt T$. Call $T$ \textbf{homologically area minimizing} in $E$ if $\forall W\subset \subset M$, \[
       \bfM_W(T+\partial P) \geq \bfM_W(T)\ \ \ \ \forall P\in I_{k+1}(M)\text{ with } spt(P)\subset E\cap W     \]
            
      For $U\subset M$ open, $V\in \cV_k(U)$ and $\scX_c(U)$, let \[
       \delta V(X) = \frac{d}{dt}\Big|_{t=0} \bfM\big((e^{tX})_{\sharp}V \big) = \int div^{\pi}X\ dV(x, \pi)    \]
      be the first variation of $V$, where $div^{\pi}X = \sum \langle\nabla^M_{e_i} X, e_i \rangle$, $\{e_i\}$ be an othonormal basis of $\pi\in \bfG(T_x M, k)$.
      Call $V$ stationary in $U$ if $\delta V(X) = 0$ for every $X\in \scX_c(U)$.


     \subsection{Geometry of hypercones} \label{Subsec, Geom of cone}
      Let $C$ be a hypercone in $\RR^{n+1}$ with normal field $\nu$, $S :=\partial \BB^{n+1}_1\cap C$ be the cross section of $C$. Call $C$ a \textbf{regular cone}, if $S$ is a closed smooth hypersurface in $\SSp^n$, in which case $Sing(C) = \{0\}$. Let $A_S$ be the second fundamental form of $(S, \nu|_S)$ in $\SSp^n$. Parametrize $C$ by 
      \begin{align}
       (0,+\infty)\times S \to C,\ \ \ (r,\omega)\mapsto x:=r\omega  \label{Pre, param cone}    
      \end{align}
     
      Now suppose $C\subset \RR^{n+1}$ be a regular stable minimal hypercone. Then by \cite{Simons68}, the Jacobi operator of $C$ is decomposed w.r.t. this parametrization, \[
       L_C = \partial_r^2 + \frac{n-1}{r}\partial_r + \frac{1}{r^2}(\Delta_S + |A_S|^2)   \]
      Let $\cL_S := \Delta_S +|A_S|^2$, $\mu_1 <\mu_2 \leq ... \nearrow +\infty$ be the eigenvalues of $-\cL_S$ on $L^2(S)$ and $w_1, w_2, ...$ be a corresponding family of $L^2$-orthonormal eigenfunctions, $ w_1>0 $. By the stability of $C$, \[
       \mu_1\geq -(\frac{n-2}{2})^2    \]
      Following \cite{CaffHardtSimon84}, call $C$ \textbf{strictly stable}, if $\mu_1 > (\frac{n-2}{2})^2 $.
     
      By \cite[section 1]{CaffHardtSimon84}, for every $0\leq R_1 < R_2 \leq +\infty$, general solution of $L_C u = f$ on $C\cap \AAa_{R_1, R_2}(0)$ is given by 
      \begin{align}
       u(r, \omega) = \sum_{k\geq 1} (u_k(r) + v_k(r))w_k(\omega)  \label{Pre, nonhom Sol Jacob}
      \end{align}
      where 
      \begin{align}
       u_k(r, f) :=
       \begin{cases}
        \frac{-1}{2b_k}{\big(}r^{\gamma^+_k} \int_r^1 s^{-\gamma_k^+ +1}f_k(s)\ ds - r^{\gamma^-_k} \int_r^1 s^{-\gamma_k^- +1}f_k(s)\ ds{\big)}, & b_k\neq 0;\\
        r^{-\frac{n-2}{2}}{\big(}\log r\int_0^r s^{\frac{n}{2}}f_k(s)\ ds - \int_0^r s^{\frac{n}{2}}\log sf_k(s)\ ds{\big)}, & b_k = 0.
       \end{cases} \label{Pre, nonhom Spe Sol Jacob}
      \end{align}
      be a special solution, \[ f_k(r) := \int_S f(r, \omega)w_k(\omega)\ d\omega \]
      be the Fourier coefficients of $f$; and 
      \begin{align}
       v_k(r) = v_k(r; c_k^+, c_k^-) := 
       \begin{cases}
        c_k^+ r^{\gamma^+_k} + c_k^- r^{\gamma^-_k} = r^{-\frac{n-2}{2}}(c_k^+ r^{b_k} +c_k^- r^{-b_k}), & b_k\neq 0; \\
        r^{-\frac{n-2}{2}}(c_k^+ + c_k^- \log r), & b_k = 0. 
       \end{cases} \label{Pre, hom Sol Jacob}
      \end{align}
      be the general homogeneous solutions, $c_k^{\pm}\in \RR$; $\gamma_k^{\pm} = -\frac{n-2}{2} \pm b_k$ be the two solutions of 
      \begin{align}
       \gamma^2 + (n-2)\gamma -\mu_k = 0    \label{Pre, power of cone Jac} 
      \end{align} 
      and $b_k = \sqrt{(\frac{n-2}{2})^2+\mu_k}$. 
      For later reference, write $\Gamma_C := \{\gamma_j^{\pm} : j\geq 1\}$ and $\forall \gamma\in \Gamma_C$, write $W_{\gamma}:= span_{\RR}\{w_j: \gamma_j^{\pm} = \gamma\}$ be the eigenspace of $\cL_S$. \\

      We list some corollary of this expansion formula. In what follows, $B_r := C\cap \BB^{n+1}_r$, $A_{r, s} = C\cap \AAa^{n+1}_{r,s}$.
      \begin{Lem} \label{Lem_Pre, Hardy-typed inequ}
       For each $\phi\in C_c^\infty(C)$ we have, \[
        \int_C |\nabla_C \phi|^2 - |A_C|^2\phi^2 \geq (\mu_1+(\frac{n-2}{2})^2)\int_C \frac{1}{r^2}\phi^2     \]
      \end{Lem}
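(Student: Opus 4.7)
The plan is to separate radial and spherical variables via the parametrization \eqref{Pre, param cone}, decompose $\phi$ into Fourier modes of the eigenfunctions $\{w_k\}$ of $-\cL_S$, and then apply a one-dimensional Hardy inequality in the radial variable to each mode.

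First, I note that under the parametrization $x = r\omega$, the induced volume form on $C$ is $r^{n-1}\,dr\,d\omega$, the gradient splits as $|\nabla_C \phi|^2 = (\partial_r\phi)^2 + r^{-2}|\nabla_S \phi|^2$, and the second fundamental form scales as $|A_C|^2 = r^{-2}|A_S|^2$. Writing $\phi(r,\omega) = \sum_{k\geq 1} \phi_k(r) w_k(\omega)$ with $\phi_k \in C_c^\infty((0,\infty))$ (since $\spt\phi\subset C$ is compactly supported away from the vertex), Parseval's identity together with $-\cL_S w_k = \mu_k w_k$ yields
\begin{align*}
\int_C |\nabla_C\phi|^2 - |A_C|^2\phi^2 \;=\; \sum_{k\geq 1}\int_0^\infty \Bigl[ (\phi_k'(r))^2\, r^{n-1} + \mu_k\, \phi_k(r)^2\, r^{n-3}\Bigr]\, dr,
\end{align*}
and similarly $\int_C r^{-2}\phi^2 = \sum_k \int_0^\infty \phi_k^2\, r^{n-3}\, dr$.

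Second, for each fixed mode I apply the sharp radial Hardy inequality
\begin{align*}
\int_0^\infty (f'(r))^2\, r^{n-1}\, dr \;\geq\; \Bigl(\tfrac{n-2}{2}\Bigr)^2 \int_0^\infty f(r)^2\, r^{n-3}\, dr, \qquad f\in C_c^\infty((0,\infty)),
\end{align*}
which follows by expanding $0\leq \int_0^\infty (f' + \tfrac{n-2}{2}\cdot f/r)^2\, r^{n-1}\, dr$ and integrating by parts on the cross term (no boundary contribution because $f$ is compactly supported in $(0,\infty)$). Applying this to $\phi_k$ gives
\begin{align*}
\int_0^\infty (\phi_k')^2 r^{n-1} + \mu_k \phi_k^2 r^{n-3}\, dr \;\geq\; \Bigl(\bigl(\tfrac{n-2}{2}\bigr)^2 + \mu_k\Bigr)\int_0^\infty \phi_k^2\, r^{n-3}\, dr,
\end{align*}
and since $\mu_k \geq \mu_1$ for every $k\geq 1$, the right-hand side is bounded below by $(\mu_1 + (\tfrac{n-2}{2})^2)\int_0^\infty \phi_k^2\, r^{n-3}\, dr$. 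Summing over $k$ gives the claimed inequality.

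There is no real obstacle: the only point requiring a moment of care is that $\phi \in C_c^\infty(C)$ means $\spt\phi$ is disjoint from the vertex $0$, so that each Fourier coefficient $\phi_k$ lies in $C_c^\infty((0,\infty))$ and the integration by parts producing the sharp Hardy constant $(\tfrac{n-2}{2})^2$ involves no boundary terms. The stability of $C$ (which guarantees $\mu_1 \geq -(\tfrac{n-2}{2})^2$) is not needed for the inequality itself but ensures the conclusion is non-vacuous.
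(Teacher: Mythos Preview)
Your proof is correct and follows essentially the same approach as the paper: Fourier-decompose $\phi$ along the eigenfunctions $\{w_k\}$ of $-\cL_S$, then apply the one-dimensional Hardy inequality in the radial variable to each mode. The only cosmetic difference is that the paper phrases the Hardy step after a change of variables (reducing to the standard form $\int_0^\infty 4a'(r)^2 - a(r)^2/r^2\,dr \geq 0$), whereas you prove the weighted version $\int_0^\infty (f')^2 r^{n-1}\,dr \geq (\tfrac{n-2}{2})^2\int_0^\infty f^2 r^{n-3}\,dr$ directly; these are equivalent via the substitution $a(r)=r^{(n-2)/2}f(r)$.
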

      \begin{proof}
       Write $\phi(r, \omega) = \sum_{k\geq 1} \phi_k(r)w_k(\omega)$, plug in the integration and change variables. The lemma follows from the Hardy inequality \[
        \int_0^{+\infty} 4a'(r)^2 - a(r)^2/r^2\ dr \geq 0\ \ \ \ \forall a\in C_c^1(0, +\infty)      \]
      \end{proof}
      
      \begin{Lem} \label{Lem_Pre, Bdness of Jac field near 0/infty}
       If $\gamma \in \RR\setminus \Gamma_C$, $u$ is a Jacobi field on $B_R$ (resp. $C\setminus B_R$). Suppose \[ 
        I^2_u(r):= \int_{A_{r, 2r}} u^2(t, \omega)t^{-n}\ dx     \] 
       satisfies that for some $r_0 <R$ (resp. $r_0>R$) and $C>0$, 
       \begin{align}
        I^2_u(r) \leq Cr^{2\gamma} ,\ \ \ \forall\ r<r_0\ \ \ \ \ (\text{resp. } \forall\ r>r_0)
       \end{align}
       Then \[
        u(r, \omega) = \sum_{k\geq 1} v_k(r; c_k^+, c_k^- )w_k(\omega)     \]  
       where $c_k^{\pm} = 0$ if $\gamma_k^{\pm} < \gamma$ (resp. $\gamma_k^{\pm} > \gamma$); $v_k(r; c_k^+, c_k^-)$ is defined in (\ref{Pre, hom Sol Jacob}).
      \end{Lem}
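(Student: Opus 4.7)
The plan is to expand $u$ in the spherical eigenbasis $\{w_k\}$, separate $I_u^2(r)$ mode by mode using $L^2(S)$-orthogonality, and read off the vanishing of individual coefficients by comparing growth rates.

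First, since $L_C u = 0$, apply the decomposition formula (\ref{Pre, nonhom Sol Jacob}) with $f \equiv 0$; the special solutions $u_k(r,f)$ then vanish and one gets
\[
u(r,\omega) \;=\; \sum_{k\geq 1} v_k\bigl(r;\, c_k^+, c_k^-\bigr)\, w_k(\omega)
\]
on the region where $u$ is defined. Using $dx = t^{n-1}\, dt\, d\omega$ on $C$ and the $L^2(S)$-orthonormality of $\{w_k\}$, one obtains the mode-by-mode decomposition
\[
I_u^2(r) \;=\; \int_r^{2r}\!\int_S u(t,\omega)^2\, t^{-1}\, d\omega\, dt \;=\; \sum_{k\geq 1}\int_r^{2r} v_k(t)^2\, t^{-1}\, dt.
\]
All summands are nonnegative, so the hypothesis $I_u^2(r)\leq Cr^{2\gamma}$ passes to each $k$:
\[
\int_r^{2r} v_k(t)^2\, t^{-1}\, dt \;\leq\; C\, r^{2\gamma} \qquad \forall\, r<r_0 \ (\text{resp.}\ \forall\, r>r_0).
\]

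Next I would evaluate this integral explicitly. For $b_k>0$, a direct computation gives an expression of the shape
\[
\int_r^{2r} v_k(t)^2\, t^{-1}\, dt \;=\; \alpha_k' (c_k^-)^2\, r^{2\gamma_k^-} \;+\; \beta_k\, c_k^+ c_k^-\, r^{-(n-2)} \;+\; \alpha_k (c_k^+)^2\, r^{2\gamma_k^+},
\]
with $\alpha_k,\alpha_k',\beta_k>0$. The strict interlacing $2\gamma_k^- < -(n-2) < 2\gamma_k^+$ is the key point: in the ball case as $r\to 0^+$, the cross term is asymptotically negligible compared to $r^{2\gamma_k^-}$, so if $c_k^-\neq 0$ the leading behavior is $\alpha_k'(c_k^-)^2 r^{2\gamma_k^-}$, which violates the bound unless $\gamma_k^-\geq\gamma$; contrapositively, $\gamma_k^-<\gamma$ forces $c_k^-=0$. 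Once $c_k^-=0$, the bound reduces to $\alpha_k(c_k^+)^2 r^{2\gamma_k^+}\leq Cr^{2\gamma}$, which forces $c_k^+=0$ when $\gamma_k^+<\gamma$. The exterior case $r\to\infty$ is entirely symmetric, with the leading term becoming $\alpha_k(c_k^+)^2 r^{2\gamma_k^+}$ and the direction of the inequality reversed. The exceptional case $b_k=0$ (so $\gamma_k^+=\gamma_k^-=-(n-2)/2$) is handled analogously; the leading near-$0$ behavior is $(c_k^-)^2 r^{-(n-2)}(\log r)^2$ if $c_k^-\neq 0$ and $(c_k^+)^2 r^{-(n-2)}$ otherwise, and the hypothesis $\gamma\notin\Gamma_C$ excludes the borderline value $\gamma=-(n-2)/2$, so the log factor is always decisive.

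The main technical point is the indefinite-sign cross term $\beta_k c_k^+ c_k^-\, r^{-(n-2)}$, which could a priori cancel part of the ``leading'' quadratic term; this is resolved by the strict exponent interlacing, which makes the cross term a genuine lower-order perturbation on both ends of the interval. Everything else reduces to elementary power-integral computations and the $L^2(S)$-orthonormality of $\{w_k\}$.
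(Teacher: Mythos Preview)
Your proof is correct and follows essentially the same approach as the paper: expand $u$ in the spherical eigenbasis, use $L^2(S)$-orthonormality (the paper uses Cauchy--Schwarz on the Fourier coefficient instead of full Parseval, but this gives the same per-mode bound $\int_r^{2r} v_k(t)^2 t^{-1}\,dt \leq C r^{2\gamma}$), and then read off the vanishing of $c_k^{\pm}$ from the explicit form (\ref{Pre, hom Sol Jacob}). Your explicit treatment of the cross term via the strict interlacing $2\gamma_k^- < -(n-2) < 2\gamma_k^+$ is more detailed than the paper's one-line appeal to ``the expression (\ref{Pre, hom Sol Jacob}),'' but the underlying argument is the same.
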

      \begin{proof}
       We only prove for Jacobi fields on $B_R$ since the proof for Jacobi fields on $C\setminus B_R$ is similar.
       By (\ref{Pre, nonhom Sol Jacob}), \[
        v_k(r; c_k^+, c_k^-) = \int_S u(r,\omega) w_k(\omega)\ d\omega \leq (\int_S u(r, \omega)^2\ d\omega )^{1/2}     \]
       Hence, for all $r \in (0, r_0)$, \[
        \int_r^{2r} t^{-1} v_k(t; c_k^+, c_k^-)^2\ dt \leq \int_r^{2r}t^{-1}dt \int_S u(t, \omega)^2\ d\omega \leq C(n)I^2_u(r) \leq Cr^{2\gamma +1}     \]
       And then by the expression (\ref{Pre, hom Sol Jacob}) of $v_k(r; c_k^+, c_k^-)$, $c_k^{\pm} = 0$ if $\gamma_k^{\pm} < \gamma$.
      \end{proof}
      
      Combined with Harnack inequality and that $w_k$ changes signs when $k\geq 2$, the following lemma is a corollary of lemma \ref{Lem_Pre, Bdness of Jac field near 0/infty}, and the proof is left to readers. See also \cite[Lemma 4.2]{White89}.
      \begin{Lem} \label{Lem_Pre, pos Jacob field}
       If $u$ is a positive Jacobi field on $B_R$ (resp. $C\setminus B_R$), then 
       \begin{align*}
        u(r, \omega) &= v_1(r)w_1(\omega) + \sum_{k\geq 2}c_k r^{\gamma_k^+}w_k(\omega)  \\
        (\ \text{resp. } u(r, \omega) &= v_1(r)w_1(\omega) + \sum_{k\geq 2}c_k r^{\gamma_k^-}w_k(\omega)\ )
       \end{align*}
       for some $c_k \in \RR$, where $v_1$ is defined in (\ref{Pre, hom Sol Jacob}) for some $c_1^{\pm}$ such that $v_1>0$ on $B_R$ (resp. on $C\setminus B_R$).
       
       In particular, if $u$ is a positive Jacobi field on $C$, then 
       \begin{align*}
        u = \begin{cases}
           (c^+ r^{\gamma_1^+} + c^- r^{\gamma_1^-}) w_1, & \text{ if }\gamma_1^+ > \gamma_1^- \\
           cr^{-\frac{n-2}{2}} w_1, & \text{ if }\gamma_1^+ = \gamma_1^-            
         \end{cases}
       \end{align*}
       for some $c^{\pm}, c \geq 0$.
      \end{Lem}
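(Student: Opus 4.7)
The plan is to combine the scale-invariant Harnack inequality with Lemma \ref{Lem_Pre, Bdness of Jac field near 0/infty}. I focus on the case $u > 0$ on $B_R$; the case on $C \setminus B_R$ is symmetric, and the entire-cone statement then follows by combining both one-sided expansions.

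The first step is to show that $I_u(r) \leq C\, r^{\gamma_1^-}$ as $r \to 0$. I would define the first spherical Fourier coefficient
\[ \phi_1(r) := \int_S u(r,\omega)\, w_1(\omega)\, d\omega > 0, \]
which is positive since $u, w_1 > 0$. Because $L_C$ is scale invariant, Harnack applied on the rescaled annulus $A_{r/2,4r}$ yields $\sup_S u(r,\cdot) \leq C \inf_S u(r,\cdot)$ with $C$ independent of $r$; together with $\inf_S w_1 > 0$ this upgrades to $\|u(r,\cdot)\|_{L^2(S)} \leq C\, \phi_1(r)$. On the other hand $\phi_1$ is itself a positive radial solution of the ODE governing the first mode, so by (\ref{Pre, hom Sol Jacob}) it satisfies $\phi_1(r) = O(r^{\gamma_1^-})$ as $r \to 0$. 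Substituting into the definition of $I_u$ then gives $I_u(r) \leq C\, r^{\gamma_1^-}$.

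I would then apply Lemma \ref{Lem_Pre, Bdness of Jac field near 0/infty} with any $\gamma \in (\gamma_2^-, \gamma_1^-) \setminus \Gamma_C$; such $\gamma$ exists since $\mu_2 > \mu_1$ forces $\gamma_2^- < \gamma_1^-$. The estimate above gives $I_u(r) \leq C\, r^\gamma$ for small $r$, so the lemma forces $c_k^- = 0$ for every $k \geq 2$ (because $\gamma_k^- \leq \gamma_2^- < \gamma$), while all $r^{\gamma_k^+}$ modes (whose exponents are $\geq -(n-2)/2 \geq \gamma_1^- > \gamma$) and both $k = 1$ coefficients are left untouched. This produces exactly the asserted expansion on $B_R$, with $v_1 w_1 = \phi_1 w_1$.

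For the ``in particular'' statement, the symmetric argument at infinity (using $\phi_1(r) = O(r^{\gamma_1^+})$ as $r \to \infty$ and $\gamma \in (\gamma_1^+, \gamma_2^+) \setminus \Gamma_C$) yields the analogous expansion on $C \setminus B_R$ with $r^{\gamma_k^-}$ in place of $r^{\gamma_k^+}$ for $k \geq 2$. Uniqueness of the Fourier expansion, together with $\gamma_k^+ \neq \gamma_k^-$ for every $k \geq 2$ (since $\mu_k > \mu_1 \geq -((n-2)/2)^2$ gives $b_k > 0$), then forces all $k \geq 2$ coefficients to vanish, leaving $u = v_1(r)\, w_1(\omega)$ on all of $C$. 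Finally, positivity of $v_1$ on $(0, \infty)$ picks out the admissible forms: when $b_1 > 0$, examining the dominant behavior as $r \to 0$ and $r \to \infty$ forces $c^\pm \geq 0$; when $b_1 = 0$ the logarithmic mode changes sign and must vanish, leaving $c \geq 0$. The only potentially delicate point is the uniformity of the Harnack constant across all scales, but this is immediate from scale invariance of $L_C$; the rest is direct bookkeeping with Lemma \ref{Lem_Pre, Bdness of Jac field near 0/infty}.
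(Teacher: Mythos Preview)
Your proof is correct and follows essentially the same approach the paper sketches (Harnack inequality together with positivity of $w_1$, feeding into Lemma~\ref{Lem_Pre, Bdness of Jac field near 0/infty}); you have simply made the mechanism explicit by routing the $L^2$ bound through the first Fourier coefficient $\phi_1$. One minor imprecision: in the critical case $b_1=0$ the bound is $\phi_1(r)=O(r^{\gamma_1^-}|\log r|)$ rather than $O(r^{\gamma_1^-})$, but since you apply Lemma~\ref{Lem_Pre, Bdness of Jac field near 0/infty} with $\gamma$ strictly below $\gamma_1^-$ this log factor is harmlessly absorbed.
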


      (\ref{Pre, nonhom Sol Jacob}) also guarantees the existence of  solutions with prescribed growth or decaying bound, which is analogous to \cite[Theorem 1.1]{CaffHardtSimon84}. We start with some notations.
      
      For $\sigma\in \RR\setminus \Gamma_C$, define $\Pi^+_{\sigma}$ (resp. $\Pi^-_{\sigma}$) $: L^2(S)\to L^2(S)$ to be the orthogonal projection onto $\bigoplus_{\gamma_j^+ >\sigma} W_{\gamma_j^+}$ (resp. $\bigoplus_{\gamma_j^-<\sigma} W_{\gamma_j^-}$). Note that $1-\Pi_{\sigma}^{\pm}$ are the projection onto their $L^2$-orthogonal complement; when $\sigma<\gamma_1^+$ (resp. $\sigma>\gamma_1^-$), $\Pi^+_{\sigma} = id$ (resp. $\Pi^-_{\sigma} = id$).
      
      For $E\subset C$ and $f\in L^2_{loc}(E)$, define 
      \begin{align}
       \|f\|^2_{L^2_{\sigma}(E)}:= \int_E |f(x)|^2\cdot |x|^{-2\sigma - n}\ dx \label{Pre_def L^2_sigma}
      \end{align}
      Also define for $k\geq 1$, $\|f\|^2_{W^{k, 2}_{\sigma}(E)}:= \sum_{j=0}^k \||\nabla^j f|\|^2_{L^2_{\sigma-j}}$. $L^2_{\sigma}(E)$ and $W^{k,2}_{\sigma}(E)$ will be the space of functions with finite $\|\cdot\|_{L^2_{\sigma}}$ and $\|\cdot\|_{W^{k, 2}_{\sigma}}$ norm respectively.
      
      \begin{Lem} \label{Lem_Pre, Ext sol of Jac, prescribed asymp}
       Suppose $\sigma \in \RR \setminus \Gamma_C$; $\varphi, \psi \in L^2(S)$; $f\in L_{\sigma-2}^2(B_1)$ (resp. $f\in L_{\sigma-2}^2(C\setminus B_1)$). 
       Then there's a unique $u \in L^2_{\sigma}\cap W^{2,2}_{loc}(B_1)$ (resp. $u\in L^2_{\sigma} \cap W^{2,2}_{loc}(C\setminus B_1)$) such that
       \begin{align*}
        \begin{cases}
         L_C u = f \ & \text{ on }B_1 \ \ \ \ \ \ \text{ (resp. } L_C u = f \ \text{ on }C\setminus B_1 \text{)}\\
         \Pi^+_{\sigma} u = \Pi^+_{\sigma} \varphi\ & \text{ on }\partial B_1\ \ \ \ \text{ (resp. }\Pi^-_{\sigma} u = \Pi^-_{\sigma} \varphi\  \text{ on }\partial B_1 \text{)}\\
         (1-\Pi^-_{\sigma})\partial_r u = (1-\Pi^-_{\sigma})\psi\ & \text{ on }\partial B_1 \ \ \ \ \text{ (resp. }(1-\Pi^+_{\sigma}) \partial_r u = (1-\Pi^+_{\sigma}) \psi\  \text{ on }\partial B_1 \text{)}
        \end{cases} 
       \end{align*}
       Moreover, $u$ satisfies the estimate 
       \begin{align*}
         \sup_{t\in (0, 1)}\|u(t, \cdot)\|_{L^2(S)}\cdot t^{-\sigma} + \|u\|_{L^2_{\sigma}} &\leq C(C, \sigma)(\|f\|_{L^2_{\sigma-2}(B_1)} + \|\varphi\|_{L^2(S)} + \|\psi\|_{L^2(S)}) \\
        \text{( resp. }  \sup_{t\in (1, +\infty)}\|u(t, \cdot)\|_{L^2(S)}\cdot t^{-\sigma} + \|u\|_{L^2_{\sigma}} &\leq C(C, \sigma)(\|f\|_{L^2_{\sigma-2}(C\setminus B_1)} + \|\varphi\|_{L^2(S)} + \|\psi\|_{L^2(S)}) \text{ )}
       \end{align*}
      \end{Lem}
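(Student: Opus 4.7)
The plan is to reduce the PDE to a family of decoupled ODEs, one per eigenmode of $\cL_S$, and then to use the explicit formulas already recorded in (\ref{Pre, nonhom Spe Sol Jacob}) and (\ref{Pre, hom Sol Jacob}) to prescribe the correct piece of data in each mode. Expand everything spectrally on the cross section:
\[
 u(r,\omega)=\sum_{k\geq 1} u_k(r)\,w_k(\omega),\qquad f(r,\omega)=\sum_{k\geq 1} f_k(r)\,w_k(\omega),
\]
and similarly for $\varphi,\psi$. Since $L_C=\partial_r^2+\tfrac{n-1}{r}\partial_r+\tfrac{1}{r^2}\cL_S$ is diagonal in this basis, $L_C u=f$ becomes the sequence of Euler-type ODEs
\[
 u_k''(r)+\frac{n-1}{r}u_k'(r)-\frac{\mu_k}{r^2}u_k(r)=f_k(r),
\]
whose general solution on $(0,1)$ or $(1,\infty)$ is $u_k(r)=u_k(r;f)+v_k(r;c_k^+,c_k^-)$.

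The next step is to select the constants $c_k^\pm$ so that the required growth and boundary conditions hold. On $B_1$ one checks directly from (\ref{Pre_def L^2_sigma}) that $r^{\gamma}w_k\in L^2_\sigma(B_1)$ iff $\gamma>\sigma$, so only those homogeneous modes with $\gamma_k^\pm>\sigma$ are admissible; the complementary modes of the particular solution $u_k(\cdot;f)$ must be rebuilt in the form (\ref{Pre, nonhom Spe Sol Jacob}) chosen to lie in $L^2_\sigma$. The combinatorial check is then: (i) when $\gamma_k^-<\sigma<\gamma_k^+$ there is a single free constant $c_k^+$, which is uniquely fixed by the single scalar equation coming from $\Pi^+_\sigma u=\Pi^+_\sigma\varphi$ at $r=1$; (ii) when $\sigma<\gamma_k^-<\gamma_k^+$ both $c_k^\pm$ are free, and both scalar equations $u_k(1)=\varphi_k$ and $u_k'(1)=\psi_k$ must be imposed (this is exactly $\Pi^+_\sigma=1$ and $1-\Pi^-_\sigma=1$ on this mode); (iii) when $\gamma_k^+<\sigma$ no homogeneous piece is allowed and neither projector sees the mode, so there is nothing to impose. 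The $2\times2$ linear system in case (ii) is non-degenerate because $r^{\gamma_k^+}$ and $r^{\gamma_k^-}$ are linearly independent (or with the log replacement when $b_k=0$), which yields uniqueness. The case $C\setminus B_1$ is symmetric with the reversed inequality $\gamma<\sigma$ determining admissibility.

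For the estimates, I would bound each mode separately. For the particular solution the explicit formula (\ref{Pre, nonhom Spe Sol Jacob}) gives, after Cauchy--Schwarz in $s$ with the weight chosen to restore $L^2_{\sigma-2}$, a bound of the form
\[
 \int_0^1 r^{-2\sigma-1}\,u_k(r;f)^2\,dr\,+\,\sup_{r\in(0,1)} r^{-2\sigma}u_k(r;f)^2\;\leq\;C_k\int_0^1 r^{-2\sigma+3}f_k(r)^2\,dr,
\]
with constant $C_k$ that depends only on the distance from $\sigma$ to the two roots $\gamma_k^\pm$; for the homogeneous piece $r^{\gamma_k^\pm}$ the analogous bound is automatic. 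Summing in $k$ using Parseval on $S$ and the fact that $|x|=r$ on the cone converts these per-mode bounds into the claimed global bounds in $L^2_\sigma$ and the weighted sup norm. Uniqueness follows by running the same argument with $f=\varphi=\psi=0$: each mode satisfies a homogeneous ODE whose $L^2_\sigma$-admissible solutions are those listed in case (i)--(iii), and the boundary projectors kill exactly the free constants.

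The main technical obstacle is to make the constant $C(C,\sigma)$ uniform across all modes. For large $k$ one always lands in case (i) (since $\gamma_k^-\to-\infty$ and $\gamma_k^+\to+\infty$), and the per-mode constant $C_k$ behaves like $1/|b_k|^2$ modulo $1/\mathrm{dist}(\sigma,\Gamma_C)^2$, so it stays bounded; the finitely many modes falling into cases (ii)--(iii) contribute only a finite additional constant depending on $\sigma$ and $C$. The one place one must be careful is around exceptional $k$ with $b_k=0$, where the $\log r$ factor in (\ref{Pre, nonhom Spe Sol Jacob}) and (\ref{Pre, hom Sol Jacob}) changes the form of the estimate; since this happens only when $\gamma_k^+=\gamma_k^-=-(n-2)/2$ and $\sigma\notin\Gamma_C$, the exponent $-(n-2)/2$ is strictly separated from $\sigma$, and a direct integration still yields the same bounds up to replacing $|b_k|^{-1}$ by $|\sigma+(n-2)/2|^{-1}$.
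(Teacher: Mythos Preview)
Your proposal is correct and follows exactly the route the paper indicates: the paper gives no proof of its own but simply refers to \cite[Theorem~1.1]{CaffHardtSimon84}, whose argument is precisely the mode-by-mode reduction via the spectral expansion on $S$ together with the explicit formulas (\ref{Pre, nonhom Spe Sol Jacob}) and (\ref{Pre, hom Sol Jacob}) that you outline, including the case split according to the position of $\sigma$ relative to $\gamma_k^\pm$ and the uniformity of the per-mode constants as $k\to\infty$.
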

      The proof is almost the same as \cite[Theorem 1.1]{CaffHardtSimon84} and is left to readers.

      The following perturbed version of lemma \ref{Lem_Pre, Ext sol of Jac, prescribed asymp} is used in section \ref{Sec, One-sided Perturb}.
      \begin{Cor} \label{Cor_Pre, sol perturbed Jac equ, prescribed asymp}
       Suppose $\sigma\in \RR\setminus \Gamma_C$. Then there exists $\varepsilon=\varepsilon(C, \sigma)>0$ such that, if $f\in L^2_{\sigma-2}(B_1)$; $\scR$ is a second order differential operator satisfying \[
        \|\scR u\|_{L^2_{\sigma-2}} \leq \epsilon\|u\|_{W^{2, 2}_{\sigma}}\ \ \ \forall u\in W^{2,2}_{\sigma}(u)    \]
       Then there exists a unique solution $u\in W^{2,2}_{\sigma}(B_1)$ to the equation 
       \begin{align}
       \begin{cases}
        L_C u + \scR u = f\ & \text{ on }B_1 \\
        \Pi_{\sigma}^+ u = 0\ & \text{ on }\partial B_1 \\
        (1-\Pi_{\sigma}^-) \partial_r u = 0\ & \text{ on }\partial B_1 
       \end{cases} \label{Pre_Perturbed Jac equ with weight sigma}
       \end{align}
       Moreover, $u$ satisfies the estimate \[
         \sup_{t\in (0, 1)}\|u(t, \cdot)\|_{L^2(S)}\cdot t^{-\sigma} + \|u\|_{L^2_{\sigma}} \leq C(C, \sigma)\|f\|_{L^2_{\sigma-2}(B_1)}   \]
      \end{Cor}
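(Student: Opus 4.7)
The plan is to treat the perturbed equation \eqref{Pre_Perturbed Jac equ with weight sigma} as a fixed point problem for the inverse of $L_C$ supplied by Lemma \ref{Lem_Pre, Ext sol of Jac, prescribed asymp}, then close the argument with the Banach contraction principle. Concretely, applying Lemma \ref{Lem_Pre, Ext sol of Jac, prescribed asymp} with $\varphi = \psi = 0$ produces a bounded linear solution operator
\[
  T: L^2_{\sigma-2}(B_1) \longrightarrow L^2_{\sigma}(B_1),\qquad u = Tf \text{ solves } L_C u = f \text{ with the prescribed projection BCs,}
\]
satisfying $\|Tf\|_{L^2_\sigma} + \sup_{t\in(0,1)} t^{-\sigma}\|Tf(t,\cdot)\|_{L^2(S)} \leq C(C,\sigma)\|f\|_{L^2_{\sigma-2}}$. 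Rewriting \eqref{Pre_Perturbed Jac equ with weight sigma} as $L_C u = f - \scR u$ (with the same boundary conditions), existence and uniqueness in $W^{2,2}_\sigma(B_1)$ amount to finding a fixed point of
\[
  \Phi: W^{2,2}_\sigma(B_1) \longrightarrow W^{2,2}_\sigma(B_1),\qquad \Phi(u) := T(f - \scR u).
\]

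The first substantive step is to upgrade the $L^2_\sigma$-bound from Lemma \ref{Lem_Pre, Ext sol of Jac, prescribed asymp} to the full weighted Sobolev bound
\[
  \|Tf\|_{W^{2,2}_\sigma(B_1)} \leq C(C,\sigma)\,\|f\|_{L^2_{\sigma-2}(B_1)}.
\]
I would obtain this by a standard dyadic scaling argument: on each annulus $A_{r,2r}\subset C$, set $\tilde u(y) := u(ry)$ on $A_{1,2}$ so that $L_C \tilde u = r^2 \tilde f$ with $\tilde f(y) := f(ry)$; standard interior $W^{2,2}$ estimates on $A_{1/2,4}$ give $\|\tilde u\|_{W^{2,2}(A_{1,2})} \leq C(\|\tilde u\|_{L^2(A_{1/2,4})} + r^2\|\tilde f\|_{L^2(A_{1/2,4})})$. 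Unwinding the scaling produces $r^{-2}$ and $r^{-1}$ weights that match $\|\cdot\|_{L^2_{\sigma-j}}$ precisely, and summing over a dyadic cover of $B_1$ yields the claim. The near-boundary annulus is handled by standard boundary $W^{2,2}$ estimates using the mixed spectral boundary conditions, which are elliptic.

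Once $T$ is bounded $L^2_{\sigma-2}\to W^{2,2}_\sigma$ with norm $\|T\|\leq K(C,\sigma)$, the hypothesis $\|\scR u\|_{L^2_{\sigma-2}} \leq \epsilon \|u\|_{W^{2,2}_\sigma}$ gives
\[
  \|\Phi(u)-\Phi(v)\|_{W^{2,2}_\sigma} \leq K\cdot\epsilon\,\|u-v\|_{W^{2,2}_\sigma},
\]
so choosing $\varepsilon := 1/(2K)$ makes $\Phi$ a contraction on $W^{2,2}_\sigma(B_1)$, and Banach's theorem yields a unique fixed point $u$. Uniqueness of solutions in $W^{2,2}_\sigma$ follows from the same contraction estimate applied to the difference of two solutions.

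Finally, the quantitative bound is immediate: the fixed point $u = T(f-\scR u)$ obeys
\[
  \|u\|_{W^{2,2}_\sigma} \leq K\|f\|_{L^2_{\sigma-2}} + K\epsilon\|u\|_{W^{2,2}_\sigma},
\]
so absorbing gives $\|u\|_{W^{2,2}_\sigma} \leq 2K\|f\|_{L^2_{\sigma-2}}$, and the pointwise-in-$r$ bound $\sup_{t\in(0,1)} t^{-\sigma}\|u(t,\cdot)\|_{L^2(S)}$ is inherited from the corresponding bound for $T(f-\scR u)$ in Lemma \ref{Lem_Pre, Ext sol of Jac, prescribed asymp}. The only conceptual obstacle is the weighted $W^{2,2}$-upgrade of $T$; everything else is formal once that is in place.
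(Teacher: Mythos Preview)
Your proposal is correct and follows essentially the same approach as the paper: both apply the Banach fixed point theorem to the map $u\mapsto T(f-\scR u)$, where $T$ is the solution operator from Lemma~\ref{Lem_Pre, Ext sol of Jac, prescribed asymp}, and both rely on upgrading the $L^2_\sigma$-estimate of that lemma to a $W^{2,2}_\sigma$-bound via elliptic $L^2$ estimates. The paper's proof is terser (it simply writes ``combine Lemma~\ref{Lem_Pre, Ext sol of Jac, prescribed asymp} and $L^2$ estimate for elliptic PDE''), whereas you spell out the dyadic-scaling mechanism for the $W^{2,2}_\sigma$-upgrade and the absorption step explicitly, but there is no substantive difference.
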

      \begin{proof}
       Apply Banach fixed point theorem on $T: W_{\sigma}^{2,2}(B_1)\to W_{\sigma}^{2,2}(B_1)$, $v= Tu$ solving
       \begin{align*}
       \begin{cases}
        L_C v  = f -\scR u \ & \text{ on }B_1 \\
        \Pi_{\sigma}^+ v = 0\ & \text{ on }\partial B_1 \\
        (1-\Pi_{\sigma}^-) \partial_r v = 0\ & \text{ on }\partial B_1 
       \end{cases} 
       \end{align*}
       Combine lemma \ref{Lem_Pre, Ext sol of Jac, prescribed asymp} and $L^2$ estimate for elliptic PDE to show that $T$ is a contraction map provided $\epsilon$ is small enough.
      \end{proof}

      To prove the finiteness of associated Jacobi field in section \ref{Sec, Asymp & Asso Jac field}, we need the following growth rate monotonicity.
      \begin{Lem}  \label{Lem_Pre, growth rate mon for Jac}
       Let $\sigma \in \RR\setminus \Gamma_C$. Then there exists $K_0 = K_0(\sigma, C)>2$ such that if $0\neq u\in L^2_{\sigma}(C\setminus B_1)$ is a Jacobi field, then 
       \begin{align}
        J^{\sigma}_u(r, s):= \int_{A_{r, s}} u^2(t, \omega)t^{-n-2\sigma}\ dx     \label{Pre, J^sigma_u(r, s)}       
       \end{align}
       satisfies $J^{\sigma}_u(Kr, K^2r) < J^{\sigma}_u(r, Kr)$, $\forall r>1$, $\forall K\geq K_0$. 
      \end{Lem}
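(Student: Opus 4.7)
The plan is to expand $u$ as a Fourier series in the spherical eigenmodes $\{w_k\}$ and reduce the monotonicity to a mode-by-mode positive-definiteness statement for a $2\times 2$ quadratic form. Writing $u(r,\omega)=\sum_{k\geq 1}v_k(r;c_k^+,c_k^-)w_k(\omega)$ via (\ref{Pre, nonhom Sol Jacob}) with $f\equiv 0$, the hypothesis $u\in L^2_\sigma(C\setminus B_1)$ forces $c_k^\pm=0$ whenever $\gamma_k^\pm>\sigma$; setting $\sigma_0:=\sigma+(n-2)/2$, only the finitely many indices $k$ with $b_k<\sigma_0$ can have both $c_k^+,c_k^-\neq 0$ (which requires $\sigma_0>0$). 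By the $L^2(S)$-orthonormality of $\{w_k\}$,
\begin{equation*}
J_u^\sigma(r,Kr)-J_u^\sigma(Kr,K^2 r)=\sum_k D_k(r,K),\quad D_k:=\int_r^{Kr}v_k^2\, t^{-2\sigma-1}\,dt-\int_{Kr}^{K^2 r}v_k^2\, t^{-2\sigma-1}\,dt,
\end{equation*}
so it suffices to show $D_k\geq 0$ for every $k$, with strict positivity whenever $(c_k^+,c_k^-)\neq 0$, for $K$ past some $K_0=K_0(\sigma,C)$ that is uniform in $r>1$ and $k$.

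For a \emph{pure} mode (exactly one of $c_k^\pm$ nonzero, $b_k>0$), the identity $\int_{Kr}^{K^2r}t^\alpha dt=K^{\alpha+1}\int_r^{Kr}t^\alpha dt$ yields $D_k=(c_k^\pm)^2(1-K^{2(\gamma_k^\pm-\sigma)})\int_r^{Kr}t^{2(\gamma_k^\pm-\sigma)-1}\,dt>0$ for every $K>1$, since $\gamma_k^\pm<\sigma$. For a \emph{mixed non-log} mode ($b_k>0$ with $b_k<\sigma_0$), expanding $v_k^2$ realises $D_k$ as a quadratic form $(c_k^+,c_k^-)M_k(c_k^+,c_k^-)^T$ with symmetric $2\times 2$ matrix $M_k$ whose entries, after factoring out the positive $r$-power $r^{2(\gamma_k^++\gamma_k^--2\sigma)}$ common to all of them, depend only on $K$ and on $a:=\sigma-\gamma_k^+$, $b:=\sigma-\gamma_k^-$. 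The positive-definiteness condition $\det M_k>0$ reduces, after a direct computation, to
\begin{equation*}
(a+b)(1-K^{-2a})(1-K^{-2b})>2\sqrt{ab}\,(1-K^{-(a+b)})^2.
\end{equation*}
As $K\to\infty$ the two sides tend to $a+b$ and $2\sqrt{ab}$ respectively, and strict AM--GM (strict because $a\neq b\iff b_k>0$) provides the needed gap; hence the inequality holds for all $K\geq K_0(k)$ for some $K_0(k)>2$. Only finitely many $k$ satisfy $b_k<\sigma_0$, so one may take $K_0^{\mathrm{mix}}$ to be the maximum.

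For a \emph{log} mode ($b_k=0$, possible only when $\sigma_0>0$), the expansion $v_k(t)=t^{-(n-2)/2}(c_k^++c_k^-\log t)$ together with the substitution $s=\log t$, $T=\log K$, $a=\log r$ writes $D_k$ as a quadratic form $(c_k^+,c_k^-)N(c_k^+,c_k^-)^T$ whose determinant, after arithmetic cancellation, equals
\begin{equation*}
\det N=(1-e^{2\eta T})^2(R_0R_2-R_1^2)-T^2R_0^2 e^{2\eta T},\quad \eta:=-\sigma_0<0,\ R_j:=\int_a^{a+T}e^{2\eta s}s^j\, ds.
\end{equation*}
Crucially, $R_0R_2-R_1^2=R_0^2\operatorname{Var}(s)$ is the variance of $s$ under the probability measure $e^{2\eta s}\,ds/R_0$ on $[a,a+T]$, and is therefore translation invariant in $a$; as $T\to\infty$ this variance tends to the exponential-distribution value $(2\eta)^{-2}>0$, while $T^2 e^{2\eta T}/(1-e^{2\eta T})^2\to 0$. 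Hence $\det N>0$ uniformly in $r>1$ for all $K\geq K_0^{\log}$. Setting $K_0:=\max(K_0^{\mathrm{mix}},K_0^{\log},3)$, summing the mode-wise contributions, and using $u\neq 0$ to guarantee that at least one contribution is strictly positive finishes the proof. \textbf{The main obstacle} is the log case: the AM--GM gap that drives the non-log mixed argument degenerates to equality when $b_k=0$, so one must instead extract positivity from the variance of the weighted exponential measure, and the translation invariance of this variance is precisely what delivers a threshold uniform in $r$.
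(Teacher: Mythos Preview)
Your proof is correct and shares the paper's overall architecture: Fourier-expand in the $w_k$, use $u\in L^2_\sigma(C\setminus B_1)$ to kill the coefficients $c_k^\pm$ with $\gamma_k^\pm>\sigma$, and reduce to the mode-wise inequality $D_k>0$, which is immediate for pure modes and requires work only for the finitely many mixed modes with $b_k<\sigma_0$.

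Where you diverge from the paper is in how you handle the mixed modes. The paper (its Claim~1 for the log case, Claim~2 for the non-log case) absorbs the coefficient ratio $c/c'$ into a shift of $r$, reducing to the single function $s\mapsto s^\alpha\log s$ (resp.\ $s^\alpha,s^\beta$), and then writes $\cI_K(Kr)-\cI_K(r)$ explicitly as a quadratic in $\log r$ whose discriminant is shown to be negative for large $K$ by inspection of the leading terms. You instead keep $(c_k^+,c_k^-)$ as free variables and prove positive-definiteness of the associated $2\times 2$ form: AM--GM delivers the non-log case, and for the log case you identify $(R_0R_2-R_1^2)/R_0^2$ as a variance, whose translation invariance in $a=\log r$ gives the $r$-uniform threshold directly. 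Your route is arguably more transparent, since the $r$-uniformity is visible from the start rather than emerging at the end of a discriminant computation; the paper's route is more hands-on and makes the dependence on $\alpha$ (equivalently $\sigma_0$) explicit.

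One minor imprecision: the entries of your matrix $M_k$ do not literally share the common factor $r^{2(\gamma_k^++\gamma_k^--2\sigma)}$; what is true is that after the substitution $(d_k^+,d_k^-)=(c_k^+r^{\gamma_k^+-\sigma},c_k^-r^{\gamma_k^--\sigma})$ the form becomes $r$-independent (equivalently, $\det M_k$ carries that factor). This does not affect the argument.
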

      \begin{proof}
       By the same argument as lemma \ref{Lem_Pre, Bdness of Jac field near 0/infty}, $u = \sum_{k\geq 1}v_k(r; c_k^+, c_k^-) w_k(\omega)$, where $c_k^{\pm} = 0$ if $\gamma_k^{\pm}>\sigma$. And then, \[
        J^{\sigma}_u(r, Kr) = \int_r^{Kr}t^{n-1-n-2\sigma}\ dt \int_S u^2\ d\omega = \sum_{k\geq 1}\int_r^{Kr} t^{-1-2\sigma}v_k(t; c_k^+, c_k^-)^2\ dt   \]
       Hence, to show the lemma, it suffices to verify that for each $k\geq 1$, 
       \begin{align}
        \int_{Kr}^{K^2r} t^{-1-2\sigma}v_k(t; c_k^+, c_k^-)^2\ dt < \int_r^{Kr} t^{-1-2\sigma}v_k(t; c_k^+, c_k^-)^2\ dt  \label{Pre, single term growth rate mon}
       \end{align}
       provided $c_k^{\pm}$ do not vanish simultaneously. If one of $\gamma_k^{\pm}$ is greater than $\sigma$, then the corresponding $c_k^{\pm}$ vanishes and (\ref{Pre, single term growth rate mon}) immediately follows from  (\ref{Pre, hom Sol Jacob}). If not, (\ref{Pre, single term growth rate mon}) would be derived from (\ref{Pre, hom Sol Jacob}) and the following:\\
       \textbf{Claim 1}: suppose $\alpha<0$, $K\geq 2$; $\cI_K(r; c, c'):= \int_r^{Kr} (cs^{\alpha} + c's^{\alpha}\log s)^2s^{-1}\ ds$. Then $\exists\ K_1 = K_1(\alpha)\geq 2$ such that if $K\geq K_1$, then \[
         \cI_K(Kr; c, c') < \cI_K(r; c, c')\ \ \ \ \forall r>0 \text{ and } c,c'\in \RR \text{ doesn't vanish simultaneouly}    \] 
       \textbf{Claim 2}: suppose $\alpha<\beta<0$, $K\geq 2$; $\tilde{\cI}_K(r; c, c'):= \int_r^{Kr} (cs^{\alpha} + c's^{\beta})^2s^{-1}\ ds$. Then $\exists\ K_2=K_2(\alpha, \beta)\geq 2$ such that if $K\geq K_2$, then \[
         \tilde{\cI}_K(Kr; c, c') < \tilde{\cI}_K(r; c, c')\ \ \ \ \forall r>0 \text{ and } c,c'\in \RR \text{ doesn't vanish simultaneouly}    \] 
       We shall prove claim 1, the proof of claim 2 is similar.\\
       \textbf{Proof of claim 1}: Claim is trivially true when $c' = 0$; If $c'\neq 0$, by the arbitrariness of $r$, WLOG $(c, c')= (0, 1)$. Then, \[
        \cI_K(r) = \frac{1}{2\alpha} \big(s^{2\alpha}(\log s)^2\big)_r^{Kr} - \frac{1}{2\alpha^2}(s^{2\alpha}\log s)_r^{Kr} + \frac{1}{4\alpha^3}(s^{2\alpha})_r^{Kr}   \]
       and 
       \begin{align*}
        \cI_K(Kr) - \cI_K(r) = \frac{1}{2\alpha}\Big[ 
          & (K^{2\alpha}-1)^2\cdot r^{2\alpha}(\log r)^2 \\
        + & \big( -\frac{1}{\alpha}(K^{2\alpha}-1)^2 + 4(K^{2\alpha}-1)K^{2\alpha}\log K \big)\cdot r^{2\alpha}\log r \\
        + & \big( \frac{1}{2\alpha^2}(K^{2\alpha}-1)^2 - \frac{2}{\alpha}(K^{2\alpha}-1)K^{2\alpha}\log K + 2(2K^{2\alpha} - 1)K^{2\alpha}(\log K)^2 \big)\cdot r^{2\alpha} \Big]
       \end{align*}
       Hence, to see $\cI_K(Kr)< \cI_K(r)$ for all $r>0$, it suffices to show that the discriminant $\Delta(K)$ is negative. Since for $K>>1$, $K^{2\alpha}(\log K)^2<<1$, and the coefficient for the constant term of $\Delta(K)$ is $1/\alpha^2 - 4/(2\alpha^2) < 0$, we have $\Delta(K)<0$ for $K\geq K_1(\alpha)>>1$. This proves the claim 1.
      \end{proof}
      
      The following corollary is a quantitative version of lemma \ref{Lem_Pre, growth rate mon for Jac}, and is proved by contradiction. We left its proof to readers.
      \begin{Cor} \label{Cor_Pre, quant growth rate mon for Jac}
       Let $\sigma \in \RR\setminus \Gamma_C$, $K_0(\sigma, C)>2$ be as in lemma \ref{Lem_Pre, growth rate mon for Jac}, $K\geq K_0$. Then $\exists \delta_0 = \delta_0(C, \sigma, K)>0$, $N = N(C, \sigma, K)\geq 2$ such that if $0\neq u\in W^{1,2}_{loc}\cap L^2(A_{1, K^{N+1}})$ is a weak solution of 
       \begin{align}
        div_C\big(\nabla_C u +b_0(x, u, \nabla_C u) \big) + |A_C|^2u + b_1(x, u, \nabla_C u) = 0  \label{Pre, Pertub Jac field equ}       
       \end{align}
       on $A_{1, K^{N+1}}$, where \[
        |b_0(x, z, p)| + |b_1(x, z, p)| \leq \delta_0\big(|z|/|x|+ |p|+ J^{\sigma}_u(K, K^2)\big)\ \ \ \ \forall x\in C, z\in \RR, p\in T_x C  \] 
       And if further $\ \sup_{2\leq j\leq N}J^{\sigma}_u(K^j, K^{j+1}) \leq J^{\sigma}_u(K, K^2)$,
       then $J^{\sigma}_u(K, K^2)< J^{\sigma}_u(1, K)$.
      \end{Cor}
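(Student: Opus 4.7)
The plan is to argue by contradiction, converting the desired quantitative estimate into a limiting statement to which the exact monotonicity of lemma \ref{Lem_Pre, growth rate mon for Jac} applies. Suppose the corollary fails; then there exist sequences $\delta_n\searrow 0$, $N_n\nearrow\infty$ and functions $u_n\in W^{1,2}_{loc}\cap L^2(A_{1, K^{N_n+1}})$, each a weak solution of (\ref{Pre, Pertub Jac field equ}) whose coefficients satisfy the structural bound with $\delta_0=\delta_n$, such that $\sup_{2\le j\le N_n}J^\sigma_{u_n}(K^j,K^{j+1})\le J^\sigma_{u_n}(K,K^2)$ while $J^\sigma_{u_n}(1,K)\le J^\sigma_{u_n}(K,K^2)$. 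Rescale so that $J^\sigma_{u_n}(K,K^2)=1$.

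First I would extract a subsequential limit. Since $\Gamma_C$ is discrete, choose $\sigma'>\sigma$ with $\sigma'\notin\Gamma_C$ and $\Gamma_C\cap(\sigma,\sigma')=\emptyset$. The weight comparison $|x|^{-n-2\sigma'}\le K^{-2j(\sigma'-\sigma)}|x|^{-n-2\sigma}$ on $A_{K^j,K^{j+1}}$ together with the sup-hypothesis gives $J^{\sigma'}_{u_n}(K^j,K^{j+1})\le K^{-2j(\sigma'-\sigma)}$ for $0\le j\le N_n$, hence a uniform weighted bound $\|u_n\|^2_{L^2_{\sigma'}(A_{1, K^{N_n+1}})}\le C(\sigma,\sigma',K)$. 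Standard Caccioppoli on the perturbed divergence-form equation then yields uniform $W^{1,2}_{loc}(C\setminus\overline{B_1})$ bounds, with the $b_0,b_1$ error terms absorbed because $\delta_n\to 0$ and the factor $J^\sigma_{u_n}(K,K^2)=1$ appearing in the structural bound is uniformly controlled. A diagonal extraction yields $u_n\to u_\infty$ strongly in $L^2_{loc}$ and weakly in $W^{1,2}_{loc}$, and since $b_0^{(n)},b_1^{(n)}\to 0$ in $L^2_{loc}$, the limit $u_\infty$ is a smooth Jacobi field on $C\setminus\overline{B_1}$ lying in $L^2_{\sigma'}(C\setminus B_1)$.

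The core analytic step is upgrading $u_\infty\in L^2_{\sigma'}$ to $u_\infty\in L^2_\sigma$. By the mode decomposition in lemma \ref{Lem_Pre, Bdness of Jac field near 0/infty} applied near infinity, $u_\infty=\sum_k v_k(r;c_k^+,c_k^-)w_k$ with $c_k^\pm=0$ whenever $\gamma_k^\pm>\sigma'$. Because $\Gamma_C\cap(\sigma,\sigma')=\emptyset$, this coincides with $c_k^\pm=0$ whenever $\gamma_k^\pm>\sigma$, so $u_\infty\in L^2_\sigma(C\setminus B_1)$. Nontriviality follows from $L^2$-convergence on the compact annulus $A_{K,K^2}$, which gives $J^\sigma_{u_\infty}(K,K^2)=\lim_n J^\sigma_{u_n}(K,K^2)=1$. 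To close the loop, for each $\epsilon>0$ local $L^2$-convergence on $A_{1+\epsilon,K}$ yields $J^\sigma_{u_\infty}(1+\epsilon,K)=\lim_n J^\sigma_{u_n}(1+\epsilon,K)\le 1$, and monotone convergence as $\epsilon\searrow 0$ gives $J^\sigma_{u_\infty}(1,K)\le 1$. Applying lemma \ref{Lem_Pre, growth rate mon for Jac} to the nonzero $u_\infty\in L^2_\sigma(C\setminus B_1)$ at $r=1$ produces $J^\sigma_{u_\infty}(K,K^2)<J^\sigma_{u_\infty}(1,K)\le 1$, contradicting $J^\sigma_{u_\infty}(K,K^2)=1$.

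The main obstacle I anticipate is precisely the upgrade from $L^2_{\sigma'}$ to $L^2_\sigma$: this is where the sup-hypothesis $\sup_{2\le j\le N}J^\sigma_u(K^j,K^{j+1})\le J^\sigma_u(K,K^2)$ earns its keep, combining with $N_n\to\infty$ and the discreteness of $\Gamma_C$ to kill every growth mode of rate exceeding $\sigma$ in the limit. The Caccioppoli absorption of the perturbation is a routine technicality given $\delta_n\to 0$ and the normalization of $J^\sigma_{u_n}(K,K^2)$.
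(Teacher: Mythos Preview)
Your contradiction argument is correct and matches the approach the paper indicates (the paper only says the corollary ``is proved by contradiction'' and leaves the details to the reader). The key ingredients---normalization by $J^\sigma_{u_n}(K,K^2)$, the uniform $L^2_{\sigma'}$ bound from the sup-hypothesis plus the discreteness of $\Gamma_C$, Caccioppoli to pass to a limiting Jacobi field, and the upgrade $L^2_{\sigma'}\Rightarrow L^2_\sigma$ via lemma~\ref{Lem_Pre, Bdness of Jac field near 0/infty}---are exactly what is needed, and your handling of the endpoint $r=1$ by monotone convergence is fine since the mode functions $v_k$ in the proof of lemma~\ref{Lem_Pre, growth rate mon for Jac} extend smoothly across $r=1$.
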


      Now suppose $(C,\nu) \subset \RR^{n+1}$ be a regular area-minimizing hypercone, i.e. $[C]$ is an area-minimizing current in $\RR^{n+1}$. Then $\RR^{n+1}\setminus C$ has exactly 2 connected component, denoted by $E_{\pm}$, with $\nu$ pointing in $E_+$. Recall the following existence and uniqueness theorem by Hardt and Simon.
      \begin{Thm}\cite[Theorem 2.1]{HardtSimon85} \label{Thm_Pre, H-S foliation}
       Let $E$ be either one of the component $E_{\pm}$. Then there esists a smooth area-minimizing hypersurface $\Sigma\subset E$ with $dist_{\RR^{n+1}}(0, \Sigma) = 1$, $[\Sigma]= \partial P$ for some $P\in \bfI_{n+1}(\RR^{n+1})$ with $spt(P) \subset E$. 
       
       Furthermore, $\exists h\in C^{\infty}(C)$ such that $\Sigma = graph_C(h)$ outside some large ball, and when $R\to +\infty$, either 
       \begin{align}
        h(R,\omega) = \begin{cases}
          cR^{\gamma_1^+}w_1(\omega) + O(R^{\gamma_1^+-\epsilon})\ &\text{ if }\mu_1 > (\frac{n-2}{2})^2 \\
          cR^{\gamma_1^+}\log R\ w_1(\omega) + O(R^{\gamma_1^+-\epsilon})\ &\text{ if }\mu_1 = (\frac{n-2}{2})^2
         \end{cases} \label{Pre, asymp of one-sided, strict}
       \end{align}
       or 
       \begin{align}
        h(R, \omega) = cR^{\gamma_1^-}w_1(\omega) + O(R^{\gamma_1^- - \epsilon}) \label{Pre, asymp of one-sided, nonstrict}
       \end{align}
       where $\gamma_1^{\pm}$ is defined in (\ref{Pre, power of cone Jac}).
       
       Moreover, $\Sigma$ foliates $E$ by rescaling. And any other area-minimizing integral $n$-cycle $\partial [Q]$ with $Q\subset E$ is a rescaling of $[\Sigma]$. 
      \end{Thm}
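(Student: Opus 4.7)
The plan is to construct $\Sigma$ via constrained mass-minimization, establish graphical convergence at infinity, extract the leading asymptotic from the linearized equation, and then generate the foliation by rescaling. Fix a basepoint $y_0\in E$ with $|y_0|=2$, and for each large $R$ minimize $\bfM$ over $P\in\bfI_{n+1}(\RR^{n+1})$ with $\spt(P)\subset Clos(E)\cap Clos(\BB^{n+1}_R)$, $y_0\in \spt(P)$, and $\spt(\partial P)\setminus\partial\BB^{n+1}_R\subset E$. Compactness of integral currents gives a minimizer $P_R$, and interior regularity of mass-minimizing boundaries shows $\spt(\partial P_R)\cap E\cap \BB^{n+1}_R$ is a smooth area-minimizing hypersurface off a set of dimension $\leq n-7$. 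A diagonal argument as $R\to\infty$ produces a subsequential limit $[\Sigma]=\partial P$ with $\spt(P)\subset Clos(E)$, area-minimizing in $E$; the basepoint constraint forces $\Sigma$ nontrivial, and rescaling normalizes $dist(0,\Sigma)=1$.

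The blow-downs $\eta_{0,R}(\Sigma)$ form a family of area-minimizing hypersurfaces in $Clos(E)$ with uniformly bounded density (monotonicity), so a subsequence converges as varifolds to an area-minimizing cone $C'\subset Clos(E)$; since $C$ is a regular minimal cone forming $\partial E$, the strong maximum principle for stationary varifolds forces $C'=C$ with multiplicity one. Allard's regularity theorem then promotes this to a smooth graphical representation $\Sigma\setminus \BB^{n+1}_{R_0}=graph_C(h)$ with $h\to 0$ at infinity. Applied to any putative interior tangent cone of $\Sigma$, the same one-sided maximum principle forces that tangent cone to be a hyperplane, so $\Sigma$ is smooth throughout $E$.

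For the asymptotic expansion, $h$ satisfies a quasilinear equation $L_C h+\scR(h)=0$ with $\scR$ quadratic in $(h,\nabla h,\nabla^2 h)$. Since $\Sigma\subset E$, $h$ has a definite sign. Choose $\sigma\in \RR\setminus \Gamma_C$ slightly above $\gamma_1^-$; the $L^2$ decay of $h$ makes $\scR$ satisfy the smallness hypothesis of Corollary \ref{Cor_Pre, quant growth rate mon for Jac} on large annuli, and iterating the resulting growth-rate monotonicity forces $h$ to decay at a rate dictated by one of the indicial exponents $\gamma_1^\pm$. Fourier-expanding $h(r,\omega)=\sum_k h_k(r)w_k(\omega)$, each $h_k$ is closely modelled by a homogeneous solution of the form (\ref{Pre, hom Sol Jacob}); the fixed sign of $h$, combined with Lemma \ref{Lem_Pre, pos Jacob field} and positivity of $w_1$, forces the leading term to be a pure multiple of $w_1$. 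The dichotomy (\ref{Pre, asymp of one-sided, strict})--(\ref{Pre, asymp of one-sided, nonstrict}) then records the two possible leading behaviors coming from (\ref{Pre, hom Sol Jacob}), with the logarithmic factor arising precisely when the two indicial roots for the $w_1$-mode coincide.

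Finally, set $\Sigma_\lambda:=\eta_{0,1/\lambda}^{-1}(\Sigma)$. Two distinct rescalings have distinct leading $w_1$-coefficients at infinity and so cannot touch there, while the strong maximum principle excludes interior contact, so $\{\Sigma_\lambda\}_{\lambda>0}$ is a pairwise disjoint smooth family; combining asymptotics near $0$ (obtained by rescaling the asymptotics at infinity) and near $\infty$ gives $\bigcup_\lambda \Sigma_\lambda=E$. For uniqueness, given another area-minimizer $\partial Q'$ with $\spt(Q')\subset E$, the same construction yields a graphical representation and asymptotic expansion for $\partial Q'$; matching leading $w_1$-coefficients with an appropriate $\Sigma_\lambda$, contact at infinity together with the strong maximum principle force $\Sigma_\lambda=\partial Q'$. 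The main obstacle will be ruling out singularities of $\Sigma$ in $E$: one needs the strong maximum principle for area-minimizing cones on one side of a regular minimal cone, together with careful blow-up analysis at potential singular points, to conclude they cannot occur.
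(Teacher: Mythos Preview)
The paper does not give its own proof of this theorem: it is quoted verbatim from \cite[Theorem~2.1]{HardtSimon85} as background, and immediately afterwards the paper writes ``We shall prove an analogue of this regularity theorem in lemma~\ref{Lem_Ass Jac, Reg of infty one-side perturb} using a different method.''  So there is no in-paper proof to compare against directly.  Your outline is essentially the original Hardt--Simon scheme (constrained Plateau problem, blow-down to $C$, graphical asymptotics from the linearized operator, foliation and uniqueness by sliding), and at that level of detail it is a fair summary.

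That said, there is a genuine gap in the regularity step, which you yourself flag at the end.  The sentence ``Applied to any putative interior tangent cone of $\Sigma$, the same one-sided maximum principle forces that tangent cone to be a hyperplane'' does not work: a tangent cone to $\Sigma$ at an interior point $p\in E$ is an area-minimizing cone in $\RR^{n+1}$, but there is no one-sided constraint at $p$ (the obstacle $C$ has been blown away), so nothing forces that cone to be planar.  In Hardt--Simon the smoothness of $\Sigma$ is obtained by a different mechanism tied to the radial structure; the paper's own proof of the analogous lemma~\ref{Lem_Ass Jac, Reg of infty one-side perturb} makes this explicit and is worth contrasting with your sketch.  There one takes the Jacobi field $\psi=\nabla_{\RR^{n+1}}|x|^2\cdot\nu_\Sigma$, uses stability and the weak maximum principle to get $\psi>0$, and then invokes lemma~\ref{Lem_Pre, Diverge of pos Jac field} (positive supersolutions blow up at singularities) to conclude $Sing(\Sigma)=\emptyset$ from the obvious boundedness of $\psi$.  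The same $\psi>0$ shows the radial projection $x\mapsto x/|x|$ is a local diffeomorphism, and a short topological lemma upgrades this to a global diffeomorphism onto one component of $\SSp^n\setminus S$, which simultaneously gives $\Sigma\subset E$ and the foliation.  This route bypasses the tangent-cone argument entirely.

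A minor technical point: your basepoint constraint ``$y_0\in\spt(P)$'' is not closed under flat convergence, so the existence of a minimizer is not immediate as written.  The standard fix (and what Hardt--Simon actually do) is to solve the Plateau problem with boundary a small one-sided perturbation of $C\cap\partial\BB^{n+1}_R$ rather than to impose an interior constraint.
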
 

      We shall prove an analogue of this regularity theorem in lemma \ref{Lem_Ass Jac, Reg of infty one-side perturb} using a different method. 
      
      Let $\Sigma_{\pm}\subset E_{\pm}$ be the smooth hypersurface in Theorem~\ref{Thm_Pre, H-S foliation}. Pick the normal field $\nu_{\pm}$ on $\Sigma_{\pm}$ by setting \[
       \pm\nabla_{\RR^{n+1}}(\frac{|x|^2}{2})\cdot \nu_{\pm}  > 0   \]
      And let $h_{\pm}$ be the graphical functions of $\Sigma_{\pm}$ outside a large ball on $C$. Following \cite{HardtSimon85}, call $C$ \textbf{strictly minimizing} if (\ref{Pre, asymp of one-sided, strict}) holds for $h_{\pm}$. Equivalent definitions of strict minimizing is discussed in \cite[Theorem 3.2]{HardtSimon85}.


     \subsection{Minimal hypersurface near singularity} \label{Subsec, Min surface near sing}
      Let $\Sigma \subset (M,g)$ to be a locally stable minimal hypersurface with normal field $\nu$, $p\in Sing(\Sigma)$. Suppose that $V$ is a tangent varifold of $|\Sigma|$ at $p$. By \cite{SchoenSimon81} and \cite[Theorem B]{Ilmanen96}, $V = |C|$ for some stable minimal hypercone $C\subset \RR^{n+1}$. Call $p$ a \textbf{strongly isolated singularity} if $C$ is a regular cone.

      Suppose now that $C$ is regular. Identify $C\subset T_p M = \RR^{n+1}$ and $\Sigma\cap B^M_{r_0}(p) \hookrightarrow (T_pM = \RR^{n+1}, \exp_p^*g)$ by the exponential map of $M$ at $p$. We shall abuse the notations and still write $\exp_p^*g$ as $g$. 
      
      Since $C$ is regular, by \cite{Simon83Asym}, $|C|$ is the unique tangent cone of $|\Sigma|$ at $p$ and there's a smooth function $v \in C^{\infty}(C)$ such that for each integer $j\geq 0$ \[
       r^{j-1}|\nabla^j v|(r,\cdot) \to 0 \ \ \ \ \text{as }r\to 0   \]
      and \[  \text{graph}_C(v)\cap B^M_{r_0} = \Sigma \cap B^M_{r_0}  \]
      For some $r_0>0$. Parametrize $\Sigma \cap B^M_{r_0}$ by 
      \begin{align} 
       \Sigma\cap B^M_{r_0} \to (0,+\infty)\times S\ \ \ x \mapsto (r,\omega)={\big(}dist_{\Sigma}(p, x), \Pi(x) {\big)}  
       \label{Pre, param of asymp conic at 0}      
      \end{align}
       and call it the \textbf{conic coordinates} near $p$, where $\Pi$ maps each $x$ to the closest point on $S$ from $x/dist_{\Sigma}(p, x)$. The proof of the following differential geometric lemma is left to readers.
      \begin{Lem} \label{Lem_Pre, geom of asymp at 0}
       There exist an increasing function $\tau=\tau_p: (0, 1]\to (0, +\infty)$ sufficiently small, depending on $\Sigma, M, g, p$ such that, 
       \begin{enumerate}
        \item[(1)] the map in (\ref{Pre, param of asymp conic at 0}) is a diffeomorphism from $\Sigma\cap B^M_{\tau(1)}(p)$ onto its image;
        \item[(2)] There's a smooth family of metrics $\{h_r\}_{r>0}$ on $S$ such that the Riemannian metric on $\Sigma\cap B^M_{\tau(1)}$ is $g = dr^2 + r^2h_r$, and $\forall \epsilon\in (0,1]$, \[
         \|r(h_r - g_S)\|_{C^3_*((0,\tau(\epsilon))\times S)} \leq \epsilon    \]
        where for a tensor $\phi$ on $(0, \tau)\times S$, \[
        \|\phi\|_{C^k_*((0,\tau)\times S)} := \sum_{0\leq i+j\leq k} r^{i+j-1}\|\partial^i_r \nabla_S^j \phi\|_{C^0}     \]
        \item[(3)] The second fundamental form of $\Sigma$ also satisfies the estimate \[
          \|r\cdot A_{\Sigma}{\Big|}_{(r, \cdot)} - \Pi^*A_{S} \|_{C^0((0,\tau(\epsilon))\times S)} \leq \epsilon   \]
        \item[(4)] The Jacobi operator of $\Sigma$ has decomposition \[
         L_{\Sigma} = \partial_r^2 +\frac{H_r}{r} \partial_r +\frac{1}{r^2}\cL_r    \]
         where $H_r$ is a smooth family of functions on $S$; $\cL_r = \partial_j(a_r^{ij}\partial_i) + V_r$ is a smooth family of self-adjoint elliptic operators on $S$ under normal coordinate of $S$, with \[
          \|H_r - (n-1)\|_{C^0((0,\tau(\epsilon))\times S)} + \|a_r^{ij}-\delta^{ij}\|_{C^0((0,\tau(\epsilon))\times S)} + \|V_r -|A_S|^2\|_{C^0((0,\tau(\epsilon))\times S)} \leq \epsilon    \].
       \end{enumerate}
      \end{Lem}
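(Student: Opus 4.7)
The plan is to use Simon's uniqueness-of-tangent-cone theorem together with the regularity theory of Schoen--Simon to realize $\Sigma$ as a graph over $C$ with sufficiently fast decay, then do a careful change of variables. First I would recall that since the tangent cone at $p$ is $|C|$ with $C$ regular, the result of \cite{Simon83Asym} (already invoked in the paragraph preceding the lemma) gives $r_0>0$ and $v \in C^\infty(C\cap \BB_{r_0})$ with $\Sigma\cap B_{r_0}^M(p)=\mathrm{graph}_C(v)$ under $\exp_p^\ast g$, such that $r^{j-1}|\nabla^j v|(r,\cdot)\to 0$ as $r\to 0$ for every $j\ge 0$. Combining this with the fact that $\exp_p^\ast g - g_{\mathrm{eucl}} = O(|x|^2)$ in every $C^k$-norm, all scale-invariant quantities of $\Sigma$ converge to the corresponding scale-invariant quantities of $C$ on dyadic annuli in the $C^k$ sense for every $k$. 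This is the input I will feed into every part of the statement.

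For (1), I would define the map $x\mapsto (r,\omega)=(\mathrm{dist}_\Sigma(p,x),\Pi(x))$ and verify it is a diffeomorphism on $\Sigma\cap B^M_{\tau(1)}(p)$ for $\tau(1)$ small. Because $v/|x|\to 0$ in $C^1$ on dyadic annuli, the nearest-point projection $\Pi$ from $\Sigma$ to $S$ along dilations is well-defined and of maximal rank, and $\mathrm{dist}_\Sigma(p,\cdot)$ is comparable to the Euclidean distance (with $C^1$-small error) by a standard length-comparison argument along rays. The inverse function theorem, applied dyadically, then gives (1). For (2), I would pull back the ambient metric $g=\exp_p^\ast g$ via $(r,\omega)$. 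The Euclidean metric on $C$ in polar coordinates is $d\rho^2+\rho^2 g_S$ where $\rho=|x|$; then reparametrizing $\rho=\rho(r,\omega)$ so that $r$ becomes arc length along the radial geodesics of $(\Sigma,g)$ emanating from $p$ (i.e.\ passing to Fermi-type coordinates based at $p$) turns the metric into $dr^2+r^2 h_r$. The $C^3_\ast$-smallness of $r(h_r-g_S)$ then follows from the decay $r^{j-1}|\nabla^j v|\to 0$ for $j\le 4$ together with the $O(r^2)$-closeness of $g$ to Euclidean; the weights $r^{i+j-1}$ in the $C^3_\ast$-norm are exactly the ones compatible with this scale-invariant decay.

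For (3), I would compute $A_\Sigma$ directly in the conic coordinates from (2). Under dyadic rescaling $\eta_{0,r}$, the rescaled surfaces converge smoothly on compact subsets of $C\setminus\{0\}$ to $C$ itself, and the rescaling multiplies the norm of $A$ by $r$; consequently $rA_\Sigma|_{(r,\cdot)}$ converges to $\Pi^\ast A_S$ in $C^0$ on the unit sphere, which is the claimed estimate once $\tau(\epsilon)$ is chosen small enough. For (4), I would use the warped product form $g=dr^2+r^2 h_r$ from (2) to write
\[
\Delta_\Sigma = \partial_r^2 + \frac{\partial_r \log\sqrt{\det(r^2 h_r)}}{1}\cdot\partial_r + \frac{1}{r^2}\Delta_{h_r},
\]
so the coefficient of $\partial_r$ has the form $H_r/r$ with $H_r=(n-1)+O(r)\cdot$(derivatives of $h_r$), and $\Delta_{h_r}$ in $S$-normal coordinates is $\partial_j(a_r^{ij}\partial_i)$ with $a_r^{ij}=\delta^{ij}+O(r)$ by (2). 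The zero-th order term is $V_r=|A_\Sigma|^2 r^2+r^2\,\mathrm{Ric}_M(\nu,\nu)$, and by (3) it converges in $C^0$ to $|A_S|^2$ (the Ricci term is $O(r^2)$, hence absorbed). Choosing $\tau(\epsilon)$ jointly for (2), (3), (4) completes the lemma.

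The most delicate step, and the only one that is not bookkeeping, is getting the $C^3_\ast$ control in (2): the naive estimate from Simon's theorem only gives $C^0_\ast$-smallness of $v/r$, so one must interpolate or else run the Schoen--Simon Schauder theory on dyadic annuli to upgrade the decay of $v$ to higher derivatives. Once this upgrade is in hand, (3) and (4) are immediate consequences of the explicit formulas for $A$ and $\Delta_\Sigma$ in the warped-product-type coordinates.
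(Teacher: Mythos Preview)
The paper itself does not give a proof of this lemma: it simply says ``The proof of the following differential geometric lemma is left to readers.'' Your approach is exactly the natural one the paper has in mind---take Simon's graphical representation with the recorded decay, express the induced metric in polar coordinates, and read off the asymptotics of $A_\Sigma$ and $L_\Sigma$ from the warped-product formulas.

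One correction to your final paragraph: you worry that Simon's theorem only gives $C^0_*$-smallness of $v/r$ and that one must then bootstrap via Schauder on dyadic annuli to reach $C^3_*$. But the paper already records, in the paragraph immediately preceding the lemma, that $r^{j-1}|\nabla^j v|(r,\cdot)\to 0$ for \emph{every} integer $j\ge 0$; this is part of the output of \cite{Simon83Asym} (the asymptotic decay is in $C^k$ for all $k$, essentially because once the graph is small in $C^1$ on an annulus, elliptic regularity for the minimal surface equation upgrades it automatically). So the $C^3_*$ estimate in (2) is not the delicate step---it is immediate from the input, and the whole lemma really is just bookkeeping as the paper suggests.
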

      If $\Sigma$ only has strongly isolated singularities, let
      \begin{align}
       \tau_{\Sigma}(\epsilon):=min \{\tau_p(\epsilon): p \in Sing(\Sigma) \}\cup \{dist_M(p, q)/2: p\neq q \in Sing(\Sigma)\}, \ \ \ \epsilon\in (0,1]  \label{Pre, conic rad of sing}
      \end{align} 
      
      We are interested in the behavior of nearby hypersurfaces of $\Sigma$ given by graphs over $\Sigma$. More precisely, let $(\Sigma, \nu)$ be a locally stable minimal hypersurface as above with only strongly isolated singularities; $\tau\in (0, \tau_{\Sigma}(1)/2)$ will be chosen differently from section to section, $\rho(x)$ will be of form $\rho(x):= \min\{dist_{\Sigma}(Sing(\Sigma), x ), 2\tau\}$. For $\Omega\subset \Sigma$ and $u\in C^k(\Omega)$, we shall frequently use the following norm, 
      \begin{align}
       \|u\|^*_{k; \Omega}:= \|u\|_{C^k_*(\Omega)}:= \sum_{j=0}^k \sup_{\Omega}\rho^{j-1}\cdot|\nabla^j u|  \label{Pre_C^k_* norm}
      \end{align}
      \begin{Lem} \label{Lem_Pre, Vol element and mean curv of graph I}
      $\exists\ \delta_1 = \delta(\Sigma, M, g,\tau)\in (0,1)$ such that if $u\in C^1(\Sigma)$ with \[
       \rho^{-1}\cdot|u|+|\nabla u|  \leq \delta_1\ \ \ \forall x\in\Sigma     \]
      Then $graph_{\Sigma}(u)$ is an embedded $C^1$ hypersurface.\\
      
      Moreover, if $g'=g+\beta$ is another $C^3$ Riemannian metric on $M$, let $F^{g'}(x, z, p)$ be the area integrand for graphs over $\Sigma$, i.e. \[
       Area_{g'}(graph_{\Sigma, g}(\phi)) = \int_{\Sigma} F^{g'}(x, \phi(x), \nabla_{\Sigma}\phi(x))\ dvol_{g|_{\Sigma}}(x)\ \ \ \ \forall \phi\in C^1(\Sigma) \text{ with }\|\phi\|^*_{1; \Sigma}\leq \delta_1     \]
      and let $\scM^{g'}$ be the Euler-Lagrangian operator, i.e. \[
       \scM^{g'}u := -div_{\Sigma}\big(\partial_p F^{g'}(x, u, \nabla u)\big) + \partial_z F^{g'}(x, u, \nabla u)    \]
      where $\partial_p F^{g'}$ could be written as 
      \begin{align*}
       \partial_p F^{g'}(x, z, p) & =: (a^{ij}(x, z, p)p_i + b^j(x, z, p)\big)\partial_j \\
       \partial_z F^{g'}(x, z, p) & =: -\big(|A_{\Sigma}|^2 + Ric_M(\nu, \nu)\big)z + \frac{1}{2}tr_{\Sigma}\nabla^M_{\nu}\beta + c(x, z, p) 
      \end{align*}
      under normal coordinates of $\Sigma$, where $a^{ij}, b^j, c\in C^1(\Sigma\times \RR\times T_*\Sigma)$. Here the derivatives and trace and all taken with respect to $g$.
      Then $\exists\ \delta_2=\delta_2(\Sigma, M, g, \tau)\in (0,\delta_1)$, $C = C(\Sigma, M, g, \tau)>1$ such that, 
      \begin{align*}
       |a^{ij}(x, z, p)-\delta^{ij}| &\leq C(\|\beta\|_{C^1(M)} + |p| + |z|/\rho(x)) \\
       \big|b^j(x, z, p)\partial_j - \iota_{\nu}\beta|_x\big| &\leq C(\|\beta\|_{C^1(M)} + |p| + |z|/\rho(x))^2 \\
       |c(x, z, p)| &\leq C(\|\beta\|_{C^2(M)} + |p| + |z|/\rho(x))^2/\rho(x)
      \end{align*}
      for $\forall (x, z, p)\in \{x\in \Sigma, |z|\leq \delta_2\rho(x), |p|\leq \delta_2\}\subset \Sigma\times \RR \times T_*\Sigma$ and $\forall \|\beta\|_{C^2(M)}\leq \delta_2 $.
     \end{Lem}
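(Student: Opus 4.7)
The plan is to work in the Fermi tubular neighborhood of $\Sigma$ and to track how every quantity degenerates near $Sing(\Sigma)$ using the curvature control from Lemma~\ref{Lem_Pre, geom of asymp at 0}. For the embedding assertion, the Fermi exponential $\Phi(x,z) := \exp^M_x(z\nu(x))$ is a diffeomorphism from the tube $\{(x,z) : |z| < c_0\,\rho(x)\}$ onto its image for some $c_0 = c_0(\Sigma,M,g,\tau) > 0$: on the conic region near each singular point $p$, Lemma~\ref{Lem_Pre, geom of asymp at 0}(3) yields $|A_\Sigma|(x) \leq C\rho(x)^{-1}$, which bounds the Jacobian of $\Phi$ from below on this tube; outside the conic region $\rho$ is pinned at $2\tau$ and $|A_\Sigma|$ is uniformly bounded, so one reduces to the classical smooth tubular-neighborhood theorem. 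Choosing $\delta_1 \ll c_0$ forces $graph_\Sigma(u)$ into this valid tube, and the bound $|\nabla u| \leq \delta_1$ ensures the tangent plane of the graph stays transverse to $\nu$, giving a $C^1$ embedded hypersurface.

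For the integrand expansion I would write $F^{g'}$ explicitly in Fermi coordinates. With $g_{\alpha\beta}(x,z)$ denoting the components of $g$ in coordinates $(x^1,\dots,x^n,z)$ adapted to $\Sigma$, the Gauss lemma gives $g_{zi} \equiv 0$, $g_{zz} \equiv 1$, and the $(i,j)$-block expands as $g_{ij}(x,z) = g^\Sigma_{ij}(x) - 2z\,A_\Sigma(\partial_i,\partial_j) + O(z^2)$, where the $O(z^2)$ is schematically controlled by $|A_\Sigma|^2$ and ambient curvature. The induced metric of $graph_\Sigma(u)$ under $g' = g + \beta$ has entries $G^{g'}_{ij} = g'_{ij}(x,u) + u_i g'_{zj}(x,u) + u_j g'_{zi}(x,u) + u_i u_j g'_{zz}(x,u)$, so $F^{g'}(x,z,p) = \sqrt{\det G^{g'}_{ij}(x,z,p)}$ is smooth in $(x,z,p,\beta)$ on the valid domain. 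Taylor expanding in the three dimensionless small parameters $\beta$, $p$ and $z/\rho$ and reading off the leading terms produces the announced decompositions of $\partial_p F^{g'}$ and $\partial_z F^{g'}$: the $p$-linear coefficient is identified using $g_{zi}(x,0) \equiv 0$, which forces the linear-in-$p$ part at $z = 0$ to be pure $\beta$-contribution, namely $\iota_\nu\beta|_x$; the $z$-linear coefficient of $\partial_z F^g$ at $p = 0$ is $-(|A_\Sigma|^2 + Ric_M(\nu,\nu))$ by the definition of the Jacobi operator as the linearization of mean curvature, while the $\beta$-linear correction contributes $\tfrac12 tr_\Sigma \nabla^M_\nu\beta$ from the standard first-variation formula for mean curvature under a metric deformation.

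The main obstacle is the $\rho^{-1}$ weight on the right-hand side of the bound on $c$. The point is that every additional $z$-derivative of $F^g$ brings in one factor of $A_\Sigma$, and each spatial derivative of $\beta$ entering the higher Taylor terms of $F^{g'} - F^g$ in Fermi coordinates is coupled through the normal variation to $A_\Sigma$. On the conic region the bound $|A_\Sigma|\lesssim \rho^{-1}$ then yields schematically $|\partial_z^{k+1} F^{g'}| \lesssim (\|\beta\|_{C^{k+1}} + 1)\,\rho^{-k}$; feeding this into a Taylor remainder bound for $\partial_z F^{g'}$ at order two in $(z,p,\beta)$ and factoring out one copy of $\rho^{-1}$ gives exactly the stated estimate $|c| \lesssim (\|\beta\|_{C^2} + |p| + |z|/\rho)^2/\rho$. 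The corresponding bounds on $a^{ij} - \delta^{ij}$ and $b^j\partial_j - \iota_\nu\beta$ are easier: for $a^{ij}$ the deviation is linear in the small parameters and only one factor of $\rho^{-1}$ is absorbed into the dimensionless $|z|/\rho$; for $b^j$ the identity $g_{zi}\equiv 0$ in the unperturbed Fermi chart kills the first-order-in-$p$ correction at $(z,\beta)=(0,0)$, leaving a genuinely quadratic remainder without any $\rho^{-1}$ prefactor. Once this bookkeeping is carried out uniformly on the rescaled conic regions via Lemma~\ref{Lem_Pre, geom of asymp at 0}(2)-(4), the estimates follow.
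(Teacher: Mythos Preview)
The paper does not give a proof of this lemma; immediately after stating it and the companion Lemma~\ref{Lem_Pre, Vol element and mean curv of graph II}, the authors write ``The calculation of both lemma are straightforward, and are left to readers.'' Your proposal supplies exactly the straightforward calculation the paper omits: expand $F^{g'}=\sqrt{\det G^{g'}}$ in Fermi coordinates, identify the leading terms from the Gauss--Riccati expansion $g_{ij}(x,z)=g^\Sigma_{ij}-2zA_{ij}+O(z^2)$ together with the first-variation-of-metric formula, and control all remainders using $|A_\Sigma|\lesssim\rho^{-1}$ from Lemma~\ref{Lem_Pre, geom of asymp at 0}. This is the intended argument.

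One small bookkeeping correction: your schematic bound $|\partial_z^{k+1}F^{g'}|\lesssim(\|\beta\|_{C^{k+1}}+1)\rho^{-k}$ undercounts by one; the Riccati expansion gives $|\partial_z^{k}F^{g}|\lesssim\rho^{-k}$ (each $z$-derivative of the metric contributes a factor of $A_\Sigma$ or ambient curvature, both of weight $\rho^{-1}$). This does not affect your conclusion, since you correctly arrive at $|c|\lesssim(\|\beta\|_{C^2}+|p|+|z|/\rho)^2/\rho$ anyway. Also note that $F^{g'}$ is $\sqrt{\det G^{g'}}/\sqrt{\det g^\Sigma}$ rather than $\sqrt{\det G^{g'}}$ alone (so that the area formula holds against $dvol_{g|_\Sigma}$), though in normal coordinates at the point this distinction is invisible.
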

     When $g' = g$, we also have estimate for higher derivatives,

     \begin{Lem} \label{Lem_Pre, Vol element and mean curv of graph II}
      $\exists\ \delta_2'= \delta_2'(\Sigma, M, g, \tau)\in (0,\delta_1)$, $C'=C'(\Sigma, M, g, \tau)>1$ such that, if denote for simplicity $F:= F^g$, $\scM := \scM^{g}$ and $V_{\Sigma}:= (|A_{\Sigma}|^2 + Ric_M(\nu, \nu))$ in lemma \ref{Lem_Pre, Vol element and mean curv of graph I} , then we have under normal coordinate of $\Sigma$,
      \begin{align*}
      |\partial_p F(x,z, p)-p|+ \rho|\partial_z F(x,z,p) + V_{\Sigma}z| + \rho|\partial^2_{px}F(x,z,p)| &\leq C'(|z|/\rho + |p|)^2 \\
      \Big[\rho|\partial_x F|+ |\partial^2_{pp}F-\delta_{ij}| + \rho|\partial^2_{pz}F|+\rho^2 (|\partial^2_{zz}F + V_{\Sigma}|+|\partial^2_{zx}F|) &  \\
      +\ |\partial^3_{ppp}F| +\rho |\partial^3_{ppx} F|+ \rho^2(|\partial^3_{pzz}F| + |\partial^3_{pzx} F|) \Big] & \leq C'(|z|/\rho + |p|) \\
      \rho|\partial^3_{ppz}F| + \rho^3|\partial^3_{zzz}F| & \leq C'
      \end{align*}
      for $\forall (x, z, p)\in \{x\in \Sigma, |z|\leq \delta'_2\rho(x), |p|\leq \delta'_2\}\subset \Sigma\times \RR \times T_*\Sigma$.

      In particular, if denote $\scM u =: -L_{\Sigma}u + N(x,u,\nabla u)\cdot \nabla^2 u + P(x, u, \nabla u)$, then   
      \begin{align*}
       \rho(x)|P(x, z, p)| & \leq C'(|p|+|z|/\rho(x))^2  \\
       |N|+ |\partial_p N| + \rho(x)|\partial_p P| + \rho(x)^2|\partial_z P| & \leq C'(|p|+|z|/\rho(x)) \\
       \rho(x)|\partial_z N|  & \leq C'
      \end{align*}
      for $\forall (x, z, p)\in \{x\in \Sigma, |z|\leq \delta'_2\rho(x), |p|\leq \delta'_2\}\subset \Sigma\times \RR \times T_*\Sigma$.
     \end{Lem}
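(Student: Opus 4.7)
The plan is a two-scale argument: on the region where $\rho(x) \geq \tau$ the estimates are standard smooth bounds by compactness, while near each strongly isolated singularity the conic coordinates from Lemma \ref{Lem_Pre, geom of asymp at 0} let us rescale to a fixed scale and obtain uniform estimates. I would begin by computing the leading Taylor coefficients of $F(x,z,p)$ at $(z,p)=(0,0)$ directly from the Fermi coordinate formula for the graph area. Minimality of $\Sigma$ gives $\partial_z F(x,0,0) = -H_\Sigma = 0$; by the $z \mapsto -z$ parity of the Fermi parametrization at $(z,p)=0$ one gets $\partial_p F(x,0,0) = 0$ and $\partial^2_{pz}F(x,0,0) = 0$; direct Hessian computation yields $\partial^2_{pp}F(x,0,0) = g|_\Sigma$ (the identity in normal coordinates) and $\partial^2_{zz}F(x,0,0) = -V_\Sigma$. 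These identifications, together with a Taylor expansion in $(z,p)$, produce the qualitative shape of every inequality claimed in the statement up to remainders controlled by powers of $(|z|/\rho + |p|)$.

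To capture the correct $\rho$-weights in the remainders, I would rescale: fix $x_0 \in \Sigma$ with $r := \rho(x_0)$ small and $p_0 \in Sing(\Sigma)$ the nearest singular point; apply $\eta_{p_0, r}$ to the ambient space. By items (2)--(4) of Lemma \ref{Lem_Pre, geom of asymp at 0}, after the induced rescaling of the metric, the piece of $\Sigma$ of diameter $\sim r$ around $x_0$ becomes a region of diameter $\sim 1$ on a hypersurface $C^3_*$-close to a fixed regular minimal cone. Graphs over $\Sigma$ transform as $\tilde z = z/r$, $\tilde p = p$, $\tilde x = x/r$, so $\{|z| \leq \delta'_2 \rho,\ |p| \leq \delta'_2\}$ becomes a fixed compact parameter set. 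The rescaled area integrand $\tilde F$ is smooth on this fixed set with bounds independent of $r$, hence all of its derivatives through order three are bounded uniformly there. Undoing the rescaling: every $x$-derivative of $F$ pays a factor $r^{-1} = \rho^{-1}$, every $z$-derivative pays $\rho^{-1}$, and every $p$-derivative is weight-free; this exactly reproduces the asymmetric weights written in the lemma.

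Finally, for the conclusion $\scM u = -L_\Sigma u + N(x,u,\nabla u)\cdot\nabla^2 u + P(x,u,\nabla u)$, I would expand $\scM u = -\mathrm{div}_\Sigma(\partial_p F(x,u,\nabla u)) + \partial_z F(x,u,\nabla u)$ via the chain rule and identify the linearization at $u=0$ with $-L_\Sigma u$, using the Hessian data $\partial^2_{pp}F(x,0,0) = g|_\Sigma$, $\partial^2_{pz}F(x,0,0)=0$, $\partial^2_{zz}F(x,0,0) = -V_\Sigma$. The coefficient $N$ collects the $\nabla^2 u$ contributions coming from $\partial^2_{pp}F - \delta^{ij}$ and $\partial^2_{pz}F$ at $(x,u,\nabla u)$, and $P$ collects everything else (nonlinear $\nabla u$ terms, $\partial_x \partial_p F$ contributions, and the nonlinear part of $\partial_z F$). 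The stated pointwise bounds on $N$, $\partial_p N$, $\partial_z N$ and $P$, $\partial_p P$, $\partial_z P$ follow term by term from the derivative estimates for $F$ established in the previous step. There is no deep obstacle here; the only real work is bookkeeping, namely tracking which derivatives land on a coefficient function of $F$ (paying $\rho^{-1}$) versus on $u$ itself (also paying $\rho^{-1}$ through the $|z|/\rho$ normalization), and keeping the correct order of the remainder as $(z,p) \to 0$.
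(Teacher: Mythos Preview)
The paper does not actually prove this lemma: immediately after stating it, the text says ``The calculation of both lemma are straightforward, and are left to readers.'' So there is no proof in the paper to compare against; your proposal is precisely the sort of computation the authors are asking the reader to supply, and the two-scale/rescaling strategy via Lemma~\ref{Lem_Pre, geom of asymp at 0} is the natural way to obtain the $\rho$-weights.

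One small correction: your justification of $\partial^2_{pz}F(x,0,0)=0$ by ``$z\mapsto -z$ parity of the Fermi parametrization'' is not valid in general, since the Fermi metric $g_z = g - 2zA_\Sigma + O(z^2)$ has a first-order term in $z$ unless $\Sigma$ is totally geodesic. The conclusion is nevertheless correct: from Lemma~\ref{Lem_Pre, Vol element and mean curv of graph I} with $\beta=0$ one has $\partial_p F(x,z,p) = a^{ij}(x,z,p)p_i\,\partial_j + b^j(x,z,p)\,\partial_j$ with $|b^j|\leq C(|z|/\rho+|p|)^2$, so $\partial_z b^j(x,0,0)=0$ and hence $\partial^2_{pz}F(x,0,0)=0$. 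With this fix, the rest of your bookkeeping (Taylor expansion at $(z,p)=(0,0)$, rescaling to unit scale near each singularity, and reading off the $\rho^{-1}$ per $x$- or $z$-derivative) goes through and yields exactly the stated estimates.
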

     Here, as a function defined on $T\Sigma \times \RR$, $\partial_x F$ denote that derivatives in horizontal directions and $\partial_p F$ be the derivatives in vertical directions.
     
     The calculation of both lemma are straightforward, and are left to readers.


     \subsection{Linear analysis of Jacobi operator} \label{Subsec, Linear ana of Jac oper}
     
      Let $(\Sigma, \nu)\subset (M, g)$ be a minimal hypersurface, $U\subset \subset M$ be a smooth domain whose boundary transversely intersect with $\Sigma$. Let $\cU:= U\cap \Sigma$ and $\bar{\cU}:=Clos(U)\cap \Sigma$. Suppose that $\Sigma$ is stable in $U$. The following result is a direct corollary of \cite[(6.18)]{CheegerNaber13_Stra_Min_Surf} by a cut-off argument,
      \begin{Lem} \label{Lem_Pre, Schoen-Simon trick of approx W^1,2}
       Suppose $\phi\in W^{1,2}_{loc}(\Sigma)$ such that $\phi|_{\Sigma\cap \partial U} = 0$ and that \[
        \int_{\cU} \phi^2 + |\nabla \phi|^2 < +\infty     \]
       Then $\exists\ \{\phi_j\}_{j\geq 1} \subset C^{\infty}_c(\cU)$ such that \[
        \int_{\cU} |\nabla(\phi - \phi_j)|^2 \to 0     \]
      \end{Lem}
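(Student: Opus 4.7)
The plan is a three-step approximation: first, cut off $\phi$ away from $Sing(\Sigma)\cap\bar{\cU}$ using a cutoff whose Dirichlet energy tends to zero; second, truncate $\phi$ at finite height to make it bounded; and third, mollify. The only nontrivial ingredient is the singular cutoff, which is precisely where the cited Cheeger-Naber tube estimate enters.

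For the cutoff, let $d(x):=dist_M(x,Sing(\Sigma)\cap\bar{\cU})$ and pick a nondecreasing $\eta\in C^\infty([0,\infty);[0,1])$ with $\eta\equiv 0$ on $[0,1]$ and $\eta\equiv 1$ on $[2,\infty)$. Define $\chi_\varepsilon(x):=\eta(d(x)/\varepsilon)$, which vanishes on $\{d\leq\varepsilon\}$, equals $1$ on $\{d\geq 2\varepsilon\}$, and obeys $|\nabla\chi_\varepsilon|\leq C/\varepsilon$ on the annular band $\{\varepsilon\leq d\leq 2\varepsilon\}$. The estimate \cite[(6.18)]{CheegerNaber13_Stra_Min_Surf} supplies the tube bound $\scH^n\big(\{d<2\varepsilon\}\cap\bar{\cU}\big)\leq C\varepsilon^7$, hence
\[
  \int_{\cU}|\nabla\chi_\varepsilon|^2 \;\leq\; C\varepsilon^{-2}\cdot\varepsilon^7 \;=\; C\varepsilon^5 \;\longrightarrow\; 0.
\]

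Next, truncate $\phi_M:=\max(-M,\min(\phi,M))$; then $\phi_M\in L^\infty\cap W^{1,2}(\cU)$, $\phi_M|_{\Sigma\cap\partial U}=0$, and $\phi_M\to\phi$ in $W^{1,2}(\cU)$ as $M\to\infty$ by dominated convergence. For fixed $M$ set $\psi_{M,\varepsilon}:=\phi_M\chi_\varepsilon$; decomposing $\nabla\psi_{M,\varepsilon}=\chi_\varepsilon\nabla\phi_M+\phi_M\nabla\chi_\varepsilon$, the first term tends to $\nabla\phi_M$ in $L^2$ by dominated convergence, while $\|\phi_M\nabla\chi_\varepsilon\|_{L^2}\leq M\|\nabla\chi_\varepsilon\|_{L^2}\to 0$. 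Thus $\psi_{M,\varepsilon}\to\phi_M$ in $W^{1,2}(\cU)$ as $\varepsilon\to 0$. Each $\psi_{M,\varepsilon}$ is bounded, supported inside $\{d\geq\varepsilon\}\cap\bar{\cU}$, a compact subset of the smooth manifold-with-boundary $\Sigma\setminus Sing(\Sigma)$, and vanishes along the smooth hypersurface $\Sigma\cap\partial U$ (smoothness coming from the transversality assumption). Standard mollification with a collar along $\Sigma\cap\partial U$ then produces a $C^\infty_c(\cU)$ approximation of $\psi_{M,\varepsilon}$ in $W^{1,2}$; a diagonal choice of $\varepsilon\to 0$, $M\to\infty$, and mollification scale yields the required $\{\phi_j\}$.

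The main substantive obstacle is the tube estimate in Step 1: it is the codimension $\geq 7$ of $Sing(\Sigma)$, in its quantitative Cheeger-Naber form, that lets one simultaneously cut off near the singular set and keep the Dirichlet energy of the cutoff small. Everything else is routine Sobolev-space manipulation, and in fact the argument yields $\phi_j\to\phi$ in $W^{1,2}(\cU)$, which is stronger than the gradient-only convergence requested in the statement.
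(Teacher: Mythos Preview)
Your proof is correct and follows exactly the approach the paper intends: the paper gives no detailed argument, stating only that the lemma is ``a direct corollary of \cite[(6.18)]{CheegerNaber13\_Stra\_Min\_Surf} by a cut-off argument,'' and your three-step scheme (cutoff near $Sing(\Sigma)$ via the Cheeger--Naber tube bound, truncation to height $M$, then mollification away from the smooth boundary) is precisely the cut-off argument being alluded to. The truncation step is the right way to handle the cross term $\phi\,\nabla\chi_\varepsilon$, since $\int_{\{d\leq 2\varepsilon\}}\phi^2\to 0$ alone is not fast enough to beat the $\varepsilon^{-2}$ from $|\nabla\chi_\varepsilon|^2$.
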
 
      From this lemma, we see that \[
       W_0^{1,2}(\cU):= \overline{C_c^\infty(\cU)}^{W^{1,2}} = \{\phi\in W^{1,2}_{loc}(\bar{\cU}): \|\phi\|^2_{W^{1,2}}:= \int_{\cU} \phi^2 + |\nabla \phi|^2 < +\infty, \ \phi|_{\partial U\cap \Sigma} = 0\}   \]
      
      In \cite[Proposition 3.1]{SchoenYau17_PSC}, a uniform $L^2$ nonconcentration property is proved for minimal slicings, which is used to establish the compactness for minimal slicings. The proof also works for stable minima hypersurfaces, which gives
      \begin{Lem}  \label{Lem_Pre, Uniform L^2-noncon}
       Suppose $\cS \subset M$ be a subset with $\scH^{n-1}(\cS) = 0$. Then for every $\epsilon > 0$, there exists a neighborhood $V_{\epsilon} \supset \cS$ in $M$, depending only on $M, g, U, \cS, \epsilon$, such that for every stable minimal hypersurfaces $\Sigma$ transversally intersect with $\partial U$, we have \[
        \int_{\Sigma \cap V_{\epsilon}} \phi^2 \leq \epsilon \int_{\Sigma} |\nabla \phi|^2 \ \ \ \ \forall\ \phi\in C_c^{\infty}(\Sigma\cap U)     \]
      \end{Lem}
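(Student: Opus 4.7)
My plan is to cover $\cS$ by balls of small total $(n-1)$-content, bound the $n$-dimensional volume of $\Sigma$ inside this cover via the monotonicity formula, and then apply the Michael--Simon Sobolev inequality together with H\"older's inequality. The key conceptual point is a dimensional mismatch: $\cS$ is small at scale $n-1$ while $\Sigma$ is $n$-dimensional, so any stable minimal $\Sigma$ sees a suitable neighborhood of $\cS$ as small in $n$-volume.

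First, since $\scH^{n-1}(\cS)=0$, for any $\eta,\delta>0$ one can cover $\cS$ by open geodesic balls $\{B^M_{r_i}(x_i)\}_{i\geq 1}$ with $r_i<\delta$ and $\sum_i r_i^{n-1}<\eta$; set $V_{\eta,\delta}:=\bigcup_i B^M_{r_i}(x_i)$. Using the monotonicity formula for minimal surfaces in $(M,g)$ together with a uniform a priori bound $\scH^n(\Sigma\cap U)\leq \Lambda=\Lambda(M,g,U)$ valid for stable $\Sigma$ transverse to $\partial U$, for $\delta$ below a geometric threshold one obtains $\scH^n(\Sigma\cap B^M_{r_i}(x_i))\leq C\,r_i^n$ with $C=C(M,g,U)$ independent of $\Sigma$. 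Summing,
\[
 \scH^n(\Sigma\cap V_{\eta,\delta})\leq C\sum_i r_i^n\leq C\delta\sum_i r_i^{n-1}\leq C\delta\eta.
\]

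Next, given $\phi\in C_c^\infty(\Sigma\cap U)$, the support of $\phi$ is compact in $Reg(\Sigma)\cap U$, so $\phi$ extends by zero to a compactly supported smooth function on $Reg(\Sigma)$, and the minimality of $\Sigma$ together with Michael--Simon Sobolev in $(M,g)$ yield
\[
 \|\phi\|_{L^{2n/(n-2)}(\Sigma)}^2\leq C_{MS}(M,g,U)\,\|\nabla_\Sigma\phi\|_{L^2(\Sigma)}^2,
\]
with $C_{MS}$ independent of $\Sigma$ since $\Sigma\subset Clos(U)$ sits in a fixed compact region of $M$. H\"older's inequality then gives
\[
 \int_{\Sigma\cap V_{\eta,\delta}}\phi^2\leq \scH^n(\Sigma\cap V_{\eta,\delta})^{2/n}\cdot \|\phi\|_{L^{2n/(n-2)}(\Sigma)}^2\leq C(\delta\eta)^{2/n}\int_\Sigma|\nabla\phi|^2,
\]
and choosing $\eta,\delta$ so that $C(\delta\eta)^{2/n}\leq\epsilon$, then setting $V_\epsilon:=V_{\eta,\delta}$, will finish the proof.

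I expect the main technical obstacle to be securing the two \emph{uniform} constants above: the upper monotonicity bound $\scH^n(\Sigma\cap B_r(x))\leq Cr^n$ and the Michael--Simon constant $C_{MS}$, both independent of the varying stable $\Sigma$. Both ultimately rest on the stability of $\Sigma$ and the transversality of $\partial U\cap\Sigma$ (which between them provide the a priori area bound $\Lambda$ and keep $\phi$ inside the class to which Michael--Simon applies), and are precisely the ingredients worked out by Schoen--Yau in \cite{SchoenYau17_PSC} in the related context of minimal slicings.
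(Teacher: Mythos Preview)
The paper does not give its own proof; it simply cites \cite[Proposition~3.1]{SchoenYau17_PSC} and remarks that the argument there carries over. Your proposal \emph{is} the Schoen--Yau argument: cover $\cS$ by balls of small $(n-1)$-content, use monotonicity to control $\scH^n(\Sigma\cap V)$, and combine Michael--Simon with H\"older. So on the level of strategy you are exactly on target.

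There is one genuine inaccuracy in your write-up, however. You assert that stability of $\Sigma$ together with transversality to $\partial U$ yields a uniform a~priori bound $\scH^n(\Sigma\cap U)\leq\Lambda(M,g,U)$. This is false: stable minimal hypersurfaces do not come with area bounds in general (think of many disjoint stable sheets). In Schoen--Yau the hypersurfaces are area-minimizing slicings, so the area bound is automatic; here it is not. Without it your monotonicity step $\scH^n(\Sigma\cap B_{r_i}(x_i))\leq C r_i^n$ fails to hold with $C$ independent of $\Sigma$, and the chain of inequalities breaks at exactly the place you flagged as delicate.

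Two ways to repair this. First, for every application in the paper the lemma is invoked for a \emph{fixed} $\Sigma$ (see the proofs in Section~3), so one may simply allow $V_\epsilon$ to depend on an area bound for $\Sigma$; your argument then goes through verbatim with $C=C(M,g,U,\Lambda)$. Second, if you really want the statement uniform over a class of $\Sigma$'s, add an explicit hypothesis $\scH^n(\Sigma\cap U)\leq\Lambda$. Either way, the Michael--Simon constant is genuinely universal (depending only on $n$ and the ambient geometry), so that part of your concern is unfounded; the only missing ingredient is the area bound feeding into monotonicity.
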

      With this lemma, $W_0^{1,2}(\Sigma \cap U) \hookrightarrow L^2(\Sigma\cap U)$ is compact, and the first eigenvalue and eigenfunction are well-defined for Schr\"odinger operators $L = \Delta_{\Sigma} + V$ satisfying the coercivity bound for some $\delta> 0$:
      \begin{align} \label{Pre, Coercivity of Schrodinger oper}
        \int_{\Sigma}|\nabla \phi|^2 \leq \delta^{-1}(\int_{\Sigma} - \phi \cdot L\phi + \int_{\Sigma} \phi^2 )\ \ \ \ \forall\ \phi\in C_c^2(\Sigma \cap U)    
      \end{align}
       In particular, for each $s> 0$, the perturbed Jacobi operator $L_{\Sigma}^s := L_{\Sigma} - s|A_{\Sigma}|^2$ satisfies (\ref{Pre, Coercivity of Schrodinger oper}), since 
       \begin{align}
       \begin{split}
        Q_{\Sigma}^s (\phi, \phi) 
         & := \int_{\Sigma} - \phi \cdot L_{\Sigma}^s \phi = \int_{\Sigma} |\nabla \phi|^2 - (1-s)|A_{\Sigma}|^2 \phi^2 - Ric_M(\nu, \nu)\phi^2 \\
         & \geq (1-s)Q_{\Sigma}(\phi, \phi) + s\int_{\Sigma} |\nabla \phi|^2 - s\|Ric_M\|\int_{\Sigma} \phi^2  \\
         & \geq s\int_{\Sigma} |\nabla \phi|^2 - s\|Ric_M\|\int_{\Sigma} \phi^2
       \end{split} \label{Pre, Q^s_Sigma coercive}
       \end{align}
       where $\|Ric_M\|$ is the $L^{\infty}$ norm of $Ric_M$ on $M$.  
       However, it's not known whether (\ref{Pre, Coercivity of Schrodinger oper}) is true for Jacobi operators of stable minimal hypersurfaces in general.
       
      It is also mentioned in \cite{SchoenYau17_PSC} that the positive weighted functions of minimal slicings, which is the first eigenfunctions of the certain perturbed quadratic form associated to the minimal slicings, will tend to infinity near singular set. We give an alternative proof of an analogue of this fact for $L^s_{\Sigma}$ in a special case, 
      \begin{Lem} \label{Lem_Pre, Diverge of pos Jac field}
       Suppose $(M, g) = (\RR^{n+1}, |dx|^2)$ be the Euclidean space, $\Sigma$ be a stable minimal hypersurface in a domain $U \subset \RR^{n+1}$ with $Sing(\Sigma) \subset U$ being a compact subset; $s\in [0, \frac{2}{n}]$ is fixed. Let $0\neq v\in C^{\infty}_{loc}$ be a non-negative supersolution of $L_{\Sigma}^s v = 0$ on $U\cap \Sigma$, i.e. \[
         Q^s_{\Sigma}(v, \phi)\geq 0\ \ \ \forall \phi\in C_c^1(\Sigma\cap U, \RR_+)    \]
       Then \[
        v(x)\to +\infty \ \ \ \text{ as } x\to Sing(\Sigma)      \]
      \end{Lem}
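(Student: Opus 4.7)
The plan is a contradiction argument built on an auxiliary sub-solution of $L_\Sigma^s$ that blows up at $Sing(\Sigma)$, compared against $v$ via a weighted divergence identity.

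The key construction: combining the Simons identity $\Delta_\Sigma|A_\Sigma|^2 = 2|\nabla A_\Sigma|^2 - 2|A_\Sigma|^4$ for minimal hypersurfaces in the flat ambient with the refined Kato inequality $|\nabla A_\Sigma|^2 \geq (1+2/n)|\nabla|A_\Sigma||^2$ for codimension one, a direct computation yields
\[
L_\Sigma^s\bigl(|A_\Sigma|^\alpha\bigr) \geq \alpha\bigl(\alpha - 1 + \tfrac{2}{n}\bigr) |A_\Sigma|^{\alpha - 2}|\nabla|A_\Sigma||^2 + (1-s-\alpha)|A_\Sigma|^{\alpha + 2},
\]
which is non-negative precisely for $\alpha \in [(n-2)/n,\,1-s]$. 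This range is non-empty iff $s \leq 2/n$, which is the sole place the hypothesis on $s$ is used; I would take the borderline choice $\alpha = (n-2)/n$ and set $u := |A_\Sigma|^{(n-2)/n}$. Standard $\epsilon$-regularity for stable minimal hypersurfaces forces $|A_\Sigma(x)| \to \infty$ as $x \to Sing(\Sigma)$, so $u(x) \to +\infty$ there.

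Assume toward a contradiction that $v$ stays bounded near some $p \in Sing(\Sigma)$; the weak Harnack inequality for the non-negative supersolution $v$ upgrades this to $v \leq M$ on $\Sigma \cap \BB^{n+1}_{r_0}(p)$ for some $r_0 > 0$, and the strong minimum principle gives $v > 0$ on the connected component of $\Sigma$ whose closure meets $p$. Set $w := v/u$ there; the identity
\[
\text{div}_\Sigma\bigl(u^2 \nabla w\bigr) \;=\; u\, L_\Sigma^s v \;-\; v\, L_\Sigma^s u \;\leq\; 0
\]
shows $w$ is a positive supersolution for the weighted elliptic operator $\text{div}(u^2\nabla\cdot)$, with $w(x) \to 0$ as $x \to p$ (since $v \leq M$ while $u \to \infty$). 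Testing this inequality against $\eta^2/w$ (legitimate since $w > 0$) and applying Cauchy-Schwarz yields the logarithmic Caccioppoli bound $\int_\Sigma \eta^2 u^2 |\nabla \log w|^2 \leq C \int_\Sigma u^2 |\nabla \eta|^2$. Choosing $\eta$ as a cutoff vanishing inside $\BB^{n+1}_r(p)$ with inner transition $|\nabla \eta| \leq C/r$ on the annulus $\BB^{n+1}_{2r}(p) \setminus \BB^{n+1}_r(p)$, the Schoen-Simon-Yau $L^q$-estimate combined with Hölder (using $q = 2(n-2)/n < 2 < n$) bounds the inner contribution by $C r^{(n-2)^2/n} \to 0$ as $r \to 0$, since $n \geq 3$. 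This vanishing of the weighted capacity of $\{p\}$ together with $w \to 0$ at $p$ forces $w \equiv 0$ in a punctured neighborhood of $p$ via a removable-singularity / strong minimum principle argument, contradicting $v > 0$.

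The principal technical obstacle is the final step, in which the vanishing weighted capacity of $\{p\}$ (with weight $u^2$) must be converted into the conclusion $w \equiv 0$. This is a weighted removable-singularity assertion for bounded supersolutions of a degenerate elliptic equation; formally $\{p\}$ is polar for $\text{div}(u^2\nabla\cdot)$ thanks to the Schoen-Simon-Yau integral curvature bound. The precise range $s \in [0, 2/n]$ enters exactly once, in guaranteeing that $\alpha = (n-2)/n$ lies in the admissible interval so the sub-solution $u$ exists.
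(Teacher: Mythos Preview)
Your subsolution computation via Simons' identity and the refined Kato inequality is correct, and the exponent window $\alpha\in[(n-2)/n,\,1-s]$ is exactly where $s\le 2/n$ enters; this matches the paper's choice $B=(1+|A_\Sigma|^2)^{(1-s)/2}$. But the comparison mechanism you build on top of it has a real gap.

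The claim that $|A_\Sigma(x)|\to\infty$ as $x\to p$ does \emph{not} follow from $\epsilon$-regularity: that theory gives an integral smallness criterion for regularity, not pointwise blow-up. For a cone with cross-section $S$ one has $|A_C|(r,\omega)=|A_S|(\omega)/r$, so along any ray where $|A_S|$ vanishes, $|A_C|$ stays bounded; the same can happen for $\Sigma$ asymptotic to such a cone. Consequently $u=|A_\Sigma|^{(n-2)/n}$ need not diverge, and worse, may vanish on $\Sigma$, so $w=v/u$ is undefined there and unbounded nearby. Your removable-singularity step for $\text{div}(u^2\nabla w)\le 0$ then fails on two counts: you cannot conclude $w\to 0$ at $p$, and $w$ is not bounded above, so vanishing weighted capacity of $\{p\}$ does not give extension. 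You flag this step as the main obstacle, and indeed it is: the quotient route seems to require exactly the pointwise divergence of $u$ that you cannot establish.

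The paper sidesteps all of this. It uses $B=(1+|A_\Sigma|^2)^{(1-s)/2}\ge 1$ (so no zeros), invokes the Cheeger--Naber bounds $|A_\Sigma|\in L^{7-\epsilon}$, $|\nabla A_\Sigma|\in L^{7/2-\epsilon}$ to place $B$ in $W^{1,2}$, picks $\epsilon>0$ with $\epsilon B<v$ on $\partial\cU'$, and then observes $\phi=(\epsilon B-v)^+\in W^{1,2}_0(\cU')$ with $Q_\Sigma^s(\phi,\phi)=\int_\Sigma(-\phi)L_\Sigma^s(\epsilon B-v)\le 0$. Strict positivity of $Q_\Sigma^s$ on $W^{1,2}_0(\cU')$ (from stability) forces $\phi\equiv 0$, i.e.\ $v\ge\epsilon B\ge\epsilon|A_\Sigma|^{1-s}$ globally. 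The conclusion then uses Harnack for the superharmonic $v$ together with the scaling fact $r^{-n}\int_{B_r(p)}|A_\Sigma|^{1-s}\to\infty$, which only needs the tangent cone to be non-flat, not pointwise curvature blow-up. This direct variational comparison is both simpler and more robust than your quotient-and-capacity route.
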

      \begin{proof}
       The idea is to find an auxiliary function bounded from above by $v$ and tending to infinity near $Sing(\Sigma)$.

       First set $\cU'\subset \subset Clos(\Sigma)\cap U$ be a smooth domain containing $Sing(\Sigma)$. We can assume WLOG that $0<v\in W^{1,2}(\cU')$ is a solution of $L^{2/n}_{\Sigma} v = 0$. In fact, consider the minimizer $\bar{v}$ of $Q^{2/n}_{\Sigma}(\phi, \phi)$ among \[
         \{\phi \in W^{1,2}(\cU'): \phi = v \text{ on }\partial \cU',\ \phi\leq v \text{ on }\cU'\}   \]
       By weak and strong maximum principle, $\bar{v} \in C_{loc}^{\infty}(\cU'; \RR_+)$ is a solution of $L^{2/n}_{\Sigma} \bar{v} = 0$ with $0< \bar{v}\leq v$. Hence it suffices to prove the lemma for $\bar{v}$. From now on denote $s := 2/n$.
      
       Recall that by \cite{CheegerNaber13_Stra_Min_Surf}, if $\Sigma$ is area minimizing, then for every $\epsilon\in (0, 1)$, $|A_{\Sigma}|\in L^{7-\epsilon}$ and $|\nabla A_{\Sigma}|\in L^{7/2-\epsilon}$; This is also true for stable minimal hypersurface using the same proof. 
       Hence, $ B(x):= (1+ |A_{\Sigma}|^2)^{(1-s)/2} \in W^{1,2}(\cU') $.

       Also, recall the following identity proposed by \cite{Simons68}, \[
        \frac{1}{2}\Delta |A_{\Sigma}|^2 = |\nabla A_{\Sigma}|^2 - |A_{\Sigma}|^4     \]
       and the refined Kato's inequality for minimal hypersurfaces proposed in \cite{SchoenSimonYau75}, \[
        |\nabla |A_{\Sigma}||^2 \leq \frac{n}{n+2}|\nabla A_{\Sigma}|^2     \]
       Denote for simplicity, $A = A_{\Sigma}$, we have
       \begin{align} 
        \begin{split}
        L^s_{\Sigma} B = &\ \Delta (1+ |A|^2)^{\frac{1-s}{2}} + (1-s)|A|^2(1+ |A|^2)^{\frac{1-s}{2}} \\
         = &\ (1-s)(1 + |A|^2)^{\frac{1-s}{2}-2} {\Big[} (1+|A|^2)(|\nabla A|^2 - |A|^4) + (-s-1)|A|^2 |\nabla|A||^2 {\Big]} \\
          &  + (1-s)|A|^2(1+ |A|^2)^{\frac{1-s}{2}} \\
         = &\ (1-s)(1 + |A|^2)^{\frac{1-s}{2}-2} {\Big[} |\nabla A|^2 + |A|^2 {\big(} |\nabla A|^2 + (-s-1)|\nabla |A||^2 {\big)} + |A|^2 (1+ |A|^2)  {\Big]} \\
         \geq &\ 0
        \end{split}  \label{Pre, |A|^s be a subsol of L^s}
       \end{align}
       In other words, $B$ is a subsolution of $L^s_{\Sigma}$. 

       Now fixed $\epsilon >0 $ be sufficiently small such that $\epsilon B < v$ near $\partial \cU'$. Then $\phi:=(\epsilon B - v)^+ \in W^{1,2}_0(\cU')$ and  \[
        Q_{\Sigma}^s(\phi, \phi) = \int_{\Sigma} \nabla \phi \cdot \nabla(\epsilon B - v) - (1-s)|A_{\Sigma}|^2 \phi (\epsilon B - v) = \int_{\Sigma} -\phi \cdot L^s_{\Sigma} (\epsilon B - v) \leq 0   \]
       But since $\Sigma$ is stable in $U$, $Q^s_{\Sigma}$ is strictly positive on $W^{1,2}_0 (\cU')$, hence $(\epsilon B -v)^+ = 0$, i.e. $v\geq \epsilon B$. 
       
       Now we can complete the proof of lemma \ref{Lem_Pre, Diverge of pos Jac field}. For any sequence of points $\Sigma \ni x_j\to p \in Sing(\Sigma)$, let $r_j := dist_{\RR^{n+1}} (x_j, p)$ and write for simplicity $B_r(x) := \BB^{n+1}_r(x)\cap \Sigma$. Since $\Delta v \leq L^s_{\Sigma} v = 0 $, by applying the Harnack inequality in \cite{BombieriGiusti72_Harnack} to $v$ and combining the fact that $v \geq \epsilon B \geq \epsilon |A_{\Sigma}|^{1-s}$, we have \[
         v(x_j) \geq \frac{C(\Sigma, n)}{(2r_j)^n}\int_{B_{2r_j}(x_j)} v \geq \frac{C(\Sigma, n)\epsilon}{r_j^n} \int_{B_{r_j}(p)} |A_{\Sigma}|^{1-s} \to +\infty  \]
        when $j\to \infty$. This finish the proof. 
      \end{proof}
      
      This lemma is used in section \ref{Sec, Asymp & Asso Jac field} to give a new proof of Hardt-Simon typed regularity.

    \section{Linear analysis for strongly isolated singularities} \label{Sec, Linear Ana}
     Throughout this section, let $\Sigma\subset (M, g)$ be a locally stable minimal hypersurface in a Riemannian manifold with normal field $\nu$; $U\subset \subset M$ will always be a smooth domain with $\partial U$ transversely intersect with $Reg(\Sigma)$; $\cU:= U\cap \Sigma$ and $\bar{\cU} := Clos(U)\cap \Sigma$.
     Let $L_{\Sigma}$, $Q_{\Sigma}$ be the Jacobi operator and its associated quadratic form  as in (\ref{Pre, Def Jac oper}), (\ref{Pre, quadratic assoc to Jac oper}). For our applications below, we may also deal with the weighted quadratic forms \[
       Q^h_{\Sigma}(\phi, \phi) := Q_{\Sigma}(\phi, \phi) + \int_{\Sigma} h\phi^2  \]
     for a given $h \in L^{\infty}(\Sigma)$, and its associated operator $L_{\Sigma} - h$.

     
     \subsection{Linear analysis in large} \label{Subsec, General linear Analysis}
      We start with a basic observation 
      \begin{Lem} \label{Linear, essential positivity of Q} 
       There's a constant $C_0 = C_0(\Sigma, M, g, U) > 0$ such that \[
        Q_{\Sigma}(\phi, \phi) + C_0\|\phi\|^2_{L^2(\Sigma)} \geq 0 \ \ \ \ \forall \phi\in C^1_c(\cU) \] 
      \end{Lem}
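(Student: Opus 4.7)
The statement is a standard "localize-then-sum" bound: local stability gives $Q_\Sigma \ge 0$ on small pieces, and the error in reassembling $\phi$ from its localized pieces is controlled by a bounded potential, giving the $C_0 \|\phi\|_{L^2}^2$ defect.

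Concretely, I would proceed as follows. Since $\Sigma$ is locally stable, for every $p \in M$ there is an open neighborhood $W_p \subset M$ such that $\Sigma$ is stable in $W_p$, i.e.\ $Q_\Sigma(\psi,\psi) \ge 0$ for every $\psi \in C^1_c(W_p \cap \Sigma)$ (using the convention $\Sigma = Reg(\Sigma)$). Since $Clos(U)$ is compact, I extract a finite subcover $W_1,\dots,W_N$ of $Clos(\bar{\cU})$ and pick a smooth partition of unity $\{\psi_i\}_{i=1}^N$ on a neighborhood of $Clos(U)$ in $M$ subordinate to $\{W_i\}$. Setting
\[ \chi_i := \psi_i \big/ \Big(\textstyle\sum_j \psi_j^2\Big)^{1/2}, \]
I get smooth functions with $spt(\chi_i) \subset W_i$ and $\sum_i \chi_i^2 \equiv 1$ near $Clos(U)$, and I restrict them to $\Sigma$.

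The key algebraic identity is that, because $\sum_i \chi_i^2 = 1$ implies $\sum_i \chi_i \nabla_\Sigma \chi_i = 0$ on $\cU$, expanding $|\nabla_\Sigma(\chi_i \phi)|^2$ and summing yields
\[ \sum_i |\nabla_\Sigma(\chi_i \phi)|^2 \;=\; |\nabla_\Sigma \phi|^2 + \Big(\sum_i |\nabla_\Sigma \chi_i|^2\Big)\phi^2. \]
Combined with $\phi^2 = \sum_i (\chi_i\phi)^2$, this rewrites the quadratic form as
\[ Q_\Sigma(\phi,\phi) \;=\; \sum_{i=1}^N Q_\Sigma(\chi_i\phi, \chi_i\phi) \;-\; \int_\Sigma \Big(\sum_i |\nabla_\Sigma \chi_i|^2\Big) \phi^2. \]
For each $i$, the function $\chi_i \phi$ lies in $C^1_c(W_i \cap \Sigma)$ (its support meets neither $\partial U$ nor $Sing(\Sigma)$), so local stability on $W_i$ gives $Q_\Sigma(\chi_i\phi,\chi_i\phi) \ge 0$. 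Setting
\[ C_0 := \Big\|\sum_i |\nabla_\Sigma \chi_i|^2\Big\|_{L^\infty(\cU)}, \]
which is finite because each $\chi_i$ is smooth on a fixed neighborhood of the compact set $Clos(\bar{\cU})$, yields the inequality.

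There is no serious obstacle here; the only point worth checking is that the cutoffs $\chi_i\phi$ are admissible test functions in the definition of local stability. This is immediate from the convention $C^1_c(\cU) \subset C^1_c(Reg(\Sigma))$ used throughout: the support of $\phi$ already avoids $Sing(\Sigma)$, so multiplying by the smooth cutoff $\chi_i$ preserves this, and the dependence of $C_0$ on $\Sigma, M, g, U$ enters only through the geometry of a cover by local-stability neighborhoods.
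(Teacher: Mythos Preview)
Your proof is correct and follows essentially the same IMS-localization strategy as the paper: both take a finite cover by local-stability neighborhoods, a squared partition of unity $\sum \chi_i^2 = 1$, and the decomposition $Q_\Sigma(\phi,\phi) = \sum_i Q_\Sigma(\chi_i\phi,\chi_i\phi) - \int_\Sigma (\text{error})\,\phi^2$. The only cosmetic difference is that the paper integrates the cross term by parts and writes the error as $\sum_j \eta_j \Delta_\Sigma \eta_j$, then bounds $|\Delta_\Sigma \eta_j| \le n|\nabla_M^2 \eta_j|$ via minimality of $\Sigma$; you instead use $\sum_i \chi_i \nabla_\Sigma \chi_i = 0$ directly to obtain the error $\sum_i |\nabla_\Sigma \chi_i|^2$, which you then bound by the ambient gradient --- a slightly shorter route to the same constant.
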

      \begin{proof}
       Let $\{U_j\subset M\}$ be a finite open cover of $Clos(\cU)$ such that $\Sigma$ is stable in each $U_j$; Let $\{\eta_j^2\}$ be a partition of unity on $\{U_j\}$ such that $\eta_j$ is also smooth. Then $\forall \phi \in C_c^1(U\cap\Sigma)$, since $\eta_j\cdot\phi$ is support in $U_j$,
       \begin{align}
       \begin{split}
        Q_{\Sigma}(\phi,\phi) & = \int_{\Sigma} \sum_j |\nabla \phi|^2 \eta_j^2 - \sum_j(|A_{\Sigma}|^2+Ric_M(\nu, \nu))\phi^2\eta_j^2 \\
        & = \sum_j Q_{\Sigma}(\phi\eta_j, \phi\eta_j) - \sum_j \int_{\Sigma} {\big(} \nabla (\phi^2)\cdot \eta_j\nabla \eta_j + \phi^2 |\nabla\eta_j|^2 {\big)} \\
        & = \sum_j Q_{\Sigma}(\phi\eta_j, \phi\eta_j) + \sum_j \int_{\Sigma}  \phi^2 ( div(\eta_j\nabla \eta_j) - |\nabla\eta_j|^2 ) \\
        & \geq \int_{\Sigma} (\sum_j \eta_j\cdot \Delta \eta_j) \phi^2 
       \end{split} \label{Linear, Bd Q from below by L^2}
       \end{align}
       Since $\Sigma$ is minimal, \[
        |\Delta \eta_j| = |\Delta_M \eta_j - \nabla_M^2 \eta_j (\nu, \nu)| \leq n |\nabla^2_M \eta_j|    \]
       Hence, $\sum_j \eta_j\cdot \Delta \eta_j$ is bounded from below by some constant $-C_0 = -C_0 (M, g, \Sigma, U)$. This proves the lemma.
      \end{proof}

      Such $C_0$ in the Lemma will be fixed throughout this section. Consider the Hilbert space $\scB(\cU) := \overline{C_c^1(\bar{\cU})}^{\|\cdot\|_{\scB}}$ and $\scB_0(\cU):= \overline{C_c^1(\cU)}^{\|\cdot\|_{\scB}}$ where 
      \begin{align}
       \|\phi\|^2_{\scB(\cU)} := Q_{\Sigma}(\phi, \phi) + (C_0+1)\|\phi\|^2_{L^2(\cU)}   \label{Linear, Def scB-norm}      
      \end{align}
      Note that by definition and lemma \ref{Linear, essential positivity of Q}, \[
       -C_0\|\phi\|^2_{\scB(\cU)} \leq Q_{\Sigma}(\phi, \phi) \leq \|\phi\|^2_{\scB(\cU)}   \ \ \ \forall \phi\in C_c^1(\bar{\cU}) \]
      Hence, the quadratic form $Q_{\Sigma}$ extends to a bilinear form on $\scB(\cU)$.

      $\scB(\cU)$ (resp. $\scB_0(\cU)$) will play the role as $W^{1,2}(\cU)$ (resp. $W_0^{1,2}(\cU)$) for inverting $L_{\Sigma}$ or finding eigenfunctions. In fact, $\scB_0(\cU) = W_0^{1,2}(\cU)$ provided $L = L_{\Sigma}$ satisfies the coercivity condition (\ref{Pre, Coercivity of Schrodinger oper}) for some $\delta >0$. In particular, this is true when every singularity is strongly isolated and strictly stable. However, when $\Sigma$ has non-strictly stable tangent cones, this will always fail, and $W_0^{1,2}\subset \scB_0$. 
      
      On the other hand, lemma \ref{Linear, essential positivity of Q} asserts that $\|\phi\|_{\scB} \geq \|\phi\|_{L^2}$, hence there's a natural continuous embedding $\scB(\cU) \hookrightarrow L^2(\cU)$. It's convenient to have the following characterization for $\phi \in L^2(\cU)$ to be in $\scB(\cU)$.
      
      \begin{Lem} \label{Linear, Equi def of scB}
       Suppose $\phi \in L^2(\cU)$. Then the following are equivalent, 
       \begin{enumerate}
        \item[(1)] $\phi \in \scB(\cU)$ (resp. $\scB_0(\cU)$);
        \item[(2)] $\phi \in W^{1,2}_{loc}(\cU)$; And for any decreasing family $... V_j \supset \supset V_{j+1} ...$ of neighborhood of $Sing(\Sigma)$ with $\bigcap_j V_j = Sing(\Sigma)$, there are $\phi_j \in W^{1,2}(\cU)$ (resp. $W_0^{1,2}(\cU)$) such that $\spt(\phi_j - \phi) \subset V_j$, $\spt (\phi_j) \cap V_{N_j} = \emptyset $ for some $N_j>>j$, and $ \sup_j \|\phi_j\|_{\scB} < +\infty $.
        \item[(3)] $\exists \phi_j\in \scB(\cU)\cap L^2$ (resp. $\scB_0(\cU)\cap L^2$) such that $\phi_j \to \phi$ a.e. and $\limsup \|\phi_j\|_{\scB} < +\infty $.
       \end{enumerate}
      \end{Lem}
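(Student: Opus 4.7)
The plan is to close the cycle $(1)\Rightarrow(2)\Rightarrow(3)\Rightarrow(1)$. The underlying Hilbert space structure is essential: lemma \ref{Linear, essential positivity of Q} gives $\|\phi\|_{L^2} \leq \|\phi\|_\scB$, so $\scB(\cU) \hookrightarrow L^2(\cU)$ continuously, and $\scB$-bounded sequences admit weakly convergent subsequences whose weak limits are pinned down by a.e. data on $\cU$.

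I would first dispatch $(2)\Rightarrow(3)$: the $\phi_j$ in (2) have supports disjoint from $V_{N_j}$, where $|A_\Sigma|$ is bounded, so the $W^{1,2}$- and $\scB$-norms are uniformly equivalent on such supports and $\phi_j \in \scB(\cU) \cap L^2(\cU)$ (respectively $\scB_0 \cap L^2$). From $\spt(\phi_j - \phi) \subset V_j$ and $\bigcap_j V_j = Sing(\Sigma)$ being $\scH^n$-null, $\phi_j \to \phi$ a.e., and $\sup_j \|\phi_j\|_\scB < +\infty$ yields $\limsup_j \|\phi_j\|_\scB < +\infty$. For $(3)\Rightarrow(1)$, extract from the $\scB$-bounded sequence $\{\phi_j\}$ a weakly convergent subsequence with limit $\psi \in \scB(\cU)$. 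Weak $\scB$-convergence entails weak $L^2$-convergence, and Mazur's lemma produces convex combinations of the $\phi_j$ converging to $\psi$ strongly in $L^2$, hence a.e. along a subsequence; combined with $\phi_j \to \phi$ a.e. this forces $\psi = \phi$. In the $\scB_0$ setting, $\scB_0(\cU)$ is a closed convex subspace of $\scB(\cU)$, hence weakly closed, so $\phi = \psi \in \scB_0$.

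The heart of the argument is $(1)\Rightarrow(2)$. Given the exhausting family $V_1 \supset\supset V_2 \supset\supset \cdots$, I build Lipschitz cutoffs $0 \leq \chi_j \leq 1$ equal to $1$ on $\cU \setminus V_j$ and vanishing on some $V_{N_j}$, using scale-invariant logarithmic profiles on nested dyadic annuli in the conic coordinates of lemma \ref{Lem_Pre, geom of asymp at 0} near each singular point. Setting $\phi_j := \chi_j \phi$, the defining approximation $C_c^1(\bar\cU) \ni \tilde\phi_k \to \phi$ in $\scB$ together with the $\scB$-$W^{1,2}$-equivalence on $\spt(\chi_j)$ (which avoids $Sing(\Sigma)$) shows $\chi_j \tilde\phi_k \to \chi_j \phi$ in $W^{1,2}(\cU)$, so $\phi_j \in W^{1,2}(\cU)$; in the $\scB_0$ case the approximation by $C_c^1(\cU)$ forces the $W^{1,2}$-trace of $\phi$ on $\partial U \cap \Sigma$ to vanish, and this vanishing is preserved under multiplication by $\chi_j$, giving $\phi_j \in W^{1,2}_0(\cU)$. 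The uniform bound $\sup_j \|\phi_j\|_\scB < +\infty$ would come from the identity
\begin{align*}
 \|\chi_j \phi\|_\scB^2 = \langle \phi, \chi_j^2 \phi\rangle_\scB + \int_\cU \phi^2 |\nabla \chi_j|^2,
\end{align*}
applying Cauchy-Schwarz in $\scB$ on the first term and a dedicated estimate on the cutoff error.

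The principal obstacle is controlling $\int_\cU \phi^2 |\nabla \chi_j|^2$ uniformly in $j$. When the tangent cones at $Sing(\Sigma)$ are only stable (not strictly stable), $\phi \in \scB(\cU)$ need not satisfy $\int |A_\Sigma|^2 \phi^2 < +\infty$, so a naive $W^{1,2}$-capacity argument on $Sing(\Sigma)$ is insufficient. The remedy exploits the cone Hardy inequality of lemma \ref{Lem_Pre, Hardy-typed inequ}: with the logarithmic cutoff $|\nabla \chi_j| \leq C/(r \log(1/r_j))$ supported on transition annuli of inner radius comparable to $r_j^2$, an integration by parts against $\phi^2$ together with lemma \ref{Lem_Pre, geom of asymp at 0} transfers Hardy on each tangent cone to the asymptotically conical geometry near each singularity, yielding a uniform bound on $\int \phi^2 |\nabla \chi_j|^2$ in terms of $\|\phi\|_\scB^2$. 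Crucially, this step only needs $\sup_j \|\phi_j\|_\scB$ bounded, not $\phi_j \to \phi$ in $\scB$-norm, so no strict stability of tangent cones is required.
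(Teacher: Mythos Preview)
Your arguments for $(2)\Rightarrow(3)$ and $(3)\Rightarrow(1)$ are essentially the paper's (the paper invokes Banach--Saks rather than Mazur, but the content is the same). The gap is in $(1)\Rightarrow(2)$, at precisely the step you flag as the ``principal obstacle''.

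Your plan is to set $\phi_j = \chi_j\phi$ and bound $\|\chi_j\phi\|_\scB^2 = \langle\phi,\chi_j^2\phi\rangle_\scB + \int\phi^2|\nabla\chi_j|^2$. Both terms are problematic. For the cutoff error you invoke lemma~\ref{Lem_Pre, Hardy-typed inequ}; but when the tangent cone is only stable (not strictly stable) one has $\mu_1 + (\tfrac{n-2}{2})^2 = 0$, so that inequality degenerates to $Q_C \geq 0$ and gives \emph{no} control on $\int r^{-2}\phi^2$. The logarithmic cutoff reduces the task to bounding $\int_{A_{s,t}}\phi^2/r^2$ by $C(\log(t/s))^\alpha\|\phi\|_\scB^2$ for some $\alpha<2$, but nothing you have written supplies this; Hardy does not. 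For the first term, Cauchy--Schwarz yields $\langle\phi,\chi_j^2\phi\rangle_\scB \leq \|\phi\|_\scB\|\chi_j^2\phi\|_\scB$, which is circular: bounding $\|\chi_j^2\phi\|_\scB$ is the same problem you started with. (Note also that this lemma sits in \S\ref{Subsec, General linear Analysis} and is stated for general locally stable $\Sigma$, whereas your conic-coordinate cutoffs already presuppose strongly isolated singularities.)

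The paper sidesteps all of this. It first proves a local comparison $\|\varphi\|_{W^{1,2}(\Omega)} \leq C(\Omega)\|\varphi\|_{\scB(\cU)}$ for $\Omega \subset\subset \cU$, via the same partition-of-unity computation as in (\ref{Linear, Bd Q from below by L^2}). Then, rather than cutting $\phi$ off, it \emph{splices} $\phi$ with the defining approximants $\varphi_k \to \phi$ in $\scB$: for a cutoff $\xi_j \in C_c^\infty(V_j)$ equal to $1$ on $V_{j+1}$, set $\psi_k^{(j)} := (1-\xi_j)\phi + \xi_j\varphi_k$. Since $\psi_k^{(j)} - \psi_{k'}^{(j)} = \xi_j(\varphi_k - \varphi_{k'})$, the cutoff identity gives
\[
Q_\Sigma\big(\xi_j(\varphi_k-\varphi_{k'}),\xi_j(\varphi_k-\varphi_{k'})\big) \ \leq\ C(\xi_j)\|\varphi_k-\varphi_{k'}\|_{L^2}^2 + Q_\Sigma(\varphi_k-\varphi_{k'},\varphi_k-\varphi_{k'}) + \int_{\cU\setminus V_{j+1}}|\nabla(\varphi_k-\varphi_{k'})|^2 + |V|(\varphi_k-\varphi_{k'})^2,
\]
and the last integral is over a region compactly away from $Sing(\Sigma)$, hence controlled by $C(j)\|\varphi_k-\varphi_{k'}\|_\scB^2$ via the local comparison. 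Thus $\{\psi_k^{(j)}\}_k$ is Cauchy in $\scB$ (with limit $\phi$), and one takes $\phi_j := \psi_{k_j}^{(j)}$ for $k_j$ large. The key point is that near the singular set $\phi$ is \emph{replaced} by the already-nice $\varphi_k$, so no estimate on $\int\phi^2/r^2$ is ever needed and the argument goes through regardless of strict stability of the tangent cones.
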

      \begin{proof}
       Clearly (2) implies (3);
       To see (3) implies (1), first observe that by weak compactness of Hilbert space and Banach-Saks theorem, if $\phi_j$ is uniformly bounded in $\scB$, then there's a subsequence $\phi_{k_j}$ such that $(\sum_{j=1}^m \phi_{k_j})/m \to \phi'$ in $\scB(\cU)$ (resp. in $\scB_0(\cU)$). Since $\scB \hookrightarrow L^2$ and that $\phi_j \to \phi$ a.e., we have $\phi = \phi'\in \scB(\cU)$ (resp. $\phi \in \scB_0(\cU)$).  
       
       Now we shall prove (1) implies (2) beginning with $\phi\in \scB_0(\Sigma)$; the proof when $\phi\in \scB(\Sigma)$ is similar. \\
       \textbf{Claim}: If $\Omega\subset \subset \cU$, then for every $\varphi\in C_c^1(\cU) $, \[
         \|\varphi\|_{W^{1,2}(\Omega)} \leq C(\Omega, \cU, \Sigma, M, g)\|\varphi\|_{\scB(\cU)}    \]
       \textbf{Proof of the claim}: Take a finite open cover $\{W_l\}_{l\geq 0}$ of $Clos(\cU)$ in $M$ such that $Clos(\Omega)\subset W_0 \subset \subset M\setminus Sing(\Sigma)$, and that $W_l\cap \Omega = \emptyset$ and $\Sigma$ is stable in $W_l$ for $l\geq 1$. Let $\{\eta_l^2\}_{l\geq 0}$ be a partition of unity for $\{W_l\}$ such that $\eta_l$ are also smooth. By the same computation in (\ref{Linear, Bd Q from below by L^2}) we have,
       \begin{align*}
        Q_{\Sigma}(\varphi, \varphi) & = Q_{\Sigma}(\eta_0 \varphi, \eta_0\varphi) + \sum_{l\geq 1}Q_{\Sigma}(\eta_l \varphi, \eta_l\varphi) + \sum_{l\geq 0}\int \varphi^2(\eta_l\Delta_{\Sigma}\eta_l) \\
        & \geq Q_{\Sigma}(\eta_0 \varphi, \eta_0\varphi) - C(\{\eta_l\})\|\varphi\|_{L^2}^2  \geq \int_{\Omega}|\nabla \varphi|^2  - C(\Omega, \{\eta_l\})\|\varphi\|_{L^2}^2
       \end{align*}
       Together with the fact that $\|\varphi\|_{L^2}\leq \|\varphi\|_{\scB}$, this completes the proof of claim.\\
       
       Now to prove (1) implies (2), suppose $\varphi_k \in C_c^1(\cU)$ be a Cauchy sequence in $\scB_0(\cU)$ which converges to $\phi$. Then by the claim, $\phi\in W^{1,2}_{loc}(\cU)$.
       
       For every nested neighborhood $\{V_j\}_{j\geq 1}$ in (2), let $\xi_j\in C_c^\infty (V_j, [0,1])$ be a cut-off which equals to $1$ on $V_{j+1}$. For each fixed $j$, let $\psi_k:=\psi^{(j)}_k := (1-\xi_j)\phi + \xi_j\varphi_k$. Then by the similar computation as in (\ref{Linear, Bd Q from below by L^2}) we have,
       \begin{align*}
        & Q_{\Sigma}(\psi_k-\psi_{k'}, \psi_k-\psi_{k'}) \\
       = & \int_{\cU} - \xi_j\Delta \xi_j(\varphi_k - \varphi_{k'})^2 + \xi_j^2 \big(|\nabla (\varphi_k - \varphi_{k'})|^2 - (|A_{\Sigma}|^2+Ric_M(\nu,\nu))(\varphi_k - \varphi_{k'})^2 \big)  \\ 
       \leq &\  C(\xi_j)\|\varphi_k - \varphi_{k'}\|_{L^2}^2 + Q_{\Sigma}(\varphi_k - \varphi_{k'}, \varphi_k - \varphi_{k'})  \\
       & + \int_{\cU\setminus V_{j+1}} |\nabla (\varphi_k - \varphi_{k'})|^2 + \big||A_{\Sigma}|^2+Ric_M(\nu,\nu)\big|(\varphi_k - \varphi_{k'})^2 \\
       \leq &\  C(\xi_j, V_j, V_{j+1}) \|\varphi_k - \varphi_{k'}\|^2_{\scB}
       \end{align*}
       where the last inequality follows form the claim. Hence $\{\psi_k\}_{k\geq 1}$ is also a Cauchy sequence in $\scB_0(\cU)$, which definitely converges to $\phi$. Thus we can choose $k_j>>j$ such that $\|\psi^{(j)}_{k_j} - \phi\|_{\scB}\leq 2^{-j} $, and $N_j>>j$ such that $spt(\psi^{j}_{k_j})\cap V_{N_j} = \emptyset$. Thus $\phi_j := \psi^{(j)}_{k_j}$ is what we want in (2).
      \end{proof}

      \begin{Eg}
       Let $\Sigma = C \subset \RR^{n+1}$ be a regular stable minimal hypercone which is not strictly stable, i.e. $\mu_1 = -(\frac{n-2}{2})^2$; Let $U =\BB^{n+1}_1$ and $\cU = B_1\subset C$. Use the notations and parametrization as in (\ref{Pre, param cone}). Let $u (r,\omega) := r^{-(n-2)/2}w_1(\omega)$. Then \[
        \|\nabla u\|^2_{L^2(B_1(0))} = \int_0^1 r^{n-1}\ dr\int_S |\partial_r u|^2 + \frac{1}{r^2}|\nabla_S u|^2 \ d\omega = +\infty     \]
       But we can construct \[
        u_j(r, \omega ):= max\{r, \frac{1}{j}\}^{-(n-2)/2}\cdot w_1(\omega)     \]
       Clearly $u_j \in W^{1,2} (U\cap C) \subset \scB(\cU)$ and $u_j \to u$ pointwisely. We verify now that $\sup_j \|u_j\|_{\scB} < +\infty$. In fact, notice that $L_C u = 0$ on $B_1\setminus B_{1/j}$, thus
       \begin{align*}
        Q_{C}(u_j, u_j) &= \int_{B_1\setminus B_{1/j}} |\nabla u|^2 - |A_C|^2 u^2 + \int_{B_{1/j}} |\nabla u_j|^2 - |A_C|^2 u_j^2 \\
          & = \int_{\partial B_1} u \partial_r u - \int_{\partial B_{1/j}} u\partial_r u - \int_{B_1\setminus B_{1/j}} u\cdot L_C u \\ 
          &\ + \int_0^{1/j} r^{n-1}\ dr\int_S r^{-2}(|\nabla_S u_j|^2 - |A_S|^2 u_j^2)\ d\omega \\
          & = \int_{\partial B_1} (-\frac{n-2}{2})r^{-n/2 - (n-2)/2}w_1^2 - \int_{\partial B_{1/j}} (-\frac{n-2}{2})r^{-n/2 - (n-2)/2}w_1^2 - 0 \\
          & \ + \int_0^{1/j} r^{n-3}\ dr \int_S j^{n-2} \cdot( |\nabla_S w_1|^2 - |A_S|^2 w_1^2 )\ d\omega \\
          & \leq C(n)           
       \end{align*}
       And $\|u_j\|_{L^2}^2 \leq \|u\|_{L^2}^2 \leq C(n)$. Therefore, by lemma \ref{Linear, Equi def of scB}, $u \in \scB(\cU)$. \\
       
       On the other hand, let $v = -r^{\frac{n-2}{2}}\log r \cdot w_1$ on $B_1$, then $v \in W_{loc}^{1,2} (C)$ and $v$ solves the equation $L_C v = 0$ on $B_1$. However, we shall see that that $v\notin \scB(\cU)$. In fact, by Lemma \ref{Linear, Equi def of scB}, consider $\forall 0<s < t < 1$, the function $v_{s, t}$ that achieves \[
       \inf \{Q_C(\phi, \phi) : \phi \in C^1_c(C),\ \phi = v \text{ on }C\setminus B_t,\ \spt(\phi) \cap B_s = \emptyset \}      \]
      By writing down the E-L equation and solving an ODE, 
      \begin{align*}
       v_{s,t}(r, \omega) = \begin{cases} v(r, \omega),\ &\text{ if } t\leq r\leq 1 \\
        (\frac{\log t \cdot \log s}{\log t - \log s} - \frac{\log t}{\log t - \log s}\log r )r^{-\frac{n-2}{2}}w_1(\omega),\ &\text{ if } s\leq r \leq t \\
        0,\ &\text{ if } 0 < r < s
       \end{cases}
      \end{align*}
      Therefore, \[
       Q_C(v_{s,t}, v_{s,t}) = (-\log t)+ \frac{(-\log t)^2}{\log t - \log s} \geq (-\log t)      \]
      And then $\|v_{s, t}\|_{\scB} \to +\infty$ when $t\to 0$. Hence by lemma \ref{Linear, Equi def of scB}, $v \notin \scB(\cU)$.
      \end{Eg}

      To study the spetral theory of $L_{\Sigma}$, we may wish that $\scB \hookrightarrow L^2$ is also compact.
      
      \begin{Def}
       Call $\Sigma$ satisfies the \textbf{$L^2$-nonconcentration property} in $U$, if for any $\epsilon >0$, there's a sufficiently small neighborhood  $V_{\epsilon} \supset Sing(\Sigma)\cap U$ such that 
       \begin{align}
        \int_{\Sigma\cap V_{\epsilon}} \phi^2 \leq \epsilon\cdot \|\phi\|^2_{\scB}\ \ \ \ \forall \phi\in C^1_c(\cU)   \label{Linear, Def L^2 noncon}       
       \end{align}
      \end{Def}

      \begin{Prop} \label{Prop_Linear, Basic prop for L^2-noncon}
       Suppose $\Sigma$ satisfies the $L^2$-nonconcentration property in $U$; $h\in L^\infty(\cU)$. Then,
       \begin{enumerate}
        \item[(1)] $\scB_0(\cU) \hookrightarrow L^2(\cU)$ is compact.
        \item[(2)] $\exists\ \lambda_1 < \lambda_2 < \lambda_3 < ... \nearrow +\infty$ and finite dimensional pairwise $L^2$-orthogonal linear subspaces $\{E_j\}_{j\geq 1}$ of $\scB_0(\cU)\cap C^{\infty}(\cU)$ such that \[
          (-L_{\Sigma} + h) \phi = \lambda_j \phi\ \ \ \ \forall \phi\in E_j    \]
        and that \[  L^2(\cU) = \overline{\bigoplus_{j\geq 1}E_j}^{L^2};\ \ \ \ \scB_0(\cU) = \overline{\bigoplus_{j\geq 1}E_j}^{\scB}    \]
        Moreover, $\dim E_1 = 1$ and any $0\neq \phi_1\in E_1$ is nowhere vanishing in $\cU$.
        \item[(3)] If $(L_{\Sigma}- h){\big|}_{\scB_0}$ is \textbf{nondegenerate}, i.e. $\lambda_j \neq 0$ for all $j\geq 1$ in (2). Then for each $f\in L^2(\Sigma)$, \[
         (-L_{\Sigma}+h)\phi = f \ \ \ \ \text{ on }\cU\]
        has a unique solution $\phi\in \scB_0(\cU)$. Moreover, we have energy-type estimate \[
         \|\phi\|_{\scB(\cU)} \leq C(h, \Sigma, U, M, g)\|f\|_{L^2}   \]
        \item[(4)] Suppose $-L_{\Sigma}+h$ is \textbf{strictly positive}, i.e. $\lambda_1>0$ in (2). Suppose $u\in \scB(\cU)$ satisfies $(-L_{\Sigma}+ h) u \geq 0$ in the distribution sense, i.e. \[
          \int_{\cU} \nabla u\cdot \nabla \phi - (|A_{\Sigma}|^2 + Ric_M(\nu,\nu) - h)u\cdot \phi \geq 0\ \ \ \forall \phi\in C^{\infty}_c(\cU; \RR_+)     \]
         Also assume that either $\partial U\cap Sing(\Sigma)=\emptyset$ and $u|_{\partial \cU} \geq 0$, or $u\in \scB_0(\cU)$. 
         Then $u\geq 0$ on $\cU$. 
        \item[(5)] There exists $\vartheta \in C_c^{\infty}(\bar{\cU})$ such that $-L_{\Sigma}+ h + \vartheta$ is strictly positive on $\scB_0(\cU)$.
        \item[(6)] $\{f\in C_c^\infty(\cU): (-L_\Sigma + h + f)|_{\scB_0} \text{ is nondegenerate}\}$ is open and dense in $C_c^\infty(\cU)$.
       \end{enumerate}
      \end{Prop}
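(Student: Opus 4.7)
The six parts cascade from (1): the compactness $\scB_0(\cU) \hookrightarrow L^2(\cU)$ is the only non-textbook ingredient, while parts (2)--(6) are spectral-theory and perturbation arguments adapted to the weighted form $Q^h_\Sigma(\phi,\phi) := Q_\Sigma(\phi,\phi) + \int h\phi^2$. For (1), given an $\scB_0$-bounded sequence $\{\phi_k\}$, I will extract an $L^2$-Cauchy subsequence by a diagonal argument across scales: for each $\epsilon > 0$, the $L^2$-nonconcentration property provides $V_\epsilon \supset Sing(\Sigma)\cap U$ with $\int_{V_\epsilon}\phi^2 \leq \epsilon\|\phi\|_\scB^2$; on the compact region $\bar{\cU}\setminus V_\epsilon$, the local $W^{1,2}$ bound from the claim inside the proof of Lemma \ref{Linear, Equi def of scB} combined with Rellich--Kondrachov yields an $L^2(\bar{\cU}\setminus V_\epsilon)$-Cauchy subsequence, and diagonalizing along $\epsilon_m \to 0$ together with the uniform absorption in each $V_{\epsilon_m}$ produces the desired subsequence in $L^2(\cU)$. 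The delicacy is that nonconcentration must be formulated against the full $\scB$-norm rather than the $W^{1,2}$-norm, since $\scB_0$ strictly contains $W^{1,2}_0$ when some tangent cone fails strict stability (as the example $u = r^{-(n-2)/2}w_1$ already recorded in the section shows).

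For part (2), the shifted form $Q^h_\Sigma + K\|\cdot\|^2_{L^2}$ is $\scB$-coercive for $K$ large by Lemma \ref{Linear, essential positivity of Q}, and together with (1) this makes the inverse of $-L_\Sigma + h + K$ a compact self-adjoint operator on $L^2(\cU)$, producing the eigenvalues $\{\lambda_j\}$ and eigenspaces $\{E_j\}$. Interior elliptic regularity on the regular part places $E_j \subset C^\infty(\cU)$. Simplicity of $\lambda_1$ and nonvanishing of $\phi_1 \in E_1$ follow from the Rayleigh-quotient argument: $|\phi_1|$ also attains the infimum and hence is a first eigenfunction, to which the strong maximum principle on $\cU$ applies. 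Part (3) is then immediate from (2): when every $\lambda_j \neq 0$, the spectral expansion yields $(-L_\Sigma + h)^{-1} f = \sum_j \lambda_j^{-1}\langle f, \phi_j\rangle \phi_j$, convergent in $\scB_0$ with norm bounded by $C(h,\Sigma,U,M,g)\|f\|_{L^2}$.

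Part (4) follows by testing against $u^- \in \scB_0(\cU)$: Lemma \ref{Linear, Equi def of scB} places $u^-$ in $\scB_0$ under either hypothesis (since $u^-$ vanishes at $\partial\cU$ in both cases), so the distributional inequality, applied to smooth $\scB_0$-approximants of $u^-$, gives $0 \geq -Q^h_\Sigma(u,u^-) = Q^h_\Sigma(u^-, u^-) \geq \lambda_1 \|u^-\|^2_{L^2}$, forcing $u^- \equiv 0$. For part (5), pick a small $\epsilon$ with its $V_\epsilon$ from nonconcentration, then choose $\vartheta \in C_c^\infty(\bar{\cU})$ with $\vartheta \geq K$ on $\bar{\cU}\setminus V_\epsilon$ and $\vartheta \geq 0$ elsewhere; combining the essential lower bound $Q^h_\Sigma \geq -(C_0 + \|h\|_\infty)\|\cdot\|^2_{L^2}$, the estimate $\int \vartheta \phi^2 \geq K(\|\phi\|^2_{L^2} - \int_{V_\epsilon}\phi^2)$, and the nonconcentration $\int_{V_\epsilon}\phi^2 \leq \epsilon\|\phi\|^2_\scB$, one takes $K$ large and then $\epsilon$ small to secure the strict positivity $Q^h_\Sigma(\phi,\phi) + \int \vartheta\phi^2 \geq \lambda_1\|\phi\|^2_{L^2}$ for some $\lambda_1 > 0$.

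Finally, part (6) is a standard perturbation argument. Openness: the eigenvalues $\lambda_j(f) := \lambda_j(-L_\Sigma + h + f)$ depend continuously on $f \in L^\infty$ by the min-max characterization, so nondegeneracy persists under small $C^k$ perturbations. Density: given $f_0$ with $\lambda_{j_0}(f_0) = 0$ and an associated eigenfunction $\phi \in C^\infty(\cU)$, pick $g \in C_c^\infty(\cU)$ with $\int g\phi^2 \neq 0$, which is possible since unique continuation on the connected smooth part $\cU$ prevents $\phi$ from vanishing on any open set; first-order perturbation theory then yields $\tfrac{d}{dt}\lambda_{j_0}(f_0 + tg)\big|_{t=0} = -\int g\phi^2 \neq 0$, so $\lambda_{j_0}(f_0+tg) \neq 0$ for small $t\neq 0$, and iterating over the finitely many zero eigenvalues in a bounded spectral window near $0$ completes the density statement.
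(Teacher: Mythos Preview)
Your treatment of parts (1)--(3), (5) is essentially the same as the paper's (the paper even omits (2)--(3) as ``standard spectral theory''), and your (6) is the same perturbation idea, though the paper handles the possible multiplicity of the zero eigenvalue more carefully by choosing a single $f$ making the matrix $[\int f\phi_i\phi_j]_{\phi_i,\phi_j\in\ker(-L_\Sigma+h)}$ nondegenerate, rather than your ``iterate over finitely many zero eigenvalues,'' which would need more care when eigenvalues collide.

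The substantive divergence is in (4). The paper explicitly flags your key step as problematic: ``the basic strategy is to multiply the differential inequality by $u^-$ and integrate by parts. However, it's unclear whether $u^-$ still lies in $\scB(\cU)$.'' Your appeal to Lemma~\ref{Linear, Equi def of scB} does not immediately settle this: the $\scB$-norm is built from $Q_\Sigma(\phi,\phi)=\int|\nabla\phi|^2-(|A_\Sigma|^2+Ric)\phi^2$, where neither integrand is separately finite for general $\phi\in\scB$, so the map $\phi\mapsto\phi^-$ is not obviously $\scB$-continuous. One can show $u^-\in\scB_0$ by taking $C_c^1$-approximants $\phi_j\to u$ in $\scB$, noting $Q_\Sigma(\phi_j^-,\phi_j^-)=Q_\Sigma(\phi_j,\phi_j)-Q_\Sigma(\phi_j^+,\phi_j^+)\leq Q_\Sigma(\phi_j,\phi_j)+C_0\|\phi_j\|_{L^2}^2$, and invoking criterion~(3) of that lemma---but you skip this. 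More seriously, even granting $u^-\in\scB_0$, your identity $-Q^h_\Sigma(u,u^-)=Q^h_\Sigma(u^-,u^-)$ requires $Q^h_\Sigma(u^+,u^-)=0$, and since $Q^h_\Sigma$ on $\scB\times\scB$ is defined abstractly by polarization (the integral formula need not converge absolutely), the disjoint-support argument is not automatic. It can be salvaged: $\phi_j^-\rightharpoonup u^-$ weakly in $\scB$, so $Q^h_\Sigma(\phi_j,\phi_j^-)\to Q^h_\Sigma(u,u^-)$ by strong-weak continuity, while $Q^h_\Sigma(\phi_j,\phi_j^-)=-Q^h_\Sigma(\phi_j^-,\phi_j^-)$ for each $j$, and weak lower semicontinuity of $\|\cdot\|_\scB^2$ gives $\lim Q^h_\Sigma(\phi_j^-,\phi_j^-)\geq Q^h_\Sigma(u^-,u^-)$; this yields $0\leq Q^h_\Sigma(u,u^-)\leq -Q^h_\Sigma(u^-,u^-)$, which suffices.

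The paper instead bypasses both issues by approximating the \emph{operator}: for $s\in(0,1)$ it solves $-L^s v^{(s)}=(-L_\Sigma+h)u$ with $L^s=L_\Sigma-h-s|A_\Sigma|^2$, which is genuinely coercive on $W^{1,2}$ by~(\ref{Pre, Q^s_Sigma coercive}), so $v^{(s)}\in W^{1,2}$ and the standard $v^{(s)}_-$-test applies cleanly; then $v^{(s)}\to v^{(0)}$ in $L^2$ as $s\to0$ with $(-L_\Sigma+h)(v^{(0)}-u)=0$, forcing $u=v^{(0)}\geq0$ by nondegeneracy. This trades the delicate truncation-in-$\scB$ argument for a compactness argument in the already-established framework of (1) and (3). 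Your route is more direct once the gap is filled; the paper's is more robust and avoids ever asking whether $\scB$ is stable under truncation.
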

      
      \begin{proof}
       (1) Suppose $\{\phi_k\}$ be a bounded sequence in $\scB_0(U)$, WLOG $\|\phi_k\|_{\scB} \leq 1$. Then by the proof of lemma \ref{Linear, Equi def of scB}, for every subdomain $\Omega \subset \subset \Sigma$, $\|\phi_k\|_{W^{1,2}} \leq C(\Sigma, M, g, \Omega)$. By Reilly theorem [G-T] and lemma \ref{Linear, Equi def of scB}, up to a subsequence, $\phi_k \to \phi$ for some $\phi \in \scB_0(\cU)$ in $L^2(\Omega)$, $\forall \Omega\subset \subset \Sigma$. We shall see that $\|\phi_k - \phi\|_{L^2} \to 0$, which proves the compactness. In fact, $\forall \epsilon > 0$, let $V_{\epsilon}$ be the neighborhood of $Sing(\Sigma)$ in definition (\ref{Linear, Def L^2 noncon}), then 
       \begin{align*}
        \limsup_{k\to \infty} \int_{\Sigma} (\phi_k-\phi)^2 & \leq \limsup_{k\to \infty} \int_{V_{\epsilon}} (\phi_k - \phi)^2 + \lim_{k\to \infty}\int_{V_{\epsilon}^c} (\phi_k -\phi)^2 \\    
        & \leq 4\epsilon \|\phi_k\|_{\scB}^2 + 0 \leq 4\epsilon
       \end{align*}
       Let $\epsilon \to 0$, we see that $\phi_k \to \phi$ in $L^2(\Sigma)$.\\       
       (2) and (3) follows directly from (1) and the standard method in spectral theory. We omit the proof here.
       
       To prove (4), the basic strategy is to multiply the differential inequality by $u^-$ and integration by part. However, it's unclear whether $u^-$ still lies in $\scB(\cU)$. Hence we do this by approximation. Consider for each $s\in (0, 1)$ the approximated operators $L^s:= L_{\Sigma} - h - s|A_{\Sigma}|^2$. Denote for simplicity $P:=|A_{\Sigma}|^2 + Ric_M(\nu, \nu) -h$. Let \[
        \scE^s(v):= \frac{1}{2}\int_{\cU}|\nabla v|^2 + (-P + s|A_{\Sigma}|^2)v^2\ dx - \int_{\cU} \nabla u\cdot \nabla v - Pu\cdot v\ \ \ \ \forall v\in C^1_c(\bar{\cU})     \]
       Note that since the second integration above is clearly bounded from above by $(1+\|h\|_{L^\infty}+\|Ric_M\|_{L^\infty})\cdot\|u\|_{\scB(\cU)}\cdot \|v\|_{\scB(\cU)}$, $\scE^s$ extends to be defined on $W^{1,2}(\cU)$.
       
       Let $v^{(s)}$ be the minimizer of $\scE^s$ among $\{v\in W^{1,2}(\cU): v = u \text{ on }\partial \cU\}$. Then by lemma \ref{Lem_Pre, Uniform L^2-noncon} and (\ref{Pre, Q^s_Sigma coercive}), $v^{(s)}\in W^{1,2}(\cU)$, satisfies the equation $-L^s v^{(s)} = (-L_{\Sigma}+h)u$ in the distribution sense on $\cU$ and $v^{(s)}\geq 0$ on $\partial \cU$. 
       Multiply the equation by $v^{(s)}_-:= -min\{0, v^{(s)}\} \in W^{1,2}_0(\cU)$ and take integration by part to get \[
        \int_{\cU} |\nabla v^{(s)}_-|^2 + (-P + s|A_{\Sigma}|^2)(v^{(s)}_-)^2 \leq 0  \]
       Since $-L_{\Sigma} + h$ is strictly positive on $\scB_0(\cU)$, so is $-L^s$. Thus we conclude that $v^{(s)}_- = 0$, i.e. $v^{(s)}\geq 0$.
       
       On the other hand, by the equation satisfied by $v^{(s)}$, we see $\sup \|v^{(s)}\|_{\scB(\cU)} \leq C(\Sigma, h)\|u\|_{\scB(\cU)}$ for every $s\in (0, 1)$. Hence, by (1), up to a subsequence of $s\to 0_+$, $v^{(s)}\to v^{(0)}$ in $L^2(\cU)$ for some $0\leq v^{(0)}\in \scB(\cU)$ equal to $u$ on $\partial \cU$ and satisfying the equation $(-L_{\Sigma} + h)(v^{(0)}- u) = 0$. By the nondegeneracy of $-L_{\Sigma}+h$ we see that $u = v^{(0)}\geq 0$, which completes the proof of (4).
       
       To prove (5), first observe that since $\Sigma$ satisfies the $L^2$-nonconcentration property in $U$, there exists a small neighborhood $\cV_0 \supset Sing(\Sigma)$ such that \[
        Q^h_{\Sigma}(\phi, \phi):= Q_{\Sigma}(\phi, \phi) + \int_{\Sigma} h\phi^2 \geq 0\ \ \ \forall \phi\in C_c^1(\cU\cap \cV_0)     \]
       Let $\cV_1\subset\subset M\setminus Sing(\Sigma)$ be an open subset such that $\cV_0 \cup \cV_1 \supset Clos(U)$. Let $\{\eta^2_0, \eta^2_1\}$ be the  partition of unity corresponding to $\cV_0, \cV_1$. Let $\beta(>-\infty)$ be the first eigenvalue of $-L_{\Sigma}+h$ on $\scB_0(\cV_1\cap \Sigma)$. Then by the virtual of (\ref{Linear, Bd Q from below by L^2}), for each $\phi\in C_c^{\infty}(\cU)$,  
       \begin{align}
       \begin{split}
        Q^h_{\Sigma}(\phi, \phi) 
        & = Q^h_{\Sigma}(\phi\eta_0, \phi\eta_0) + Q^h_{\Sigma}(\phi\eta_1, \phi\eta_1) + \int_{\Sigma} \phi^2(\eta_0\Delta\eta_0 + \eta_1\Delta\eta_1) \\
        & \geq \int_{\cV_1} \phi^2(\beta\eta_1^2+\eta_0\Delta\eta_0 + \eta_1\Delta\eta_1 )
       \end{split} \label{Linear_Q^h > beta L^2} 
       \end{align}
       Hence take $\vartheta\in C_c^\infty(\bar{\cU}; \RR_+)$ such that $\vartheta \geq 1+ |\beta + \eta_0\Delta\eta_0 + \eta_1\Delta\eta_1 |$ on $\cV_1$, we see from (\ref{Linear_Q^h > beta L^2}) that $-L_{\Sigma}+h+\vartheta$ is strictly positive on $\scB_0(\cU)$.
       
       To prove (6), first note that openness follows directly from (2); To see denseness, denote for simplicity $L^h:= L_\Sigma - h$ and $\cK:= Ker(-L^h)\subset \scB_0(\cU)$. By arbitrariness of $h$, it suffices to show that if $\cK \neq 0$, then there exists $f\in C_c^\infty(\cU)$ such that $-L^h + tf$ is non-degenerate on $\scB_0(\cU)$ for sufficiently small $|t|$.
       
       Choose $f$ by the following approach. Let $\{\cU_l\subset\subset \cU\}_{l\geq 1}$ be an increasing exhaustion of $\cU$, and $f_l\in C_c^\infty(\cU; [0,1])$ be a family of cut-off which restricts to $1$ on $\cU_l$. Clearly, $f_l\to 1$ on $\cU$ weakly in $L^\infty$, hence for sufficiently large $l$, the bilinear form on $\cK\times \cK$, $(\phi_1, \phi_2)\mapsto \int_{\cU} f_l\phi_1\phi_2\ dx$ is nondegenerate. Choose $f = f_l$ for such $l$.
       
       To show $-L^h + tf$ is non-degenerate for $|t|<<1$, suppose for contradiction that $\exists\ t_i\to 0$ and $u_i \in Ker(-L^h + t_if)\cap \scB_0(\cU)$ with $\|u_i\|_{L^2} = 1$. Let $v_i$ be the $L^2$-orthogonal projection of $u_i$ onto $\cK$; And let $\{E_j\}_{j\geq 1}$ be the $L^2$-orthogonal eigensubspaces of $-L^h$ given in (2), with corresponding eigenvalues $\lambda_1<\lambda_2<... \nearrow +\infty$. 
       Multiply the equation $L^h(u_i - v_i) = t_i f u_i$ with $\phi\in E_j$, integrate by parts and take sum over $j$ we see that \[
        \|u_i - v_i\|_{L^2}\leq \lambda^{-1}|t_i|\|f\|_{L^\infty}   \]
       where $\lambda := \inf\{|\lambda_j|: \lambda_j\neq 0\}>0$. Let $\hat{u}_i:= (u_i - v_i)/t_i$, since $\hat{u}_i$ satisfies the equation $L^h \hat{u}_i = fu_i$ and the $L^2$ bound above, by integration by part we have $\limsup_i\|\hat{u}_i\|_{\scB(\cU)}\leq C<+\infty$; Also, since $v_i\in\cK$ and $\|v_i\|_{L^2}\leq 1$, we have $\limsup_i\|v_i\|_{\scB} <+\infty$.

       By compactness assumption for $\scB_0(\cU)\hookrightarrow L^2$ we conclude that up to a subsequence, $\hat{u}_i\to \hat{u}_\infty$, $v_i\to v_\infty$ both in $L^2(\cU)$; And they satisfy $\|v_\infty\|_{L^2} = 1$, $L^h \hat{u}_\infty = fv_\infty$. Multiply by $\phi\in \cK$ and take integration by part, we see that \[
        \int_{\cU} f v_\infty\cdot \phi = 0\ \ \ \text{ for every }\phi\in \cK   \] 
       This contradicts to the choice of $f$.
      \end{proof}
      
      \begin{Rem}
       Proposition \ref{Prop_Linear, Basic prop for L^2-noncon} (4) is usually called \textbf{weak maximum principle} for $-L_{\Sigma} +h$. The assumption on boundary value of $u$, which is used in the proof to guarantee the solutions to perturbed equations fall in $W^{1,2}$, is technical and subtle and is conjectured to be dropped. 
       
       Proposition \ref{Prop_Linear, Basic prop for L^2-noncon} (6) shows that $-L_\Sigma + h$ being non-degenerate is a generic property.
      \end{Rem}

      We finish this subsection by discussing a basic corollary of the $L^2$-nonconcentration property. Following \cite{Dey19_Cptness}, the \textbf{index} of a singular minimal hypersurface $\Sigma$ in $U\subset M$ is defined by 
      \begin{align*}
       ind(\Sigma, U) := \sup {\Big\{ }\dim \scV  :&\ \scV \subset \scX_c(U) \text{\ be a linear subspace s.t. }\\
       &\ \ \frac{d^2}{ds^2}{\Big|}_{s=0}\scH^n(e^{sX}(\Sigma)) <0\ \ \forall X\in \scV \ {\Big \}}      
      \end{align*}
      \begin{Cor} \label{Linear, Finiteness of index assum L^2 noncon}
       Suppose $\Sigma$ satisfies the $L^2$-nonconcentration property in $U$. Let $\lambda_1 < \lambda_2 < \lambda_3 < ... \nearrow +\infty$ be the spectrum of $L_{\Sigma}$ on $\scB$ as in Proposition \ref{Prop_Linear, Basic prop for L^2-noncon}, where $h = 0$; and $E_j$ be the eigenspaces. Then \[
        ind(\Sigma, U) = \sum_{\lambda_j < 0}\dim E_j   \]
       In particular, $\Sigma$ has finite index.
      \end{Cor}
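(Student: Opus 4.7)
The plan is to relate the variational index $\mathrm{ind}(\Sigma,U)$ to the Morse index of the quadratic form $Q_\Sigma$ on $\scB_0(\cU)$ via the normal-component map $X \mapsto u_X := \langle X,\nu\rangle|_\Sigma$. Set $N := \sum_{\lambda_j<0}\dim E_j$. I would prove the two inequalities $\mathrm{ind}(\Sigma,U)\geq N$ and $\mathrm{ind}(\Sigma,U)\leq N$ separately.

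For the lower bound, pick an $L^2$-orthonormal family $\{\phi_1,\ldots,\phi_N\}$ spanning $\bigoplus_{\lambda_j<0}E_j$; by Proposition~\ref{Prop_Linear, Basic prop for L^2-noncon}(2), $Q_\Sigma$ is negative definite on the span. Since $\scB_0(\cU) = \overline{C^1_c(\cU)}^{\|\cdot\|_\scB}$ and smooth functions are $\scB$-dense by mollification, approximate each $\phi_i$ by $\psi_i \in C_c^\infty(\cU)$ in $\|\cdot\|_\scB$; for close enough approximation, $\{\psi_i\}$ remains linearly independent with $Q_\Sigma$ negative definite on its span. Since $\mathrm{spt}\,\psi_i \subset\subset \cU \setminus \mathrm{Sing}(\Sigma)$, I extend $\psi_i\nu$ to a smooth vector field $X_i \in \scX_c(U)$ using Fermi coordinates in a tubular neighborhood of $\mathrm{Reg}(\Sigma)$ containing $\mathrm{spt}\,\psi_i$. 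The classical smooth second variation formula for minimal $\Sigma$ gives $\tfrac{d^2}{ds^2}\big|_{s=0}\scH^n(e^{sX_i}\Sigma) = Q_\Sigma(\psi_i,\psi_i)$, and the same identity holds bilinearly for linear combinations. Hence $\mathrm{span}\{X_i\}\subset \scX_c(U)$ is $N$-dimensional with strictly negative second variation, yielding $\mathrm{ind}(\Sigma,U)\geq N$.

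For the upper bound, suppose $\scV \subset \scX_c(U)$ is a subspace on which the second variation is strictly negative on $\scV \setminus \{0\}$. Define $T\colon \scV \to \scB_0(\cU)$ by $T(X) := u_X$. The main technical task is to verify (a) that $u_X\in \scB_0(\cU)$ for every $X\in \scX_c(U)$, and (b) that $\tfrac{d^2}{ds^2}\big|_{s=0}\scH^n(e^{sX}\Sigma) = Q_\Sigma(u_X, u_X)$. For (a), let $\eta_\delta\colon \Sigma \to [0,1]$ be a cut-off vanishing in a $\delta$-neighborhood of $\mathrm{Sing}(\Sigma)$ and equal to $1$ outside a $2\delta$-neighborhood, with $|\nabla\eta_\delta|\leq C/\delta$. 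Since $X$ is smooth and compactly supported in $U$ (so $u_X$ vanishes on $\partial U\cap \Sigma$), $\eta_\delta u_X \in C^1_c(\cU) \subset \scB_0(\cU)$. Applying the stability inequality with a test function that equals $1$ on $\mathrm{spt}(X)\cap\Sigma$ gives $\int_{\mathrm{spt}X\cap\Sigma}|A_\Sigma|^2 < \infty$, so that $\int |\nabla u_X|^2 + |A_\Sigma|^2 u_X^2 <\infty$; expanding $Q_\Sigma(\eta_\delta u_X,\eta_\delta u_X)$ and using the codimension bound on $\mathrm{Sing}(\Sigma)$ to force $\int u_X^2|\nabla\eta_\delta|^2 \to 0$ shows $\sup_\delta\|\eta_\delta u_X\|_\scB<\infty$, and Lemma~\ref{Linear, Equi def of scB}(3) then gives $u_X \in \scB_0(\cU)$. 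For (b), write $\scH^n(e^{sX}\Sigma) = \int_{\mathrm{Reg}(\Sigma)} J^X_s\,dvol_\Sigma$ via the Jacobian, differentiate twice under the integral (dominated convergence applies since $X$ is smooth on $U$), and identify the integrand with that of $Q_\Sigma(u_X,u_X)$ by the standard pointwise computation; minimality eliminates the tangential terms and the global integration by parts is justified by cutting off with $\eta_\delta$ and passing to the limit as $\delta\to 0$.

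Granting (a) and (b), $T$ is injective: if $T(X) = 0$ then $X$ is tangential to $\Sigma$ along $\Sigma$, and (b) forces the second variation along $X$ to vanish, contradicting strict negativity unless $X = 0$. Moreover $T(\scV)\subset\scB_0(\cU)$ is a subspace on which $Q_\Sigma$ is negative definite, and by the spectral decomposition of Proposition~\ref{Prop_Linear, Basic prop for L^2-noncon}(2) any such subspace has dimension at most $N$. Hence $\dim\scV = \dim T(\scV) \leq N$, giving $\mathrm{ind}(\Sigma,U)\leq N$ and concluding the proof. The main obstacle is the technical verification of (a) and (b) near the singular set; both rely on stability-based integrability of $|A_\Sigma|^2$ and on the codimension control of $\mathrm{Sing}(\Sigma)$ (equivalently, on the $L^2$-nonconcentration hypothesis) to make the cut-off errors disappear in the limit.
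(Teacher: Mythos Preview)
Your proof is correct and matches the paper's argument. The paper organizes it as the chain $\mathrm{ind}_c(\Sigma,U) \leq \mathrm{ind}(\Sigma,U) \leq \mathrm{ind}_\scB(\Sigma,U)$ together with $\mathrm{ind}_c = \mathrm{ind}_\scB$ by density of $C_c^\infty(\cU)$ in $\scB_0(\cU)$, and for your steps (a) and (b) it simply invokes Lemma~\ref{Lem_Pre, Schoen-Simon trick of approx W^1,2} (giving $u_X\in W_0^{1,2}(\cU)\subset\scB_0(\cU)$) and \cite{SchoenSimon81} (for the second-variation identity) rather than re-deriving them via cut-offs.
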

      Combined with lemma \ref{Linear, L^2 noncon for iso} in the next subsection, this partially answers a question proposed by A. Neves.
      \begin{Cor} \label{Linear, Finiteness of index in dim 8}
       Let $\Sigma$ be a closed, locally stable minimal hypersurface in an $8$ dimensional manifold, then $\Sigma$ has finite index.
      \end{Cor}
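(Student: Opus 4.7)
The plan is to reduce the statement to Corollary \ref{Linear, Finiteness of index assum L^2 noncon} by verifying that in dimension $8$ every locally stable minimal hypersurface has only strongly isolated singularities, and hence satisfies the $L^2$-nonconcentration property via the (forthcoming) Lemma \ref{Linear, L^2 noncon for iso}. Once this is set up, finiteness of the index is immediate from the spectral picture of $L_\Sigma$ on $\scB_0(\Sigma)$.

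First I would argue that $Sing(\Sigma)$ is a finite set. By the Schoen--Simon regularity theorem, for a locally stable minimal hypersurface in an $(n+1)$-manifold one has $\scH^{n-7+\epsilon}(Sing(\Sigma))=0$ for every $\epsilon>0$; with $n+1=8$, this forces $\dim Sing(\Sigma)\leq 0$, so $Sing(\Sigma)$ is a discrete subset of the closed manifold $M$, hence finite.

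Next I would verify that every $p\in Sing(\Sigma)$ is strongly isolated. By \cite{SchoenSimon81} together with \cite[Theorem B]{Ilmanen96} (as recalled in Section \ref{Subsec, Min surface near sing}), any tangent varifold at $p$ is of the form $|C|$ for a stable minimal hypercone $C\subset\RR^8$ of multiplicity one. Applying the Schoen--Simon bound on the cross-section $S=C\cap \SSp^7$ (equivalently, dimension reduction on $C$), we find $\dim Sing(C)\leq 0$; but $Sing(C)$ is a cone through the origin, so $Sing(C)=\{0\}$, i.e.\ $C$ is a regular cone. Thus $p$ is strongly isolated in the sense of the definition in Section \ref{Sec, intro}.

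Now I would invoke Lemma \ref{Linear, L^2 noncon for iso} from the next subsection, which shows that a locally stable minimal hypersurface with only strongly isolated singularities satisfies the $L^2$-nonconcentration property in any $U\subset\subset M$ whose boundary transversely intersects $\Sigma$; covering $M$ appropriately (or taking $U$ to be a tubular neighborhood of $\Sigma$ with smooth boundary transverse to $\Sigma$), Corollary \ref{Linear, Finiteness of index assum L^2 noncon} then yields
\[
 ind(\Sigma, U) = \sum_{\lambda_j<0} \dim E_j < +\infty,
\]
since the eigenvalues $\lambda_j\nearrow +\infty$ and each $E_j$ is finite-dimensional. The main (and only real) obstacle is ensuring that the dimension-reduction step genuinely produces regular tangent cones and that the cited $L^2$-nonconcentration lemma covers this setting without extra hypotheses; both are routine given the framework built in Section \ref{Sec, Prelim} and the forthcoming Section \ref{Subsec, L^2 noncon for iso sing}.
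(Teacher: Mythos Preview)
Your proposal is correct and follows exactly the route the paper indicates: the paper simply remarks that the corollary follows by combining Corollary~\ref{Linear, Finiteness of index assum L^2 noncon} with Lemma~\ref{Linear, L^2 noncon for iso}, leaving implicit the (standard) dimension-reduction argument that in an $8$-manifold every singularity of a locally stable minimal hypersurface is strongly isolated. You have spelled out precisely those implicit steps, so there is no meaningful difference in approach.
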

      
      \begin{proof}[Proof of corollary \ref{Linear, Finiteness of index assum L^2 noncon}]
       First recall that by lemma \ref{Lem_Pre, Schoen-Simon trick of approx W^1,2}, if $X\in \scX_c(U)$, then $\phi:= X\cdot \nu \in W_0^{1,2}(\cU)$, and 
       \begin{align}
        \frac{d^2}{ds^2}{\Big|}_{s=0}\scH^n(e^{sX}(\Sigma)) < 0 \ \ \text{ iff }\ \   Q_{\Sigma} (\phi, \phi) < 0    \label{Linear, Equi notion of area decrease quadratically}
       \end{align}
       Hence, define 
       \begin{align*}
        ind_c(\Sigma, U) & := \sup \{\dim \scV_c : \scV_c \subset C_c^{\infty}(\Sigma\cap U) \text{ and } Q_{\Sigma}(\phi, \phi)< 0 \ \ \forall 0\neq\phi\in \scV_c \} \\
        ind_{\scB}(\Sigma, U) & := \sup \{\dim \scV_{\scB} : \scV_{\scB} \subset \scB_0(\cU) \text{ and } Q_{\Sigma}(\phi, \phi)< 0 \ \ \forall 0\neq\phi\in \scV_{\scB} \}
       \end{align*}
       Since for each $\phi \in C_c^{\infty}(\cU)$, $\phi\nu$ could be extended to a smooth vector field in $M$ supported in $U$, by (\ref{Linear, Equi notion of area decrease quadratically}) and proposition \ref{Prop_Linear, Basic prop for L^2-noncon}, \[
        ind_c(\Sigma, U) \leq ind(\Sigma, U) \leq ind_{\scB}(\Sigma, U) = \sum_{\lambda_j < 0}\dim E_j     \]
       On the  other hand, since $C_c^{\infty}(\cU)$ is dense in $\scB_0(\cU)$, we conclude that $ind_c(\Sigma, U) = ind_{\scB}(\Sigma, U)$. 
      \end{proof}


     \subsection{$L^2$-nonconcentration for strongly isolated singularities} \label{Subsec, L^2 noncon for iso sing}
     
      In this and next subsection, we keep assuming that $\Sigma$ has only strongly isolated singularities. The goal of this subsection is the following 
      \begin{Lem} \label{Linear, L^2 noncon for iso}
       Suppose $\Sigma$ has only strongly isolated singularities and $\partial U \cap Sing(\Sigma) = \emptyset$. Then, $\Sigma$ satisfies the $L^2$-nonconcentration property in $U$.
      \end{Lem}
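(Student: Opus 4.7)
The singular set $Sing(\Sigma)\cap U$ consists of finitely many strongly isolated points $\{p_1,\dots,p_N\}$ lying in the interior of $U$. By a standard partition-of-unity argument together with the uniform lower bound of Lemma~\ref{Linear, essential positivity of Q}, the statement reduces to the following local assertion at each $p=p_i$: for every $\tau>0$ there exists $\delta>0$ such that
\[ \int_{\Sigma\cap B^M_{\delta}(p)} \phi^2 \leq \tau\,\|\phi\|^2_{\scB} \qquad \forall\,\phi\in C^1_c(\cU). \]

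Fix such a $p$, let $C\subset T_p M\cong\RR^{n+1}$ be its regular tangent cone with cross section $S=C\cap\partial\BB^{n+1}_1$, and let $\mu_1\geq -(\tfrac{n-2}{2})^2$ be the lowest eigenvalue of $-\cL_S$. By Lemma~\ref{Lem_Pre, geom of asymp at 0}, in the conic coordinates $(r,\omega)\in(0,\tau_p(1))\times S$ the induced metric of $\Sigma$ is $dr^2+r^2h_r$ with $\|r(h_r-g_S)\|_{C^3_*}\to 0$ as $r\to 0$ and $|A_\Sigma|^2=r^{-2}(|A_S|^2+o_\delta(1))$; the ambient Ricci curvature is bounded. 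Setting $V_\delta:=\Sigma\cap B^M_\delta(p)$ and reading the cone quadratic form $Q_C$ through this parametrization, one obtains the comparison
\[ \bigl|Q_\Sigma(\phi,\phi)-Q_C(\phi,\phi)\bigr| \leq o_\delta(1)\int_{V_\delta}\frac{\phi^2}{r^2}+K\int_{V_\delta}\phi^2 \]
for every $\phi\in C^1_c(V_\delta)$, where $o_\delta(1)\to 0$ as $\delta\to 0$ and $K=K(\Sigma,M,g)$.

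On the cone, expand $\phi(r,\omega)=\sum_{k\geq 1}\phi_k(r)w_k(\omega)$ in the $L^2(S)$-orthonormal eigenbasis of $-\cL_S$; then $Q_C(\phi,\phi)=\sum_k Q_k(\phi_k)$ with $Q_k(f):=\int_0^\infty(r^{n-1}(f')^2+\mu_k r^{n-3}f^2)\,dr$. The classical one-dimensional Hardy inequality $\int r^{n-1}(f')^2\,dr\geq (\tfrac{n-2}{2})^2\int r^{n-3}f^2\,dr$ yields $\int r^{n-3}\phi_k^2\,dr\leq C\,Q_k(\phi_k)$ for every mode with $\mu_k+(\tfrac{n-2}{2})^2>0$; this covers all modes when $C$ is strictly stable, and all modes with $k\geq 2$ in general (since $\mu_2>\mu_1$). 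The remaining borderline case $\mu_1=-(\tfrac{n-2}{2})^2$, $k=1$, is handled by the improved (logarithmic) Hardy inequality
\[ \int_0^R r^{n-1}(f')^2\,dr - \Bigl(\tfrac{n-2}{2}\Bigr)^2\int_0^R r^{n-3}f^2\,dr \geq \tfrac{1}{4}\int_0^R \frac{r^{n-3}f^2}{\log^2(R/r)}\,dr, \]
valid for $f\in C^1_c(0,R)$ with $R$ fixed larger than $\tau_p(1)$. Since the function $r\mapsto r^2\log^2(R/r)$ is monotone increasing on $(0,R/e)$, the elementary bound $r^2\leq \delta^2\log^2(R/\delta)\log^{-2}(R/r)$ on $(0,\delta)$ combines with the two preceding Hardy-type inequalities to give, in either subcase,
\[ \int_{V_\delta}\phi^2 \leq C_1\,\delta^2\log^2(R/\delta)\,Q_C(\phi,\phi) \]
for every $\phi$ supported in $V_\delta$, with $C_1=C_1(C,R)$.

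Given arbitrary $\phi\in C^1_c(\cU)$, multiply by a smooth cut-off $\eta$ supported in $V_{2\delta}$ with $\eta\equiv 1$ on $V_\delta$; the commutator $|Q_\Sigma(\eta\phi,\eta\phi)-Q_\Sigma(\phi,\phi)|$ is controlled by $K'\|\phi\|^2_{L^2}$ via the computation in the proof of Lemma~\ref{Linear, essential positivity of Q}. Combining this with the previous two displays, and absorbing the $o_\delta(1)\int\phi^2/r^2$ error into $Q_C(\eta\phi,\eta\phi)$ using the same Hardy inequalities (possible once $o_\delta(1)\cdot C_1\log^2(R/\delta)<1/2$), gives
\[ \int_{V_\delta}\phi^2 \leq 2C_1\,\delta^2\log^2(R/\delta)\,\|\phi\|^2_{\scB}. \]
Choosing $\delta$ so that $2C_1\delta^2\log^2(R/\delta)\leq\tau$ and summing over the finitely many singularities completes the proof. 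The main technical obstacle is exactly the borderline stability case $\mu_1=-(\tfrac{n-2}{2})^2$, where Lemma~\ref{Lem_Pre, Hardy-typed inequ} degenerates to the trivial bound $Q_C\geq 0$; the quantitative control comes from the logarithmic Hardy improvement, and carefully coupling this improvement to the conic-coordinate perturbation from $Q_C$ to $Q_\Sigma$ is what makes the final absorption step work.
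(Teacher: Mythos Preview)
Your reduction to a local estimate and your treatment of the strictly stable case are fine, and indeed match the paper's brief handling of that subcase. The gap is in the borderline case $\mu_1=-(\tfrac{n-2}{2})^2$, precisely at the ``absorbing'' step.

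The logarithmic Hardy inequality gives you control of $\int_{V_\delta}\phi^2/(r^2\log^2(R/r))$ by $Q_C(\phi,\phi)$, but \emph{not} of $\int_{V_\delta}\phi^2/r^2$. The comparison error $|Q_\Sigma-Q_C|$, however, contains exactly a term of the form $o_\delta(1)\int\phi^2/r^2$ (coming from $|A_\Sigma|^2-|A_C|^2$ and from the metric perturbation on the gradient). To absorb this into $Q_C$ you would need
\[
o_\delta(1)\int_{V_\delta}\frac{\phi^2}{r^2}\ \leq\ \tfrac12\,Q_C(\phi,\phi)\ \leq\ C\int_{V_\delta}\frac{\phi^2}{r^2\log^2(R/r)},
\]
which forces $o_\delta(1)\cdot\sup_{\mathrm{supp}\,\phi}\log^2(R/r)$ to be small. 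Since $\phi\in C^1_c(\cU)$ may be supported arbitrarily close to $r=0$, this supremum is not bounded in terms of $\delta$; your stated condition $o_\delta(1)\cdot C_1\log^2(R/\delta)<\tfrac12$ uses the wrong endpoint. Concretely, test with $\phi(r,\omega)=r^{-(n-2)/2}\chi(r)w_1(\omega)$ where $\chi$ is a cutoff on $(\epsilon,\delta/2)$: then $Q_C(\phi,\phi)=O(1)$ while $\int\phi^2/r^2\sim\log(\delta/\epsilon)\to\infty$ as $\epsilon\to0$, so no fixed $o_\delta(1)>0$ allows the absorption uniformly in $\phi$. Without a quantitative decay rate for the metric toward the cone (which Lemma~\ref{Lem_Pre, geom of asymp at 0} does not provide), the step cannot close.

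The paper avoids comparing $Q_\Sigma$ to $Q_C$ altogether in the borderline case. It works intrinsically on $\Sigma$: for each $s>0$ the perturbed form $Q^s_\Sigma=Q_\Sigma+s\int|A_\Sigma|^2\phi^2$ is coercive, so the weighted first eigenvalue problem $\inf\{Q^s_\Sigma(\phi,\phi):\int\alpha\phi^2=1\}$ (with $\alpha\to\infty$ at $p$) has a positive minimum $\lambda_s$ and a minimizer $u^{(s)}$. A barrier argument (Lemma~\ref{Linear, Uniform lower est on u^(s)}) gives a uniform-in-$s$ lower bound on $u^{(s)}$ over a fixed annulus, so the limit $u_0=\lim_{s\to0}u^{(s)}$ is nontrivial. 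The crux is then the claim $\lambda_0:=\lim\lambda_s>0$: if $\lambda_0=0$, then $u_0\in\scB_0(B_{2r_2})$ would be a nontrivial minimizer of $Q_\Sigma$ on the larger ball $B_{2r_2}$ with $Q_\Sigma(u_0,u_0)=0$, hence $L_\Sigma u_0=0$ on $B_{2r_2}$ while $u_0\equiv0$ on $B_{2r_2}\setminus B_{r_2}$, contradicting the strong maximum principle. From $\lambda_0>0$ one gets $Q_\Sigma(\phi,\phi)\geq\lambda_0\int\alpha\phi^2$, and the divergence of $\alpha$ at $p$ immediately yields the $L^2$-nonconcentration. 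This argument never needs to control $\int\phi^2/r^2$ by $Q_\Sigma$, which is exactly the estimate that fails in your approach.
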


      The proof is based on a refined analysis near singularities. Since $\Sigma$ has only strongly isolated singularities, by a cut-off argument similar to the proof of lemma \ref{Linear, essential positivity of Q}, it suffices to work near each singularity $p$. In the proof below, we keep using the parametrization and notations of $\Sigma\cap B^M_{\tau_\Sigma(1)}(p)$ as in section \ref{Subsec, Min surface near sing}. Recall that $\cB_r(p)$ denotes the ball in $\Sigma$ with intrinsic distance, and since we have parametrized $\Sigma \cap B^M_{\tau_\Sigma(1)}(p)$ by its tangent cone $C_p \subset T_p M = \RR^{n+1}$ as in lemma \ref{Lem_Pre, geom of asymp at 0}, $p = 0$ and $\cB_r(p)$ is abbreviated as $B_r$. 
      Let $r_2 = r_2(\Sigma, M, g)\in (0, \{\tau(1/10n)/4)$ be fixed such that $\Sigma$ is stable in $B^M_{3r_2}(p)$.

      We shall prove lemma \ref{Linear, L^2 noncon for iso} when $U \cap \Sigma = B_{r_2}$. To do so, first observe that if $C_p$ is strictly stable, then by lemma \ref{Lem_Pre, Hardy-typed inequ} and \ref{Lem_Pre, geom of asymp at 0}, $\exists r_3 = r_3(\Sigma, p)\in (0, r_2)$ and $\delta_{\Sigma}>0$ such that \[
       \delta_{\Sigma}\int_{\Sigma} |\nabla \phi|^2 \leq Q_{\Sigma}(\phi, \phi)\ \ \ \forall \phi\in C^1_c(B_{r_3})     \]
      Hence lemma \ref{Linear, L^2 noncon for iso} holds directly by lemma \ref{Lem_Pre, Uniform L^2-noncon} and a cut off argument. Thus, it suffices to deal with non-strictly stable tangent cones. From now on, assume $\mu_1(C_p) = -(n-2)^2/4$ (see the notations in section \ref{Subsec, Geom of cone}). 
      
      Consider for each $s \in (0,1)$, the perturbed Jacobi operator \[
        L_{\Sigma}^{s} := L_{\Sigma} - s|A_{\Sigma}|^2 = \Delta_{\Sigma} + (1-s)|A_{\Sigma}| + Ric(\nu,\nu)    \]
      and its assciated quadratic form \[
        Q^s_{\Sigma}(\phi, \phi) = Q_{\Sigma}(\phi, \phi) + s\int_{\Sigma}|A_{\Sigma}|^2\phi^2 = \int_{\Sigma} |\nabla^{\Sigma}\phi|^2 - {\big(}(1-s)|A_{\Sigma}|^2 + Ric(\nu, \nu){\big)}\phi^2    \]

      By Lemma \ref{Lem_Pre, Uniform L^2-noncon} and (\ref{Pre, Q^s_Sigma coercive}), there's a continuous function $1 \leq \alpha(r,\omega) \leq 1 + \log(r_1/r)$ on $B_{2r_2}$ (independent of choice of $s$) such that 
      \begin{align} 
        \alpha(x)\to \infty\ \ \ \text{ as } x\to Sing(\Sigma)= \{ p \}  \label{Linear, Weight alpha diverge near sing} 
      \end{align}
      and that for each $\epsilon > 0$, there's a neighborhood $V_{\epsilon, s} \ni p$ such that \[
        \int_{\Sigma\cap V_{\epsilon, s}} \phi^2\cdot \alpha \leq \epsilon\cdot Q_{\Sigma}^s(\phi, \phi)\ \ \ \ \forall \phi\in C_c^1( B_{r_2} )   \]

      Hence, by the similar argument of proposition \ref{Prop_Linear, Basic prop for L^2-noncon}, there's a unique $u^{(s)}\in C_{loc}^0(B_{2r_2})$ with 
      \begin{align} 
       u^{(s)}(r, \cdot) 
        \begin{cases} =0  & \text{ if } r\geq r_2 \\
            >0  & \text{ if } r<r_2
        \end{cases};\ \ \ \ \ \int_{\Sigma} (u^{(s)})^2\cdot \alpha = 1    \label{Linear, Sol u^(s) vanish outside, pos inside} 
      \end{align}
      and 
      \begin{align}
       \lambda_s := Q_{\Sigma}^s(u^{(s)}, u^{(s)}) = \inf\{Q^s_{\Sigma}(\phi, \phi) : \phi\in C_c^1(B_{r_2}),\ \int_{\Sigma} \phi^2\cdot \alpha = 1\}  \label{Linear, Sol u^(s) achieve inf Q^s}
      \end{align}
      In particular, $u^{(s)}$ satisfies the equation 
      \begin{align}
        -L^s_{\Sigma}u^{(s)} = \lambda_s \alpha \cdot u^{(s)} \ \ \ \ \text{ on } B_{r_2}  \label{Linear, Sol u^(s) of L^s=0} 
      \end{align}
      Also by definition and stability of $\Sigma$ in $B_{2r_2}$, for any $0<s_1<s_2<1$ and any $\phi \in C^1_c( B_{r_2})$, $0\leq Q_{\Sigma}^{s_1}(\phi, \phi) \leq Q_{\Sigma}^{s_2}(\phi, \phi)$. Together with (\ref{Linear, Sol u^(s) vanish outside, pos inside}) we see, $0\leq \lambda_{s_1}<\lambda_{s_2}$.

      We need a uniform lower bound on $u^{(s)}$ away from singularities. To derive it, we need the following lemma \ref{Linear, Uniform lower est on u^(s)}. In fact, this is the only place where structures of strongly isolated singularity is essentially made use of.
      \begin{Lem} \label{Linear, Uniform lower est on u^(s)}
       $\exists\ s_0 = s_0(\Sigma, M, g)\in (0,1)$, $\delta = \delta(\Sigma, M, g) > 0$ s.t. for any $0<s<s_0$, \[
        \sup \{ u^{(s)}(r,\omega)^2 : \frac{r_2}{2}< r< r_2 \} \geq \delta    \]
      \end{Lem}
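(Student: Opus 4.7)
The plan is a contradiction argument: assume there is a sequence $s_i\searrow 0$ with $M_i:=\sup_{r_2/2<r<r_2}(u^{(s_i)})^2\to 0$, and show this forces the weighted $L^2$-mass of $u^{(s_i)}$ to concentrate at $p$ in a way inconsistent with the cone-level asymptotics of positive solutions. As a first step, testing $Q^{s_i}_\Sigma$ against $\chi(r)\max(r,\varepsilon)^{-(n-2)/2}w_1(\omega)$ with $\varepsilon=e^{-1/s_i}$ shows $\lambda_{s_i}$ is uniformly bounded, so the right-hand side $\lambda_{s_i}\alpha u^{(s_i)}$ has bounded multiplier (using $\alpha\leq 1+\log(r_2/r)$). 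Rescaling dyadic annuli $A_{\rho,2\rho}(p)\subset\Sigma$ to unit size, the coefficients of $L^{s_i}_\Sigma$ converge uniformly to those of $L^0_{C_p}$ by lemma \ref{Lem_Pre, geom of asymp at 0}; Harnack then gives a scale-independent constant, and iterating across scales promotes $M_i\to 0$ into $u^{(s_i)}\to 0$ uniformly on every fixed compact subset of $\Sigma\cap B^M_{r_2}(p)\setminus\{p\}$.

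For the second step I would expand $u^{(s_i)}$ in its leading cone mode near $p$. Since $\mu_1(C_p)=-((n-2)/2)^2$, for $s>0$ the first eigenvalue $\mu_1^s$ of $-(\Delta_S+(1-s)|A_S|^2)$ strictly exceeds $-((n-2)/2)^2$, so the perturbed indices $\gamma^\pm(s)=-(n-2)/2\pm b(s)$ are real, distinct, and both tend to $-(n-2)/2$. Setting $\sigma_i:=\gamma^+(s_i)-b(s_i)/2$, which stays in $\RR\setminus\Gamma_{C_p}$ uniformly, and treating $\scR_i:=L^{s_i}_\Sigma-L^{s_i}_{C_p}$ as a small perturbation on a fixed small ball $B_{r_3}$, corollary \ref{Cor_Pre, sol perturbed Jac equ, prescribed asymp} applied to $L^{s_i}_{C_p}u^{(s_i)}+\scR_i u^{(s_i)}=-\lambda_{s_i}\alpha u^{(s_i)}$ yields a decomposition
\[
u^{(s_i)}(r,\omega)=a_i\,r^{\gamma^+(s_i)}w_1^{s_i}(\omega)+q_i(r,\omega),\qquad \|q_i\|_{W^{2,2}_{\sigma_i}(B_{r_3})}\leq C,
\]
with $a_i\geq 0$ by the positivity of $u^{(s_i)}$ and a Hopf-type argument as in lemma \ref{Lem_Pre, pos Jacob field}. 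Because by the first step $\int_{\Sigma\setminus B_{r_3}}(u^{(s_i)})^2\alpha\to 0$, the normalization $\int(u^{(s_i)})^2\alpha=1$ concentrates entirely in $B_{r_3}$; noting that the $L^2_{\sigma_i}$-weight $r^{-2-b(s_i)}$ dominates the logarithmic weight $\alpha$ near $p$ (so the $\alpha$-weighted norm of $q_i$ is controlled by its $W^{2,2}_{\sigma_i}$-norm), one gets $a_i\geq c>0$ along the subsequence: otherwise the full normalization integral would have a uniform upper bound tending to $0$.

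Evaluating the expansion at $r=r_3/2$, the bounds $a_i\geq c$, $\gamma^+(s_i)\to -(n-2)/2$, $w_1^{s_i}\to w_1>0$ in $C^2(S)$, and $\|q_i(r_3/2,\cdot)\|_{L^\infty(S)}\to 0$ (by a weighted trace estimate and interior elliptic control of $q_i$) together yield $u^{(s_i)}(r_3/2,\cdot)\geq c'>0$, contradicting the first step. The main obstacle is the uniform asymptotic expansion: as $s\to 0$ the exponent $\gamma^+(s)\to -(n-2)/2\in\Gamma_{C_p}$ is resonant, so the perturbation constant in corollary \ref{Cor_Pre, sol perturbed Jac equ, prescribed asymp} degenerates if $\sigma_i\to\Gamma_{C_p}$. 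Keeping $\sigma_i=\gamma^+(s_i)-b(s_i)/2$ at uniform positive distance from the spectrum, and shrinking $r_3$ so that $\scR_i$ stays below the threshold $\varepsilon(C_p,\sigma_i)$ of that corollary uniformly in $i$, is what makes the matching step quantitative.
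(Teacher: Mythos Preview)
Your approach has a genuine gap at the spectral–decomposition step, and it is precisely the resonance you flag as ``the main obstacle'' that breaks the argument. You choose $\sigma_i=\gamma^+(s_i)-b(s_i)/2$, which lies at distance $b(s_i)/2$ from \emph{both} $\gamma^+(s_i)$ and $\gamma^-(s_i)$ in the spectrum of the perturbed cone operator $L^{s_i}_{C_p}$; since $b(s_i)\to 0$, this distance collapses. The constant $\varepsilon(C,\sigma)$ in corollary~\ref{Cor_Pre, sol perturbed Jac equ, prescribed asymp} (and the estimate constant in lemma~\ref{Lem_Pre, Ext sol of Jac, prescribed asymp}) depends on $\mathrm{dist}(\sigma,\Gamma)$ and degenerates as that distance shrinks. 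So you cannot simultaneously keep $\sigma_i$ in the gap $(\gamma^-(s_i),\gamma^+(s_i))$ \emph{and} at uniform positive distance from the spectrum; shrinking $r_3$ to beat a threshold that is itself going to zero forces $r_3\to 0$, which destroys the fixed evaluation radius in your final step. If instead you work with the unperturbed operator $L^0_{C_p}$, its spectrum has the double root $-(n-2)/2$, and there is no gap at all to place $\sigma_i$ in. Either way the uniform bound $\|q_i\|_{W^{2,2}_{\sigma_i}}\leq C$ is not available.

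There is a second, independent gap: even if you had $\|q_i\|_{W^{2,2}_{\sigma_i}}\leq C$ uniformly, a trace or interior elliptic estimate at $r=r_3/2$ would only give $|q_i(r_3/2,\cdot)|\leq C$, not $q_i(r_3/2,\cdot)\to 0$. Nothing in your setup forces the remainder to vanish in the limit. Indeed, your Step~1 already gives $u^{(s_i)}(r_3/2,\cdot)\to 0$, so if $a_i\geq c>0$ then necessarily $q_i(r_3/2,\cdot)\to -c\,(r_3/2)^{-(n-2)/2}w_1\neq 0$; you cannot have both. The paper bypasses all of this with a direct barrier: it shows $D\cdot U(r)w_1$ with $U(r)=(r/r_2)^{-(n-1)/2}-(r/r_2)^{-(n-3)/2}$ is an $L^s_\Sigma$-subsolution, uses the weak maximum principle to get $m^{(s)}(r)\leq C\,U(r)\,m^{(s)}(r_2/2)$ for $r<r_2/2$, and then the normalization $\int(u^{(s)})^2\alpha=1$ together with $\int_0^1 r^{n-1}\log r\cdot U(r)^2\,dr<\infty$ gives the lower bound directly. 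The point is that the barrier exponents $-(n\pm 1)/2$ are strictly off the critical $-(n-2)/2$, so the comparison is uniform in $s$ with no resonance to manage.
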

      \begin{proof}
       For each $0<r<r_2$, $s\in (0,1)$, let $m^{(s)}(r):= \inf\{u^{(s)}(r,\omega): \omega\in S\}$. By (\ref{Linear, Sol u^(s) vanish outside, pos inside}) and Harnack inequality, there's a $C_1 = C_1(\Sigma, M, g)>0$ (independent of $s$ and $r$) such that 
       \begin{align}
        u^{(s)}(r, \omega) \leq C_1 m^{(s)}(r)\ \ \ \ \forall 0<r< r_2/2,\ \forall s\in (0,1),\ \forall \omega\in S   \label{Linear, Harnack inequ for u^(s)}
       \end{align} 
       On the other hand, let $0< s_0<<1$ such that \[
         r^2|A_{\Sigma}|_{(r,\cdot)}^2 w_1 \leq \frac{1}{10n\cdot s_0}\ \ \ \ \forall\ 0< r< r_1    \]
       Hence, let $U(r):= (\frac{r}{r_2})^{-(n-1)/2} - (\frac{r}{r_2})^{-(n-3)/2}$. For any $s\in (0, s_0)$, any constant $D > 0$,  
       \begin{align*}
         & L_{\Sigma}^s ( B\cdot U(r)w_1(\omega)) \\ 
       =\ & D\cdot {\big(} U''(r)w_1(\omega) + \frac{U'(r)}{r}H_r(\omega)w_1(\omega) + \frac{U(r)}{r^2}(\cL_r w_1 - sr^2|A_{\Sigma}|^2 w_1) {\big)} \\
       \geq\ & D\cdot {\big(} U''(r) + (n-1 + \frac{1}{10n})\frac{U'(r)}{r}+ (\mu_1 -\frac{2}{10n})\frac{U(r)}{r^2} {\big)} w_1(\omega) \\
       \geq\ & 0\ \ \geq -\lambda_s\alpha\cdot u^{(s)} = L_{\Sigma}^s u^{(s)}
       \end{align*}
       on $B_{r_2}$, where the first equality and the first inequality follow from lemma \ref{Lem_Pre, geom of asymp at 0} and the choice of $r_2$, the second inequality is a direct calculation since $\mu_1 = -(n-2)^2/4$ as assumed, and the third is by (\ref{Linear, Sol u^(s) vanish outside, pos inside}). Hence, by weak maximum principle, $\forall 0< r_3< r_2$, if \[
        D\cdot U(r_3)\sup_S w_1 \leq m^{(s)}(r_3)   \]
       then 
       \begin{align}
        D\cdot U(r)\sup_S w_1 \leq m^{(s)}(r)\ \ \ \ \forall r_3 < r <r_2  \label{Linear, Growth pre-est for u^(s)}  
       \end{align}
       Choose $D>0$ such that \[
        D\cdot U(r_2/2)\sup_S w_1 = m^{(s)}(r_2/2)   \]
       By (\ref{Linear, Growth pre-est for u^(s)}), for any $0<r<r_2/2$,
       \begin{align}
        m^{(s)}(r) \leq D\cdot U(r)\sup_S w_1 = m^{(s)}(r_2/2)\cdot\frac{U(r)}{U(r_2/2)} \label{Linear, Growth est for u^(s)}
       \end{align}
       Thus, by (\ref{Linear, Sol u^(s) vanish outside, pos inside}),
       \begin{align*}
        1 = \int_{\Sigma} (u^{(s)})^2\cdot \alpha = & \int_{ B_{r_2}\setminus B_{r_2/2}} (u^{(s)})^2\cdot \alpha + \int_{B_{r_2/2}} (u^{(s)})^2\cdot \alpha \\
          \leq &\ C(\Sigma, M, g){\Big(} \sup_{B_{r_2}\setminus B_{r_2/2}} (u^{(s)})^2  + \int_0^{r_2/2} r^{n-1}\log r \int_S (u^{(s)})^2(r, \omega)\ d\omega\ dr {\Big)} \\
          \leq &\ C(\Sigma, M, g){\Big(} \sup_{B_{r_2}\setminus B_{r_2/2}} (u^{(s)})^2  + \int_0^{r_2/2} r^{n-1}\log r \cdot m^{(s)}(r)^2\ dr {\Big)} \\
          \leq &\ C(\Sigma, M, g, r_2){\Big(} \sup_{B_{r_2}\setminus B_{r_2/2}} (u^{(s)})^2  + \int_0^{r_2/2} r^{n-1}\log r \cdot m^{(s)}(r_2/2)^2 U(r)^2\ dr {\Big)} \\
          \leq &\ C_2(\Sigma, M, g, r_2)\cdot \sup_{ B_{r_2}\setminus B_{r_2/2}} (u^{(s)})^2  
       \end{align*}
       Where the first inequality follows by definition of $\alpha$ and lemma \ref{Lem_Pre, geom of asymp at 0}, the second inequality follows from (\ref{Linear, Harnack inequ for u^(s)}), the third inequality is by (\ref{Linear, Growth est for u^(s)}) and the last inequality is by \[
        \int_0^1 r^{n-1}\log r\cdot U(r)^2 < +\infty \ dr   \] 
       Hence, the lemma is proved by choosing $\delta = C_2(\Sigma, M, g, r_2)^{-1}$.
      \end{proof}

      \begin{proof}[Proof of lemma \ref{Linear, L^2 noncon for iso}.] 
      Recall that $u^{(s)}$ satisfies the equation (\ref{Linear, Sol u^(s) of L^s=0}) and (\ref{Linear, Sol u^(s) vanish outside, pos inside}), where $\lambda_s > 0$ is decreasing in $s$.
      Let $s\to 0$. Then by classical elliptic estimates \cite{GilbargTrudinger01}, up to a subsequence, 
      \begin{align} 
       u^{(s)} \to u_0\ \ \text{ in } C^{\infty}_{loc}(B_{r_2})\cap W^{1,2}_{loc}\cap C^0_{loc}(B_{r_1}),\ \ \ \ \lambda_s\to \lambda_0  \label{Linear, Sol u^(s) converges to u_0} 
      \end{align}
      Clearly, \[ 
       \lambda_0 \geq \inf\{Q_{\Sigma}(\phi, \phi) : \phi\in C_c^1(\Sigma\cap \BB_{r_2}),\ \int_{\Sigma} \phi^2\cdot \alpha = 1\} \geq 0   \] 
      and the equation holds 
      \begin{align}
       -L_{\Sigma} u_0 = \lambda_0\alpha \cdot u_0   \label{Linear, Sol u_0 of L = -lambda_0}
      \end{align}
      Also, by lemma \ref{Linear, Uniform lower est on u^(s)}, $u_0 \neq 0$.\\
      \textbf{Claim}: $\lambda_0 > 0$. \\
      We first complete the proof assuming this claim to be true: For each $\epsilon > 0$, let $\cV_{\epsilon} := \{x\in B_{r_2}: \alpha(x) >1/(\lambda_0\cdot \epsilon) \}$. By (\ref{Linear, Weight alpha diverge near sing}), $\cV_{\epsilon} \neq \emptyset$ is a neighborhood of $Sing(\Sigma) \cap B^M_{r_1}(p) = \{ p \}$. For any $\phi \in C_c^1(B_{r_2})$ fixed, 
      \begin{align*}
       Q_{\Sigma}(\phi, \phi) =  \lim_{s\to 0} Q_{\Sigma}^s(\phi, \phi) 
         \geq  \lim_{s\to 0} \lambda_s\cdot \int_{\Sigma}\phi^2\cdot \alpha 
          = & \lambda_0 \cdot \int_{\Sigma}\phi^2\cdot \alpha \\
         \geq & \frac{\lambda_0}{\lambda_0 \epsilon}\cdot \int_{\cV_{\epsilon}} \phi^2 
         = \frac{1}{\epsilon}\int_{\cV_{\epsilon}} \phi^2
      \end{align*}
      Hence, $\Sigma$ satisfies the $L^2$-nonconcentration property in $B_{r_2}$.\\      
      \textbf{Proof of the claim}: We shall argue by contradiction. Suppose $\lambda_0 = 0$. By (\ref{Linear, Sol u^(s) vanish outside, pos inside}), (\ref{Linear, Sol u^(s) converges to u_0}) and (\ref{Linear, Sol u_0 of L = -lambda_0}),
      \begin{align}
        L_{\Sigma}u_0 = 0\ \ \text{ on } B_{r_2};\ \ \ \ 
        u_0 (r, \cdot) \begin{cases}    = 0\ & \text{ if } r>r_2 \\
           \geq 0\ & \text{ if } 0<r<r_2     \end{cases} \label{Linear, Sol u_0 of L = 0}
      \end{align}  
      Also notice that for $s\in (0, 1/2)$, \[
        Q_{\Sigma}(u^{(s)}, u^{(s)}) \leq Q_{\Sigma}^s(u^{(s)}, u^{(s)}) = \lambda_s \leq \lambda_{1/2}    \]
      Hence by (\ref{Linear, Sol u_0 of L = 0}) and lemma \ref{Linear, Equi def of scB}, $u_0\in \scB_0(B_{2r_2})$ and $Q_{\Sigma}(u_0, u_0) = 0 $.

      On the other hand, recall that $\Sigma$ is stable in $B_{2r_2}$, i.e. \[
        Q_{\Sigma}(\phi, \phi) \geq 0 \ \ \ \ \forall \phi \in C_c^1 (B_{2r_2})   \]
      Therefore, $u_0 \neq 0$ achieves the infimum of $Q_{\Sigma}$ in $\scB_0(B_{2r_2})$. The Euler-Lagrangian equation gives \[
       L_{\Sigma}u_0 = 0 \ \  \text{ on } B_{2r_2}    \]
       However, (\ref{Linear, Sol u_0 of L = 0}) contradict to the strong maximum principle of $u_0$ in $B_{2r_2}$. This completes the proof of the claim.  
      \end{proof}
      \begin{Rem}
       The proof of the Claim above also tells that $-L_{\Sigma}$ is strictly positive on $\scB_0(B_{r_2})$.
      \end{Rem}

  
     \subsection{Asymptotic analysis near singularities} \label{Subsec, Linear asymp near sing}
      The aim of this subsection is to study the asymptotic behavior of special functions near strongly isolated singularities. They will be used in  section \ref{Sec, Asymp & Asso Jac field} to describe associated Jacobi fields and in section \ref{Sec, One-sided Perturb} to construct desired one-sided perturbations. We keep the assumptions on $\Sigma$, $(M,g)$, $U$, $p\in Sing(\Sigma)\cap U$ and $r_1>r_2>0$ as in section \ref{Subsec, L^2 noncon for iso sing}. Also keep parametrizing $\Sigma\cap B^M_{r_1}(p)$ by the tangent cone $C_p$ and using the notation that $B_r:= \cB_r(p)$. Recall that $-L_{\Sigma}$ is strictly positive on $\scB_0(B_{r_2})$. 
      For later applications, we shall consider operators of a more general form: $L^h:= L_{\Sigma} - h$ and $Q^h(\phi, \phi):= Q_{\Sigma}(\phi, \phi) + \int_{\Sigma}h\cdot\phi^2$, where $h\in L^{\infty}(\Sigma)$ be fixed throughout this subsection. Assume that $-L^h$ is also strictly positive on $\scB_0(B_{r_2})$.
      
      We first slightly extend the function space we are dealing with.
      
      \begin{Lem} \label{Lem_Linear, Loc Green's func unique}
       Up to a normalization, there exists a unique $G \in C_{loc}^{\infty}(Clos(B_{r_2})$ such that $L^h G = 0$ on $B_{r_2}(p)$, $G>0$ on $B_{r_2}$, $G = 0$ on $\partial B_{r_2}$. 
      \end{Lem}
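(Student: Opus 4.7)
The plan is to build $G$ as a subsequential limit of classical Dirichlet problems on annuli avoiding $p$, and then to prove uniqueness by extracting the leading-order asymptotic behavior at $p$ and invoking strict positivity of $-L^h$ on $\scB_0(B_{r_2})$.

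For \emph{existence}, fix $x_0 \in B_{r_2}\setminus Clos(B_{r_2/2})$. For small $\epsilon > 0$, the annulus $A_\epsilon := B_{r_2}\setminus Clos(B_\epsilon)$ lies in $Reg(\Sigma)$, and extension by zero embeds $W^{1,2}_0(A_\epsilon) \hookrightarrow \scB_0(B_{r_2})$, so strict positivity of $-L^h$ on $\scB_0(B_{r_2})$ transfers to $W^{1,2}_0(A_\epsilon)$. Classical Dirichlet theory then produces a unique smooth $G_\epsilon$ solving $L^h G_\epsilon = 0$ on $A_\epsilon$ with $G_\epsilon|_{\partial B_{r_2}} = 0$ and $G_\epsilon|_{\partial B_\epsilon} = 1$; the weak maximum principle gives $G_\epsilon > 0$. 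Normalize by $\tilde G_\epsilon := G_\epsilon/G_\epsilon(x_0)$. Harnack's inequality on compact subsets of $Clos(B_{r_2})\setminus\{p\}$ together with Schauder boundary estimates up to $\partial B_{r_2}$ yields uniform $C^k$ bounds; a diagonal subsequence converges in $C^\infty_{loc}$ to the desired $G$, with $G>0$ by the strong maximum principle. Note that $G \notin \scB_0(B_{r_2})$, for otherwise $L^h G = 0$ would contradict strict positivity; hence $G$ genuinely blows up at $p$.

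For \emph{uniqueness}, suppose $G_1, G_2$ both satisfy the hypotheses. The main step is to establish the asymptotic expansion
\[
G_i(r,\omega) = a_i\, v_1(r)\, w_1(\omega) + o(v_1(r)) \quad \text{as } r \to 0,
\]
where $v_1$ denotes the most singular first-mode radial homogeneous solution on the tangent cone $C_p$ from (\ref{Pre, hom Sol Jacob}), with $a_i > 0$. By Lemma \ref{Lem_Pre, geom of asymp at 0}(4), in conic coordinates $L^h$ equals $L_{C_p}$ plus a perturbation with coefficients tending to $0$ as $r \to 0$. Decomposing $G_i(r,\cdot)$ in the eigenbasis $\{w_k\}$ of $\cL_S$ and iterating the quantitative growth-rate monotonicity Corollary \ref{Cor_Pre, quant growth rate mon for Jac} on dyadic annuli approaching $p$ forces every non-leading Fourier mode to decay strictly faster than $v_1$; elliptic regularity together with Lemma \ref{Lem_Pre, Bdness of Jac field near 0/infty} upgrades this to the pointwise expansion. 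The coefficient $a_i$ must be nonzero, for otherwise $G_i$ would lie in $\scB_0(B_{r_2})$ (contradicting the observation above), and $a_i > 0$ then follows from positivity of $G_i$ and $w_1$. Setting $\lambda^* := a_1/a_2$ and $u := G_1 - \lambda^* G_2$, the leading $v_1 w_1$ term cancels, so by Lemma \ref{Linear, Equi def of scB} the remaining Fourier modes place $u \in \scB_0(B_{r_2})$; strict positivity of $-L^h$ on $\scB_0$ then forces $u \equiv 0$.

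The principal obstacle is the asymptotic expansion: one must verify uniform admissible smallness of the perturbation $L^h - L_{C_p}$ across the dyadic scales to which Corollary \ref{Cor_Pre, quant growth rate mon for Jac} is applied, and then iteratively peel off higher eigenmodes of $\cL_S$ to isolate the leading $w_1$ contribution. The non-strict stability case, in which $v_1$ carries a logarithmic factor, requires additional care since the associated homogeneous mode is only marginally outside $\scB_0$, as illustrated by the example immediately after Lemma \ref{Linear, Equi def of scB}.
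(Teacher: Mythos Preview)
Your existence argument is essentially the paper's: solve Dirichlet problems on shrinking annuli, normalize, pass to a subsequential limit via Harnack and Schauder estimates.

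For uniqueness the paper takes a much shorter route than yours. Given two positive solutions $G,G'$ vanishing on $\partial B_{r_2}$, the weak maximum principle applied to $G-\lambda G'$ shows that the extrema of the ratio $G/G'$ over each annulus $A_{t,r_2}$ are attained on $\partial B_t$; Harnack then gives $\sup_{\partial B_t}G/G'\le C\inf_{\partial B_t}G/G'$ uniformly in $t$, so $a_\pm:=\sup/\inf$ of $G/G'$ over $B_{r_2}$ are finite and positive. One then blows up: $G(t\,\cdot)/\inf_{\partial B_t}G$ and $G'(t\,\cdot)/\inf_{\partial B_t}G$ subconverge to positive Jacobi fields $G_0,G_0'$ on the full cone $C_p$, and by Lemma~\ref{Lem_Pre, pos Jacob field} any such field is a radial function times $w_1$. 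Hence $G_0/G_0'$ is \emph{constant} on $\partial B_1$, forcing $a_-=a_+$. This avoids any asymptotic expansion and handles the non--strictly stable case without special care.

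Your approach via asymptotic expansion and cancellation is in the spirit of the later machinery (Lemma~\ref{Lem_Linear, Growth rate charact} and Corollaries~\ref{Cor_Linear, Growth est for G and vaiphi_1}--\ref{Cor_Linear, Unique Asymp for Green's func}), but as written it has a gap. Cancelling the leading $a_i v_1 w_1$ terms yields only $u=G_1-\lambda^*G_2=o(v_1(r))$; this is \emph{not} enough to conclude $u\in\scB_0(B_{r_2})$. In the strictly stable case $\scB_0=W_0^{1,2}$, and membership requires growth rate at least $\gamma_1^+>-(n-2)/2$, whereas $o(r^{\gamma_1^-})$ allows rates anywhere in $(\gamma_1^-,\gamma_1^+)$. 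You would need to show that the remainder in each $G_i$ actually has rate $\ge\gamma_1^+$ (i.e.\ that higher Fourier modes enter with exponents $\gamma_k^+$ rather than $\gamma_k^-$, and that no residual $r^{\gamma_1^-}$ correction survives); Lemma~\ref{Linear, Equi def of scB} by itself does not supply this. Establishing that quantitative rate amounts to proving Corollary~\ref{Cor_Linear, Unique Asymp for Green's func} first, which in the paper's logical order comes \emph{after} the present lemma and in fact relies on results built upon it.
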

      \begin{proof}
       We first prove the existence. Consider $(r_2/2>)t_j\to 0_+$ and the solution $u_j$ of $L^h u_j = 0$ on $A_{t_j, r_2}$, $u_j = 0$ on $\partial B_{r_2}$, $u_j = 1$ on $\partial B_{t_j}$. By weak maximum principle, $u_j >0$ on $A_{t_j, r_2}$. By Harnack inequality and standard elliptic estimate, $u_j/\sup_{\partial B_{r_2/2}} u_j \to G$ in $C^{\infty}_{loc}(Clos(B_{r_2}))$ and $G>0$ on $B_{r_2}$, $G=0$ on $\partial B_{r_2}$.
       
       To see the uniqueness, if $G'$ is another positive smooth function vanishing on $\partial B_{r_2}$ and satisfies the equation above, first note that by weak maximum principle, $\forall t\in (0, r_2)$, 
       \begin{align}
        \sup_{A_{t, r_2}} G/G' = \sup_{\partial B_t}G/G'\ \ \text{ and }\ \   \inf_{A_{t, r_2}} G/G' = \inf_{\partial B_t}G/G'  \label{Linear, extrema G/G' achieved on bdy}
       \end{align}
       and by Harnack inequality, $\sup_{\partial B_t}G/G' \leq C(\Sigma)\inf_{\partial B_t} G/G'$, $\forall t\in (0, r_2/2)$. Hence, \[
        0< a_-:= \inf_{B_{r_2}} G/G' \leq a_+:=\sup_{B_{r_2}} G/G' <+\infty   \]
       We now show that $a_- = a_+$, with which the lemma is proved.
       
       By (\ref{Linear, extrema G/G' achieved on bdy}), $a_- = \liminf_{t\to 0_+}\inf_{\partial B_t} G/G'$ and $a_+ = \liminf_{t\to 0_+}\inf_{\partial B_t} G/G'$. Thus, up to a subsequence of $t\to 0_+$, $G_t(x):= G(tx)/\inf_{\partial B_t} G \to G_0$, $G'_t(x):= G'(tx)/\inf_{\partial B_t} G \to G_0'$ and $G_0, G_0'$ are positive Jacobi fields on $C_p$ with $a_- = \inf_{\partial B_1}G_0 /G_0'$, $a_+ = \sup_{\partial B_1}G_0 /G_0'$. By lemma \ref{Lem_Pre, pos Jacob field}, $a_- = a_+$.
      \end{proof}
      
      Call such $G$ in lemma \ref{Lem_Linear, Loc Green's func unique} a \textbf{Green's function} of $L^h$ at $p$ in $B_{r_2}$.
      Since $-L^h$ is strictly positive on $\scB_0(B_{r_2})$, $G$ doesn't belong to $\scB(B_{r_2})$. Just like Green's functions in a smooth domain, $G$ dominates certain functions in $\scB(B_{r_2})$.
      \begin{Lem} \label{Lem_Linear, Green's function dominates}
       Let $\varphi_1\in \scB_0(B_{r_2})$ be the first eigenfunction of $-L^h$ in $\scB_0(B_{r_2})$ with $\|\varphi_1\|_{L^2}=1$; $\lambda_1>0$ to be the corresponding eigenvalue, i.e. \[
        \lambda_1 = \inf\{Q^h(\phi, \phi): \phi\in \scB_0(B_{r_2}), \|\phi\|_{L^2}=1\} = Q^h(\varphi_1, \varphi_1) > 0    \] 
       Then $\lim_{r\to 0} \sup_{\partial B_r} (\varphi_1/G) = 0$.
       
       In particular, if $u\in \scB(B_{r_2})$ satisfies $L^h u \in L^{\infty}(B_{r_2})$, then $\lim_{r\to 0} \sup_{\partial B_r} (|u|/G) = 0$
      \end{Lem}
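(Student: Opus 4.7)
My plan is to derive a matched asymptotic expansion of $\varphi_1$ and $G$ at $p$ in the conic coordinates of Lemma~\ref{Lem_Pre, geom of asymp at 0}, then exploit the sharp dichotomy that $\varphi_1 \in \scB_0(B_{r_2})$ while $G \notin \scB(B_{r_2})$ to identify which of the two possible leading coefficients each function carries.

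In conic coordinates around $p$, write $L_\Sigma = L_{C_p} + \scR$ with $\scR$ a perturbation small in the weighted spaces of Section~\ref{Subsec, Geom of cone}. A Caffarelli--Hardt--Simon style expansion (via Lemma~\ref{Lem_Pre, Ext sol of Jac, prescribed asymp} and its perturbed version, Corollary~\ref{Cor_Pre, sol perturbed Jac equ, prescribed asymp}) then gives, for any solution $u$ of $L^h u = f$ on $B_{r_2}$ with $f$ of mild polynomial growth at $p$, a leading expansion
\[
u(r,\omega) \;=\; v_1\bigl(r;\, c^+_u,\, c^-_u\bigr)\,w_1(\omega) + R_u(r,\omega),
\]
with $v_1$ as in (\ref{Pre, hom Sol Jacob}) and the remainder $R_u$ of strictly better decay at $p$, controlled by the higher spectral modes of $-\cL_S$. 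I apply this to $u=\varphi_1$ (with $f = -\lambda_1\varphi_1 + h\varphi_1$, whose weighted norm is controlled since $\varphi_1\in\scB$) and to $u=G$ (homogeneous), producing coefficients $(c^\pm_\varphi)$ and $(c^\pm_G)$. I next show that $c^-_\varphi = 0$ while $c^-_G \neq 0$. If $c^-_\varphi \neq 0$: in the strictly stable case the $r^{\gamma_1^-}w_1$ mode would make $\int \varphi_1^2/r^2$ diverge near $p$, incompatible via the Hardy bound of Lemma~\ref{Lem_Pre, Hardy-typed inequ} with $\varphi_1 \in \scB$; in the degenerate case $\mu_1 = -((n-2)/2)^2$, the logarithmic mode $r^{-(n-2)/2}\log r\,w_1$ is not in $\scB$ (by the direct $\scB$-norm computation exhibited in the second example after Lemma~\ref{Linear, Equi def of scB}), again incompatible with $\varphi_1 \in \scB$. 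Conversely, if $c^-_G = 0$, the leading mode of $G$ is the non-singular Jacobi field, which \emph{does} belong to $\scB$ (first example after Lemma~\ref{Linear, Equi def of scB}); combined with $G=0$ on $\partial B_{r_2}$ and the characterization in Lemma~\ref{Linear, Equi def of scB}, this forces $G \in \scB_0(B_{r_2})$, contradicting the strict positivity of $-L^h$ on $\scB_0(B_{r_2})$ since $L^h G = 0$ and $G \not\equiv 0$.

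With these coefficients fixed, in the strictly stable case $(\sup_{\partial B_r}\varphi_1)/(\sup_{\partial B_r}G) \sim r^{\gamma_1^+-\gamma_1^-}\to 0$, while in the degenerate case the ratio is $\sim c^+_\varphi/\bigl(|c^-_G|\,|\log r|\bigr)\to 0$; the Harnack inequality, applied to the positive solutions $\varphi_1$ and $G$ separately, then upgrades this into $\sup_{\partial B_r}(\varphi_1/G)\to 0$ as $r\to 0$. The ``in particular'' clause for $u\in\scB(B_{r_2})$ with $L^h u\in L^\infty(B_{r_2})$ follows by applying the same machinery to $u^\pm$ (writing $u = u_1 - u_2$ via a potential-theoretic decomposition into positive pieces if needed): the $L^\infty$ bound on $L^h u$ is precisely the input the Caffarelli--Hardt--Simon expansion requires, and $u\in\scB$ again kills the singular coefficient on each piece. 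The main obstacle will be rigorously establishing the expansion in the perturbed (non-Euclidean) setting, and making the $\scB$-versus-non-$\scB$ dichotomy sharp in the degenerate case, where the discriminator is a logarithm rather than a different power and the Hardy inequality is vacuous --- one must then invoke the finer $L^2$-nonconcentration of Lemma~\ref{Linear, L^2 noncon for iso} from Section~\ref{Subsec, L^2 noncon for iso sing}.
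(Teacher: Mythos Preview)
Your approach is genuinely different from the paper's, and while it can in principle be made to work, the proof as written has a real gap and is also considerably heavier than what the paper does.

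\textbf{What the paper does.} The paper never computes an asymptotic expansion here. Instead it argues by contradiction: if $\limsup_{r\to 0}\sup_{\partial B_r}(\varphi_1/G)>0$, then after renormalizing one has $G\le \varphi_1$. For each $r<r_2$ one takes the minimizer $v_r$ of $Q^h$ among $\{\phi\in\scB_0(B_{r_2}):\phi=\varphi_1\text{ on }B_r\}$; then $L^h v_r=0$ on $A_{r,r_2}$, $G\le v_r\le\varphi_1$ by the maximum principle, and $\|v_r\|_{\scB}$ is uniformly bounded. As $r\to 0$ one extracts a limit $v_0\in\scB_0(B_{r_2})$ solving $L^h v_0=0$ with $v_0\ge G>0$; nondegeneracy of $-L^h$ forces $v_0\equiv 0$, a contradiction. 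For the ``in particular'' clause, the paper observes $\varphi_1(x)\to\infty$ as $x\to p$ (positivity, Harnack, Lemma~\ref{Lem_Pre, pos Jacob field} on blow-ups), so $|L^h u|\le -C\,L^h\varphi_1$ and $|u|\le C\varphi_1$ on $\partial B_{r_2/2}$ for $C\gg 1$; the weak maximum principle (Proposition~\ref{Prop_Linear, Basic prop for L^2-noncon}(4)) then gives $|u|\le C\varphi_1$.

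\textbf{The gap in your proposal.} The tools you cite, Lemma~\ref{Lem_Pre, Ext sol of Jac, prescribed asymp} and Corollary~\ref{Cor_Pre, sol perturbed Jac equ, prescribed asymp}, construct solutions in a prescribed weighted class; they do \emph{not} produce an asymptotic expansion of a given solution such as $\varphi_1$ or $G$ on $\Sigma$. To get the expansion you want one really needs the blow-up/monotonicity machinery of Lemma~\ref{Lem_Linear, Growth rate charact} (which comes \emph{after} the present lemma in the paper, though it does not logically depend on it). So your plan amounts to proving Lemma~\ref{Lem_Linear, Growth rate charact} and Corollary~\ref{Cor_Linear, Growth est for G and vaiphi_1} first and then reading off the present lemma as a corollary. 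That is a legitimate reordering, but you have not carried it out; you have only named the obstacle. In the degenerate case $\mu_1=-(\tfrac{n-2}{2})^2$ the $\scB$-versus-non-$\scB$ discriminator is indeed the logarithmic mode, and transferring the cone computation in the Example after Lemma~\ref{Linear, Equi def of scB} to $\Sigma$ also requires knowing the remainder $R_u$ lies in $\scB$ --- again a consequence of the expansion you have not established.

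\textbf{Comparison.} The paper's argument is short, uses only the maximum principle and the $\scB_0$-compactness already in hand, and works uniformly in the strictly stable and degenerate cases without any case split. Your route, once the expansion is rigorously in place, yields finer information (the actual leading rates), but that is precisely what the paper postpones to the subsequent lemmas. For the ``in particular'' clause, your decomposition of a sign-changing $u$ into positive pieces is unnecessary: the paper's one-line comparison $|u|\le C\varphi_1$ via the weak maximum principle is both simpler and more robust.
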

      \begin{proof}
       By weak maximum principle and Harnack inequality, for each $r\in (0, r_2)$, \[
        \sup_{\partial B_r} \varphi_1/G \leq C(\Sigma)\inf_{\partial B_r} \varphi_1/G = C(\Sigma)\inf_{A_{r, r_2}} \varphi_1/G    \]
       Hence, if assume for contradiction that $\limsup_{r\to 0} \sup_{\partial B_r} (\varphi_1/G) >0$, then by a renormalization in $G$, WLOG $G\leq \varphi_1$ on $B_{r_2}$.
       
       For each $r<r_2$, consider the minimizer $v_r$ of $Q^h(\phi, \phi)$ among \[
        \{\phi\in \scB_0(B_{r_2}): \phi = \varphi_1 \text{ in }B_r\}    \]
       The existence of such minimizer follows from proposition \ref{Prop_Linear, Basic prop for L^2-noncon} and lemma \ref{Linear, L^2 noncon for iso}. Moreover, $L^h v_r = 0$ on $A_{r, r_2}$; $G\leq v_r\leq \varphi_1$ on $B_{r_2}$ by weak maximum principle; $\|v_r\|_{\scB(B_{r_2})} \leq (1+ \|h\|_{L^{\infty}})\|\varphi_1\|_{\scB(B_{r_2})}$ by definition. 
       Let $r\to 0_+$, up to a subsequence, $v_r \to v_0$ in $C^{\infty}_{loc}(Clos(B_{r_2}))$ for some $v_0\in \scB_0(B_{r_2})$ satisfying $L^h v_0 = 0$ on $B_{r_2}$ and $G\leq v_0$. But since $L^h$ is nondegenerate in $\scB_0(B_{r_2})$, $v_0 \equiv 0$, contradicts to that $G>0$ on $B_{r_2}$.
       
       To see that near $p$ $G$ also dominates $u\in \scB(B_{r_2})$ with $L^h u \in L^{\infty}(B_{r_2})$, first observe that by Harnack inequality, lemma \ref{Lem_Pre, pos Jacob field} and a blow up argument, $\varphi_1(x)\to +\infty$ as $x\to p$. Hence there exists $C>>1$ depending on $u$ and $\varphi_1$ such that $|L^h u| \leq -C\cdot L^h \varphi_1$ in $B_{r_2/2}$ and $|u|\leq C\varphi_1$ on $\partial B_{r_2/2}$. Then by proposition \ref{Prop_Linear, Basic prop for L^2-noncon} (4), $|u|\leq C\varphi_1$ on $B_{r_2/2}$ and the lemma is proved.
      \end{proof}
      

      Now we introduce the notion of \textbf{asymptotic rate} at $p$. Recall for $\sigma\in \RR\setminus \Gamma_{C_p}$, \[
       J^{\sigma}_u(r, s):= \int_{A_{r, s}} u^2(x)|x|^{-n-2\sigma}    \]
      is introduced in (\ref{Pre, J^sigma_u(r, s)}). Here, the integration is with respect to volume density on $C_p\subset T_p M= \RR^{n+1}$. Define the asymptotic rate of $u\in L^2_{loc}(B_{r_2})$ at $p$ to be
      \begin{align}
       \cA\cR_p(u):=\sup \{\sigma : \lim_{t\to 0+} J^{\sigma}_u(t, 2t) = 0\}
      \end{align}
      where we use the convention that $\sup \emptyset = -\infty$ and $\cA\cR_p(0) = +\infty$.
      \begin{Lem} \label{Lem_Linear, Growth rate charact}
       Suppose $0\neq u\in W^{2,2}_{loc}(B_{r_2})$ such that $L^h u\in L^{\infty}$. Also suppose that $\gamma:=\cA\cR_p(u) \in (-\infty, 1)$, then 
       \begin{enumerate}
       \item[(1)] $\forall \sigma\in (\gamma, 1)\setminus \Gamma_{C_p}$, let $K_0(\sigma, C_p)>2$ be determined in lemma \ref{Lem_Pre, growth rate mon for Jac}. For every $K\geq K_0(\sigma, C_p)$, there exists some $t_0=t_0(K, u, \sigma, C_p, L^h, \Sigma, M, g)\in (0, r_2)$ such that $l\mapsto J^{\sigma}_u(t_0 K^{-l-1}, t_0 K^{-l})$ is strictly increasing in $l\in \NN$;
       \item[(2)] $\exists t_j\to 0_+$ and $c_j\to 0_+$ such that $u_j(x):= u(t_j x)/c_j \to u_{\infty}$ in $W^{1,2}_{loc}(C_p)$ for some $u_{\infty}(r, \omega)= r^{\gamma}(c+ c'\log r)\cdot w$, where $0\neq w\in W_{\gamma}(C_p)$ is the corresponding eigenfunction of $\gamma$ defined in section \ref{Subsec, Geom of cone}; $(c, c')\neq (0, 0)$ and $c' = 0$ unless $\gamma = -(n-2)/2$. 
       
       In particular, $\cA\cR_p(u)\in \Gamma_{C_p}$.
       \end{enumerate}
      \end{Lem}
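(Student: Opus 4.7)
My plan splits according to the two parts of the statement.

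For (1), the strategy is to reduce to the quantitative growth monotonicity of corollary~\ref{Cor_Pre, quant growth rate mon for Jac} via rescaling. Using the conic-coordinate description in lemma~\ref{Lem_Pre, geom of asymp at 0}, the operator $L^h = L_\Sigma - h$ near $p$ differs from $L_{C_p}$ only by coefficients that are $O(\epsilon)$ in the weighted sense of that lemma on $B_{\tau_p(\epsilon)}$. For small $r>0$ I set $v_r(y) := u(ry)$ on $y \in A_{1, K^{N+1}} \subset C_p$; then $v_r$ satisfies a divergence-form equation
\begin{equation*}
\mathrm{div}_{C_p}(\nabla v_r + b_0(y, v_r, \nabla v_r)) + |A_{C_p}|^2 v_r + b_1(y, v_r, \nabla v_r) = 0
\end{equation*}
on $A_{1, K^{N+1}}$, where $b_0, b_1$ collect (i) the geometric deviation from $C_p$ to $\Sigma$, contributing terms of size $O(\epsilon)(|z|/|y| + |p|)$, and (ii) the rescaled inhomogeneity $r^2 (L^h u)(ry) + r^2 h(ry) v_r$, which is uniformly $L^\infty$-bounded. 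Choosing $t_0$ small makes both contributions fit inside the bound $|b_0| + |b_1| \leq \delta_0(|z|/|y| + |p| + J^\sigma_{v_r}(K,K^2))$ of corollary~\ref{Cor_Pre, quant growth rate mon for Jac}; the additive $J^\sigma_{v_r}(K,K^2)$ slot absorbs the inhomogeneity because $\sigma > \gamma = \cA\cR_p(u)$ together with the definition of $\cA\cR_p$ gives $\liminf_{r \to 0_+} J^\sigma_u(r, Kr) > 0$, so the scaled inhomogeneity $O(r^{2-2\sigma})$ is dominated by the $J^\sigma$-normalization for $r$ small since $\sigma < 1$. With this reduction in hand, setting $a_l := J^\sigma_u(t_0 K^{-l-1}, t_0 K^{-l})$, corollary~\ref{Cor_Pre, quant growth rate mon for Jac} applied at scale $t_0 K^{-l-2}$ yields the implication ``$\max_{2 \leq j \leq N} a_{l+1-j} \leq a_l \Rightarrow a_l < a_{l+1}$'', and a contradiction picking the smallest index $l^*$ at which strict monotonicity fails (after shrinking $t_0$ so $l^* \geq N$) closes (1).

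For (2), I do a blow-up at $p$. Fix $\sigma \in (\gamma, 1) \setminus \Gamma_{C_p}$, pick $t_j \to 0_+$, set $c_j := \sqrt{J^\sigma_u(t_j K^{-1}, t_j)}$ (bounded below by a positive constant by the monotonicity from (1)), and define $u_j(y) := u(t_j y)/c_j$. The normalization forces $\int_{A_{K^{-1}, 1}} u_j^2 |y|^{-n-2\sigma}\,dy = 1$, and by (1) together with lemma~\ref{Lem_Pre, geom of asymp at 0} one obtains uniform $L^2_\sigma$-bounds for $u_j$ on every compact annulus in $C_p \setminus \{0\}$. Elliptic estimates then extract a subsequence with $u_j \to u_\infty$ in $W^{1,2}_{\mathrm{loc}}(C_p \setminus \{0\})$, $u_\infty \not\equiv 0$, where $u_\infty$ solves $L_{C_p} u_\infty = 0$ weakly: the geometric perturbation vanishes in the limit, and the rescaled inhomogeneity $c_j^{-1} t_j^2 (L^h u)(t_j y) = O(t_j^{2-2\sigma})$ vanishes because $\sigma < 1$. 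Passing the monotonicity from (1) to $u_\infty$ for every $\sigma' \in (\gamma, \sigma) \setminus \Gamma_{C_p}$, combined with lemma~\ref{Lem_Pre, Bdness of Jac field near 0/infty} applied both near $0$ (using the lower-growth monotonicity) and controlling behaviour at infinity from the uniform $L^2_\sigma$ estimate, forces the homogeneous decomposition (\ref{Pre, hom Sol Jacob}) of $u_\infty$ to carry only the exponent $\gamma$. Therefore $\gamma \in \Gamma_{C_p}$, and $u_\infty(r, \omega) = r^\gamma(c + c' \log r) w(\omega)$ with $w \in W_\gamma$ and $c' = 0$ except in the double-root case $\gamma = -(n-2)/2$.

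The main technical obstacle is in (1): packaging the geometric error from lemma~\ref{Lem_Pre, geom of asymp at 0} together with the $L^\infty$ inhomogeneity $L^h u$ into the additive-plus-pointwise structure $\delta_0(|z|/|y| + |p| + J^\sigma_{v_r}(K,K^2))$ of corollary~\ref{Cor_Pre, quant growth rate mon for Jac}. Absorbing the inhomogeneity needs $\liminf_{r \to 0_+} J^\sigma_u(r, Kr) > 0$, a consequence of $\sigma > \gamma$, and one must arrange the induction so that this $\liminf$-information is used in a way that does not become circular with the strict monotonicity one is trying to establish; in practice I expect to prove a one-sided a priori lower bound first by the definition of $\cA\cR_p$ and only then invoke corollary~\ref{Cor_Pre, quant growth rate mon for Jac}. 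In (2) the analogous delicate point is preventing $c_j \to 0$ or $\infty$ and checking that the limit truly satisfies $L_{C_p} u_\infty = 0$ despite the inhomogeneity, both resting again on $\sigma < 1$.
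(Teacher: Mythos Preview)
Your overall strategy matches the paper's, but there is a genuine gap in each part, and in both cases it is the same missing idea: a \emph{peak-scale selection} that the paper uses and that cannot be replaced by the heuristics you propose.

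\textbf{Part (1).} You want to absorb the $L^\infty$ inhomogeneity $r\|f\|$ into the slot $\delta_0 J^\sigma_{u^{(r)}}(K,K^2)$, and you claim this follows from $\liminf_{r\to 0_+} J^\sigma_u(r,Kr)>0$. But $\sigma>\gamma=\cA\cR_p(u)$ only gives $\limsup_{t\to 0_+} J^\sigma_u(t,2t)>0$ (in fact $=+\infty$, via the comparison $J^\sigma_u\geq J^{\sigma'}_u\cdot s^{2(\sigma'-\sigma)}$); it does \emph{not} give $\liminf>0$. This is exactly the circularity you flag but do not resolve. The paper's resolution is to pick $s_j\to 0_+$ at which $J^\sigma_u(Ks_j,K^2s_j)\geq J^\sigma_u(t,Kt)$ for every $t\in(Ks_j,r_2/K)$: such scales exist because $\limsup=+\infty$. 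This single choice simultaneously (a) gives the hypothesis $\sup_{2\le i\le N}J^\sigma_{u^{(s_j)}}(K^i,K^{i+1})\leq J^\sigma_{u^{(s_j)}}(K,K^2)$ of corollary~\ref{Cor_Pre, quant growth rate mon for Jac} for the first step, and (b) forces $J^\sigma_{u^{(s_j)}}(K,K^2)=s_j^{2(\sigma-1)}J^\sigma_u(Ks_j,K^2s_j)\geq c\,s_j^{2(\sigma-1)}\to+\infty$, which absorbs $s_j\|f\|$. After the first step, both properties propagate inductively. Your ``smallest $l^*$'' contradiction still needs the first $N$ steps and the absorption at $l^*$ to hold for \emph{some} $t_0$, and you have no mechanism to produce such a $t_0$ without the peak trick.

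\textbf{Part (2).} The monotonicity from (1) gives, for the rescaled $u_j$, an \emph{upper} bound on $J^\sigma_{u_j}$ on annuli $A_{K^m,K^{m+1}}$ with $m\ge 0$ (outward, toward $\infty$), but only a \emph{lower} bound for $m<0$ (inward, toward $0$). So (1) alone does not give the compactness on compact annuli that you claim. The paper obtains the missing inward bound by picking a second exponent $\sigma'<\gamma$: since $J^{\sigma'}_u(t,Kt)\to 0$, one can choose the blow-up scales $t_j$ at inward local maxima of $J^{\sigma'}_u$, yielding $J^{\sigma'}_{u_j}$ uniformly bounded on all annuli $A_{K^{-m},K^{-m+1}}$. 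The two-sided bound then gives a limit $u_\infty$ satisfying $\limsup_{r\to 0}J^{\sigma'}_{u_\infty}(r,Kr)+\limsup_{r\to\infty}J^\sigma_{u_\infty}(r,Kr)<\infty$, and now choosing $\sigma',\sigma$ with $\Gamma_{C_p}\cap[\sigma',\sigma]\setminus\{\gamma\}=\emptyset$ and invoking lemma~\ref{Lem_Pre, Bdness of Jac field near 0/infty} at both ends pins down the homogeneity. Your use of ``$\sigma'\in(\gamma,\sigma)$'' cannot supply the inward upper bound; you need $\sigma'<\gamma$.
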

      \begin{proof}
       To prove (1), first notice that for every $\sigma'<\sigma$ we have $J^{\sigma}_u(r,s)\geq J^{\sigma'}_u(r, s)\cdot s^{2(\sigma'-\sigma)}$. Hence by definition of $\cA\cR_p(u)$, $\forall \sigma\in (\cA\cR_p(u), 1)\setminus \Gamma_{C_p}$, $\forall K \geq K_0(\sigma, C_p)>2$ as in lemma \ref{Lem_Pre, growth rate mon for Jac} fixed, $\limsup_{t\to 0_+}J^{\sigma}_u(t, Kt)\geq \limsup_{t\to 0_+}J^{\sigma}_u(t, 2t) = +\infty$. Thus, there exists $s_j\to 0_+$ such that for every $j$, \[
        J^{\sigma}_u(Ks_j, K^2s_j)\geq J^{\sigma}_u(t, Kt)\ \ \ \ \forall t\in (Ks_j, r_2/K)    \]
       
       Also note that $u$ satisfies the equation $L^h u = f\in L^\infty (\Sigma)$. For each $r\in (0, r_2)$, let $u^{(r)}(x):= u(r x)/r$ defined on $B_{r_2/r}\subset C_p$, then by lemma \ref{Lem_Pre, geom of asymp at 0}, $u^{(r)}$ satisfies an equation of form 
       \begin{align}
        & div_C(\nabla_C u^{(r)} + b_0^{(r)}(x, u^{(r)}, \nabla_C u^{(r)})) + |A_C|^2u^{(r)} + b_1^{(r)} (x, u^{(r)}, \nabla_C u^{(r)}) = 0  \label{Linear_An ell equ satisfied by u^{(r)}}
       \end{align}
       where $|b_0^{(r)}(x, z, p)| + |b_1^{(r)}(x, z, p)| \leq \epsilon(r)(|z|/|x|+ |p|) + r\cdot \|f\|_{L^{\infty}(\Sigma)}$ on $A_{1, r_2/r}$ and $\epsilon(r)\to 0_+$ as $r\to 0_+$. 
       Also by changing variable, $J^{\sigma}_{u^{(r)}}(s, t) = J^{\sigma}_u(rs, rt)\cdot r^{2(\sigma -1)}$. Therefore, we conclude that there exists $J>>1$, $u^{(s_{J})}$ satisfies the assumptions in corollary \ref{Cor_Pre, quant growth rate mon for Jac}. Then by applying corollary \ref{Cor_Pre, quant growth rate mon for Jac} inductively to $u^{(s_J\cdot K^{-l})}$, we have $l \mapsto J^{\sigma}_u(s_J\cdot K^{-l-1}, s_J\cdot K^{-l})$ is  strictly increasing. Take $t_0:= s_J$\\
       
       To prove (2), consider $\sigma'\in (-\infty, \cA\cR_p(u))$ and $\sigma\in (\cA\cR_p(u), 1)\setminus \Gamma_{C_p}$ TBD; Let $K\geq K_0(C_p, \sigma)> 2$ be fixed as in (1). By definition of $\cA\cR_p(u)$, $\lim_{t\to 0_+} J^{\sigma'}_u(t, Kt) = 0$. Thus, there exists $t_j\to 0_+$ such that \[
        J^{\sigma'}_u(t_j, K t_j)\geq J^{\sigma}_u(t, Kt)\ \ \ \ \forall t\in (0, t_j)    \]
       Therefore, together with (1) we see that if denote $u_j:= u^{(t_j)}/\|u^{(t_j)}\|_{L^2(A_{1, K})}$, then $u_j$ satisfies an elliptic equation of form (\ref{Linear_An ell equ satisfied by u^{(r)}}) and a local-bound-global estimate for $j>>1$: \[
        \|u_j\|_{L^2(A_{R^{-1}, R})} \leq C(R, \sigma, \sigma')\|u_j\|_{L^2(A_{1, K})}\ \ \ \forall 1<R<t_0/t_j   \] 
       By standard elliptic estimate \cite{GilbargTrudinger01}, up to a subsequence, $u_j \to u_\infty$ in $W^{1,2}_{loc}(C_p)$ and $u_\infty$ satisfies the Jacobi field equation $L_{C_p}u_\infty = 0$ and the growth estimate near $0$ and $\infty$: 
       \begin{align}
        \limsup_{r\to 0_+} J^{\sigma'}_{u_\infty}(r, Kr) + \limsup_{r\to +\infty}J^{\sigma}_{u_{\infty}}(r, Kr) < +\infty  \label{Linear_Growth est at 0 and infty}       
       \end{align}
       Now choose $\sigma, \sigma'$ such that $ \Gamma_{C_p} \cap [\sigma', \sigma]\setminus \{\cA\cR_p(u)\} = \emptyset$. Then by lemma \ref{Lem_Pre, Bdness of Jac field near 0/infty} and (\ref{Linear_Growth est at 0 and infty}), $\cA\cR_p(u)\in \Gamma_{C_p}$ and $u_{\infty}(r, \omega)= r^{\gamma}(c + c'\log r)\cdot w(\omega)$ where $w\in W_{\gamma}$, $(c, c')\neq (0, 0)$, $c' = 0$ unless $\gamma = -(n-2)/2$. This completes the proof.
      \end{proof}
      
      \begin{Cor} \label{Cor_Linear, Growth est for G and vaiphi_1}
       Let $G$ be a Green's function of $L^h$ in $B_{r_2}$ at $p$; 
       $\varphi_1\in \scB_0(B_{r_2})$ be the first eigenfunction of $-L^h$ in $\scB_0(B_{r_2})$;
       $\xi \in \scB(B_{r_2})$ be the solution of $ - L^h \xi = 0$ on $B_{r_2}$ and $\xi = 1$ on $\partial B_{r_2}$. 
       
       Then $\cA\cR_p(G) = \gamma_1^-$, $\cA\cR_p(\varphi_1)= \cA\cR_p(\xi)= \gamma_1^+$.
      \end{Cor}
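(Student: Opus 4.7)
The plan is to apply the blow-up analysis of Lemma \ref{Lem_Linear, Growth rate charact} together with the classification of positive Jacobi fields in Lemma \ref{Lem_Pre, pos Jacob field} to show that the asymptotic rate of each of $G$, $\varphi_1$, $\xi$ lies in $\{\gamma_1^+,\gamma_1^-\}$, and then to use the domination statement of Lemma \ref{Lem_Linear, Green's function dominates} to single out which exponent occurs for each.

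First I verify positivity. $G$ and $\varphi_1$ are positive by construction. For $\xi$, Proposition \ref{Prop_Linear, Basic prop for L^2-noncon}(4) applied to $-L^h\xi=0\geq 0$ with $\xi|_{\partial B_{r_2}}=1\geq 0$ (and $\partial B_{r_2}$ lying off $Sing(\Sigma)$) gives $\xi\geq 0$, and the strong maximum principle upgrades this to $\xi>0$ on $B_{r_2}$. Each of the three functions satisfies an equation $L^{h'}u=0$ for some bounded $h'$: take $h'=h$ for $G$ and $\xi$, and $h'=h-\lambda_1$ for $\varphi_1$, using $L^{h-\lambda_1}\varphi_1=L^h\varphi_1+\lambda_1\varphi_1=0$.

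Next I apply Lemma \ref{Lem_Linear, Growth rate charact}(2) to each $u$ in turn, possibly adapting the rescaling normalization to accommodate the case $\gamma_1^+\geq 1$ by replacing $u(rx)/r$ with $u(rx)/\sup_{\partial B_r}u$, which still yields a nontrivial blow-up. The hypothesis $L^{h'}u\in L^\infty$ is immediate, and $\cA\cR_p(u)<1$ holds either because $u$ blows up at $p$ (automatic for $G$) or a posteriori from the classification below. The lemma produces sequences $t_j\to 0_+$ and $c_j>0$ with $u(t_jx)/c_j\to u_\infty$ in $W^{1,2}_{loc}(C_p)$, where $u_\infty$ is a nontrivial Jacobi field on $C_p$. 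Positivity of $u$ and $c_j$ forces $u_\infty\geq 0$, and the strong maximum principle upgrades this to $u_\infty>0$ on $C_p\setminus\{0\}$. Lemma \ref{Lem_Pre, pos Jacob field} then writes $u_\infty=(c^+ r^{\gamma_1^+}+c^- r^{\gamma_1^-})w_1$ in the strictly stable case, or $u_\infty=c\,r^{-(n-2)/2}w_1$ in the borderline case $\mu_1=-(n-2)^2/4$, with nonnegative coefficients not all zero. In either case $\cA\cR_p(u)\in\{\gamma_1^+,\gamma_1^-\}$.

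It remains to distinguish the two possibilities. Lemma \ref{Lem_Linear, Green's function dominates} yields $\sup_{\partial B_r}(\varphi_1/G)\to 0$ directly, and applied to $\xi\in\scB(B_{r_2})$ with $L^h\xi=0\in L^\infty$, also $\sup_{\partial B_r}(\xi/G)\to 0$ as $r\to 0$. Combined with the Harnack inequality on dyadic annuli — so that $\sup$ and $\inf$ of each positive solution on $\partial B_r$ are comparable up to a universal constant — these pointwise ratio decays force the strict rate inequalities $\cA\cR_p(\varphi_1)>\cA\cR_p(G)$ and $\cA\cR_p(\xi)>\cA\cR_p(G)$. Since all three rates lie in $\{\gamma_1^+,\gamma_1^-\}$ and $\gamma_1^+>\gamma_1^-$ in the strictly stable case, the only consistent assignment is $\cA\cR_p(G)=\gamma_1^-$ and $\cA\cR_p(\varphi_1)=\cA\cR_p(\xi)=\gamma_1^+$; in the borderline case $\gamma_1^+=\gamma_1^-=-(n-2)/2$ and the identification is automatic. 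The main subtle point is converting the pointwise ratio decay on spheres into a strict rate inequality, which is where the explicit leading-order form of positive Jacobi fields on $C_p$ is essential: it rules out the ``tie'' configurations where both blow-ups carry the same dominant power and forces the rate of $\varphi_1/G$ to be genuinely positive.
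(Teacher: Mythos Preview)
Your first two steps --- positivity of $G,\varphi_1,\xi$ and the reduction via Lemma~\ref{Lem_Linear, Growth rate charact}(2) together with Lemma~\ref{Lem_Pre, pos Jacob field} to $\cA\cR_p\in\{\gamma_1^+,\gamma_1^-\}$ --- match the paper exactly. (A side remark: the concern about $\gamma_1^+\geq 1$ is unnecessary, since testing $-\cL_S$ against constants gives $\mu_1<0$ for any nontrivial cone, hence $\gamma_1^+<0$.)

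The paper distinguishes the two possible rates differently, and your approach has a gap at precisely this point. The paper first reduces to the strictly stable case (otherwise $\gamma_1^+=\gamma_1^-$ and there is nothing to prove), where Lemma~\ref{Lem_Pre, Hardy-typed inequ} and Lemma~\ref{Lem_Pre, geom of asymp at 0} give $\|\cdot\|_{\scB}\sim\|\cdot\|_{W^{1,2}}$. Since $\gamma_1^+>-(n-2)/2>\gamma_1^-$, combining Lemma~\ref{Lem_Linear, Growth rate charact}(1) with standard elliptic estimates shows that for each of the three functions, $\cA\cR_p(u)=\gamma_1^+$ exactly when $u$ has finite $W^{1,2}$ norm near $p$. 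Membership in $\scB$ gives this for $\varphi_1$ and $\xi$; for $G$ one observes that $G\notin\scB_0(B_{r_2})$ (else $L^hG=0$ would contradict strict positivity of $-L^h$), hence $\cA\cR_p(G)=\gamma_1^-$.

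Your route via Lemma~\ref{Lem_Linear, Green's function dominates} does not close. The assertion that $\sup_{\partial B_r}(\varphi_1/G)\to 0$ forces a \emph{strict} inequality $\cA\cR_p(\varphi_1)>\cA\cR_p(G)$ is not justified: you must still rule out the two ``tie'' cases where both rates equal $\gamma_1^-$ or both equal $\gamma_1^+$. Lemma~\ref{Lem_Linear, Growth rate charact}(2) only produces the pure-power blow-up limit along \emph{one} subsequence, and nothing in your argument prevents $\inf_{\partial B_r}\varphi_1/\inf_{\partial B_r}G\to 0$ while both infima obey two-sided bounds $c\,r^{\sigma'}\leq \inf_{\partial B_r}(\cdot)\leq C\,r^{\sigma}$ with $\sigma'<\gamma_1^-<\sigma$. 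Your appeal to the explicit form of positive Jacobi fields on $C_p$ does rule out ties for \emph{exact} Jacobi fields on the cone, but transferring this to $\Sigma$ amounts to a uniqueness-of-blow-up statement; for $G$ this is Corollary~\ref{Cor_Linear, Unique Asymp for Green's func}, which is proved \emph{after} the present corollary and uses it. To close your argument you would essentially need to recover the paper's $W^{1,2}$-membership criterion.
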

      \begin{proof}
       By weak maximum principle, $\varphi_1, \xi>0$; Also by definition, $G>0$. By lemma \ref{Lem_Linear, Growth rate charact} (2), lemma \ref{Lem_Pre, pos Jacob field} and Harnack inequality, $\cA\cR_p(G), \cA\cR_p(\varphi_1), \cA\cR_p(\xi) \in \{\gamma_1^{\pm}\}$. And to distinguish them, it suffices to assume in addition that $C_p$ is strictly stable, in which case $\gamma_1^+ > -(n-2)/2 > \gamma_1^-$. 
       
       With this assumption, by lemma \ref{Lem_Pre, Hardy-typed inequ} and \ref{Lem_Pre, geom of asymp at 0}, $\|\cdot\|_{\scB} \sim \|\cdot\|_{W^{1,2}}$ and in particular $\scB_0(B_{r_2}) = W^{1,2}_0(B_{r_2})$. Therefore, combined with lemma \ref{Lem_Linear, Growth rate charact} (1) and the standard elliptic estimate, we have for every $u\in \{\varphi_1, G, \xi\}$, $\cA\cR_p(u) = \gamma_1^+$ if and only if $u\in W_0^{1,2}(B_{r_2})$. Thus proves the corollary. 
      \end{proof}
      
      \begin{Cor} \label{Cor_Linear, Unique Asymp for Green's func}
       Let $G$ be a Green's function of $L^h$ in $B_{r_2}$ at $p$. Then \[
        \lim_{r\to 0_+} G(rx)/\inf_{\partial B_r}G = G_0(x)   \]
       where $G_0(r, \omega)=r^{\gamma^-_1}w_1(\omega)/\inf_{\partial B_1} w_1$ is a homogeneous positive Jacobi field on $C_p$ with fast asymptotic rate. 
       Moreover, the convergence is in $C^{\infty}_{loc}(C_p)$. 
      \end{Cor}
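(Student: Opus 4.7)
The plan is a blow-up compactness argument. Set $m(r):=\inf_{\partial B_r}G$ and $u_r(x):=G(rx)/m(r)$, viewing $\Sigma\cap B^M_{r_2}(p)$ through the conic coordinates of lemma \ref{Lem_Pre, geom of asymp at 0} so that $u_r$ is defined on $B_{r_2/r}\subset C_p$. In these coordinates $u_r$ satisfies a second-order linear elliptic equation of the form (\ref{Linear_An ell equ satisfied by u^{(r)}}) whose coefficients converge, uniformly on compact subsets of $C_p\setminus\{0\}$, to those of the cone Jacobi operator $L_{C_p}$ as $r\to 0_+$. Harnack's inequality applied to $G$ gives $m(r)$ comparable to $\sup_{\partial B_r}G$ and hence two-sided bounds on $u_r|_{\partial B_1}$ by constants depending only on $\Sigma$; standard Schauder estimates then yield relative $C^{\infty}_{loc}(C_p\setminus\{0\})$-compactness of $\{u_r\}$.

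Any subsequential limit $u_{\infty}$ along $r_j\to 0_+$ is thus a positive Jacobi field of $L_{C_p}$ on $C_p\setminus\{0\}$ with $\inf_{\partial B_1}u_{\infty}=1$. Applying lemma \ref{Lem_Pre, pos Jacob field} separately on $C_p\setminus B_R$ (as $R\to\infty$) and on $B_{\delta}\setminus\{0\}$ (as $\delta\to 0_+$) and matching the resulting spherical Fourier expansions on an overlap annulus, $u_{\infty}$ extends to a positive Jacobi field on all of $C_p$. The global form of lemma \ref{Lem_Pre, pos Jacob field} then forces
\[ u_{\infty}(r,\omega)=\frac{\big(c^+ r^{\gamma_1^+}+c^- r^{\gamma_1^-}\big)w_1(\omega)}{\inf_{\partial B_1}w_1},\quad c^{\pm}\geq 0,\ c^++c^-=1, \]
in the strictly stable case; in the non-strictly stable case $\gamma_1^+=\gamma_1^-=-(n-2)/2$ and positivity precludes any $\log r$ contribution, so $u_{\infty}=G_0$ follows at once, completing the proof there.

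In the strictly stable case it remains to show $c^+=0$. For this, invoke lemma \ref{Lem_Linear, Growth rate charact} (2) applied to $G$, together with $\cA\cR_p(G)=\gamma_1^-$ from corollary \ref{Cor_Linear, Growth est for G and vaiphi_1}: there exist $t_j\to 0_+$ and $c_j\to 0_+$ along which $G(t_j\cdot)/c_j\to c\cdot r^{\gamma_1^-}w_1$ in $W^{1,2}_{loc}(C_p)$, the single-mode form following from simplicity of $\mu_1$ (so $W_{\gamma_1^-}=\mathbb{R}w_1$) and from positivity ruling out the $\log r$ term whenever $\gamma_1^-\neq -(n-2)/2$. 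By Harnack, $c_j\asymp m(t_j)$, so after matching the normalization $\inf_{\partial B_1}=1$ the specific subsequence $u_{t_j}$ converges to $G_0$. For an arbitrary subsequential limit $u_{\infty}$ with coefficients $(c^+,c^-)$, a further blow-down at $0$ is governed by the slow mode: $u_{\infty}(\lambda\cdot)/\inf_{\partial B_{\lambda}}u_{\infty}\to G_0$ as $\lambda\to 0_+$. But by direct computation $u_{r_j}(\lambda\cdot)/\inf_{\partial B_{\lambda}}u_{r_j}=u_{\lambda r_j}$, so a diagonal extraction $\lambda_k r_{j_k}\to 0_+$ produces a genuine blow-up of $G$ which, by the compactness step, lies in the same family of limits and, by the iterated blow-down, equals $G_0$. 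Comparing with the $t_j$-subsequence via Harnack forces $c^+=0$, i.e., $u_{\infty}=G_0$; hence $u_r\to G_0$ in $C^{\infty}_{loc}(C_p)$.

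The main obstacle is this last uniqueness step in the strictly stable case: excluding the fast-decay mode $r^{\gamma_1^+}w_1$ requires an asymptotic-rigidity argument (iterated blow-down plus diagonal extraction) rather than purely local spectral input from lemma \ref{Lem_Pre, pos Jacob field}. The delicate technical point is the alignment, via Harnack, of the two a priori different normalizations $m(r)$ and the $c_j$ produced by lemma \ref{Lem_Linear, Growth rate charact} (2), so that the iterated blow-up of $u_{\infty}$ can genuinely be identified with a blow-up of $G$.
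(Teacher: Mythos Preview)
Your compactness step and the non-strictly stable case are fine and agree with the paper. The gap is in the strictly stable case: the iterated blow-down / diagonal argument you propose does not actually exclude $c^+>0$.

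Concretely, suppose along $r_j\to 0_+$ one has $u_{r_j}\to u_\infty=(c^+ r^{\gamma_1^+}+c^- r^{\gamma_1^-})w_1/\inf_{\partial B_1}w_1$ with $c^+>0$. Your claim that the blow-down of $u_\infty$ at $0$ is $G_0$ already requires $c^->0$; if $c^-=0$ then $u_\infty$ is homogeneous of degree $\gamma_1^+$ and its blow-down is $u_\infty$ itself. More importantly, even when $c^->0$, the diagonal extraction only produces yet another subsequence $\lambda_k r_{j_k}\to 0_+$ along which the blow-up of $G$ converges to $G_0$. But you already had such a subsequence (namely $t_j$), and its existence alone does not constrain the coefficients $(c^+,c^-)$ of a \emph{different} subsequential limit $u_\infty$. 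The final sentence ``comparing with the $t_j$-subsequence via Harnack forces $c^+=0$'' has no content: Harnack relates values at comparable scales, not the leading coefficients of two distinct blow-up limits taken along disjoint scale sequences.

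The paper's argument avoids this entirely by using lemma~\ref{Lem_Linear, Growth rate charact}\,(1) rather than (2). For $\sigma\in(\gamma_1^-,\gamma_1^+)$, the growth-rate monotonicity says $l\mapsto J^\sigma_G(t_0 K^{-l-1},t_0 K^{-l})$ is strictly increasing in $l\in\NN$; this monotonicity passes to any limit $\hat G_0$ and must then hold for all $l\in\ZZ$. But if $c^+>0$, the $r^{\gamma_1^+}$ term dominates near infinity and $J^\sigma_{\hat G_0}(r,Kr)\sim r^{2(\gamma_1^+-\sigma)}\to\infty$ as $r\to\infty$ (i.e.\ as $l\to-\infty$), violating the monotonicity. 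This single quantitative input, valid at \emph{every} scale below $t_0$, is what pins down the limit uniquely; your argument lacks an analogous scale-uniform constraint.
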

      \begin{proof}
       First note that by Harnack inequality, when $r\to 0_+$, $G_{(r)}(x):= G(rx)/\inf_\partial B_r$ always subconverges in $C_{loc}^{\infty}(C_p)$ to some positive Jacobi field on $C_p$ with infimum $1$ on $\partial B_1$. The goal now is to show that the limit is unique. 
       
       By lemma \ref{Lem_Pre, pos Jacob field}, if $C_p$ is not strictly stable, then there's only one such positive Jacobi field on $C_p$. Hence, it suffices to prove in case when $C_p$ is strictly stable, i.e. $\gamma_1^-<\gamma_1^+$. 
       By corollary \ref{Cor_Linear, Growth est for G and vaiphi_1}, $\cA\cR_p(G) = \gamma_1^-$; By applying lemma \ref{Lem_Linear, Growth rate charact} (1) to $\sigma\in (\gamma_1^-, \gamma_1^+)$, it's easy to see that for every sequence $r_j\to 0_+$ such that $G_{(r_j)}\to \hat{G}_0=(c^+ r^{\gamma_1^+} + c^-r^{\gamma_1^-})w_1$ we have, $l \mapsto J^{\sigma}_{\hat{G}_0}(\hat{t}_0 K^{-l-1}, \hat{t}_0 K^{-l})$ is increasing in $l\in \ZZ$ for some $\hat{t}_0\in [1, K)$. Hence $c^+ = 0$ and thus $\hat{G}_0$ is of the form in corollary \ref{Cor_Linear, Unique Asymp for Green's func}.
      \end{proof}

      \begin{Cor} \label{Cor_Linear, Loc Growth Bd and function decomp}
       Suppose $u\in W^{2,2}_{loc}(B_{r_2})$ with $L^h u \in L^{\infty}(B_{r_2})$. $G$ be a Green's function of $L^h$ in $B_{r_2}$ at $p$. Then,
       \begin{enumerate}
       \item[(1)] If $u\in \scB(B_{r_2})$, then $\cA\cR_p(u)\geq \gamma_1^+$;
       \item[(2)] If $\cA\cR_p(u)\geq \gamma_1^-$, then there exists a unique $c\in \RR$ such that $u - c\cdot G \in \scB(B_{r_2/2})$.
       \end{enumerate}
      \end{Cor}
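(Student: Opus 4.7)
For part (1) I argue by contradiction: assume $\gamma:=\cA\cR_p(u)<\gamma_1^+$. Lemma \ref{Lem_Linear, Growth rate charact}(2) produces a blow-up $u_j(x):=u(t_jx)/c_j\to u_\infty=cr^\gamma w$ in $W^{1,2}_{\mathrm{loc}}(C_p)$ with $0\neq w\in W_\gamma$. Inspecting $\Gamma_{C_p}$, the hypothesis $\gamma<\gamma_1^+$ forces $\gamma<-(n-2)/2$ (hence no $\log$-term), with $\gamma\leq\gamma_1^-$ in the strictly stable case, and $\gamma\leq\gamma_2^-$ in the degenerate case $\mu_1=-(n-2)^2/4$, where moreover $w\in W_{\gamma_k^-}=\mathrm{span}\{w_j:\mu_j=\mu_k\}$ for some $k\geq2$ is automatically $L^2(S)$-orthogonal to $w_1$. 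Transferring lemma \ref{Lem_Pre, Hardy-typed inequ} to $\Sigma$ via the conic-coordinate geometry of lemma \ref{Lem_Pre, geom of asymp at 0}, I obtain a localized Hardy inequality valid for some $\rho_0>0$ and $C>0$:
\begin{equation*}
Q_\Sigma(\phi,\phi)+C\|\phi\|_{L^2}^2\ \geq\ \tfrac{b_\ast^2}{2}\int_\Sigma(\phi^{\#})^2/r^2\,dx\qquad\forall\phi\in C_c^1(B_{\rho_0}),
\end{equation*}
with $(b_\ast,\phi^\#)=(b_1,\phi)$ in the strict case, and $(b_\ast,\phi^\#)=(b_2,\phi^\perp)$ in the degenerate case, where $\phi^\perp(r,\cdot):=\phi(r,\cdot)-w_1(\cdot)\int_S\phi(r,\omega)w_1(\omega)\,d\omega$. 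Applied to $\eta\phi_k$, where $\eta$ cuts off near $p$ and $\phi_k\to u$ in $\scB$, Fatou gives the global bound $\int_{B_{\rho_0}}(u^{\#})^2/r^2\,dx<+\infty$. On the other hand, unpacking the $L^2$-normalization of $u_j$ forces $c_j^2\gtrsim t_j^{2\gamma}$, so the convergence $u_j\to u_\infty$ on the fixed annulus $A_{1,2}\subset C_p$ delivers $\int_{A_{t_j,2t_j}}(u^{\#})^2/r^2\,dx\gtrsim c_j^2 t_j^{n-2}\sim t_j^{2\gamma+n-2}$; since $2\gamma+n-2<0$, summing along a geometric subsequence diverges, contradicting the Hardy bound.

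For part (2), uniqueness follows immediately from (1) combined with corollary \ref{Cor_Linear, Growth est for G and vaiphi_1}: if both $u-cG$ and $u-c'G$ lay in $\scB(B_{r_2/2})$, so would $(c-c')G$, forcing $\cA\cR_p((c-c')G)\geq\gamma_1^+>\gamma_1^-=\cA\cR_p(G)$, hence $c=c'$. For existence I use that $w_1>0$ implies $\mu_1$ is simple, so $W_{\gamma_1^\pm}=\RR w_1$. When $\cA\cR_p(u)>\gamma_1^-$, the fact that $\Gamma_{C_p}\cap(\gamma_1^-,+\infty)$ begins at $\gamma_1^+$ forces $\cA\cR_p(u)\geq\gamma_1^+$, and I set $c=0$. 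When $\cA\cR_p(u)=\gamma_1^-$, the simplicity of $\mu_1$ makes every blow-up of the form $c_\ast r^{\gamma_1^-}w_1$ with $c_\ast$ independent of the subsequence (the scalar is read off from the $w_1$-mode of $u$, as in corollary \ref{Cor_Linear, Unique Asymp for Green's func}). Corollary \ref{Cor_Linear, Unique Asymp for Green's func} computes the matching $r^{\gamma_1^-}w_1$-leading profile of $G$, so I pick the unique $c\in\RR$ cancelling $c_\ast$; lemma \ref{Lem_Linear, Growth rate charact}(2) applied to $u-cG$ then yields $\cA\cR_p(u-cG)\geq\gamma_1^+$.

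The remaining step---showing that any $v\in W^{2,2}_{\mathrm{loc}}(B_{r_2/2})$ with $L^hv\in L^\infty$ and $\cA\cR_p(v)\geq\gamma_1^+$ lies in $\scB(B_{r_2/2})$---is carried out by verifying lemma \ref{Linear, Equi def of scB}(3) with $\phi_j:=\eta_jv$. I take $\eta_j$ to be a standard linear cut-off on $A_{t_j/2,t_j}$ in the strictly stable case, and the logarithmic transition $\eta_j=\log(r/t_j^2)/\log(1/t_j)$ on $A_{t_j^2,t_j}$ in the degenerate case. Integrating by parts,
\begin{equation*}
Q_\Sigma(\eta_jv,\eta_jv)=-\int\eta_j^2 v\cdot L_\Sigma v+\int v^2\bigl(\eta_j\Delta\eta_j+|\nabla\eta_j|^2\bigr),
\end{equation*}
the first term is uniformly bounded via $L^hv\in L^\infty$ and $v\in L^2$, and in the strictly stable case the second term is controlled by $b_1^2\int v^2/r^2<+\infty$ (the Hardy bound from part (1)). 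The main obstacle is the degenerate case $\gamma_1^+=-(n-2)/2$, where $\int v^2/r^2$ may itself be infinite; here I must establish $\int_{A_{t_j^2,t_j}}v^2/r^2\,dx\leq C\log(1/t_j)$ by decomposing $v$ along the spherical eigenbasis, using lemma \ref{Lem_Pre, geom of asymp at 0} to see that only the $w_1$-mode saturates the borderline $r^{-(n-2)/2}$-decay while all higher modes enjoy strict Hardy-type gaps. Combined with $|\nabla\eta_j|^2\lesssim(r\log(1/t_j))^{-2}$ this gives $\int v^2|\nabla\eta_j|^2\to 0$, and a parallel calculation using $\Delta\log r\sim(n-2)/r^2$ handles $\int v^2\eta_j\Delta\eta_j$. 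Lemma \ref{Linear, Equi def of scB}(3) then delivers $v\in\scB(B_{r_2/2})$.
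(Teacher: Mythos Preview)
Your approach diverges substantially from the paper's, and while part (1) can plausibly be made to work along your lines in the strictly stable case, part (2) contains a genuine gap.

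For part (1), the paper gives essentially a one-line argument: by the reasoning of Lemma \ref{Lem_Linear, Green's function dominates}, the weak maximum principle (Proposition \ref{Prop_Linear, Basic prop for L^2-noncon}(4)) yields $|u|\leq C\varphi_1$, whence $\cA\cR_p(u)\geq\cA\cR_p(\varphi_1)=\gamma_1^+$ by Corollary \ref{Cor_Linear, Growth est for G and vaiphi_1}. Your Hardy-inequality route is more laborious, and in the degenerate case the transferred inequality for $\phi^\perp$ is not clearly justified: the error $Q_\Sigma-Q_{C_p}$ involves the \emph{full} $\phi$ (including its $w_1$-mode, for which there is no spectral gap), and you have not explained how to absorb those terms.

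The serious problem is in part (2). Your ``remaining step''---that $\cA\cR_p(v)\geq\gamma_1^+$ together with $L^hv\in L^\infty$ forces $v\in\scB$---is \emph{false} in the non-strictly-stable case $\gamma_1^+=\gamma_1^-=-(n-2)/2$. The paper's own Example following Lemma \ref{Linear, Equi def of scB} exhibits (on the cone) $v=-r^{-(n-2)/2}\log r\cdot w_1$, which satisfies $L_Cv=0$ and has $\cA\cR_p(v)=-(n-2)/2=\gamma_1^+$, yet $v\notin\scB$. Your claimed bound $\int_{A_{t_j^2,t_j}}v^2/r^2\leq C\log(1/t_j)$ fails for such $v$: the integral is of order $(\log(1/t_j))^3$, so the logarithmic cutoff cannot control it. Relatedly, your uniqueness argument invokes $\gamma_1^+>\gamma_1^-$, which is void here; and your selection of $c$ by ``cancelling $c_\ast$'' neither addresses a possible $\log$-term in the blow-up of $u$ nor establishes that $c_\ast$ is subsequence-independent. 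In the degenerate case $\cA\cR_p$ simply does not distinguish the $G$-direction from the $\scB$-direction, so no coefficient-matching via asymptotic rates can succeed.

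The paper avoids all of this by never attempting to pass from an asymptotic-rate bound back to $\scB$-membership. Instead it constructs the $\scB$-part directly: solve $L^hv=L^hu$ for $v\in\scB(B_{r_2/2})$ with $v=u$ on the boundary (Proposition \ref{Prop_Linear, Basic prop for L^2-noncon}(3)), set $w:=u-v$, and note that $w$ satisfies $L^hw=0$ \emph{exactly} with zero boundary values. Then either $w\equiv0$ (so $c=0$), or---after observing via Lemma \ref{Lem_Linear, Growth rate charact} that $w$ has a sign on a sequence of spheres---the strong maximum principle gives $w>0$, and the uniqueness Lemma \ref{Lem_Linear, Loc Green's func unique} identifies $w$ as a multiple of the local Green's function $G'$; a final cutoff-and-solve relates $G'$ to $G$ modulo $\scB$. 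This PDE/maximum-principle route is both shorter and insensitive to whether $C_p$ is strictly stable.
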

      \begin{proof}
       (1) follows directly from weak maximum principle and lemma \ref{Cor_Linear, Growth est for G and vaiphi_1}, which assert that $\cA\cR_p(\varphi_1) = \gamma_1^+$.
       
       To prove (2), consider $v\in \scB(B_{r_2/2})$ be the solution of $L^h v = L^h u$ on $B_{r_2/2}$, $v = u$ on $A_{r_2/2, r_2}$. Let $w:= u-v$. By (1) and definition we see that $\cA\cR_p(w) \geq \gamma_1^-$ and $L^h(u-v) = 0$ on $B_{r_2/2}$, $w = 0$ outside $B_{r_2/2}$. 
       
       If $\cA\cR_p(w) >\gamma_1^-$, then by corollary \ref{Cor_Linear, Growth est for G and vaiphi_1}, $\limsup_{t\to 0_+} \sup_{\partial B_t} |w|/G' = 0$, where $G'$ denotes the Green's function of $L^h$ at $p$ in $B_{r_2/2}$. Then by weak maximum principle, $w = 0$.
       
       If $\cA\cR_p(w) = \gamma_1^-$, then by lemma \ref{Lem_Linear, Growth rate charact} and lemma \ref{Lem_Pre, Bdness of Jac field near 0/infty}, WLOG there exists $t_j \to 0_+$ such that $w>0$ on $\partial B_{t_j}$. Then by classical weak and strong maximum principle for elliptic PDE, $w> 0$ on $B_{r_2/2}$. Hence by lemma \ref{Lem_Linear, Loc Green's func unique}, $w = c'G'$ for some $c'\in \RR\setminus \{0\}$. 
       To go back to a Green's function on $B_{r_2}$, consider $\eta\in C_c^{\infty}(B^M_{r_2/4}(p))$ which equals to $1$ near $p$ and the solution $\psi\in \scB_0(B_{r_2})$ of $L^h \psi = L^h(\eta\cdot w)$ on $B_{r_2}$. By lemma \ref{Lem_Linear, Green's function dominates} and weak maximum principle, $-\psi+\eta\cdot w$ is a Green's function at $p$ in $B_{r_2}$, combined with lemma \ref{Lem_Linear, Loc Green's func unique}, $u - cG \in \scB(B_{r_2/2})$ for some $c\in \RR$, which completes the proof.
      \end{proof}

      Now we turn to the globally defined functions on $\Sigma$. Recall that $\cU := U\cap \Sigma$. Denote $G_{r, p}$ to be a Green's function of $L^h$ in $\cB_r(p)$, where $p\in Sing(\Sigma)$, $r\in (0, r_2]$.
      \begin{Def}
      Define $\scB_0(\cU)\bigoplus \RR_{L^h}\langle Sing(\Sigma)\cap U\rangle$ to be the function space linearly generated by $\scB_0(\cU)$ and $G_{r, p}$, $p\in Sing(\Sigma)\cap U$.       
      \end{Def}
      The proof of corollary \ref{Cor_Linear, Loc Growth Bd and function decomp} (2) actually guarantees that this space depends only on $\Sigma, U$ and $p$, but not on the choice of $r$. 

      \begin{Cor} \label{Cor_Linear, Global Green's func exists}
       Suppose $L^h$ be nondegenerate on $\scB_0(\cU)$. Then, for each $p\in Sing(\Sigma)\cap U$, there is a unique (up to normalization) $0\neq G_p\in \scB_0(\cU)\bigoplus \RR_{L^h}\langle Sing(\Sigma)\cap U\rangle$ such that $L^h G_p= 0$ on $\cU$, $G_p \in \scB(\cU\setminus Clos(B^M_{r_2}(p))$. 
       Moreover, $\scB_0(\cU)\bigoplus \RR_{L^h}\langle Sing(\Sigma)\cap U\rangle$ is linearly generated by $\scB_0(\cU)$ and $\{G_p : p\in Sing(\Sigma)\cap U\}$.
       
       If further $-L^h$ is strictly positive on $\scB_0(\cU)$, then $G_p$ can be chosen positive on $\cU$.
      \end{Cor}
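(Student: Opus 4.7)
\textit{Existence.} For each singularity $p \in Sing(\Sigma) \cap U$, I will build $G_p$ by correcting a cut-off of the local Green's function $G_{r_2,p}$ into a global Jacobi field. Choose $\eta \in C_c^\infty(B^M_{r_2/2}(p);[0,1])$ equal to $1$ in a neighborhood of $p$. Since $L^h G_{r_2,p}=0$ on $B_{r_2}(p)$ and $\eta \equiv 1$ near $p$, the source term $f := L^h(\eta G_{r_2,p})$ (with $\eta G_{r_2,p}$ extended by zero) is bounded and compactly supported inside the annulus $\{\nabla\eta \neq 0\}$, hence $f \in L^2(\cU)$. By the non-degeneracy of $L^h$ on $\scB_0(\cU)$ and Proposition \ref{Prop_Linear, Basic prop for L^2-noncon}(3), there is a unique $\psi \in \scB_0(\cU)$ with $L^h\psi = f$. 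Set
\[
 G_p := \eta\, G_{r_2,p} - \psi.
\]
Then $L^h G_p = 0$ on $\cU$, and on $\cU \setminus \mathrm{spt}(\eta)$ we have $G_p = -\psi \in \scB_0(\cU)$, so in particular $G_p \in \scB\bigl(\cU \setminus \mathrm{Clos}(B^M_{r_2}(p))\bigr)$. That $G_p \neq 0$ follows because near $p$ it equals $G_{r_2,p}-\psi$, whose asymptotic rate is $\gamma_1^-$ (dominated by $G_{r_2,p}$ via Lemma \ref{Lem_Linear, Green's function dominates} and Corollary \ref{Cor_Linear, Growth est for G and vaiphi_1}).

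\textit{Uniqueness and generation.} Both statements rest on the unique local-coefficient assignment supplied by Corollary \ref{Cor_Linear, Loc Growth Bd and function decomp}(2): near each singularity $q \in Sing(\Sigma) \cap U$, every element $u$ of the augmented space has a unique scalar $c_q(u)$ such that $u - c_q(u)\, G_{r_2,q} \in \scB$ locally at $q$. The bound on $r_2$ in \S\ref{Subsec, L^2 noncon for iso sing} forces $\mathrm{Clos}(B^M_{r_2}(p))$ to avoid every other singularity, so any $\hat G_p$ satisfying the hypotheses is in $\scB$ near each $q \neq p$, giving $c_q(\hat G_p)=0$ for $q \neq p$. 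From the construction, $c_p(G_p)=1$ and $c_q(G_p)=0$ for $q \neq p$. Therefore $D := \hat G_p - c_p(\hat G_p)\, G_p$ has vanishing local coefficient at every singularity, so $D \in \scB_0(\cU)$; combined with $L^h D = 0$ and non-degeneracy this forces $D \equiv 0$, i.e.\ $\hat G_p = c_p(\hat G_p)\, G_p$. The generation statement is then a direct rewrite: given $u = v + \sum_q c_q \eta_q G_{r_2,q}$ with $v \in \scB_0(\cU)$, substitute $\eta_q G_{r_2,q} = G_q + \psi_q$ to get
\[
 u = \bigl(v + \textstyle\sum_q c_q \psi_q\bigr) + \sum_q c_q G_q,
\]
with the first summand in $\scB_0(\cU)$.

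\textit{Positivity.} Now assume $-L^h$ is strictly positive on $\scB_0(\cU)$. I construct a positive $G_p$ as a Perron-type limit. For small $\epsilon>0$, on the smooth domain $\cU_\epsilon := \cU \setminus \mathrm{Clos}(B_\epsilon(p))$ solve $L^h u_\epsilon = 0$ with $u_\epsilon|_{\partial B_\epsilon(p)} = G_{r_2,p}|_{\partial B_\epsilon(p)}$ and $u_\epsilon|_{\partial U \cap \Sigma} = 0$. Strict positivity of $-L^h$ and Proposition \ref{Prop_Linear, Basic prop for L^2-noncon}(4) give $u_\epsilon \geq 0$, upgraded to $u_\epsilon > 0$ by the strong maximum principle; comparing $u_\epsilon$ with $G_{r_2,p}$ on $B_{r_2}(p)\setminus \mathrm{Clos}(B_\epsilon(p))$, using $G_{r_2,p}|_{\partial B_{r_2}} = 0$, yields $u_\epsilon \geq G_{r_2,p}$ there. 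Normalize $\tilde u_\epsilon := u_\epsilon / \sup_{\partial B_{r_2/2}(p)} u_\epsilon$ and extract a $C^\infty_{\mathrm{loc}}(\cU \setminus \{p\})$ sub-limit $G_p^+ \geq 0$, which is a non-trivial Jacobi field vanishing (in trace sense) on $\partial U \cap \Sigma$. Using the uniform lower bound $\tilde u_\epsilon \geq G_{r_2,p}/\sup_{\partial B_{r_2/2}(p)} u_\epsilon$, Harnack, and Corollary \ref{Cor_Linear, Unique Asymp for Green's func}, the rescaled limits of $G_p^+$ at $p$ identify with the positive $\gamma_1^-$-homogeneous Jacobi field on the tangent cone, so $G_p^+$ lies in the augmented space with non-zero local coefficient at $p$. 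The uniqueness proved above then forces $G_p^+ = c\, G_p$ for some $c \neq 0$, and replacing $G_p$ by $c^{-1} G_p^+$ gives a strictly positive choice. The main obstacle I expect is precisely this final identification: controlling the blow-up rate of $\tilde u_\epsilon$ at $p$ uniformly in $\epsilon$ so that the limit carries a non-vanishing Green's coefficient rather than collapsing into $\scB$; this is where the two-sided comparison $u_\epsilon \geq G_{r_2,p}$ must be coupled carefully with the asymptotic dichotomy $\gamma_1^\pm$ and a Hardt--Simon type argument.
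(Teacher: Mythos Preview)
Your existence, uniqueness, and generation arguments are essentially the paper's own: the paper says the existence ``follows the same construction in the proof of corollary \ref{Cor_Linear, Loc Growth Bd and function decomp}(2) from $G'$ to $G$'', which is exactly your cut-off-and-correct construction $G_p = \eta G_{r_2,p} - \psi$; and for uniqueness it cites the same circle of local-decomposition results you invoke.

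For positivity, however, you take a detour that creates the very obstacle you flag. The paper's route is a one-liner: work with the $G_p$ you already built. Since $\psi \in \scB_0(\cU)$ and $L^h\psi \in L^\infty$, Lemma \ref{Lem_Linear, Green's function dominates} gives $\lim_{r\to 0}\sup_{\partial B_r(p)} |\psi|/G_{r_2,p} = 0$, so $G_p = G_{r_2,p} - \psi > 0$ on some small $B_{\epsilon_0}(p)$. On $\cU \setminus \mathrm{Clos}(B_{\epsilon_0}(p))$ you have $G_p \in \scB$, $L^h G_p = 0$, $G_p = 0$ on $\partial U \cap \Sigma$, and $G_p > 0$ on $\partial B_{\epsilon_0}(p)$; now strict positivity of $-L^h$ and the weak maximum principle (Proposition \ref{Prop_Linear, Basic prop for L^2-noncon}(4)) give $G_p \geq 0$, and the strong maximum principle upgrades this to $G_p > 0$.

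Your Perron-type construction is not wrong in spirit, but the step you worry about---showing $\sup_{\partial B_{r_2/2}(p)} u_\epsilon$ stays bounded so that the limit retains a nonzero Green's coefficient---is genuinely missing an ingredient as written (you would need a barrier from above, e.g.\ comparing $u_\epsilon$ with a multiple of the already-constructed $G_p$ on $\cU_\epsilon$). Since the direct argument above bypasses this entirely, there is no need to rescue the limit approach.
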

      Call such $G_p$ a \textbf{Green's function of $L^h$ on $\cU$ at $p$}.
      \begin{proof}
       The existence of $G_p$ follows the same construction in the proof of corollary \ref{Cor_Linear, Loc Growth Bd and function decomp} (2) from $G'$ to $G$;
       The uniqueness of such $G_p$ up to normalization follows directly from lemma \ref{Lem_Linear, Loc Green's func unique}, \ref{Lem_Linear, Green's function dominates} and corollary \ref{Cor_Linear, Growth est for G and vaiphi_1}, \ref{Cor_Linear, Loc Growth Bd and function decomp}; 
       The positivity of $G_p$ when $-L^h$ is strictly positive follows from weak maximum principle.
      \end{proof}

      \begin{Cor} \label{Cor_Linear, Global Growth Bd function decomp}
       Suppose $u\in W^{2,2}_{loc}(\cU)$, $L^h u \in L^{\infty}(\Sigma)$ and $u|_{\partial \cU} = 0$. If $\cA\cR_p(u)\geq \gamma_1^-$ for every $p\in Sing(\Sigma)\cap U$, then $u\in \scB_0(\cU)\bigoplus \RR_{L^h}\langle Sing(\Sigma)\cap U\rangle$.
      \end{Cor}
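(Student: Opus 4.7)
The plan is to peel off one Green's function at each singular point of $\Sigma$ in $U$ and show that the remainder lies in $\scB_0(\cU)$. First observe that $Sing(\Sigma)\cap U$ is finite: it is closed in $Clos(U)$, disjoint from $\partial U$ by assumption, hence compact, and strongly isolated singularities are discrete, so compactness forces finiteness. For each $p\in Sing(\Sigma)\cap U$, the hypotheses $\cA\cR_p(u)\geq\gamma_1^-$ and $L^h u\in L^\infty(\Sigma)$ allow me to invoke Corollary \ref{Cor_Linear, Loc Growth Bd and function decomp}(2) in $\cB_{r_2}(p)$: it produces a unique $c_p\in\RR$ with $u-c_p G_{r_2,p}\in\scB(\cB_{r_2/2}(p))$. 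Choose disjointly supported cutoffs $\eta_p\in C_c^\infty(B^M_{r_2/2}(p))$ with $\eta_p\equiv 1$ on $B^M_{r_2/4}(p)$, and set
\[
w:=u-\sum_{p\in Sing(\Sigma)\cap U}c_p\,\eta_p G_{r_2,p},
\]
interpreting $\eta_p G_{r_2,p}$ as extended by zero outside $\cB_{r_2}(p)$. Each $(\eta_p-1)G_{r_2,p}$ is smooth with compact support in $\cU\setminus Sing(\Sigma)$ and zero trace on $\partial\cU$, hence is approximable by $C_c^1(\cU)$--functions in $\scB$-norm and therefore lies in $\scB_0(\cU)$. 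Consequently, once $w\in\scB_0(\cU)$ is established, the identity $u=(w+\sum_p c_p(\eta_p-1)G_{r_2,p})+\sum_p c_p G_{r_2,p}$ exhibits $u$ as an element of $\scB_0(\cU)\bigoplus\RR_{L^h}\langle Sing(\Sigma)\cap U\rangle$.

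To verify $w\in\scB_0(\cU)$ I would appeal to Lemma \ref{Linear, Equi def of scB}(2). The prerequisites hold: on $\cB_{r_2/4}(p)$ one has $w=u-c_p G_{r_2,p}\in\scB(\cB_{r_2/2}(p))\subset W^{1,2}_{loc}$; off the cutoff supports $w=u$ is smooth on $Reg(\Sigma)$ by elliptic regularity; hence $w\in W^{1,2}_{loc}(\cU)$, and $w|_{\partial\cU}=0$ since each $\eta_p G_{r_2,p}$ has support disjoint from $\partial U$. Given any nested basis $V_j\supset\supset V_{j+1}$ of neighborhoods of $Sing(\Sigma)$ with $\bigcap_j V_j=Sing(\Sigma)$, I may assume $V_j\subset\bigsqcup_p \cB_{r_2/8}(p)$ for $j$ large. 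For each $p$, apply Lemma \ref{Linear, Equi def of scB}(2) in the $\scB$-direction to $w|_{\cB_{r_2/4}(p)}$ to obtain $w_j^{(p)}\in W^{1,2}(\cB_{r_2/4}(p))$ such that $w_j^{(p)}=w$ outside $V_j$, $w_j^{(p)}\equiv 0$ on some $V_{N_j}$ with $N_j\gg j$, and $\sup_j\|w_j^{(p)}\|_{\scB(\cB_{r_2/4}(p))}<\infty$. Glue into a global $w_j$ by setting $w_j:=w_j^{(p)}$ on $\cB_{r_2/4}(p)$ and $w_j:=w$ on the complement; the pieces match because $w_j^{(p)}=w$ throughout $\cB_{r_2/4}(p)\setminus V_j$, which contains a neighborhood of $\partial\cB_{r_2/4}(p)$ by the choice of $V_j$. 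Then $w_j$ coincides with $w$ near $\partial\cU$ (hence has vanishing trace there), vanishes on $V_{N_j}$, satisfies $\spt(w_j-w)\subset V_j$, and summing the local bounds gives $\sup_j\|w_j\|_{\scB}<\infty$; so $w_j\in W_0^{1,2}(\cU)$, and Lemma \ref{Linear, Equi def of scB}(2) delivers $w\in\scB_0(\cU)$.

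The main obstacle is this final patching step. One must verify both that the glued $w_j$ genuinely lies in $W^{1,2}_0(\cU)$ (which is why the constraint $V_j\subset\bigsqcup_p\cB_{r_2/8}(p)$ is imposed, so that $w_j^{(p)}$ already equals $w$ on an open collar around $\partial\cB_{r_2/4}(p)$ and no spurious interface distributions appear), and that the patchwise $\scB$-bounds add up to a uniform global bound. Once this bookkeeping is dispatched, the rest of the argument is a routine assembly of Corollary \ref{Cor_Linear, Loc Growth Bd and function decomp} and Lemma \ref{Linear, Equi def of scB}.
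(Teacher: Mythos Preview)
Your proposal is correct and follows the same route the paper intends: the paper's proof is the single sentence ``This directly follows from corollary \ref{Cor_Linear, Loc Growth Bd and function decomp}'', and you have unpacked exactly how the local decomposition $u-c_pG_{r_2,p}\in\scB(\cB_{r_2/2}(p))$ at each singularity assembles, via cutoffs and Lemma \ref{Linear, Equi def of scB}(2), into the global statement $u\in\scB_0(\cU)\bigoplus\RR_{L^h}\langle Sing(\Sigma)\cap U\rangle$. Your identification of the patching step as the only nontrivial bookkeeping is accurate, and your treatment of it (ensuring $V_j\subset\bigsqcup_p\cB_{r_2/8}(p)$ so the local approximants glue without interface terms, and splitting $Q_\Sigma$ additively over the pieces to sum the local $\scB$-bounds) is sound.
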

      \begin{proof}
       This directly follows from corollary \ref{Cor_Linear, Loc Growth Bd and function decomp}.
      \end{proof}
      
      The following lemma shows that having top asymptotic growth rate near each singularity is a generic property for solutions of $L^h u = f$.
      \begin{Lem} \label{Lem_Genericness of Sol with Top Growth Rate}
       Suppose $L^h$ be nondegenerate on $\scB_0(\cU)$, $p\in Sing(\Sigma)\cap U$. Then there exists an open subset $\scV_p \subset L^{\infty}(\cU)$ satisfying
       \begin{enumerate}
       \item[(1)] $f\in \scV_p$ implies $cf\in \scV_p$, $\forall c\in \RR\setminus \{0\}$;
       \item[(2)] If $f\in \scV_p$ and $g\in L^{\infty}(\cU)\setminus \scV_p$, then $f+g \in \scV_p$;
       \item[(3)] $\scV_p \cap C^{\infty}_c(\bar{\cU})$ is dense in $C^{\infty}_c(\bar{\cU})$;
       \item[(4)] If $u \in \scB_0(\cU)$ is the solution of $L^h u = f$ for some $f\in \scV_p$, then $\cA\cR_p(u) = \gamma_1^+(C_p)$. 
       \end{enumerate}
       In particular, for every $k\in \NN\cup \{0\}$ and $\alpha\in [0, 1]$, $\{f\in C^{k, \alpha}(Clos(U)): f|_{\Sigma} \in \cap_{p\in Sing(\Sigma)\cap U}\scV_p\}$ is open and dense in $C^{k, \alpha}(Clos(U))$.
      \end{Lem}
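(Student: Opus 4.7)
Define $\scN_p:=\{f\in L^\infty(\cU):\cA\cR_p(u_f)>\gamma_1^+(C_p)\}$, where $u_f\in\scB_0(\cU)$ is the unique solution of $L^h u_f=f$ given by Proposition~\ref{Prop_Linear, Basic prop for L^2-noncon}(3), and set $\scV_p:=L^\infty(\cU)\setminus\scN_p$. Corollary~\ref{Cor_Linear, Loc Growth Bd and function decomp}(1) forces $\cA\cR_p(u_f)\geq\gamma_1^+$ always, while Lemma~\ref{Lem_Linear, Growth rate charact}(2) confines asymptotic rates to the discrete set $\Gamma_{C_p}$, so $f\in\scN_p$ is equivalent to $\cA\cR_p(u_f)\geq\gamma^\ast:=\min\big(\Gamma_{C_p}\cap(\gamma_1^+,+\infty]\big)$. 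The linearity of $f\mapsto u_f$ combined with $\cA\cR_p(u+v)\geq\min\{\cA\cR_p(u),\cA\cR_p(v)\}$ then makes $\scN_p$ a linear subspace of $L^\infty(\cU)$, which immediately yields (1) and (2).

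The remaining task is to produce a continuous, nonzero linear functional $\ell_p$ on $L^\infty(\cU)$ with $\ker\ell_p=\scN_p$; closedness of $\ker\ell_p$ then gives openness of $\scV_p$, and since $\ell_p$ is nonzero on $C_c^\infty(\bar{\cU})$, $\scV_p\cap C_c^\infty$ is open and dense, yielding (3). Assume first that $C_p$ is strictly stable ($\gamma_1^+>\gamma_1^-$) with $\dim W_{\gamma_1^+}=1$. Let $G_p$ be the Green's function of $L^h$ at $p$ from Corollary~\ref{Cor_Linear, Global Green's func exists}, normalized by $G_p\sim r^{\gamma_1^-}w_1$ near $p$ via Corollary~\ref{Cor_Linear, Unique Asymp for Green's func}. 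Since $\gamma_1^->-(n-2)>-n$, $G_p\in L^1(\cU)$, and the pairing $\ell_p(f):=\int_\cU fG_p$ is a bounded linear functional. Both $u_f$ and $G_p$ vanish on $\partial\cU$ (from the construction in Corollary~\ref{Cor_Linear, Loc Growth Bd and function decomp}(2)) and $L^h G_p=0$ on $\cU\setminus\{p\}$, so Green's formula on $\cU\setminus\cB_\epsilon(p)$ reduces $\ell_p(f)$ to a Wronskian-type boundary term:
$$\ell_p(f)=\lim_{\epsilon\to 0^+}\int_{\partial\cB_\epsilon(p)}\big(G_p\,\partial_\nu u_f-u_f\,\partial_\nu G_p\big)\,d\scH^{n-1}.$$
Substituting the expansions of $G_p$ and of $u_f$ (the latter supplied by Lemma~\ref{Lem_Linear, Growth rate charact}(2) at rate $\gamma_1^+$) into the conic coordinates of Lemma~\ref{Lem_Pre, geom of asymp at 0}, and using $\gamma_1^++\gamma_1^-=-(n-2)$, evaluates the limit to $(\gamma_1^+-\gamma_1^-)\|w_1\|^2_{L^2(S)}\cdot c(u_f)$, where $c(u_f)$ is the coefficient of $r^{\gamma_1^+}w_1$ in the leading expansion of $u_f$. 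The lower-order tail contributes a power $\epsilon^{\gamma^\ast-\gamma_1^+}\to 0$, justifying passage to the limit. Hence $\ell_p(f)=0$ iff $c(u_f)=0$ iff $\cA\cR_p(u_f)\geq\gamma^\ast$, i.e. $\ker\ell_p=\scN_p$. The strictly stable case with $\dim W_{\gamma_1^+}=d>1$ is identical after replacing $\ell_p$ by the $\RR^d$-valued functional built from $d$ Green-type functions whose leading modes exhaust an $L^2(S)$-basis of $W_{\gamma_1^+}$. The final $C^{k,\alpha}$ claim is then automatic: $\bigcap_p\scV_p$ is open in $L^\infty\supset C^{k,\alpha}(Clos(U))$, while $C_c^\infty(\bar{\cU})$ is dense in $C^{k,\alpha}(Clos(U))|_\Sigma$ and is hit by $\scV_p$ on an open dense subset for each $p$.

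The principal technical obstacle lies in the non-strictly stable case $\gamma_1^+=\gamma_1^-=-(n-2)/2$, where $G_p$ and the first eigenfunction $\varphi_1$ share the identical leading asymptotic $r^{-(n-2)/2}w_1$, so the single pairing $\int fG_p$ fails to separate the generic $r^{-(n-2)/2}$-mode from the exceptional $r^{-(n-2)/2}\log r$-mode of \eqref{Pre, hom Sol Jacob}. The remedy is to upgrade $\ell_p$ to an $\RR^2$-valued functional: the first component is $\int fG_p$ as above, and the second is $\int fH_p$, where $H_p\in\scB_0(\cU)\oplus\RR_{L^h}\langle Sing(\Sigma)\cap U\rangle$ is a secondary dual kernel whose leading expansion at $p$ contains the $r^{-(n-2)/2}\log r$ mode, produced by the same limiting construction as in Lemma~\ref{Lem_Linear, Loc Green's func unique} but tuned to that mode (explicit existence coming from the ODE description \eqref{Pre, hom Sol Jacob}). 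The $2\times 2$ Wronskian matrix evaluating these two functionals on the pair $\big(r^{-(n-2)/2}w_1,\;r^{-(n-2)/2}\log r\cdot w_1\big)$ has nonzero determinant by a direct computation from \eqref{Pre, hom Sol Jacob}, so $\ker\ell_p=\scN_p$ in this case as well, completing the proof.
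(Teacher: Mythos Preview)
Your approach via the Green-function pairing $\ell_p(f)=\int_\cU f\,G_p$ is conceptually attractive and genuinely different from the paper's, but as written it has two real gaps.

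First, the identification $\ker\ell_p=\scN_p$ rests on the phrase ``substituting the expansions of $G_p$ and of $u_f$ (the latter supplied by Lemma~\ref{Lem_Linear, Growth rate charact}(2))''. That lemma, however, gives only a \emph{subsequential} blow-up $u_f(t_jx)/c_j\to r^{\gamma}w(\omega)$ along some $t_j\to 0^+$; it does not furnish a uniform expansion $u_f=c\,r^{\gamma_1^+}w_1+o(r^{\gamma_1^+})$ with the derivative control needed to evaluate $\lim_{\epsilon\to 0}\int_{\partial\cB_\epsilon}(G_p\,\partial_\nu u_f-u_f\,\partial_\nu G_p)$. Power-counting gives you the easy inclusion $\scN_p\subset\ker\ell_p$, but the reverse inclusion --- which is what you actually need to force $\scN_p$ closed --- is not justified without that expansion. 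This is fillable (one would run a Caffarelli--Hardt--Simon type argument), but it is extra work the paper does not supply.

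Second, your diagnosis of the non-strictly-stable case $\gamma_1^+=\gamma_1^-=-(n-2)/2$ is off. The Green's function $G_p$ carries the $r^{-(n-2)/2}\log(1/r)\,w_1$ mode --- this is exactly why $G_p\notin\scB$, cf.\ the example after Lemma~\ref{Linear, Equi def of scB} together with Lemma~\ref{Lem_Linear, Green's function dominates} --- whereas any $u_f\in\scB_0$ can only carry the $r^{-(n-2)/2}w_1$ mode at leading order. The Wronskian of this pair is already nonzero, so the single functional $\int fG_p$ would in fact suffice; your proposed ``secondary kernel $H_p$'' with a $\log$-mode would simply reproduce $G_p$ modulo $\scB_0$, making the $\RR^2$-valued fix both unnecessary and ill-posed.

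The paper avoids all of this with a much more elementary maximum-principle argument. It sets $\scV_p:=\{f:\limsup_{x\to p}|u_f|/\xi_p>0\}$, where $\xi_p\in\scB(\cB_{r_2}(p))$ solves $L^h\xi_p=0$ with boundary value $1$. Openness comes from a uniform bound $\limsup_{x\to p}|u_f|/\xi_p\leq C\|f\|_{L^\infty}$, obtained by barrier comparison with $2\xi_p-\check\xi_p$ where $(-L^h+1)\check\xi_p=0$; non-emptiness of $\scV_p\cap C_c^\infty(\bar\cU)$ is produced by an explicit element built from Proposition~\ref{Prop_Linear, Basic prop for L^2-noncon}(5). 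No asymptotic expansion and no integration by parts over $\partial\cB_\epsilon$ are needed.
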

      \begin{proof}
       For each $p\in Sing(\Sigma)\cap U$, define $\xi_p\in \scB(\cB_{r_2}(p))$ to be the solution of $L^h \xi_p = 0$ with $\xi_p|_{\partial \cB_{r_2}(p)} = 1$; And let $\check{\xi}_p\in \cB_{r_2}(p)$ be the solution of $(-L^h + 1)\check{\xi}_p = 0$ with $\check{\xi}_p|_{\partial \cB_{r_2}(p)} = 1$. Then by weak maximum principle, $\xi_p\geq \check{\xi}_p > 0$ on $\cB_{r_2}(p)$ for every $p\in Sing(\Sigma)\cap U$, and by corollary \ref{Cor_Linear, Growth est for G and vaiphi_1} $\check{\xi}_p(x)\to +\infty$ as $x\to p$.
       
       Now fix p, define \[
        \scV_p:= \{f\in L^\infty (\cU): u:= (L^h)^{-1}f \in \scB_0(\cU) \text{ satisfies }\limsup_{x\to p} |u|(x)/\xi_p(x) >0 \}   \]
       (1), (2) directly follows from this definition; (4) follows from lemma \ref{Lem_Linear, Growth rate charact} and corollary \ref{Cor_Linear, Growth est for G and vaiphi_1}. If (1)-(4) and openness of $\scV_p$ is verified, then by Baire category theorem, \[
        \bigcap_{p\in Sing(\Sigma)\cap U} \{f\in C^{k, \alpha}(Clos(U)): f|_{\Sigma} \in \scV_p\}\subset C^{k, \alpha}(Clos(U))\  \text{ is open and dense}    \]
               
       Now we proceed to prove (3) and openness of $\scV_p$ in $L^\infty$. \\
       \textbf{Proof of openness}: Clearly, it suffices to show that there exists a $C=C(h, \Sigma, U, M, g)>0$ such that for every $\|f\|_{L^{\infty}(\cU)}\leq 1$ we have 
       \begin{align}
        \limsup_{x\to p}|(L^h)^{-1}f|(x)/\xi_p(x) \leq C  \label{Linear_Asymp bded by xi_p}
       \end{align} 
       To prove this bound, denote $(L^h)^{-1}f$ by $v$. First recall that by proposition \ref{Prop_Linear, Basic prop for L^2-noncon} (3) and interior $C^0$-estimate, $|v|\leq C_1 = C_1(h, \Sigma, U, M, g, r_2)$ on $\cA_{r_2/2, r_2}$. Let $\kappa:= \inf_{\cB_{r_2}(p)} \check{\xi}_p (>0)$, $C:= max\{1, C_1\}/\kappa$. Then by $|L^h v| \leq 1 \leq -C\cdot L^h(2\xi_p - \check{\xi}_p)$ on $\cB_{r_2}(p)$, $|v|\leq C\kappa \leq C(2\xi_p - \check{\xi}_p)$ on $\partial \cB_{r_2}(p)$ and weak maximum principle, we see that $|v|\leq C(2\xi_p - \check{\xi}_p)$ on $\cB_{r_2}(p)$. Hence (\ref{Linear_Asymp bded by xi_p}) is proved. \\
       \textbf{Proof of (3)}: By (1) and (2), it suffices to show that $\scV_p\cap C^\infty_c(\bar{\cU})\neq \emptyset$. To see this, first recall that by proposition \ref{Prop_Linear, Basic prop for L^2-noncon} (5), there exists $\vartheta\in C_c^\infty (\bar{\cU}; \RR_+)$ such that $-L^{h + \vartheta}$ is strictly positive. 

       Let $w:= (-L^{h + \vartheta})^{-1}(\vartheta) \in \scB_0(\cU)$, then $w>0$ on $\cU$ by weak and strong maximum principle; Let $r_3\in (0, r_2)$ such that $\cB_{r_3}(p)\cap spt(\vartheta) =\emptyset$. Then $w$ satisfies the equation $-L^h w = \vartheta - \vartheta w = 0$ on $\cB_{r_3}(p)$, and by weak maximum principle, $w \geq \varepsilon \xi_p$ for some $\varepsilon>0$. Therefore, $\vartheta -\vartheta w\in \scV_p \cap C_c^{\infty}(\bar{\cU})$.
      \end{proof}

    \section{Asymptotics and associated Jacobi fields} \label{Sec, Asymp & Asso Jac field}

     Let $(\Sigma,\nu) \subset (M,g)$ be a minimal hypersurface with normal field $\nu$; $U\subset \subset M$ be a smooth domain as in section \ref{Sec, Prelim}. 
     Suppose $\{g_j\}$ be a family of metric smoothly converges to $g_0$, $V_j$ be a family of stationary integral varifolds in $(U, g_j)$ \textbf{different from} $|\Sigma|$ and converging to $|\Sigma|$. Then by Allard regularity theorem \cite{Allard72, Simon83_GMT}, the convergent is in $C^{\infty}_{loc}(U\cap\Sigma)$

     \begin{Def}
      Call $0\neq \phi \in C^{\infty}(\Sigma\cap U)$ a \textbf{generalized Jacobi field associated to} $\{(V_j, g_j)\}_{j\geq 1}$, if $\exists\ t_j \to 0_+$ and $v_j \in C^{\infty} (\Sigma\cap U)$ such that $v_j/t_j \to \phi $ in $C^{\infty}_{loc}(U\cap \Sigma)$ and that $\forall \ U' \subset \subset U\setminus Sing(\Sigma)$, \[
      |graph_{\Sigma}(v_j)|\llcorner U' = V_j\llcorner U' \ \ \ \text{ for sufficiently large }j   \]
     \end{Def}
     
     Clearly, if $\phi$ is an associated Jacobi operator to $\{(V_j, g_j)\}$, then so is $c\phi$ for $c>0$. Also, if $g_j = g$ and $\phi$ is a Jacobi field associated to some $(V_j, g)$, then $\phi$ satisfies the Jacobi field equation $L_{\Sigma} \phi = 0 $.
     
     When $g \equiv g_j$, $U=M$ and $\Sigma$ is regular, after passing to subsequences, the existence of associated Jacobi field directly follows from Allard regularity and elliptic estimates, see \cite{Sharp17}.  However, if $Sing(\Sigma) \neq \emptyset$, even when $g_j=g$, it's unclear whether for any such family of stationary integral varifold $\{V_j\}$, there exists an associated Jacobi field.\\
     
     From now on, suppose that $\Sigma$ is closed, locally stable and has only strongly isolated singularities, and $U = M$. Let $\beta$ be a symmetric 2-tensor on $M$ such that \[
      q(\beta):= div_{\Sigma} (\beta(\nu, \cdot)) -\frac{1}{2}tr_{\Sigma}(\nabla^M_{\nu}\beta)\  \text{ is not identically }0 \text{ on }\Sigma    \] 
     The goal of this section is to prove the following 
     \begin{Thm} \label{Thm_Ass Jac, Main thm}
      Let $\Sigma, \beta$ be as above, $\{g_j\}$ be a family of Riemannian metric. Suppose either $g_j\equiv g$ for all $j$, or $\exists\ c_j\searrow 0$ and $\beta_j\to \beta$ in $C^4$ such that $g_j = g+c_j\beta_j$. 

      Let $V_j\neq |\Sigma|$ be a stationary integral varifold in $(M, g_j)$, $V_j\to |\Sigma|$. Suppose for every $p\in Sing(\Sigma)$, one of the following holds
      \begin{enumerate}
       \item[(A)] the tangent cone $C_p$ of $\Sigma$ at $p$ is area-minimizing at least in one side, see \cite{Lin87, LiuZH19_OneSided}.
       \item[(B)] there exists some neighborhood $U_p\subset M$ of $p$ such that $V_j\llcorner U_p$ are stable minimal hypersurfaces (possibly with multiplicity) in $(U_p, g_j)$, $j>>1$.
      \end{enumerate}
      Then after passing to a subsequence, there exists a (generalized) associated Jacobi field $\phi \in \scB(\Sigma) \bigoplus \RR_{L_{\Sigma}} \langle Sing(\Sigma) \rangle$ to $\{(V_j, g_j)\}_{j\geq 1}$; Moreover, $\phi$ satisfies $L_{\Sigma}\phi = 0$ provided $g_j = g$; Or $ L_{\Sigma}\phi = cq(\beta)$ for some $c\geq 0$ provided $g_j = g+ c_j\beta_j$.
     \end{Thm}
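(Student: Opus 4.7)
The strategy is to realize each $V_j$ as a graph over $\Sigma \setminus Sing(\Sigma)$, rescale appropriately, and extract a subsequential limit in $\scB(\Sigma)\bigoplus \RR_{L_\Sigma}\langle Sing(\Sigma)\rangle$. Since $V_j\to |\Sigma|$ with multiplicity one, Allard regularity yields, for each compact $K\subset\Sigma\setminus Sing(\Sigma)$ and $j\gg 1$, a smooth $v_j$ on a neighborhood of $K$ with $V_j = |graph_\Sigma(v_j)|$ there and $v_j\to 0$ in $C^\infty_{loc}$. Fix a small $\delta_0>0$, set $K_0:=\{x\in\Sigma: dist_\Sigma(x,Sing(\Sigma))\geq \delta_0\}$, put $c_j:=0$ in the case $g_j\equiv g$, and normalize by $t_j:=\max(c_j, \|v_j\|_{L^2(K_0)})$, $\phi_j:=v_j/t_j$. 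By lemma \ref{Lem_Pre, Vol element and mean curv of graph II}, the stationarity $\scM^{g_j}v_j=0$ linearizes on the regular part to
\begin{equation*}
 L_\Sigma \phi_j = (c_j/t_j)\, q(\beta_j) + E_j,
\end{equation*}
where $E_j$ is a quasilinear error vanishing to second order in $(v_j/\rho, \nabla v_j)$, and therefore $E_j\to 0$ in $C^k_{loc}(\Sigma\setminus Sing(\Sigma))$ for every $k$. Passing to a subsequence with $c_j/t_j\to c\in[0,1]$ (necessarily $c=0$ when $g_j\equiv g$), interior Schauder estimates produce a smooth subsequential limit $\phi$ on $\Sigma\setminus Sing(\Sigma)$ satisfying the equation stated in the theorem.

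The main obstacle is upgrading the interior convergence to convergence in $\scB(\Sigma)\bigoplus \RR_{L_\Sigma}\langle Sing(\Sigma)\rangle$, which I would reduce to a uniform lower bound $\cA\cR_p(\phi_j)\geq \gamma_1^-(C_p)$ at each $p\in Sing(\Sigma)$. Given such a uniform bound, corollary \ref{Cor_Linear, Loc Growth Bd and function decomp} decomposes each $\phi_j$ on $B_{r_2}(p)$ as $a_j^p G_p + \psi_j^p$ with $\psi_j^p\in\scB$ and $a_j^p\in\RR$; straightforward energy estimates using the perturbed equation then bound $\|\psi_j^p\|_{\scB}$ and $|a_j^p|$ uniformly. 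Under hypothesis (B), I would perform a blow-up analysis: for suitable scales $r_j\to 0$, the rescaled varifolds $(\eta_{p,r_j})_\sharp V_j$ are locally stable and, after passing to a subsequence, converge to a stable minimal hypersurface asymptotic to $C_p$ at infinity, to which theorem \ref{Thm_Intro, Asymp Rate Lower Bd and Regularity} applies to rule out asymptotic rates below $\gamma_1^-$; transferring this through a covering argument at all scales yields the required uniform bound on $\phi_j$. Under hypothesis (A), the Hardt-Simon foliation from theorem \ref{Thm_Pre, H-S foliation} on the minimizing side of $C_p$ provides one-sided barriers with explicit asymptotic rate $\gamma_1^\pm$, and a comparison of $V_j$ with suitably rescaled leaves --- combined on the other side with barriers built from Green's functions of $L_\Sigma$ via corollary \ref{Cor_Linear, Unique Asymp for Green's func} --- dominates $|v_j|$ by a constant multiple of $G_p$ near $p$, from which lemma \ref{Lem_Linear, Growth rate charact} delivers the rate bound.

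With the uniform decomposition in place, the $L^2$-nonconcentration of lemma \ref{Linear, L^2 noncon for iso} activates the compact embedding $\scB_0\hookrightarrow L^2$ of proposition \ref{Prop_Linear, Basic prop for L^2-noncon} (note $\scB=\scB_0$ since $\Sigma$ is closed), producing a subsequential limit in $\scB(\Sigma)\bigoplus \RR_{L_\Sigma}\langle Sing(\Sigma)\rangle$ that must agree with the interior limit $\phi$ by unique continuation. Nontriviality is forced by the normalization: either $\|v_j\|_{L^2(K_0)}$ realizes $t_j$ along a subsequence, in which case $\|\phi\|_{L^2(K_0)}=1$; or else $c_j$ realizes $t_j$, in which case $c=1$ and $L_\Sigma\phi = q(\beta)\not\equiv 0$, forcing $\phi\not\equiv 0$. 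A standard diagonalization over an exhaustion of $\Sigma\setminus Sing(\Sigma)$ by compact sets ensures the graph identity $V_j\llcorner U'=|graph_\Sigma(v_j)|$ holds on each $U'\subset\subset\Sigma\setminus Sing(\Sigma)$ for large $j$, exhibiting $\phi$ as a generalized associated Jacobi field with the stated regularity and equation.
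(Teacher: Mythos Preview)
Your overall architecture is correct and close to the paper's: write $V_j$ as a graph $v_j$ over exhausting subdomains of $\Sigma$, normalize, pass to a $C^\infty_{loc}$ limit, and then show the limit lies in $\scB(\Sigma)\oplus\RR_{L_\Sigma}\langle Sing(\Sigma)\rangle$ via a growth-rate bound at each singularity. Your choice of $t_j=\max(c_j,\|v_j\|_{L^2(K_0)})$ is a perfectly good normalization, and your derivation of the limiting equation and nontriviality is fine.

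The substantive gap is in how you obtain the growth control near each $p\in Sing(\Sigma)$. First, the statement ``uniform lower bound $\cA\cR_p(\phi_j)\geq\gamma_1^-(C_p)$'' is not well-posed: $\phi_j$ is only defined on $\Sigma\setminus B_{s_j}(Sing(\Sigma))$ for some $s_j\to 0$, so it has no asymptotic rate at $p$, and corollary~\ref{Cor_Linear, Loc Growth Bd and function decomp} cannot be applied to $\phi_j$. What is actually required is an annular $L^2$ bound of the form
\[
 \int_{\cA_{r,Kr}(p)} v_j^2\,\rho^{-n-2\sigma} \;\leq\; C\int_{\cA_{\bar s_0,\tau}(p)} v_j^2\,\rho^{-n-2\sigma}, \qquad \forall\, r\in (s_j,\bar s_0),
\]
for some fixed $\sigma<\gamma_1^-(C_p)$, \emph{uniformly in $j$ and across all scales $r$}. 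This then passes to the limit $\phi$, giving $\cA\cR_p(\phi)\geq\gamma_1^-$, and only at that point does one invoke corollary~\ref{Cor_Linear, Global Growth Bd function decomp} on $\phi$ itself.

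Your proposed mechanism for this annular bound is where the argument breaks. Under (B), blowing up at a \emph{single} sequence of scales $r_j$ and applying theorem~\ref{Thm_Intro, Asymp Rate Lower Bd and Regularity} to the limit tells you something about that particular limit, but does not by itself yield a bound valid at \emph{every} scale $r\in(s_j,\bar s_0)$ with a constant independent of $j$; the phrase ``covering argument at all scales'' hides exactly this difficulty. Under (A), the direct barrier idea is not sound as written: the Hardt--Simon leaves bound $V_j$ only on one side, and Green's functions of $L_\Sigma$ are solutions of the \emph{linear} equation and cannot serve as barriers for the nonlinear minimal-surface equation on the other side. The paper handles (A) and (B) uniformly by a different device: a \emph{quantitative} growth-rate monotonicity for perturbed Jacobi fields (corollary~\ref{Cor_Pre, quant growth rate mon for Jac}), packaged into a dichotomy (lemma~\ref{Lem_Ass Jac, Dichotomy Growth Rate Bd}). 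Choosing $\sigma\in(\gamma_2^-,\gamma_1^-)$, alternative (i) of that lemma would produce a $\delta$-minimal blow-up limit with $\cA\cR_\infty\leq\sigma<\gamma_1^-$, which is ruled out under either (A) or (B) by theorem~\ref{Thm_Ass Jac, Asymp Thm of ext. min. graph}; hence alternative (ii) holds, and that is precisely the inductive, scale-by-scale monotonicity that propagates the annular bound from $r=\bar s_0$ all the way down to $r=s_j$. This quantitative monotonicity is the missing ingredient in your sketch.
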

     
     \begin{Rem}
     \begin{enumerate}
      \item[(1)] The assumption (B) is always satisfied when $n=7$ and $V_j$ are locally stable minimal hypersurfaces with the same index as $\Sigma$. Hence, the theorem applies in this case.
      \item[(2)] In section \ref{Subsec, Ass Jac_Finiteness of ass Jac fields} we study in more details the possible asymptotic rate of $\phi$ above near each $p\in Sing(\Sigma)$. Roughly speaking, they are modeled on the asymptotic rate of stable minimal hypersurface in $\RR^{n+1}$ near infinity to the tangent cone $C_p$. See corollary \ref{Cor_Ass Jac, Max growth of ass Jac => smooth}.
     \end{enumerate}
     \end{Rem}


     \subsection{Asymptotics of minimal hypersurfaces near infinity} \label{Subsec, Ass Jac_Asymp of min surf near infty}
      Let $C\subset \RR^{n+1}$ be a regular hypercone (not necessarily stable) throughout this subsection, with cross section $S$ and normal field $\nu$. The notions of geometric quantities on $C$ and $S$ will be the same as section \ref{Subsec, Geom of cone}. Recall that $E_{\pm}$ are the two connected components of $\RR^{n+1}\setminus C$, with $\nu$ pointing into $E_+$.
      
      Suppose that $V$ is an integral varifold in $\RR^{n+1}\setminus \BB_1^{n+1}$ (not necessarily closed), with finite density at infinity, i.e. \[
        \limsup_{R\to +\infty}\frac{1}{R^n}\|V\|(\BB_R^{n+1}\setminus \BB_1^{n+1}) < +\infty     \]
      \begin{Def}
       Call $V$ \textbf{asymptotic to} C near $\infty$, if there's a function $h\in C^2(C)$ and $R_0 > 1$ such that 
       \begin{align}   
        \frac{1}{R}|h|(R,\cdot) + |\nabla_C h|(R, \cdot) + R|\nabla_C^2 h|(R, \cdot) \to 0\ \ \ \text{ in }C^0(S)\text{ as }R\to +\infty   \label{Ass Jac, Asymp graph tend to 0}
       \end{align}
       and that 
       \begin{align}
        |graph_C(h)|\llcorner (\RR^{n+1}\setminus \BB^{n+1}_{R_0}) = V\llcorner (\RR^{n+1} \setminus \BB^{n+1}_{R_0})  \label{Ass Jac, graphical near infty of asymp min surf}
       \end{align}
       For such $V$, define \[
        \cA\cR_{\infty}(V) = \cA\cR_{\infty}(V; C) := \inf\{\sigma: \limsup_{R\to +\infty} R^{-\sigma}h(R, \cdot) = 0\ \text{ in }C^0(S) \}     \]
       Called the \textbf{asymptotic rate} of $V$ to $C$ at infinity. We use the convention that $\inf \emptyset := +\infty$ and $\cA\cR_\infty(|C|; C) = -\infty$.
       
       If $\Sigma\subset \RR^{n+1}\setminus \BB_1^{n+1}$ be a hypersurface, denote $\cA\cR_{\infty}(\Sigma):=\cA\cR_{\infty}(|\Sigma|)$.
      \end{Def}
      For stationary integral varifolds with finite density at infinity, by Allard regularity \cite{Allard72, Simon83_GMT}, being asymptotic to $C$ is equivalent to having $|C|$ to be the tangent varifold at $\infty$. Clearly, by (\ref{Ass Jac, Asymp graph tend to 0}), $\cA\cR_{\infty}(V) = \cA\cR_\infty((\eta_{0,R})_{\sharp}V ) \leq 1$, $\forall R>0$.  \\

      The goal of this subsection is to prove the following asymptotic rate estimate of minimal hypersurface towards it's tangent cone at infinity.
      \begin{Thm} \label{Thm_Ass Jac, Asymp Thm of ext. min. graph} 
       Suppose $V\in \cI\cV_n(\RR^{n+1})$ be a stationary integral varifold asymptotic to a regular minimal hypercone $C$ near infinity, $V \neq |C|$. Then
       \begin{enumerate}
        \item[(1)] If $C$ is one-sided minimizing (resp. strictly minimizing), then $\cA\cR_{\infty}(V) \geq \gamma_1^- (\text{resp. }\gamma_1^+)$.
         Moreover, if $C$ is area minimizing and $\cA\cR_{\infty}(\Sigma) = \gamma_1^{\pm}$, then $\Sigma$ has no singularity and lies on one side of $C$.
        \item[(2)] If $V=|\Sigma|$ for some stable minimal hypersurface $\Sigma\subset \RR^{n+1}$, then $\cA\cR_{\infty}(\Sigma) \geq \gamma_1^-$. 
         If further, $\cA\cR_{\infty}(\Sigma) = \gamma_1^{\pm}$, then $\Sigma$ has no singularity and lies on one-side of $C$, inside which $\Sigma$ and $C$ are minimizing.
       \end{enumerate}
      \end{Thm}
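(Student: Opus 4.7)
The plan is to blow down $V$ at infinity to produce a Jacobi field on $C$ whose growth records $\cA\cR_\infty(V)$, and then rule out rates strictly below $\gamma_1^-$ by the quantitative growth-rate monotonicity of Corollary~\ref{Cor_Pre, quant growth rate mon for Jac}. For $R$ large set $h_R(x):=R^{-1}h(Rx)$; by Lemma~\ref{Lem_Pre, Vol element and mean curv of graph II} combined with (\ref{Ass Jac, Asymp graph tend to 0}), each $h_R$ solves on $C\cap\{|x|>R_0/R\}$ a perturbed Jacobi-field equation of the exact form governed by Corollary~\ref{Cor_Pre, quant growth rate mon for Jac}, with perturbation coefficients $b_0,b_1$ that become uniformly small on any fixed annulus once $R$ is large. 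Standard elliptic estimates on $h_R/\|h_R\|_{L^2(A_{1,2})}$ yield subsequentially a nontrivial Jacobi field $u_\infty$ on $C$, so $V\neq|C|$ is witnessed by a genuine limit.

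For the $\gamma_1^-$ bound (the heart of both (1) and (2) apart from the strict minimizing improvement), suppose toward contradiction $\cA\cR_\infty(V)<\gamma_1^-$ and pick $\sigma\in(\cA\cR_\infty(V),\gamma_1^-)\setminus\Gamma_C$; by definition $J^\sigma_h(R,KR)\to 0$ as $R\to\infty$ for every $K>1$. Fix $K\geq K_0(\sigma,C)$ from Lemma~\ref{Lem_Pre, growth rate mon for Jac} and let $N,\delta_0$ come from Corollary~\ref{Cor_Pre, quant growth rate mon for Jac}. For arbitrarily large $R$ one can select a scale at which $J^\sigma_h(R,KR)\geq\sup_{2\leq j\leq N}J^\sigma_h(K^jR,K^{j+1}R)$ (such scales exist since all of these quantities tend to $0$). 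Corollary~\ref{Cor_Pre, quant growth rate mon for Jac} applied to $h$ on $A_{R,K^{N+1}R}$ then gives $J^\sigma_h(KR,K^2R)<J^\sigma_h(R,KR)$. Iterating yields an eventual strict decrease of $J^\sigma_h$ across successive $K$-annuli starting from every sufficiently large $R$, which is incompatible with the existence of comparison scales arbitrarily far out, proving the bound.

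For the $\gamma_1^+$ improvement in (1) when $C$ is strictly minimizing, the two Hardt--Simon leaves $\Sigma_\pm$ of Theorem~\ref{Thm_Pre, H-S foliation} have rate exactly $\gamma_1^+$, and one-sided minimality of $V$ together with the uniqueness clause in that theorem rules out any leading contribution at rate $\gamma_1^-$ (such a contribution would force $V$ to be slid to, or cross, a rescaled leaf in violation of one-sided minimality). For the ``moreover'' parts of (1) and (2), the critical equality $\cA\cR_\infty(V)=\gamma_1^\pm$ forces the leading term of $u_\infty$ to be $c\,r^{\gamma_1^\pm}w_1$ with $c\neq 0$, hence $h$ has a definite sign at infinity and $\Sigma$ lies on one side of $C$ outside a large ball. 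In the stable setting of (2), if there were $p\in\mathrm{Sing}(\Sigma)$, one-sidedness plus comparison with a fixed rescaled Hardt--Simon leaf produces a positive (generalized) Jacobi field on a punctured neighborhood of $p$; Lemma~\ref{Lem_Pre, Diverge of pos Jac field} would then force this function to diverge at $p$, contradicting the scale-invariant boundedness inherited from the graphical expansion. Hence $\mathrm{Sing}(\Sigma)=\emptyset$, and Theorem~\ref{Thm_Pre, H-S foliation} identifies $\Sigma$ with a rescaling of $\Sigma_\pm$, delivering the foliation conclusion.

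The main obstacle is coordinating the iterative step of Corollary~\ref{Cor_Pre, quant growth rate mon for Jac} uniformly in $R$: the perturbation of $L_C$ decays with $R$ but the monotonicity is only quantitative over $N$ fixed consecutive $K$-annuli, so one must simultaneously select scales where the ``local maximum'' hypothesis on $J^\sigma_h$ holds and where the perturbation on $A_{R,K^{N+1}R}$ sits below $\delta_0$; this is exactly where the $o(1)$ decay in (\ref{Ass Jac, Asymp graph tend to 0}) is used. A secondary difficulty is constructing the positive Jacobi field near singularities in the moreover step in a form to which Lemma~\ref{Lem_Pre, Diverge of pos Jac field} (stated in the Euclidean setting) applies, which is precisely why the strongly isolated hypothesis on $\Sigma$ and the regularity of $C$ are essential here.
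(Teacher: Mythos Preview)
Your central argument for the lower bound $\cA\cR_\infty(V)\geq\gamma_1^-$ does not close. You choose $\sigma\in(\cA\cR_\infty(V),\gamma_1^-)\setminus\Gamma_C$, observe that $J^\sigma_h(R,KR)\to 0$, and then invoke Corollary~\ref{Cor_Pre, quant growth rate mon for Jac} to deduce an eventual strict decrease of $l\mapsto J^\sigma_h(K^lR,K^{l+1}R)$. But monotone decrease to zero is \emph{not} a contradiction; nothing you wrote actually uses the hypothesis $\sigma<\gamma_1^-$. In fact, for any $\sigma\in(\gamma_2^-,\gamma_1^-)\setminus\Gamma_C$ there are nontrivial Jacobi fields on $C\setminus B_1$ in $L^2_\sigma$ (those built from the modes $r^{\gamma_k^-}w_k$ with $k\geq 2$), and Lemma~\ref{Lem_Pre, growth rate mon for Jac} simply confirms that their $J^\sigma$ decreases. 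Growth-rate monotonicity alone cannot produce a lower bound on $\cA\cR_\infty$; it is a tool for controlling blow-ups, not for ruling out fast decay. The same remark applies to your treatment of the strictly minimizing improvement in~(1), where you also invoke ``one-sided minimality of $V$'', which is not part of the hypothesis.

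The paper's proofs of~(1) and~(2) are completely different from each other and from your proposal. For~(1) it slides the Hardt--Simon leaf $\Sigma_+\subset E$ of Theorem~\ref{Thm_Pre, H-S foliation} as a barrier: if $\cA\cR_\infty(V)<\gamma_1^-$ (resp.\ $<\gamma_1^+$ in the strict case), then by Lemma~\ref{Lem_Ass Jac, asymp char} the graph of $V$ lies strictly below that of $\Sigma_+$ near infinity, so decreasing $R$ in $R\cdot P_+$ until first contact and applying the strong maximum principle forces $\operatorname{spt}(V)$ entirely into $\RR^{n+1}\setminus\operatorname{Clos}(E)$; Lemma~\ref{Lem_Ass Jac, Reg of infty one-side perturb} then identifies $V$ with a Hardt--Simon leaf on that side, contradicting the assumed rate. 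For~(2) the paper introduces the explicit Jacobi field $\psi:=\nabla_{\RR^{n+1}}|x|^2\cdot\nu_\Sigma$ on $\Sigma$: if $\cA\cR_\infty(\Sigma)<\gamma_1^-$ then Lemma~\ref{Lem_Ass Jac, asymp char} gives $|\psi(x)|\leq|x|^{\gamma_2^-+\epsilon}$ near infinity; this is compared against a positive Jacobi field $v\in\cG(\Sigma)$ produced by Lemma~\ref{Lem_Ass Jac, Pos Jac field on Sigma with AR bd}, which satisfies $v(x)\geq C|x|^{\gamma_1^--\epsilon}$ at infinity and $v\to\infty$ at $\operatorname{Sing}(\Sigma)$ by Lemma~\ref{Lem_Pre, Diverge of pos Jac field}. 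Taking the smallest $c$ with $cv\geq\psi$ and using the strong maximum principle forces $\psi\equiv 0$, hence $\Sigma$ is dilation-invariant and equals $C$, a contradiction. The ``moreover'' clauses are then handled via Lemma~\ref{Lem_Ass Jac, asymp char} and Lemma~\ref{Lem_Ass Jac, Reg of infty one-side perturb}. None of the key objects---the sliding barrier, the radial Jacobi field $\psi$, or the positive Jacobi field $v$ on $\Sigma$---appear in your outline.
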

      We start with some preparation.      
      
      \begin{Lem} \label{Lem_Ass Jac, asymp char} 
       Suppose $\Sigma \subset \RR^{n+1}\setminus\BB^{n+1}_1$ be a minimal hypersurface asymptotic to $C$ near infinity. Then
       \begin{enumerate}
        \item[(1)] $\cA\cR_{\infty}(\Sigma) \in \Gamma_C\cup \{-\infty\}$, where $\Gamma_C$ is defined below (\ref{Pre, power of cone Jac}).
        \item[(2)] If $\gamma := \cA\cR_{\infty}(\Sigma) \in (-\infty, 1)$, $h$ be the graphical function as in (\ref{Ass Jac, graphical near infty of asymp min surf}), then \[
          h(r,\omega) = r^{\gamma}(c + c'\log r)w_j(\omega) + O_2(r^{\gamma - \epsilon})     \]
         for some $(c, c')\neq (0, 0)$, $c'=0$ unless $\gamma = \gamma_1^+ = \gamma_1^-$ (in which case $C$ is not strictly stable); $\epsilon = \epsilon(\Sigma) > 0$; $w_j(\omega)$ is a unit eigenfunction of $\cL_S$ as in section \ref{Subsec, Geom of cone}, and $\gamma \in \{\gamma_j^{\pm}\}$.
       \end{enumerate}
      \end{Lem}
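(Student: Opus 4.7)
The plan is to adapt the proof of Lemma~\ref{Lem_Linear, Growth rate charact} to behavior at infinity, running the growth rate monotonicity of Jacobi fields in the opposite direction on the logarithmic scale. Write $\gamma := \cA\cR_\infty(\Sigma)$, and assume $\gamma \in (-\infty, 1)$ (otherwise (1) is trivial and (2) is vacuous). Expanding the minimal surface equation in Fermi coordinates over $C$ and using the decay (\ref{Ass Jac, Asymp graph tend to 0}), the graphical function $h$ satisfies a perturbed Jacobi equation on $C\setminus B_{R_0}$ of the form
\[
div_C\bigl(\nabla_C h + b_0(x,h,\nabla_C h)\bigr) + |A_C|^2 h + b_1(x,h,\nabla_C h) = 0,
\]
with $|b_0| + |b_1| \leq \epsilon(|x|)\bigl(|h|/|x| + |\nabla_C h|\bigr)$ and $\epsilon(R) \to 0$ as $R \to \infty$. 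The rescaled functions $h^{(R)}(x) := h(Rx)/R$ thus solve an arbitrarily small perturbation of $L_C u = 0$ on any fixed annulus $A_{1, K^{N+1}} \subset C$, uniformly for $R$ large.

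First I would establish growth rate monotonicity at infinity. Fix $\sigma \in (\gamma, 1) \setminus \Gamma_C$ and $K \geq K_0(\sigma, C)$ as in Lemma~\ref{Lem_Pre, growth rate mon for Jac}. Since $R^{-\sigma}\|h(R,\cdot)\|_{C^0(S)} \to 0$ by the definition of $\gamma$, elliptic estimates furnish scale-invariant $C^2$-decay, so Corollary~\ref{Cor_Pre, quant growth rate mon for Jac} applies to the rescalings $h^{(R)}$ on dyadic annuli for $R$ large. This yields some $R_1 \geq R_0$ such that $l \mapsto J^\sigma_h(R_1 K^l, R_1 K^{l+1})$ is strictly decreasing on $\NN$.

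Next I would choose $\sigma' < \gamma$ with $[\sigma', \sigma] \cap \Gamma_C \subset \{\gamma\}$ (possible because $\Gamma_C$ is discrete). The definition of $\gamma$ forces a sequence $R_j \to \infty$ along which $h$ does not decay faster than $R_j^{\sigma'}$; combining with the monotonicity above,
\[
J^{\sigma'}_h(R_j, K R_j) \geq J^\sigma_h(R, K R) \quad \forall\, R > R_j.
\]
Normalising $\hat h_j(x) := h(R_j x)/c_j$ with $c_j := \|h(R_j\,\cdot)\|_{L^2(A_{1,K})}$, standard elliptic theory extracts a $W^{1,2}_{loc}(C)$-limit $0 \neq \hat h_\infty$ solving $L_C \hat h_\infty = 0$ on the punctured cone, satisfying
\[
\limsup_{r \to 0_+} J^{\sigma'}_{\hat h_\infty}(r, Kr) + \limsup_{r \to \infty} J^\sigma_{\hat h_\infty}(r, Kr) < +\infty.
\]
Applying Lemma~\ref{Lem_Pre, Bdness of Jac field near 0/infty} in both directions kills all modes with $\gamma_k^\pm \notin [\sigma', \sigma]$, so $\hat h_\infty(r,\omega) = r^\gamma(c + c'\log r)\, w(\omega)$ for some $0\neq w \in W_\gamma$, with the logarithmic term only arising in the degenerate case $b_k = 0$ of (\ref{Pre, hom Sol Jacob}), forcing $\gamma = \gamma_1^+ = \gamma_1^-$. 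This proves (1) and identifies the leading term of (2).

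For the subleading estimate in (2), I would iterate: setting $\tilde h := h - r^\gamma(c + c'\log r) w$ (with $c, c', w$ read off from the blowdown), $\tilde h$ still solves the same perturbed equation and now has $\cA\cR_\infty(\tilde h) < \gamma$; rerunning the argument with $\sigma \in (\gamma - \epsilon_0, \gamma) \setminus \Gamma_C$, where $\epsilon_0 > 0$ is the spectral gap of $\Gamma_C$ below $\gamma$, forces $\tilde h = O(R^{\gamma - \epsilon})$ in $L^2$-averages on annuli, and Schauder estimates on rescaled annuli upgrade this to the pointwise $C^2$ bound. The main obstacle is the middle step: ensuring $\hat h_j$ admits a nontrivial blowdown $\hat h_\infty$ with the two prescribed one-sided growth bounds surviving into the limit, which requires careful bookkeeping of the dyadic inequalities furnished by Step~1 and Step~2, together with uniform control of the perturbation coefficients on every fixed annulus.
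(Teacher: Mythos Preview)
Your approach to (1) via blowdown and growth-rate monotonicity is correct and parallels Lemma~\ref{Lem_Linear, Growth rate charact}. The paper takes a different and shorter route: it observes that the nonlinear error $\scR_C h$ in $\scM_C h = -L_C h + \scR_C h = 0$ satisfies $|\scR_C h|(R,\omega) \leq C_\delta R^{2\gamma - 3 + 2\delta}$, then invokes Lemma~\ref{Lem_Pre, Ext sol of Jac, prescribed asymp} directly to produce $u \in L^2_\sigma(C\setminus B_1)$ with $L_C u = \scR_C h$ and $\sigma = 2\gamma - 1 + 3\delta < \gamma$. Then $h - u$ is an \emph{exact} Jacobi field on the exterior domain, and the expansion (\ref{Pre, hom Sol Jacob}) reads off the leading term immediately; since $u = O(R^\sigma)$, the same expansion holds for $h$. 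No blowdown or monotonicity is needed.

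Your iteration step for (2) has a genuine gap. You claim ``$\tilde h := h - r^\gamma(c+c'\log r)w$ still solves the same perturbed equation,'' but it does not: subtracting a Jacobi field $v_0$ leaves $L_C \tilde h = \scR_C h = \scR_C(\tilde h + v_0)$, and expanding the quadratic nonlinearity produces a purely inhomogeneous term of order $r^{2\gamma-3}$ coming from $v_0$ alone. This term is not of the form $\epsilon(|x|)(|\tilde h|/|x| + |\nabla_C \tilde h|)$ required by Corollary~\ref{Cor_Pre, quant growth rate mon for Jac}, so the monotonicity argument cannot be rerun on $\tilde h$ without further work. A second, related issue is that your blowdown limit $\hat h_\infty$ is obtained only along a subsequence $R_j$ with an unknown normalization $c_j$, so a priori neither the uniqueness of $(c,c',w)$ nor the relation $c_j \sim R_j^\gamma$ is established; without these, ``reading off $(c,c',w)$ from the blowdown'' is not well-defined. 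The paper's use of Lemma~\ref{Lem_Pre, Ext sol of Jac, prescribed asymp} sidesteps both problems by constructing the full correction $u$ in one shot rather than peeling off the leading term.
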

      This characterization is well-known, at least when $\gamma = \gamma_1^{\pm}$, in \cite[1.9]{HardtSimon85}. For sake of completeness, we sketch the proof here. In what follows, every constant depends on the cone $C$.
      \begin{proof}
       Since $1\in \Gamma_C$, (1) follows from (2). 
       To prove (2), since $\Sigma$ is minimal, $h$ satisfies the minimal surface equation \[
        \scM_C h = -L_C h + \scR_C h = 0     \]
       where by \cite[(2.2)]{CaffHardtSimon84}, 
       \begin{align*}
        & \scR_C h\ (r, \omega) := N(r\omega, h/r, \nabla h)\cdot \nabla^2 h + r^{-1}P(r\omega, h/r,\nabla h ) \\
        & |P(x, z, p)|\leq C(|z|+|p|)^2 \\
        & |N(x, z, p)|+ |P_z(x, z, p)| + |P_p(x, z, p)|\leq C(|z|+|p|) \\
        r(& |N_x(x, z, p)+P_x(x, z, p)|) + |N_z(x, z, p)| + |N_p(x, z, p)| \leq C
       \end{align*}
       By elliptic estimate, for $R>>2R_0$, \[
        \|\nabla_C h\|_{C^0(A_{R, 2R})} + R\|\nabla_C^2 h\|_{C^0(A_{R, 2R})} \leq \frac{C}{R}\|h\|_{C^0(A_{R/2, 4R})}     \]
       Hence by definition, for any $\delta>0$, since $R^{-\gamma - \delta}h(R, \cdot)\to 0$ as $R\to +\infty$, we have\[
        |\scR_C h|(R, \omega) \leq C_{\delta}R^{\gamma-2 + (\gamma - 1 + 2\delta)} \ \ \ \ \forall R>> 2R_0    \]
       Fix $\delta>0$ such that $\gamma - 1 + 3\delta < 0 $ and $\sigma := \gamma + (\gamma -1 +3\delta) \notin \Gamma_C$. By lemma \ref{Lem_Pre, Ext sol of Jac, prescribed asymp}, $\exists u \in C^{\infty}(C\setminus B_1)$ solving $L_C u = \scR_C h$ and \[
        \|u(R, \cdot)\|_{L^2(S)}\leq CR^{\sigma}    \]
       But since $L_C(h-u) = 0$ and $\forall \delta'>0$, $\limsup_{R\to \infty}R^{-\gamma + \delta'}\|h(R, \cdot)\|_{C^0(S)}>0$, by (\ref{Pre, hom Sol Jacob}), $h+u = r^{\gamma}(c+ c'\log r) + l.o.t.$ and so does $h$.
      \end{proof}

      The following lemma is an analogue of \cite[Theorem 2.1]{HardtSimon85} for stable minimal cones.
      \begin{Lem} \label{Lem_Ass Jac, Reg of infty one-side perturb} 
       Suppose $\Sigma\subset\RR^{n+1}$ be a connected stable minimal hypersurface with  finite density at infinity; Also suppose that $\Sigma$ is contained in $E_+$ outside a large ball, i.e. $\exists\ R_0>1$ such that $\Sigma\subset \BB^{n+1}_{R_0} \cup E_+$. Then 
       \begin{enumerate}
        \item[(1)] $Sing(\Sigma)=\emptyset$, $\Sigma$ is contained entirely in $E_+$ and area-minimizing in $E_+$. 
        \item[(2)] $C$ is area-minimizing in $Clos(E_+)$.
        \item[(3)] $\{r\cdot \Sigma\}_{r>0}$ foliates $E_+$.
        \item[(4)] Any stationary integral $n$-varifold $V$ with $spt V\subset E_+$ and having the same density at $\infty$ as $C$ is a rescaling of $|\Sigma|$.
       \end{enumerate}
      \end{Lem}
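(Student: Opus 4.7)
The plan is to use the dilation Jacobi field $\phi(x) := x \cdot \nu(x)$ together with Lemma~\ref{Lem_Pre, Diverge of pos Jac field} to rule out singularities of $\Sigma$, and then exploit the resulting smooth one-sided foliation to derive the minimizing and uniqueness statements via a standard calibration.

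First I would establish the asymptotic structure at infinity. Finite density at infinity yields a tangent cone $V_\infty$, which is a stable stationary integral cone with $\spt V_\infty \subset Clos(E_+)$. A maximum-principle argument for stationary varifolds of Solomon--White / Ilmanen type forces $\spt V_\infty \subset C$, hence $V_\infty = m|C|$; connectedness of $\Sigma$, strict one-sidedness outside a ball, and Allard regularity along $C \setminus \{0\}$ reduce to $m = 1$. Consequently $\Sigma \setminus \BB^{n+1}_{R_1}$ is the smooth graph of a positive function $h$ on $C$, and Lemma~\ref{Lem_Ass Jac, asymp char} provides the sharp expansion $h(r,\omega) = r^\gamma(c + c'\log r)\,w_1(\omega) + O(r^{\gamma - \epsilon})$ with $\gamma \in \Gamma_C \cap (-\infty, 1)$. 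Since dilations preserve minimality in $\RR^{n+1}$, $\phi$ is a Jacobi field on $Reg(\Sigma)$, and a computation in the graphical parametrization gives $\phi = (1 - \gamma) h + l.o.t.$ near infinity, so $\phi > 0$ outside a large ball.

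Next I would upgrade to global positivity and then to regularity. Set $\phi_- := \max(-\phi, 0)$, which is Lipschitz and compactly supported in $Reg(\Sigma)$. Using $L_\Sigma \phi = 0$ together with the disjoint supports of $\phi_+$ and $\phi_-$, one verifies $Q_\Sigma(\phi_-, \phi_-) = 0$. Stability forces $\phi_-$ to be a minimizer of $Q_\Sigma$ over compactly supported $W^{1,2}$ functions; the Euler--Lagrange equation and elliptic regularity make $\phi_-$ smooth, and its vanishing on the open set $\{\phi > 0\} \neq \emptyset$ combined with unique continuation forces $\phi_- \equiv 0$. Thus $\phi \geq 0$, and the strong maximum principle for the Jacobi operator upgrades this to $\phi > 0$ on $Reg(\Sigma)$. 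If now $p \in Sing(\Sigma)$, pick a neighborhood $U$ of $p$ in $\RR^{n+1}$ with $Sing(\Sigma) \cap U$ compact and apply Lemma~\ref{Lem_Pre, Diverge of pos Jac field} to $v = \phi$ with any $s \in (0, 2/n]$: since $L_{\Sigma}^{s}\phi = -s|A_\Sigma|^2 \phi \leq 0$, $\phi$ is a non-negative supersolution, and the lemma yields $\phi(x) \to +\infty$ as $x \to p$. This contradicts $|\phi(x)| \leq |x|$ being bounded near $p$. Hence $Sing(\Sigma) = \emptyset$, which is the smoothness assertion of~(1).

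Finally, with $\Sigma$ smooth and $\phi > 0$, the family $\{\lambda \Sigma\}_{\lambda > 0}$ is a strictly monotone smooth foliation of $E_+$, with leaves converging to $|C|$ as $\lambda \to 0^+$ and escaping to infinity as $\lambda \to \infty$. The unit normal to the leaves defines a smooth closed calibrating $n$-form on $E_+$ that extends by continuity to $Clos(E_+)$; standard calibration arguments then certify that each $\lambda \Sigma$ is area-minimizing in $E_+$ and that $C$ is area-minimizing in $Clos(E_+)$, establishing~(1) and~(2), while~(3) is the foliation itself. For~(4), any stationary integral varifold $V$ with $\spt V \subset E_+$ and the matching density at infinity has $|C|$ as tangent cone at infinity; sliding the foliation $\{\lambda \Sigma\}$ until it first touches $\spt V$ and applying the strong maximum principle for stationary varifolds localizes $\spt V$ onto a single leaf $\lambda_0 \Sigma$, after which the density match at infinity forces $V = |\lambda_0 \Sigma|$. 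The main obstacles in this plan are the multiplicity-one reduction in the first step and the unique-continuation / variational argument for $\phi_-$, both of which require careful maximum-principle input; once those are in place Lemma~\ref{Lem_Pre, Diverge of pos Jac field} does the essential work of killing singularities.
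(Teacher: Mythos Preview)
Your overall strategy coincides with the paper's: use the dilation Jacobi field $\phi=x\cdot\nu$, show it is positive, invoke Lemma~\ref{Lem_Pre, Diverge of pos Jac field} to rule out singularities, and then calibrate via the resulting foliation. Two steps in your execution, however, are genuinely incomplete.

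First, the assertion that $\phi_-$ is ``Lipschitz and compactly supported in $Reg(\Sigma)$'' is false. One has $|\nabla\phi|\le |x|\,|A_\Sigma|$, and $|A_\Sigma|$ blows up at each point of $Sing(\Sigma)$, so $\phi_-$ is neither Lipschitz nor supported away from the singular set (its support is contained in $B_{R_1}$, which is where the singularities live). The paper fills this gap by invoking the Cheeger--Naber curvature estimate $|A_\Sigma|\in L^{7-\epsilon}_{loc}$ for stable minimal hypersurfaces to obtain $\phi\in W^{1,2}(B_R)$ for every $R$; together with Lemma~\ref{Lem_Pre, Schoen-Simon trick of approx W^1,2} this is precisely what is needed to make your computation $Q_\Sigma(\phi_-,\phi_-)=0$ (equivalently, the weak maximum principle on $B_{R_1}$) legitimate across the singular set. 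Without this input your variational step does not go through.

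Second, you assert that $\phi>0$ immediately yields that $\{\lambda\Sigma\}_{\lambda>0}$ foliates $E_+$ and that $\Sigma\subset E_+$. Positivity of $\phi$ only says the radial projection $p:\Sigma\to\SSp^n$, $x\mapsto x/|x|$, is a local diffeomorphism; one still has to show it is a global diffeomorphism onto $\Omega_+:=E_+\cap\SSp^n$. The paper supplies a short topological lemma for this: gluing $p$ with the identity on $Clos(\Omega_-)$ produces a local homeomorphism of a closed $n$-manifold to $\SSp^n$, hence a covering, hence a bijection. This simultaneously gives $\Sigma\subset E_+$ and the foliation. For the multiplicity-one reduction at infinity, which you correctly flag as delicate, the paper appeals to the one-end theorem of Cao--Shen--Zhu for connected stable minimal hypersurfaces rather than to Allard regularity plus connectedness alone.
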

      \begin{Rem}
      \begin{enumerate}
       \item[(1)] When $C$ is a priori assumed to be area-minimizing, (1), (3), (4) of lemma \ref{Lem_Ass Jac, Reg of infty one-side perturb} is a corollary of \cite[Theorem 2.1]{HardtSimon85} by a barrier argument. However, in the proof we present here, (1)(3) are established first based on the linear analysis on $\Sigma$ in section \ref{Subsec, Linear ana of Jac oper} and \ref{Sec, Linear Ana}, and the one-sided minimizing property of $C$ is part of the conclusion. With (1) and (2), the proof (4) is similar to \cite{HardtSimon85}. For sake of completeness, we also sketch the idea of (4).
       \item[(2)] By solving Plateau problem with one-sided perturbed boundary, [H-S] shows that such $\Sigma$ in lemma \ref{Lem_Ass Jac, Reg of infty one-side perturb} do exist if $C$ is area-minimizing. The argument also works for one-sided area-minimizers. 
       \item[(3)] We assert here that the stability assumption on $\Sigma$ can't be dropped. See (P1.2) for further discussion in section \ref{Sec, Apps and Discussion}.
      \end{enumerate}
      \end{Rem}
      
      \begin{proof}
       Recall that we use the notion $B_R := \BB_R^{n+1}\cap \Sigma$. Since $\Sigma$ is stable with finite density at infinity, by \cite{SchoenSimon81}, any tangent varifold $V_{\infty}$ of $|\Sigma|$ at infinity is a stable minimal cone. And since $\Sigma$ is contained in $E_+$ outside a large ball, $spt(V_{\infty})\subset Clos(E_+)$. Then by $0\in sptV_{\infty}\cap C$ and strong maximum principle \cite{SolomonWhite89_StrongMax, Ilmanen96}, $spt(V_{\infty})=C$; Since $C$ is regular, by \cite{SchoenSimon81} and \cite{CaoShenZhu97_Stab_One_End}, $V = |C|$, i.e. $\Sigma$ asymptotic to $C$ near infinity. Take $R_0>1$ and $h\in C^2(C)$ such that (\ref{Ass Jac, Asymp graph tend to 0}) and (\ref{Ass Jac, graphical near infty of asymp min surf}) holds.

       Since $\Sigma$ is contained in $E_+$ near $\infty$, by a blow down argument and combining lemma \ref{Lem_Pre, pos Jacob field} and \ref{Lem_Ass Jac, asymp char}, see also \cite[1.9]{HardtSimon85}, when $R\to +\infty$
       \begin{align}
        \begin{cases}
         \text{ either }\ & h(R,\omega) = (c_1+c_2\log R)R^{\gamma_1^+}w_1(\omega) + O_2(R^{\gamma_1^+ -\epsilon}) \\
         \text{ or }\ & h(R,\omega) = c_1 R^{\gamma_1^-}w_1(\omega) + O_2(R^{\gamma_1^- -\epsilon})
        \end{cases} \label{Ass Jac, Asymp near infty of one-side perturb}
       \end{align}
       for some $\epsilon>0$; $c_1+c_2\log R> 0$ for $R>R_0$; $c_2 = 0$ unless $\gamma_1^+ = \gamma_1^- = -(n-2)/2$.
       
       Let $\nu_{\Sigma}$ be the normal field of $\Sigma$, pointing away from $C$. Consider the function $\psi = (\nabla_{\RR^{n+1}}|x|^2)\cdot \nu_{\Sigma}$ on $\Sigma$. Note that $|\nabla \psi|(x) \leq |x|\cdot|A_{\Sigma}|_x $ and $L_{\Sigma}\psi = 0$; And by \cite{CheegerNaber13_Stra_Min_Surf}, $|A_{\Sigma}| \in L^7_{weak}(B_R)$, $|\nabla A_{\Sigma}|\in L^{7/2}_{weak}(B_R)$, thus $\psi \in W^{1,2}(B_R)$ for each $R>0$.
       By (\ref{Ass Jac, Asymp near infty of one-side perturb}), $\psi>0$ outside some large ball $B_{R_1}$, $R_1>R_0$; Hence by stability of $\Sigma$ and weak maximum principle, $\psi>0$ on the whole $\Sigma$. Hence by lemma \ref{Lem_Pre, Diverge of pos Jac field}, $\psi(x)\to \infty$ as $x\to Sing(\Sigma)$. 
       But notice that by its definition, $\psi$ is bounded on every $B_R$. This means $Sing(\Sigma) = \emptyset$.  \\

       To show that $\Sigma\subset E_+$, let $\Omega_+ := E_+\cap \SSp^n$ and consider the smooth map\[
        p:\Sigma\setminus \{0^{n+1}\} \to \SSp^n\ \ \ x\mapsto \frac{x}{|x|}     \]
       By $\psi> 0$, $p$ is a diffeomorphism locally; By (\ref{Ass Jac, Asymp near infty of one-side perturb}), $p$ is diffeomorphism between $\Sigma\setminus B_{R_1}$ and a collar neighborhood of $S = \partial \Omega_+$ in $\Omega_+$. By the following topological lemma \ref{Lem_Ass Jac, Top lemma of diff}, $p$ is a glocal diffeomorphism onto $\Omega_+$. Hence, $\Sigma\subset E_+$ and $\{r\cdot \Sigma\}_{r>0}$ foliates $E_+$.\\
       
       To show that $\Sigma$ and $C$ are area-minimizing in $Clos(E_+)$, it suffices to observe that $\{r\cdot \Sigma\}_{r>0}$ determines a calibration $\Theta := \iota_{\nu_+}dVol_{\RR^{n+1}}$ on $Clos(E_+)\setminus \{0\}$, where $\nu_+$ be the normal vector field of $\{r\cdot \Sigma\}_{r>0}$, $C^1$ extended to $Clos(E_+)\setminus \{0\}$. Hence, for any $R>0$, and any integral cycle $T \in \cZ_n(\RR^{n+1})$,
       \begin{align*}
        \textbf{M}_{\BB^{n+1}_R}(T) & \geq \langle T, \Theta \rangle = \langle [C], \Theta \rangle = \textbf{M}_{\BB^{n+1}_R}([C])\ \ \ \text{ if }spt([C]-T)\subset \BB_R^{n+1} \\ 
        \textbf{M}_{\BB^{n+1}_R}(T) & \geq \langle T, \Theta \rangle = \langle [\Sigma], \Theta \rangle = \textbf{M}_{\BB^{n+1}_R}([\Sigma])\ \ \ \text{ if }spt([\Sigma]-T)\subset \BB_R^{n+1}
       \end{align*}

       We finally sketch the idea of (4). If $V$ is a stationary integral $n$-varifold with the same density at $\infty$ as $C$ and $sptV\subset E_+$, then by a similar argument as the first part of this proof, $|C|$ is the tangent varifold of $V$ at infinity, and hence $V$ could be parametrized as a graph over $C$ near infinity as in (\ref{Ass Jac, graphical near infty of asymp min surf}) and (\ref{Ass Jac, Asymp near infty of one-side perturb}). Let 
       \begin{align*}
        r_+ &:= \inf\{r: \text{for any }R>r, sptV\cap R\cdot \Sigma =\emptyset\} \\
        r_- &:= \sup\{r: \text{for any }R<r, sptV\cap R\cdot \Sigma =\emptyset\}
       \end{align*}
       By (\ref{Ass Jac, Asymp near infty of one-side perturb}) and strong maximum principle, $0< r_- = r_+ <\infty$ and $V = |r_+\cdot\Sigma|$.
      \end{proof}      
      
      \begin{Lem} \label{Lem_Ass Jac, Top lemma of diff}
       Let $N$ be a connected compact $n$ manifold with nonempty boundary, $M$ be a closed simply connected $n$ manifold, $p:N\to M$ be a local diffeomorphism onto its image, and restricted to a bijection near $\partial N$. If $M\setminus p(\partial N) = M_+\sqcup M_-$ has 2 connected component, with $p(N)\cap M_+ \neq \emptyset$, then $p$ is a diffeomorphism onto $M_+$. 
      \end{Lem}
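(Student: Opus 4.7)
The plan is to extend $p$ to a local homeomorphism between closed manifolds by capping off $\partial N$ with one of the two halves of $M$, and then to invoke the classical fact that a proper local homeomorphism onto a connected simply-connected base is a homeomorphism. Before carrying this out, I dispatch the easier half that $p(N)\supset M_+$: $p(N^\circ)$ is open (since $p|_{N^\circ}$ is a local diffeomorphism), $p(N)$ is compact hence closed in $M$, and as $p(\partial N)\cap M_\pm=\emptyset$ the set $p(N)\cap M_+$ is both open and closed in the connected space $M_+$, so the assumption $p(N)\cap M_+\neq\emptyset$ gives $M_+\subset p(N)$.

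By the bijection-near-boundary hypothesis, $p|_{\partial N}:\partial N\to S:=p(\partial N)$ is a diffeomorphism and for some neighborhood $V$ of $\partial N$ in $N$, $p|_V$ is a diffeomorphism onto a one-sided tubular neighborhood of $S$ in $M$; write $\epsilon\in\{+,-\}$ for the sign with $p(V\setminus\partial N)\subset M_\epsilon$. I glue $N$ to $Clos(M_{-\epsilon})$ along the diffeomorphism $p|_{\partial N}:\partial N\to S=\partial Clos(M_{-\epsilon})$ to obtain a closed connected topological manifold $\hat N$, and define $\hat p:\hat N\to M$ by $\hat p|_N=p$ and $\hat p|_{Clos(M_{-\epsilon})}=$ inclusion. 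The step I must verify is that $\hat p$ is a local homeomorphism on all of $\hat N$: on the interiors of the two pieces this is either hypothesis or tautology, and at a gluing point $x\in\partial N\cong S$ the one-sided collar from the $N$-side (sent by $p$ diffeomorphically to the $M_\epsilon$-side of $S$) and the one-sided collar from the $Clos(M_{-\epsilon})$-side (sent by the inclusion to the $M_{-\epsilon}$-side) assemble into a two-sided neighborhood of $x$ which $\hat p$ maps homeomorphically onto a two-sided neighborhood of $p(x)$ in $M$, the two halves agreeing along $S$. The choice of cap $Clos(M_{-\epsilon})$ rather than $Clos(M_\epsilon)$ is what produces complementary sides here, and this is the single delicate point in the argument.

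Once that is established, $\hat p$ is a proper local homeomorphism between connected manifolds, hence a covering map, and simple-connectedness of $M$ forces it to be a homeomorphism. Restricting to $N\subset\hat N$, one has $p(N)=\hat p(\hat N\setminus M_{-\epsilon})=M\setminus M_{-\epsilon}=Clos(M_\epsilon)$, and since $p$ is already a smooth local diffeomorphism on $N^\circ$ and a diffeomorphism on $V$, this bijective $p:N\to Clos(M_\epsilon)$ is automatically a diffeomorphism. Finally the hypothesis $p(N)\cap M_+\neq\emptyset$ rules out $\epsilon=-$ and forces $\epsilon=+$, giving the conclusion.
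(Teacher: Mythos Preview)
Your proof is correct and follows essentially the same route as the paper: glue $N$ to the complementary half of $M$ along $p|_{\partial N}$, observe that the resulting map between closed manifolds is a local homeomorphism hence a covering, and use simple-connectedness of $M$ to conclude bijectivity. The paper's version is terser---it glues directly with $M_-$ without introducing your parameter $\epsilon$---whereas you carefully verify which side the collar lands on and only identify $\epsilon=+$ at the end; this extra bookkeeping is sound but not a different method.
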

      \begin{proof}
       Notice that the glued map $p\cup_{\partial N}id_{M_-}: N\cup_{\partial N} M_- \to M $ is a local homeomorphism between closed manifolds, hence a covering map. Since $N$ is connected and $M$ is simply connected, this is a bijection, so is $p$. 
      \end{proof}
      
      \begin{Lem} \label{Lem_Ass Jac, Pos Jac field on Sigma with AR bd}
       Let $\Sigma\subset \RR^{n+1}$ be a stable minimal hypersurface. Then the space of positive Jacobi field on $\Sigma$ is nonempty, i.e. \[
        \cG(\Sigma):= \{u\in C^{\infty}(\Sigma): L_{\Sigma}u = 0,\ u>0\} \neq \emptyset     \]
       And if $Sing(\Sigma)$ is bounded, then $\forall u\in \cG(\Sigma)$, we have $u(x)\to +\infty$ if $x\to x_{\infty}\in Sing(\Sigma)$. 
       
       Moreover, if $\Sigma$ is asymptotic to $C$ near infinity for some regular minimal hypercone $C$, assume WLOG (by a rescaling) $Sing(\Sigma)\subset B_1$, then for every $\epsilon>0$, \[
        u(x)\geq C(\Sigma, \epsilon, u)|x|^{\gamma_1^- - \epsilon}>0\ \ \ \ \forall x\in \Sigma\setminus B_2   \]
      \end{Lem}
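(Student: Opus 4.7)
The plan is to establish the three assertions in sequence. For the nonemptiness of $\cG(\Sigma)$, I would argue by a Fischer--Colbrie/Moss--Piepenbrink exhaustion on the smooth noncompact manifold $Reg(\Sigma)$. Choose an exhaustion by precompact smooth domains $\Omega_1\subset\subset\Omega_2\subset\subset\cdots$ of $Reg(\Sigma)$; by stability, the first Dirichlet eigenvalues $\lambda_j$ of $-L_\Sigma$ on $\Omega_j$ are nonnegative, and domain monotonicity forces $\lambda_j\searrow\lambda_\infty\geq 0$. Normalize the positive first eigenfunctions $u_j$ at a fixed base point $p_0\in Reg(\Sigma)$; by Harnack and interior elliptic estimates a subsequence converges in $C^\infty_{loc}(Reg(\Sigma))$ to $u_\infty>0$ satisfying $L_\Sigma u_\infty+\lambda_\infty u_\infty=0$. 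The classical equivalence between stability of a Schr\"odinger operator and existence of a positive solution (Fischer--Colbrie, Moss--Piepenbrink) then yields the desired $u\in\cG(\Sigma)$.

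The divergence statement near singularities is a direct application of Lemma \ref{Lem_Pre, Diverge of pos Jac field} with $s=0$: any $u\in\cG(\Sigma)$ satisfies $L_\Sigma u=0$ and is in particular a nonnegative supersolution of $L^0_\Sigma$ in the required weak sense, and since $Sing(\Sigma)\subset\RR^{n+1}$ is bounded, hence compact, the cited lemma applied on any bounded open $U\supset Sing(\Sigma)$ in which $\Sigma$ is stable forces $u(x)\to+\infty$ as $x\to Sing(\Sigma)$.

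For the lower bound at infinity I plan a blow-down argument. Set $M(r):=\inf_{\partial B_r\cap\Sigma}u$ and $\tilde u_r(y):=u(ry)/M(r)$ for $y\in\Sigma_r:=\eta_{0,r}(\Sigma)$; scaling yields $L_{\Sigma_r}\tilde u_r=0$, while Harnack gives locally uniform bounds and a positive lower bound on $\partial B_1\cap\Sigma_r$. Since $\Sigma$ is asymptotic to $C$, $\Sigma_r\to C$ in $C^\infty_{loc}(\RR^{n+1}\setminus\{0\})$, so along any $r_j\to\infty$ a subsequence of $\tilde u_{r_j}$ converges in $C^\infty_{loc}(C\setminus\{0\})$ to a positive Jacobi field $\tilde u_\infty$ on $C\setminus\{0\}$ with $\inf_{\partial B_1\cap C}\tilde u_\infty=1$, hence by the last assertion of Lemma \ref{Lem_Pre, pos Jacob field} of the form $(c^+r^{\gamma_1^+}+c^-r^{\gamma_1^-})w_1$ (or its logarithmic analog when $\gamma_1^+=\gamma_1^-$). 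To upgrade to a uniform-in-$r$ pointwise lower bound, I would then pull $u$ back to a function $\tilde u$ on $C\setminus B_R$ through the graphical parametrization of $\Sigma$ over $C$: $\tilde u$ satisfies a perturbed Jacobi equation $L_C\tilde u+\scR\tilde u=0$ with coefficients decaying at infinity by the infinity-analogue of Lemma \ref{Lem_Pre, geom of asymp at 0}. Running the infinity-version of Lemma \ref{Lem_Linear, Growth rate charact}, powered by the quantitative monotonicity of Corollary \ref{Cor_Pre, quant growth rate mon for Jac}, pins $\cA\cR_\infty(\tilde u)$ down to an element $\gamma\in\{\gamma_1^+,\gamma_1^-\}$ with positive leading profile a multiple of $w_1$ (positivity picks out $w_1$ exactly as in Lemma \ref{Lem_Pre, pos Jacob field}); since $\gamma\geq\gamma_1^-$, the bound $u(x)\geq C|x|^{\gamma_1^- - \epsilon}$ on $\Sigma\setminus B_2$ follows, the $\epsilon$-slack absorbing lower-order remainders and the metric distortion between $\Sigma$ and $C$.

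The main obstacle is this last step: converting a subsequential blow-down limit into a pointwise lower bound valid for \emph{every} large $r$. This requires controlling the perturbation $\scR$ quantitatively so that the monotonicity of Corollary \ref{Cor_Pre, quant growth rate mon for Jac} survives transplantation from $C$ to $\Sigma$, paralleling the treatment of the singular case in Section \ref{Subsec, Linear asymp near sing}.
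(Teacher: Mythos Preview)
Your proposal is correct, and for the first two assertions it matches the paper closely (the paper constructs the positive Jacobi field by solving $L_\Sigma u_i=0$ on $\cU_i\setminus B_{r_i}(p_i)$ with $u_i=1$ on the small ball and $u_i=0$ on $\partial\cU_i$, rather than via first eigenfunctions, but this is a cosmetic difference; the divergence statement is handled identically by citing Lemma~\ref{Lem_Pre, Diverge of pos Jac field}).

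For the lower bound at infinity your route is heavier than necessary. The paper stops right after your blow-down step: every subsequential limit $v_\infty$ of $v_R(r,\omega):=u(rR,\omega)/\inf_{\partial B_R}u$ is a positive Jacobi field on all of $C$ with $\inf_{\partial B_1}v_\infty=1$, so by the last assertion of Lemma~\ref{Lem_Pre, pos Jacob field} it has the form $(c^+r^{\gamma_1^+}+c^-r^{\gamma_1^-})w_1$ (or the log variant) with $c^\pm\geq 0$ and $c^++c^-=1/\inf_S w_1$; hence $v_\infty(r,\omega)\geq C(S)\,r^{\gamma_1^-}$ for $r\geq 1$ with a constant depending only on $S$. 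The point is that this bound is \emph{uniform over all possible blow-down limits}, so the upgrade from subsequential to every $r$ is just compactness plus iteration: for any fixed $K>1$ and small $\delta$, one has $\inf_{\partial B_{KR}}u\geq K^{\gamma_1^-}(1-\delta)\inf_{\partial B_R}u$ for all large $R$ (else a subsequence would produce a $v_\infty$ violating the uniform bound), and iterating with $K^{\gamma_1^-}(1-\delta)\geq K^{\gamma_1^- -\epsilon}$ gives the claim. This is what the paper means by ``an approximation argument.'' Your proposed route through an infinity version of Lemma~\ref{Lem_Linear, Growth rate charact} and Corollary~\ref{Cor_Pre, quant growth rate mon for Jac} would also work and in fact yields the sharper conclusion that $u$ has a genuine asymptotic expansion with leading term in $\{r^{\gamma_1^\pm}w_1\}$, but it requires developing that machinery at infinity; for the statement at hand the paper's uniform-limit observation bypasses all of that.
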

      \begin{proof}
       To see $\cG(\Sigma)\neq \emptyset$, consider an exhaustion $\{\cU_i\}_{i\geq 1}$ of $\Sigma$ and $B_i:=B_{r_i}(p_i)\subset\subset \cU_i$, where $p_i\to \infty$, $r_i\in (0, 1)$. Let $u_i\in W_0^{1,2}(\cU_i)$ be the solution of $L_{\Sigma} u_i = 0$ on $\cU_i\setminus B_i$ and $u_i = 1$ on $B_i$. By the stability of $\Sigma$ and maximum principle, $u_i> 0$ on $\cU_i$; Also let $q\in \Sigma$ be fixed. Then by Harnack inequality, up to a subsequence, \[
        u_i/u_i(q) \to u_{\infty}\ \ \ \text{ in }C_{loc}^{\infty}(\Sigma)     \]
       for some positive Jacobi field $u_{\infty}\in \cG(\Sigma)$.
       
       The unboundedness of any $u\in \cG(\Sigma)$ towards singular set follows from lemma \ref{Lem_Pre, Diverge of pos Jac field}.
       
       To deduce the lower bound of $u$ near infinity, suppose WLOG that $h\in C^{\infty}(C)$ and $R_0 = 1$ such that (\ref{Ass Jac, Asymp graph tend to 0}) and (\ref{Ass Jac, graphical near infty of asymp min surf}) holds for $V = |\Sigma|$. Parametrize $\Sigma\setminus B_1$ by \[
        C \to \Sigma\ \ \ x\mapsto x+ h(x)\nu(x)     \]
       By (\ref{Ass Jac, Asymp graph tend to 0}), under this parametrization and the normal coordinates of $C$, $L_{\Sigma} = a_0\cdot\partial_i(a^{ij}\partial_j u) + |A_{\Sigma}|^2 u $, where \[
        \sup_{\omega\in S}|a_0 - 1|(R, \omega) + |a^{ij}-\delta^{ij}|(R,\omega) + R^2\big| |A_{\Sigma}|^2- |A_C|^2\big|(R,\omega) \to 0\ \ \ \text{ as } R\to \infty   \]
       Consider for each $R>2$, $c_R:=\inf_{\partial B_R}u$ and $v_R\in C^{\infty}(C\setminus \BB_{2/R})$ defined by \[
        v_R(r, \omega):= u(rR, \omega)/c_R     \]
       By elliptic estimate, up to subsequences, $v_R \to v_{\infty}$ in $C^0_{loc}(C)$ for some positive Jacobi fields $v_{\infty}$ on $C$ with $\inf_{\SSp^n \cap C} v_{\infty} = 1$. By lemma \ref{Lem_Pre, pos Jacob field}, 
       \begin{align}
        v_{\infty}(r, \omega) \geq C(S)r^{\gamma_1^-}  \label{Ass Jac, decay bd on pos Jac fields}       
       \end{align}
       The lower bound of decaying rate for $u$ then follows from (\ref{Ass Jac, decay bd on pos Jac fields}) and an approximation argument.
      \end{proof}
      
      \begin{proof}[Proof of theorem \ref{Thm_Ass Jac, Asymp Thm of ext. min. graph} (1).] 
      Let $\Sigma_+ = \partial P_+$ be the leaf of Hardt-Simon foliation in theorem \ref{Thm_Pre, H-S foliation} lying on the side $E$ where $C$ is minimizing (resp. strictly minimizing), $P_+\subset E$; Let $h_+$, $h$ be the garphical functions of $\Sigma_+$ and $V$ correspondingly over $C$. Suppose otherwise that $\cA\cR_{\infty}(V)<\gamma_1^-$ (resp. $\gamma_1^+$). Then by lemma \ref{Lem_Ass Jac, asymp char}, (\ref{Pre, asymp of one-sided, strict}) and (\ref{Pre, asymp of one-sided, nonstrict}), $h_+>h$ near infinity. Hence for $R>>1$, $R\cdot P_+ \cap spt(V) = \emptyset$. By considering \[
       \inf \{R: s\cdot P_+ \cap spt(V) = \emptyset, \ \forall s>R\}   \]
      and using the strong maximum principle \cite{SolomonWhite89_StrongMax}, we see that $spt(V)\subset \RR^{n+1}\setminus Clos(E)$, i.e. lying on the other side of $C$. Then by lemma \ref{Lem_Ass Jac, Reg of infty one-side perturb} and theorem \ref{Thm_Pre, H-S foliation}, $V$ is a rescaling of Hardt-Simon foliation on the compliment of $Clos(E)$ and thus $\cA\cR_{\infty}(V) \geq \gamma_1^-$ (resp. $=\gamma_1^+$), from which we get a contradiction.
      
      If $\cA\cR_{\infty}(V) \in \{\gamma_1^{\pm}\}$, then by lemma \ref{Lem_Ass Jac, asymp char}, $spt(V)$ lies on one side of $C$ near infinity. By using the Hardt-Simon foliation on the other side and strong maximum principle as above, one see that $spt(V)$ lies entirely on one side of $C$, hence becomes a leaf of Hardt-Simon foliation by theorem \ref{Thm_Pre, H-S foliation}. \\
      
\noindent  \textit{Proof of theorem \ref{Thm_Ass Jac, Asymp Thm of ext. min. graph} (2)} Consider $\psi := \nabla_{\RR^{n+1}}|x|^2\cdot \nu_{\Sigma}$. By lemma \ref{Lem_Ass Jac, asymp char}, if $\cA\cR_{\infty}(\Sigma)<\gamma_1^-$, then for each $\epsilon>0$, $\exists\ R(\Sigma, \epsilon)>1$ such that \[
       |\psi(x)|\leq |x|^{\gamma_2^- + \epsilon} \ \ \ \ \forall |x|>R(\Sigma, \epsilon)    \] 
       On the other hand, let $v\in \cG(\Sigma)$ be a positive Jacobi field on $\Sigma$. By lemma \ref{Lem_Pre, Diverge of pos Jac field}, \ref{Lem_Ass Jac, Pos Jac field on Sigma with AR bd} and weak maximum principle, $\exists\ c>0$ such that $cv> \psi$ on $\Sigma$. Consider the smallest one among all such $c$ and using strong maximum principle, we conclude that $\psi = 0$ and thus $\Sigma = C$, which contradicts to our assumption. This proves the lower bound on $\cA\cR_{\infty}(\Sigma)$.
       
       When $\cA\cR_{\infty}(\Sigma)\in \{\gamma_1^{\pm}\}$, the conclusion follows from lemma \ref{Lem_Ass Jac, asymp char} and \ref{Lem_Ass Jac, Reg of infty one-side perturb}.       
      \end{proof}

      
      \subsection{Finiteness of associated Jacobi fields} \label{Subsec, Ass Jac_Finiteness of ass Jac fields}
      
       The first goal of this subsection is to prove theorem \ref{Thm_Ass Jac, Main thm}. The key is a growth rate estimate near each singularity. We begin with some notations. Recall $(\Sigma, \nu) \subset (M, g)$ is a two sided, locally stable minimal hypersurfaces with strongly isolated singularities. The geometric quantities of $\Sigma$ are defined in section \ref{Subsec, Min surface near sing}. 
       
       Let $\tau_{\Sigma}(1)$ be defined below (\ref{Pre, conic rad of sing}); $0<\tau<\tau_{\Sigma}(1)/2$. When working in $B^M_{2\tau}(p)$ for some singular point $p$, we identify $\Sigma \cap B_{2\tau} ^M(p) \hookrightarrow T_pM$ as in section \ref{Subsec, Min surface near sing} and parametrize it by the tangent cone $C_p$ of $\Sigma$ at $p$ as in (\ref{Pre, param of asymp conic at 0}). 
       Denote $\Gamma_{C_p}=\{\gamma_k^{\pm}(C_p)\}_{k\geq 1}$ to be the asymptotic spectrum defined in (\ref{Pre, power of cone Jac}) for $C = C_p$. We may simply write $\gamma_k^{\pm}(C_p)$ as $\gamma_k^{\pm}$ if there's no confusion. \\
       
       We shall first prove a general local dichotomy near every fixed $p\in Sing(\Sigma)$. Recall for a domain $\Omega\subset \Sigma$ and $k\geq 0$, $\|\cdot \|^*_{k;\Omega}$ norm is introduced in (\ref{Pre_C^k_* norm}) and will be used frequently in this subsection.
       
       Consider in general $\{g_j\}_{j\geq 1}$ a family of smooth metrics $C^4$ converges to $g$ on $M$; $\{V_j\}_{j\geq 1}$ are stationary integral varifolds in $(B^M_{2\tau}(p), g_j)$ which converges to $|\Sigma|\llcorner B_{2\tau}^M(p)$ in $B_{2\tau}^M(p)$ in varifold sense. Suppose either of the following holds,
       \begin{enumerate}
       \item[(A')] $C_p$ is area-minimizing at least in one side.
       \item[(B')] $spt(V_j)$ are stable minimal hypersurfaces in $B^M_{2\tau}(p)$. 
       \end{enumerate}
       For any $\delta\in (0,1)$, call $\{(\eta_{p, \tau_j})_{\sharp} V_j\}$  \textbf{$\delta$-minimal blow up sequence} of $\{V_j\}_{j\geq 1}$ at $p$ 
       if \[
         \tau_j:= \inf\{t>0 : V_j\llcorner A^M_{t, \tau}(p) = |graph_{\Sigma}(v_j)|\llcorner A^M_{t, \tau} \text{ for some }\|v_j\|^*_{2; \cA_{t, \tau}}\leq \delta\}>0   \]
       Note that by Allard compactness \cite{Allard72, Simon83_GMT} and the minimality of $\tau_j$, for every $0<\delta<<1$, the $\delta$-minimal blow up sequence sub-converges to some stationary integral varifold $|C_p|\neq \tilde{V}_{\infty}\subset \RR^{n+1}$ which is a exterior graph over $C_p$. Hence by (A'), (B') and theorem \ref{Thm_Ass Jac, Asymp Thm of ext. min. graph},
       \begin{align}
        \cA\cR_{\infty}(\tilde{V}_{\infty}; C_p)\geq \gamma_1^- (C_p)  \label{Ass Jac_Asymp Lower Bd for delta-minimal blow up lim}
       \end{align}
       Call such $\tilde{V}_{\infty}$ a \textbf{$\delta$-minimal blow-up limit} of $\{V_j\}_{j\geq 1}$.

       For every $\sigma\in (-\infty, 1)\setminus \Gamma_{C_p}$ fixed, let $K= K_0(C_p, \sigma)$ be determined in lemma \ref{Lem_Pre, growth rate mon for Jac}; Let $\delta_0, N$ be determined in corollary \ref{Cor_Pre, quant growth rate mon for Jac}, both depend on $C_p, \sigma, K$. 
       For each $Q\in \RR_+$, $\phi\in L^2(C_p)$ and $r>0$, define 
       \begin{align}
        \hat{J}^{\sigma}_{p, \phi}(r; Q):= \sup\big\{Q, \int_{A^{C_p}_{r, Kr}(p)} \phi^2(x)\cdot |x|^{-n-2\sigma}\ dvol_{C_p}(x) \big\}
       \end{align}
       
       \begin{Lem} \label{Lem_Ass Jac, Dichotomy Growth Rate Bd}
        Let $\Sigma, M, g, \tau, \sigma, K, N$ be described above; $\kappa>0$. Then there exists $\delta_3>0$ and $s_0\in (0, \tau)$ such that,
        if $\{(V_j, g_j)\}_{j\geq 1}$ satisfying either (A') or (B') are given as above; then one of the following holds,
        \begin{enumerate}
        \item[(i)] $\exists\ \delta_3$-minimal blow-up limit $\tilde{V}_\infty$ of $\{V_j\}_{j\geq 1}$ such that $\cA\cR_{\infty}(\tilde{V}_\infty; C_p)\leq \sigma$;
        \item[(ii)] If $\tau_j\to 0_+$ and $v_j\in C^2(\cB_{2\tau}(p))$ be the $\delta_3$-graphical function of $V_j$ over $\cA_{\tau_j, \tau}$, i.e $v_j$ satisfies $\|v_j\|^*_{2, \cA_{\tau_j/2, 2\tau}(p)}\leq \delta_3$ and $V_j\llcorner A^M_{\tau_j, \tau} = |graph_{\Sigma}(v_j)|\llcorner A^M_{\tau_j, \tau}$, then for $j>> 1$, \[
         \hat{J}_j(l) \leq \sup_{1\leq i\leq l-1}\hat{J}_j(i) \ \ \ \text{ for all integers } N+1\leq l\leq (\log s_0 - \log \tau_j)/\log K    \]
         where $\hat{J}_j(l):= \hat{J}^{\sigma}_{p, v_j}(s_0K^{-l}; \kappa\|g_j - g\|_{C^3(M)})$.
        \end{enumerate}
       \end{Lem}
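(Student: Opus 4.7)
I would argue by contradiction, assuming that for arbitrarily small $\delta_3 > 0$ and $s_0 \in (0,\tau)$, there is a sequence $\{(V_j, g_j)\}_{j\geq 1}$ satisfying the hypotheses for which neither (i) nor (ii) holds. For each $j$, let $l_j$ be the \emph{largest} level in the admissible range $[N+1, (\log s_0 - \log \tau_j)/\log K]$ at which (ii) fails. Iterating the inequality in (ii) for $l > l_j$ yields $\hat{J}_j(l) \leq \hat{J}_j(l_j)$ for all such $l$ up to the top of the range, while failure at $l_j$ gives $\hat{J}_j(l_j) > \hat{J}_j(l)$ for every $l < l_j$. Since $\sup_{i < l_j}\hat{J}_j(i) \geq \hat{J}_j(1) \geq Q = \kappa\|g_j - g\|_{C^3}$, the value $\hat{J}_j(l_j)$ strictly exceeds $Q$ and is therefore attained by the integral rather than the floor.

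Next, I rescale at scale $r_j := s_0 K^{-l_j}$ via the conical coordinates of Lemma \ref{Lem_Pre, geom of asymp at 0}, setting $\bar{v}_j(y) := \hat{J}_j(l_j)^{-1/2}\, r_j^{-\sigma}\, v_j(r_j y)$ on the corresponding rescaled annular region inside $C_p$. A change of variables gives $J^{\sigma}_{\bar{v}_j}(1, K) = 1$ together with $J^{\sigma}_{\bar{v}_j}(K^{\pm k}, K^{\pm k+1}) \leq 1$ for all admissible $k \geq 1$, so $A_{1,K}$ is the \emph{dominant} annulus of $\bar{v}_j$ in the $J^{\sigma}$-sense on both sides. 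Using Lemmas \ref{Lem_Pre, Vol element and mean curv of graph I}--\ref{Lem_Pre, Vol element and mean curv of graph II} to expand the minimal surface equation $\scM^{g_j} v_j = 0$, and Lemma \ref{Lem_Pre, geom of asymp at 0} for the asymptotic convergence of $L_\Sigma$ to $L_{C_p}$ near $p$, $\bar{v}_j$ is a weak solution of a perturbed Jacobi equation on $C_p$ of the form \eqref{Pre, Pertub Jac field equ}, whose coefficient bounds decompose as $\varepsilon_1(s_0) + \varepsilon_2(\delta_3) + \kappa^{-1/2}$: the first term is the conical-convergence error of $\Sigma$ at scale $\leq s_0$, the second is the nonlinear graph error controlled by $\|v_j\|^*_{2}$, and the third arises from the metric perturbation, whose contribution after normalization is $\|g_j - g\|_{C^3}/\hat{J}_j(l_j)^{1/2} \leq \kappa^{-1/2}$ by the floor in $\hat{J}$. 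Fixing $\delta_3, s_0$ small and $\kappa$ large in terms of the constants $\delta_0, N$ from Corollary \ref{Cor_Pre, quant growth rate mon for Jac} ensures the Corollary applies to $\bar{v}_j$.

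The dominant-annulus property precisely supplies the hypothesis of Corollary \ref{Cor_Pre, quant growth rate mon for Jac} at the reference scale, so the Corollary yields the strict inequality $J^{\sigma}_{\bar{v}_j}(K, K^2) < 1 = J^{\sigma}_{\bar{v}_j}(1,K)$; iterating the Corollary at successive rescalings outward across the annuli $A_{K^k, K^{k+1}}$ (each inheriting the dominant-annulus hypothesis) propagates the strict monotonicity to all scales from $1$ up to approximately $\tau/r_j$. I would then split into two regimes. If $r_j/\tau_j$ stays bounded, the iterated monotonicity transfers to a $\delta_3$-minimal blow-up limit of $\{V_j\}$ and produces a nontrivial graphical $|y|^{\sigma}$-component at infinity, contradicting failure of (i) which requires $\cA\cR_\infty(\tilde{V}_\infty) > \sigma$. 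If $r_j/\tau_j \to \infty$, then after a further blow-up at scale $r_j$ one obtains a stationary varifold smooth on $\RR^{n+1}\setminus\{0\}$ asymptotic to $C_p$; by assumption (A') or (B') combined with Theorem \ref{Thm_Ass Jac, Asymp Thm of ext. min. graph}, this limit is either $|C_p|$ or has $\cA\cR_\infty \geq \gamma_1^-$, whose unique $r^{\gamma_1^\pm}$-type asymptotic structure (Lemma \ref{Lem_Pre, pos Jacob field}, Lemma \ref{Lem_Ass Jac, asymp char}) is again incompatible with the iterated strict $J^{\sigma}$-monotonicity that forces $A_{1,K}$ to be strictly dominant.

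The principal obstacle is uniformly verifying the smallness bound for the perturbation coefficients of the Corollary across \emph{all} the iterated rescalings simultaneously, which requires coupling the choices of $\delta_3$, $s_0$, and $\kappa$ before extracting the subsequence and carefully estimating the nonlinear terms in Lemma \ref{Lem_Pre, Vol element and mean curv of graph II} using the a priori bound $\|v_j\|^*_{2; A_{r_j, 2\tau}} \leq \delta_3$ on every dyadic annulus in the admissible range.
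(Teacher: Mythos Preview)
Your overall strategy—take the largest level $l_j$ at which (ii) fails and derive a contradiction—is reasonable in spirit, but the execution has a genuine gap and several confusions.

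The central error is your treatment of $\kappa$. In the lemma, $\kappa>0$ is \emph{given}; you are not free to fix it large. Your error decomposition $\varepsilon_1(s_0)+\varepsilon_2(\delta_3)+\kappa^{-1/2}$ therefore cannot be made uniformly smaller than $\delta_0$ by the mechanism you propose. The paper resolves this differently: it never renormalizes by $\hat J_j(l_j)^{-1/2}$ but works with the plain rescaling $v_j^{(r)}(y)=v_j(ry)/r$ and uses the scaling identity
\[
\hat J^{\sigma}_{p,v}(s;Q)=r^{2(1-\sigma)}\,\hat J^{\sigma}_{p,v^{(r)}}\bigl(s/r;\,r^{2(\sigma-1)}Q\bigr).
\]
Choosing $s_0$ so small that $s_0^{2(\sigma-1)}\kappa\ge 1$ forces the rescaled floor to dominate $\|g_j-g\|_{C^3}$, which is exactly the inhomogeneous term allowed in the hypothesis of Corollary~\ref{Cor_Pre, quant growth rate mon for Jac}. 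This is how the dependence on an arbitrary $\kappa>0$ is absorbed into the choice of $s_0$, and it is missing from your argument.

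Second, you iterate Corollary~\ref{Cor_Pre, quant growth rate mon for Jac} \emph{outward} (``from $1$ up to approximately $\tau/r_j$''), but the corollary goes the other way: from dominance on the outer annuli $A_{K^2,K^3},\dots,A_{K^N,K^{N+1}}$ it concludes that the \emph{inner} annulus $A_{1,K}$ strictly dominates $A_{K,K^2}$. Outward dominance is already given to you for free by failure of (ii) at $l_j$; what the corollary buys is strict growth one step \emph{inward}. The paper simply takes any $l_j$ where (ii) fails (no need for the largest), applies the corollary inductively at scales $\tau'_jK^{-d}$ for $d\ge 1$, and obtains $\hat J^{\sigma}_{p,v_j}(s_0K^{-l};0)\ge\sup_{i<l}\hat J_j(i)$ all the way down to the $\delta_3$-minimal blow-up scale. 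This immediately produces a blow-up limit $\tilde V_\infty$ with $\cA\cR_\infty(\tilde V_\infty)\le\sigma$, i.e.\ (i) holds, with no case split on $r_j/\tau_j$ and no appeal to Theorem~\ref{Thm_Ass Jac, Asymp Thm of ext. min. graph}. Your second case ($r_j/\tau_j\to\infty$) is both unnecessary and unclear: blowing up at scale $r_j\gg\tau_j$ lands you in a regime where $V_j$ is still $\delta_3$-graphical over $C_p$, so the limit may well be $|C_p|$ itself, and the contradiction you sketch from ``strict $J^{\sigma}$-monotonicity versus $r^{\gamma_1^\pm}$-asymptotics'' is not substantiated.
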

       \begin{proof}
        First observe that by lemma \ref{Lem_Pre, geom of asymp at 0} and \ref{Lem_Pre, Vol element and mean curv of graph}, $\exists s_1\in (0, \tau)$ and $\delta_3>0$ such that if $\cU\subset \cB_{s_1}(p)$ is an open subset, $\|g'-g\|_{C^3(M)}\leq \delta_3$ and $\|v\|^*_{2, \cU}\leq \delta_3$ such that $graph_{\Sigma}(v)$ is minimal in $(M, g')$ in its interior, then $v$ satisfies the equation \[
         -div_C (\nabla_C v + b_0) + |A_C|^2v + b_1 = 0    \]
        on $\cU$, where \[
         |b_0(x, z, p)| + |x|\cdot|b_1(x, z, p)| \leq \delta_0(|z|/|x|+ |p|+ \|g'-g\|_{C^2(M)})/2 \ \ \text{ on }\cU\times \RR\times T_*\Sigma  \]
        and $\delta_0$ is specified in corollary \ref{Cor_Pre, quant growth rate mon for Jac}. Moreover, $v^{(r)}(y):= v(ry)/r$ also satisfies the equation of the same form with the same estimate on its domain. Fix the choice of $\delta_3$ from now on.
        Also notice that 
        \begin{align}
         \hat{J}^{\sigma}_{p, v}(s; Q) = r^{2(1-\sigma)}\hat{J}^{\sigma}_{p, v^{(r)}}(s/r; r^{2(\sigma - 1)}Q) \label{Ass Jac, J(r) change under rescaling}        
        \end{align}
        Take $s_0\in (0, s_1)$ such that $s_0^{2(\sigma-1)}\cdot\kappa \geq 1$. 
        
        Suppose (ii) fails, then after passing to a subsequence, $\exists\ l_j\geq N+1$, $\tau'_j:= s_0\cdot K^{-l_j}\in (\tau_j, s_0)$ such that 
        \begin{align}
         \hat{J}^{\sigma}_{p, v_j} (\tau'_j; \kappa\|g_j-g\|_{C^3(M)}) > \sup_{1\leq i\leq l_j-1}\hat{J}^{\sigma}_{p, v_j} (\tau'_j K^i; \kappa\|g_j-g\|_{C^3(M)})  \label{Ass Jac, hat(J)^sigma strictly increase at l_j}        
        \end{align}          
        Then by inductively considering $v_j^{(\tau'_jK^{-d})}$, $d\geq 1$, by the choice of $s_0$, (\ref{Ass Jac, J(r) change under rescaling}) and corollary \ref{Cor_Pre, quant growth rate mon for Jac}, 
        \begin{align}
         \hat{J}^{\sigma}_{p, v_j} (s_0K^{-l}; 0)\geq 
         \sup_{1\leq i\leq l-1}\hat{J}^{\sigma}_{p, v_j} (s_0K^i; \kappa\|g_j-g\|_{C^3(M)})\ \ \ \forall l_j\leq l\leq (\log s_0 -\log \tau_j)/\log K   \label{Ass Jac, J^sigma growth mon in l}        
        \end{align}
        WLOG $\tau_j\to 0_+$ are such that $(\eta_{p, \tau_j})_{\sharp}V_j$ is the $\delta_3$-minimal blow up sequence for $\{V_j\}_{j\geq 1}$, note that it also follows from (\ref{Ass Jac, J^sigma growth mon in l}) and (\ref{Ass Jac, hat(J)^sigma strictly increase at l_j}) that $\tau_j>0$. Then by (\ref{Ass Jac, J^sigma growth mon in l}), the varifold limit $\tilde{V}_{\infty}$ of this blow up sequence satisfies $\cA\cR_{\infty}(\tilde{V}_{\infty})\leq \sigma$. This completes the proof.         
       \end{proof}

       \begin{proof}[Proof of theorem \ref{Thm_Ass Jac, Main thm}.]
        Recall that $V_j$ is stationary in $(G, g_j)$ and $V_j \to |\Sigma|$, $V_j\neq |\Sigma|$. 

        We consider first the simpler setting where $g_j \equiv g$. By assumption (A) and (B) in theorem \ref{Thm_Ass Jac, Main thm}, suppose $\tau\in (0, \tau_{\Sigma}(1)/2)$ such that for each $p\in Sing(\Sigma)$, either $C_p$ is minimizing in one side, or $V_j$ is a stable minimal hypersurface in $B^M_{\tau}(p)$. Let $\sigma_p \in (\gamma_2^-(C_p), \gamma_1^-(C_p))\setminus \Gamma_{C_p}$; $K_p\geq 2$, $N_p\in \NN$ be specified in corollary \ref{Cor_Pre, quant growth rate mon for Jac} as above; $s_0(p)\in (0,\tau)$ and $\delta_3(p)$ be specified in lemma \ref{Lem_Ass Jac, Dichotomy Growth Rate Bd} for each $p\in Sing(\Sigma)$, where $\kappa = 1$ and $g_j=g$; 
        And let $\delta_4:= \inf_{p\in Sing(\Sigma)}\delta_3(p)$; $\bar{s}_0:=\inf_{p\in Sing(\Sigma)} s_0(p)\cdot K_p^{-N_p-1}$.

        Note that by (\ref{Ass Jac_Asymp Lower Bd for delta-minimal blow up lim}) and the choice of $\sigma_p$, only (ii) in lemma \ref{Lem_Ass Jac, Dichotomy Growth Rate Bd} could happen for every singularity $p$.
        
        Since $V_j\to |\Sigma|$, by Allard regularity, $\exists s_j\to 0_+$ and $v_j\in C^2(\Sigma)$ such that $\|v_j\|^*_{2,\Sigma\setminus B_{s_j}(Sing(\Sigma))}\leq \delta_4$ and that \[
         V_j\llcorner M\setminus B^M_{s_j}(Sing(\Sigma)) = |graph_{\Sigma}(v_j)|\llcorner M\setminus B^M_{s_j}(Sing(\Sigma))   \]
        Then by lemma \ref{Lem_Ass Jac, Dichotomy Growth Rate Bd} (ii), for each $p\in Sing(\Sigma)$,
        \begin{align}
         \int_{A_{r, K_pr}(p)} v_j^2 \rho^{-n-2\sigma_p}\ dvol_{\Sigma}(x) \leq C(\Sigma, M, g)\int_{A_{\bar{s}_0, \tau}(p)} v_j^2 \rho^{-n-2\sigma_p}\ dvol_{\Sigma}(x)\ \ \ \ \ \forall r\in (s_j, \bar{s}_0) \label{Ass Jac, growth rate bd for v_j}
        \end{align} 
        In particular, $\forall j>>1$ and $\forall \Omega\subset \Sigma\setminus B_{s_j}(Sing(\Sigma))$, we have 
        \begin{align}
         \int_{\Omega} v_j^2\ dx \leq C(\Sigma, M, g, \{\sigma_p\}_{p\in Sing(\Sigma)}, \Omega)\int_{\Sigma\setminus B_{\bar{s}_0}(Sing(\Sigma))} v_j^2\ dx  \label{Ass Jac, global bded by loc}        
        \end{align}
        The key point of this estimate is that the constant on RHS does \textbf{NOT} depend on $j$. 
        
        Take $a_j:= \|v_j\|_{L^2(\Sigma\setminus B_{\bar{s}_0})}>0$ for $j>>1$. Since $V_j \neq |\Sigma|$, by Allard regularity, unique continuation property of minimal surface equation and assumption (A) (B) on $V_j$, $a_j>0$ for $j>>1$.
        
        Take $\hat{v}_j:= v_j/a_j$. By (\ref{Ass Jac, global bded by loc}) and standard elliptic estimate, $\hat{v}_j \to v_{\infty}$ in $C^{\infty}_{loc}(\Sigma)$ and by the choice of $a_j$, $v_{\infty}$ is not identically $0$. Also by (\ref{Ass Jac, growth rate bd for v_j}) and the choice of $\sigma_p$, $\cA\cR_{p}(v_{\infty})\geq \gamma_1^-(C_p)$ for all $p\in Sing(\Sigma)$. Hence corollary \ref{Cor_Linear, Global Growth Bd function decomp} yields $v_{\infty}\in \scB(\Sigma)\oplus \RR_{L_{\Sigma}}\langle Sing(\Sigma)\rangle$.\\
        
        The proof for the case where $g_j = g+ c_j\beta_j$ as in the statement of theorem is similar but more delicate in choosing the renormalizing constant. Since $q(\beta)$ is not identically $0$, let $\Omega_0\subset \subset \Sigma$ such that $spt(q(\beta))\cap \Omega_0 \neq \emptyset$. Let $v_j\in C^2(\Omega_0)$ be the graphical function of $V_j$ over $\Omega$, $\|v_j\|_{C^2}\to 0$.\\
        \textbf{Claim}: $\liminf_{j\to \infty} \|v_j\|_{L^2(\Omega_0)} / c_j >0$.\\
        \textbf{Proof of the claim}: Otherwise, up to a subsequence, $v_j/c_j \to 0$ in $L^2(\Omega_0)$. Since $V_j$ is stationary in $(M, g_j)$, by lemma \ref{Lem_Pre, Vol element and mean curv of graph I}, $\forall \phi\in C_c^2(\Omega_0)$, 
        \begin{align}
        \begin{split}
         0 &= \int_{\Omega_0}\scM^{g_j}v_j\cdot \phi \\
           &= \int_{\Omega_0} a_{g_j}^{ik}(x, v_j, \nabla v_j)\partial_i(v_j)\partial_k\phi + b_{g_j}^k(x, v_j, \nabla v_j)\partial_k\phi + \partial_z F^{g_j}(x, v_j ,\nabla v_j)\phi \\
           &= \int_{\Omega_0} -v_j\cdot \partial_i\big(a_{g_j}^{ik}(x, v_j, \nabla v_j)\partial_k \phi \big) + b_{g_j}^k(x, v_j, \nabla v_j)\partial_k\phi + \partial_z F^{g_j}(x, v_j ,\nabla v_j)\phi 
        \end{split} \label{Ass Jac, test function in min surf equ}
        \end{align}
        multiply both side of (\ref{Ass Jac, test function in min surf equ}) by $1/c_j$ and let $j\to \infty$, by lemma \ref{Lem_Pre, Vol element and mean curv of graph I} and the assumption that $(g_j - g)/c_j\to \beta$ in $C^4(M)$ and $v_j/c_j \to 0$ in $L^2(\Omega_0)$, we get \[
         \int_{\Omega_0} \beta(\nu, \nabla_{\Sigma} \phi) + \frac{1}{2}tr_{\Sigma}(\nabla^M_{\nu}\beta)\cdot \phi  = 0\ \ \ \ \forall \phi\in C^2_c(\Omega_0)    \]
        This contradicts to that $spt(q(\beta))\cap \Omega_0 \neq \emptyset$. Thus the claim is proved.\\
        
        With this claim, we see that $\exists \kappa>0$ such that $\kappa\|g_j - g\|_{C^4} \leq \|v_j\|_{L^2(\Omega)}$. Now repeat the previous proof by applying lemma \ref{Lem_Ass Jac, Dichotomy Growth Rate Bd} with also taking $\tau < dist(Sing(\Sigma), \Omega_0)$ and $\kappa>0$ specified above. We still get a local-control-global typed $L^2$-estimate as (\ref{Ass Jac, global bded by loc}), and then the conclusion holds immediately by elliptic estimate.
       \end{proof}
       
       We close this section by pointing out the following improved characterization of nearby minimal hypersurfaces using their associated Jacobi fields. 
       
       \begin{Cor}  \label{Cor_Ass Jac, Max growth of ass Jac => smooth}
        Suppose $\sigma <1$; $\{(V_j, g_j)\}_{j\geq 1}$ is a sequence of varifold-metric pair described in theorem \ref{Thm_Ass Jac, Main thm}. 
        Suppose $\bar{v}\in C_{loc}^2(\Sigma)$ is an associated generalized Jacobi field of $\{(V_j, g_j)\}_{j\geq 1}$; $p\in Sing(\Sigma)$ such that $\cA\cR_{p}(\bar{v})\leq \sigma$. Then up to a subsequence, there exists a $\delta_3(p)$-minimal blow up sequence of $\{V_j\}_{j\geq 1}$ near $p$, every blow up limit $\tilde{V}_{\infty}$ of which is asymptotic to $C_p$ near infinity with $\cA\cR_{\infty}(\tilde{V}_\infty) \leq \sigma$.
        
        In particular, suppose $\sigma\leq \gamma_1^+$, and either (B) of theorem \ref{Thm_Ass Jac, Main thm} hold or $C_p$ is area-minimizing in both side. Then $Sing(V_j)\cap B^M_{\tau_{\Sigma}(1)}(p) = \emptyset$ for $j>>1$.
       \end{Cor}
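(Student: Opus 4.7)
The strategy is to exploit the dichotomy in Lemma \ref{Lem_Ass Jac, Dichotomy Growth Rate Bd}: for each $\sigma' \in (\sigma, 1) \setminus \Gamma_{C_p}$ and suitably chosen $\kappa > 0$, either (i) a $\delta_3(\sigma')$-minimal blow-up limit of $\{V_j\}$ at $p$ with $\cA\cR_\infty \leq \sigma'$ exists, or (ii) a growth-rate monotonicity inequality holds for the graphical functions $v_j$. I rule out (ii) using the hypothesis $\cA\cR_p(\bar{v}) \leq \sigma < \sigma'$, forcing (i), and then pass $\sigma' \searrow \sigma$ via diagonal extraction to produce the claimed blow-up limit.

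To rule out (ii): by the definition of associated Jacobi field there exist $t_j \to 0_+$ with $v_j/t_j \to \bar{v}$ in $C^\infty_{loc}$, and by Theorem \ref{Thm_Ass Jac, Main thm} $\bar{v}$ satisfies $L_\Sigma \bar{v} \in L^\infty_{loc}$ (either $0$ or $c\, q(\beta)$). In the case $g_j \equiv g$ the monotonicity $\hat{J}_j(l) \leq \sup_{1 \leq i \leq l-1}\hat{J}_j(i)$ divides by $t_j^2$ directly, and $C^\infty_{loc}$ convergence on the fixed annuli $A_{s_0 K^{-l}, s_0 K^{-l+1}}$ passes the inequality to the limit:
\[
 J^{\sigma'}_{\bar{v}}(s_0 K^{-l}, s_0 K^{-l+1}) \leq \sup_{1 \leq i \leq l-1} J^{\sigma'}_{\bar{v}}(s_0 K^{-i}, s_0 K^{-i+1}).
\]
Since $\sigma' > \cA\cR_p(\bar{v})$, Lemma \ref{Lem_Linear, Growth rate charact}(1) forces $l \mapsto J^{\sigma'}_{\bar{v}}$ to be strictly increasing past some index, contradicting the fixed bound. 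In the metric-varying case $g_j = g + c_j\beta_j$, one chooses $\kappa$ small and uses the compatible normalization $t_j = a_j$ from the proof of Theorem \ref{Thm_Ass Jac, Main thm} so that the perturbation term $\kappa\|g_j - g\|_{C^3}$ is dominated by $I_j(\cdot)$ on annuli supporting nontrivial behavior of $\bar{v}$; on scales where it is not, $v_j$ is of size $\sqrt{\kappa\|g_j - g\|}$, constraining $\bar{v}$ via the separation between the limiting scales and reducing to the previous case. Hence (i) holds for each $\sigma' \searrow \sigma$. Fixing $\delta_3(p)$ valid for all $\sigma' \in (\sigma, \sigma + 1) \setminus \Gamma_{C_p}$ and applying Allard compactness, we extract a subsequence of $\{V_j\}$ whose $\delta_3(p)$-minimal blow-up limits $\tilde{V}_\infty$ all satisfy $\cA\cR_\infty(\tilde{V}_\infty) \leq \sigma$.

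For the "in particular" assertion, assume $\sigma \leq \gamma_1^+(C_p)$. Under (B), Schoen-Simon compactness transfers stability to $\tilde{V}_\infty = |\Sigma_\infty|$, so Theorem \ref{Thm_Ass Jac, Asymp Thm of ext. min. graph}(2) gives $\cA\cR_\infty(\tilde{V}_\infty) \geq \gamma_1^-$; using $\mu_j > \mu_1$ for $j \geq 2$ one checks $\Gamma_{C_p} \cap [\gamma_1^-, \gamma_1^+] = \{\gamma_1^-, \gamma_1^+\}$, so $\cA\cR_\infty(\tilde{V}_\infty) \in \{\gamma_1^\pm\}$ and the "moreover" clause of the theorem forces $\tilde{V}_\infty$ to be a smooth Hardt-Simon leaf. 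The two-sided area-minimizing hypothesis on $C_p$ yields the same conclusion via Theorem \ref{Thm_Ass Jac, Asymp Thm of ext. min. graph}(1). With $\tilde{V}_\infty$ smooth, a standard blow-up-contradiction argument finishes: any hypothetical $q_j \in Sing(V_j) \cap B^M_{\tau_\Sigma(1)}(p)$ with $r_j := dist_M(q_j, p) \to 0$ yields, along a subsequence with $r_j/\tau_j \to c \in [0, +\infty]$, a tangent varifold equal to either $|C_p|$ (for $c \in \{0, +\infty\}$) or a rescale of $\tilde{V}_\infty$ (for $c \in (0, +\infty)$); both are regular off the origin, yet the singular point on the unit sphere inherited from $(q_j - p)/r_j$ survives, contradicting Allard regularity.

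The main technical obstacle is the perturbation-normalization balance when passing alternative (ii) to the limit in the metric-varying case, finessed by the coupled choice of $\kappa$ and the normalization $t_j$. A secondary subtlety is the potential degeneracy $\tau_j \equiv 0$ (i.e., $V_j$ is globally $\delta_3$-graphical over $\Sigma$ in $B_\tau(p) \setminus \{p\}$), in which $V_j$ inherits a strongly isolated singularity at $p$ with tangent cone $C_p$; here an analogous rescaled blow-up analysis using the asymptotic rate of $\bar{v}$ at $p$ and Theorem \ref{Thm_Ass Jac, Asymp Thm of ext. min. graph} provides the regularity conclusion.
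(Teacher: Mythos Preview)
Your strategy matches the paper's: apply the dichotomy Lemma~\ref{Lem_Ass Jac, Dichotomy Growth Rate Bd}, rule out alternative (ii) via Lemma~\ref{Lem_Linear, Growth rate charact}(1), and invoke Theorem~\ref{Thm_Ass Jac, Asymp Thm of ext. min. graph} plus Allard regularity for the smoothness. But you over-engineer the first step. The paper dispenses with your $\sigma'\searrow\sigma$ diagonal argument in one line: since $\cA\cR_p(\bar v)\in\Gamma_{C_p}$ (Lemma~\ref{Lem_Linear, Growth rate charact}(2)) and $\cA\cR_\infty(\tilde V_\infty)\in\Gamma_{C_p}\cup\{-\infty\}$ (Lemma~\ref{Lem_Ass Jac, asymp char}), and $\Gamma_{C_p}$ is discrete, one may WLOG take a single $\sigma\in(-\infty,1)\setminus\Gamma_{C_p}$ and apply the dichotomy once. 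Your assertion that a common $\delta_3(p)$ can be fixed ``valid for all $\sigma'\in(\sigma,\sigma+1)\setminus\Gamma_{C_p}$'' is not justified---$\delta_3$ depends on $\sigma'$ through $K_0(\sigma',C_p)$ and $N$, which need not be uniformly controlled as $\sigma'$ approaches $\Gamma_{C_p}$---and in any case is unnecessary once you use the discreteness observation.

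Your handling of the metric-varying case is more explicit than the paper's (which simply says ``(ii) cannot happen''), and you correctly isolate the issue: the perturbation term $\kappa\|g_j-g\|_{C^3}$ in $\hat J_j$ must be controlled after normalizing by $t_j^2$. The clean resolution is that any normalization $t_j$ with $v_j/t_j\to\bar v\neq 0$ is comparable to $a_j=\|v_j\|_{L^2(\Omega_0)}$, and the Claim in the proof of Theorem~\ref{Thm_Ass Jac, Main thm} gives $a_j\gtrsim c_j$, so $\|g_j-g\|/t_j^2\lesssim 1/t_j$; since $\cA\cR_p(\bar v)<\sigma$ forces $J^\sigma_{\bar v}(r,Kr)\to+\infty$ as $r\to 0$, the $\kappa$-term is eventually irrelevant. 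Your ``in particular'' argument is correct; the paper's is shorter because once $(\eta_{p,\tau_j})_\sharp V_j\to\tilde V_\infty$ with $\tilde V_\infty$ smooth, Allard regularity directly gives smoothness of $V_j$ in $B_{\tau_j}^M(p)$, while $V_j$ is already graphical (hence smooth) outside---no separate contradiction argument with $q_j$ is needed.
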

       \begin{proof}
        Directly apply lemma \ref{Lem_Ass Jac, Dichotomy Growth Rate Bd}. By lemma \ref{Lem_Linear, Growth rate charact} and lemma \ref{Lem_Ass Jac, asymp char}, take WLOG $\sigma\in (-\infty, 1)\setminus \Gamma_{C_p}$. 

        By lemma \ref{Lem_Linear, Growth rate charact}, since $\cA\cR_p(\bar{v})\leq \sigma$, (ii) of lemma \ref{Lem_Ass Jac, Dichotomy Growth Rate Bd} can not happen. Hence there exists a blow up limit with asymptotic rate to $C_p$ near infinity $\leq \sigma$.
        
        When $\sigma\leq \gamma_1^+(C_p)<\gamma_2^+(C_p)$, by theorem \ref{Thm_Ass Jac, Asymp Thm of ext. min. graph}, $\tilde{V}_{\infty}$ is a closed embedded smooth minimal hypersurface in $\RR^{n+1}$ of multiplicity $1$, hence by Allard regularity, for those $V_j$ whose blow-up sequence converges to $\tilde{V}_{\infty}$, $V_j$ has no singularity near $p$.
       \end{proof}

       \begin{Cor} \label{Cor_Ass Jac, Growth rate bd near strictly minimizing singularity}
        Suppose $p\in Sing(\Sigma)$ such that $C_p$ is strictly minimizing. Let $\{V_j\}_{j\geq 1}$ be a family of stationary integral varifold in $B^M_{2\tau}(p)$ which converges in varifold sense to $|\Sigma|\llcorner B^M_{2\tau}(p)$. 

        Let $\sigma\in (-\infty, \gamma_1^+)\setminus \Gamma_{C_p}$ be fixed; $N, s_0, \delta_3$ be the same as in lemma \ref{Lem_Ass Jac, Dichotomy Growth Rate Bd}; $s_0' := s_0\cdot K^{-N-1}$.
        
        Suppose $r_j\to 0_+$ and $\|v_j\|^*_{2, \cA_{r_j, 2\tau}}\leq \delta_3$ be the graphical function of $V_j$ over $\Sigma$, i.e. $V_j\llcorner A^M_{r_j, 2\tau} = |graph_{\Sigma}(v_j)|\llcorner A^M_{r_j, 2\tau}$. Then for $j>> 1$, the following pointwise bound holds on $\cA_{Kr_j, \tau/K}$ \[
         |v_j|/\rho + |\nabla v_j| \leq C(\sigma, \Sigma, M, g)\rho^{\sigma - 1}\cdot \big(\int_{\cA_{s_0', \tau}} v_j^2\ \big)^{1/2}    \]
       \end{Cor}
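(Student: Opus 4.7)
The plan is to combine Lemma~\ref{Lem_Ass Jac, Dichotomy Growth Rate Bd} with the strictly minimizing hypothesis to obtain a uniform weighted $L^2$ decay for $v_j$, and then upgrade this to a pointwise $C^1$ bound by standard elliptic rescaling.

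First I would rule out alternative (i) of Lemma~\ref{Lem_Ass Jac, Dichotomy Growth Rate Bd} for all large $j$. Because the corollary is stated without metric perturbation, one applies that lemma with $g_j \equiv g$ and $\kappa = 1$, so $\hat{J}_j(l) = J^\sigma_{p,v_j}(s_0 K^{-l}, s_0 K^{-l+1})$. Any $\delta_3$-minimal blow-up limit $\tilde{V}_\infty$ is, by the very definition of $\tau_j$ and Allard compactness, a stationary integral varifold asymptotic to $C_p$ near infinity with $\tilde{V}_\infty \neq |C_p|$. Since $C_p$ is strictly minimizing (hence area-minimizing in both sides, so assumption~(A') holds), Theorem~\ref{Thm_Ass Jac, Asymp Thm of ext. min. graph}(1) gives $\cA\cR_\infty(\tilde{V}_\infty;C_p) \geq \gamma_1^+(C_p) > \sigma$. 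Thus alternative (i) cannot happen, and alternative (ii) provides the monotonicity
\[
J^\sigma_{p,v_j}(s_0 K^{-l}, s_0 K^{-l+1}) \;\leq\; \sup_{1 \leq i \leq l-1} J^\sigma_{p,v_j}(s_0 K^{-i}, s_0 K^{-i+1})
\]
for all $N+1 \leq l \leq (\log s_0 - \log \tau_j)/\log K$ with $\tau_j \leq r_j$.

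Next I would iterate this inequality. A straightforward induction on $l$ yields the $l$-uniform bound
\[
J^\sigma_{p,v_j}(s_0 K^{-l}, s_0 K^{-l+1}) \;\leq\; M_j := \sup_{1 \leq i \leq N} J^\sigma_{p,v_j}(s_0 K^{-i}, s_0 K^{-i+1}).
\]
The $N$ annuli on the right all sit inside the fixed annulus $\cA_{s_0', s_0}$ on which $\rho$ is bounded above and below by constants depending only on $\sigma,\Sigma,M,g$; hence $M_j \leq C \int_{\cA_{s_0',\tau}} v_j^2$. Translating back to unweighted integrals, for every $r \in [r_j, s_0']$ one obtains
\[
\int_{A_{r, Kr}(p)} v_j^2 \;\leq\; C\, r^{n + 2\sigma} \int_{\cA_{s_0',\tau}} v_j^2.
\]

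Finally I would convert this $L^2$ bound to a pointwise $C^1$ bound by rescaling. For $x \in \cA_{Kr_j,\tau/K}$ with $r = |x|$, set $\tilde{v}(y) := v_j(ry)/r$ on $A_{1/2, 2K} \subset C_p$. By Lemma~\ref{Lem_Pre, geom of asymp at 0} and Lemma~\ref{Lem_Pre, Vol element and mean curv of graph II}, $\tilde{v}$ satisfies a quasilinear elliptic equation of the form used in Corollary~\ref{Cor_Pre, quant growth rate mon for Jac}, with coefficients uniformly controlled by $\delta_3$. Standard interior $C^1$ elliptic estimates then give
\[
\sup_{A_{1, K/2}} \bigl(|\tilde{v}| + |\nabla \tilde{v}|\bigr) \;\leq\; C \|\tilde{v}\|_{L^2(A_{1/2, 2K})},
\]
and unrolling the rescaling yields $|v_j(x)|/r + |\nabla v_j(x)| \leq C r^{-(n+2)/2} \|v_j\|_{L^2(A_{r/2, 2Kr})}$. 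Plugging in the $L^2$ estimate from the previous paragraph produces the desired bound $|v_j|/\rho + |\nabla v_j| \leq C\rho^{\sigma-1}(\int_{\cA_{s_0',\tau}} v_j^2)^{1/2}$.

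The main technical point is the careful bookkeeping: one must verify that for $r \in [Kr_j, \tau/K]$ the rescaled annulus $A_{r/2,2Kr}$ sits inside $\cA_{r_j, 2\tau}$ (guaranteed by $K \geq 2$ and the hypothesis $\|v_j\|^*_{2; \cA_{r_j, 2\tau}} \leq \delta_3$), and that the index range in alternative~(ii) of Lemma~\ref{Lem_Ass Jac, Dichotomy Growth Rate Bd} covers all scales $r \in [r_j, s_0']$, which is exactly why $s_0'$ was chosen to be $s_0 K^{-N-1}$.
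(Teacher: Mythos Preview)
Your proposal is correct and follows exactly the route the paper takes: the paper's proof is the terse one-liner ``Directly apply Lemma~\ref{Lem_Ass Jac, Dichotomy Growth Rate Bd} one gets an $L^2$ bound of $v_j$ on each $\cA_{r,Kr}$; then since $v_j$ solves the minimal surface equation, classical interior Schauder estimate yields the corollary,'' and you have correctly filled in the two implicit steps, namely that the strictly minimizing hypothesis together with Theorem~\ref{Thm_Ass Jac, Asymp Thm of ext. min. graph}(1) forces $\cA\cR_\infty(\tilde V_\infty)\geq\gamma_1^+>\sigma$ so alternative~(i) is excluded, and that the iterated $L^2$ annular bound upgrades to the pointwise estimate via rescaled interior elliptic estimates.
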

       \begin{proof}
        Directly apply lemma \ref{Lem_Ass Jac, Dichotomy Growth Rate Bd} one get an $L^2$ bound of $v_j$ on each $\cA_{r, Kr}$; then since $v_j$ solves the minimal surface equation, classical interior Schauder estimate yields the corollary.
       \end{proof}
       
       \begin{proof}[Proof of theorem \ref{Thm_Intro, Induced Jacobi field}.]
        The existence of associated Jacobi field $u$ follows from theorem \ref{Thm_Ass Jac, Main thm}; If assuming $u>0$ near some $p\in Sing(\Sigma)$, then by a blow up argument and lemma \ref{Lem_Pre, pos Jacob field}, $\cA\cR_p(u)\in \{\gamma_1^{\pm}(C_p)\}$. The regularity of $\Sigma_j$ near $p$ then follows from corollary \ref{Cor_Ass Jac, Max growth of ass Jac => smooth}. 
       \end{proof}

    \section{One-sided deformations of minimal hypersurfaces} \label{Sec, One-sided Perturb}
     
     Let $(\Sigma,\nu)\subset (M,g)$ be a strictly stable, closed minimal hypersurface with strongly isolated singularities, i.e. $-L_{\Sigma}$ is strictly positive on $\scB_0(\Sigma)$ as in proposition \ref{Prop_Linear, Basic prop for L^2-noncon} (4). Recall "closed" means $Clos(\Sigma)$ is compact.
     Suppose $M\setminus Clos(\Sigma)$ has 2 connected component $M_{\pm}$. This is always possible if we replace $M$ by a small neighborhood of $\Sigma$. Suppose $M_+$ be the one which $\nu$ points in.
     
     Let $\lambda_1>0$ be the first $L^2$-eigenvalue of $-L_{\Sigma}$ on $\scB_0(\Sigma)$; $\Lambda \in (0, \lambda_1/2]$ be fixed throughout this section. Note that by definition we have 
     \begin{align}
      Q_{\Sigma}(\phi, \phi)  \geq 2\Lambda\int_{\Sigma} \phi^2  \ \ \ \ \forall \phi \in C_c^1(\Sigma)   \label{One-sided Perturb, Q^h strictly positive} 
     \end{align}
     
     The aim of this section is to prove theorem \ref{Thm_Intro, loc minimiz}. More precisely, we shall prove the following
     
     \begin{Thm} \label{Thm_One-sided Perturb, Main Thm, mean convex foliation}
      Let $\Sigma, M$ be as above; $\tau \in (0, \tau_{\Sigma}(1)/4)$, where $\tau_{\Sigma}$ is defined in (\ref{Pre, conic rad of sing}). Suppose each tangent cone of $\Sigma$ is strictly minimizing. 
      Then there exists a family of piecewise smooth neighborhood $\{U_t\}_{t\in (0, 1)}$ of $Clos(\Sigma)$ such that
      \begin{enumerate}
       \item[(1)] $U_{t_1}\subset\subset U_{t_2}$ for any $t_1< t_2$;
       \item[(2)] $\bigcap_{t\in (0,1)} U_t = Clos(\Sigma)$; $\bigcap_{s\in (t, 1)}U_s = Clos(U_t)$ and $\bigcup_{s\in(0, t)}U_s = U_t$, $\forall t\in (0,1)$;
       \item[(3)] $\exists$ smooth domains $\{\Omega_t\}_{t\in (0,1)}$, $\Sigma\setminus B_{\tau}(Sing(\Sigma)) \subset \Omega_t\subset \subset \Sigma$ and decomposition \[
         \partial U_t \cap M_{\pm} = \bar{\Sigma}_t^{\pm, ext} \cup \bigcup_{p\in Sing(\Sigma)}\bar{\Sigma}_{t, p}^{\pm, int}    \]
       where $\bar{\Sigma}_t^{\pm, ext}$ are smooth minimal graphs over $\Omega_t$ with smooth boundaries; $\bar{\Sigma}_{t, p}^{\pm, int}$ are smooth minimal hypersurfaces in $B^M_{\tau}(p)$ with boundary $\partial \bar{\Sigma}_{t, p}^{\pm, int} = \partial \bar{\Sigma}_t^{\pm, ext} \cap B^M_{\tau}(p)$.
       \item[(4)] Along their intersection, $\bar{\Sigma}_t^{\pm, ext}$ and $\bar{\Sigma}_{t, p}^{\pm, int}$ forms a convex wedge.
      \end{enumerate}
     \end{Thm}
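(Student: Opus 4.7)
The plan is to construct $\{U_t\}$ by gluing exterior minimal graphs over smooth subdomains $\Omega_t\subset \Sigma\setminus B_\tau(Sing(\Sigma))$ with interior Hardt--Simon-type pieces inside each $B^M_\tau(p)$, $p\in Sing(\Sigma)$. For each side $\pm$ of $\Sigma$, the envelope $\partial^\pm U_t$ is assembled from two minimal hypersurfaces meeting along $\partial \Omega_t$ at a convex wedge.

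First I would produce the interior pieces. Since each tangent cone $C_p$ is strictly minimizing, Theorem~\ref{Thm_Pre, H-S foliation} supplies smooth area-minimizing leaves $\Sigma_p^\pm \subset E_p^\pm$ foliating each side of $C_p$ by rescaling, with asymptotics $h_p^\pm = c\,R^{\gamma_1^+(C_p)} w_1 + O(R^{\gamma_1^+ - \epsilon})$ (or with a $\log R$ correction if $\mu_1 = -((n-2)/2)^2$). Pulling back by $\exp_p^M$ and truncating at $\partial B^M_\tau(p)$ gives the pieces $\bar{\Sigma}_{t,p}^{\pm,int}$, parametrized by the rescaling factor, which will be tied to $t$ below.

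For the exterior pieces I would invoke the weighted existence statement of Theorem~\ref{Thm_One-sided Perturb, Ext graph with growth est}. The minimal-graph operator $\scM$ linearizes at $0$ to $-L_\Sigma$, which is strictly positive on $\scB_0(\Omega_t)$ by hypothesis; combined with Corollary~\ref{Cor_Pre, sol perturbed Jac equ, prescribed asymp} and Lemma~\ref{Lem_Pre, Vol element and mean curv of graph II}, a Banach fixed-point argument yields a one-signed minimal graph $v_t^\pm$ over $\Omega_t$ lying in $\scB(\Omega_t)$, with asymptotic rate at each $p$ equal to $\gamma_1^+(C_p)$ by Corollary~\ref{Cor_Linear, Growth est for G and vaiphi_1}. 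The free Dirichlet data on $\partial \Omega_t$ is tuned so that $\bar{\Sigma}_t^{\pm,ext}:=\mathrm{graph}_\Sigma(v_t^\pm)$ and $\bar{\Sigma}_{t,p}^{\pm,int}$ share common boundary on $\partial B^M_\tau(p)$; this matching is possible because both sides expand with the same leading profile $c\,r^{\gamma_1^+} w_1$, so a single overall scaling parameter can be fixed via the implicit function theorem.

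The main analytic difficulty is item~(4), convexity of the wedge along $\partial \Omega_t$. Here I would compare the outward conormals of the two pieces at their common boundary curve: in conic coordinates near $p$, the strict-minimizing expansion for the Hardt--Simon leaf together with Lemma~\ref{Lem_Ass Jac, asymp char} and Lemma~\ref{Lem_Pre, geom of asymp at 0} shows the radial derivative of the interior leaf on $\partial B^M_\tau(p)$ exceeds that of the exterior graph once the leading coefficients are equalized, yielding a corner opening into $M\setminus U_t$. Strict stability together with strict minimizing is precisely what forces both leading coefficients to have the correct sign and the subleading error to remain of lower order; without either assumption a comparison at the subleading level (with possible log terms) would fail, which is the key obstacle. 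Finally properties~(1)--(2) follow from monotonicity of the construction in $t$, using uniqueness of the exterior solution from strict positivity of $-L_\Sigma$ and the rescaling foliation of Hardt--Simon on the interior; the homological minimality and uniqueness of $\Sigma$ in $U$ are then immediate from the mean-convex foliation via the standard calibration argument using the outward unit normal to each $\partial U_t$.
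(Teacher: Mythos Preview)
Your proposal has the right architecture (exterior minimal graphs glued to interior Hardt--Simon pieces) but misidentifies the key asymptotic, and this breaks the convex-wedge argument.

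You assert that the exterior graph $v_t^\pm$ has asymptotic rate $\gamma_1^+(C_p)$ at each singularity, citing Corollary~\ref{Cor_Linear, Growth est for G and vaiphi_1}. That corollary gives rate $\gamma_1^+$ for the \emph{first eigenfunction} $\varphi_1\in\scB_0$, but the paper's exterior graph is not built from $\varphi_1$. Instead, Theorem~\ref{Thm_One-sided Perturb, Ext graph with growth est} produces a solution controlled pointwise by the Green's function $G$ of $L_\Lambda$, and by Corollary~\ref{Cor_Linear, Growth est for G and vaiphi_1} and Corollary~\ref{Cor_Linear, Unique Asymp for Green's func} one has $\cA\cR_p(G)=\gamma_1^-$, not $\gamma_1^+$. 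So after blow-up at scale $r_p(T)$, the exterior graph limits to a minimal graph over $C_p\setminus\BB_1$ decaying at rate $\gamma_1^-$ near infinity.

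This is exactly what makes the wedge argument work: strict minimizing means the Hardt--Simon leaf grows like $r^{\gamma_1^+}w_1$ (or $r^{\gamma_1^+}\log r\,w_1$), which is \emph{strictly faster} than $r^{\gamma_1^-}$. Hence near infinity the blow-up exterior graph lies strictly below the Hardt--Simon leaf; by sliding and strong maximum principle it lies below everywhere on $C_p\setminus\BB_1$; and Hopf's lemma at $\partial\BB_1$ gives $\partial_r u_p^{+,ext}<\partial_r u_p^+$, which is the convex wedge. Your version, with both pieces sharing the same leading profile $r^{\gamma_1^+}w_1$, would force a comparison at subleading order, which (as you yourself note) does not close. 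The role of ``strictly minimizing'' is not to fix signs at subleading order but to create the gap $\gamma_1^+>\gamma_1^-$ (or the $\log$ factor) between the two rates; strict stability of the tangent cone is \emph{not} assumed here.

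A second, smaller issue: the boundary matching is not done by implicit function theorem on the Dirichlet data. The paper simply \emph{prescribes} $\varphi_T^+$ on $\partial\Sigma_T$ to equal the rescaled Hardt--Simon trace (equation~(\ref{One-sided Perturb, Prescribe bdy of ext graph foliation})), so the boundaries agree by construction; the interior piece is then the Plateau minimizer with that boundary, and its uniqueness/smoothness for large $T$ comes from uniqueness of the Hardt--Simon foliation plus Allard regularity.
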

     
     \begin{Rem} \label{Rem_One-sided Perturb_Mean conv dom can be smooth approx}
      Note that $\partial U_t$ are uniform local Lipschitz hypersurfaces. Hence by \cite{EckerHuisken91_Int_Reg}, by running mean curvature flow starting from $\partial U_t$, we see from (3) and (4) above that $U_t$ can be approximated by smooth mean convex domains.
     \end{Rem}
     
     We first sketch the construction of $U_t$.      
     Let \[
      \rho(x) := min\ \{dist_{\Sigma}(x, p): p\in Sing(\Sigma)\}\cup \{2\tau\}    \]
     When working near a certain singularity $p$, we shall parametrize $\cB_{2\tau}(p)$ by the tangent cone $C_p$ of $\Sigma$ near $p$. We may abuse the notations and simply write any point $x\in \cB_{2\tau}(p)$ as $x = (r, \omega)$ as in (\ref{Pre, param of asymp conic at 0}). 
     Then near each $p\in Sing(\Sigma)$, $\rho(r, \omega) = r$ if $r\leq 2\tau$. 
     
     Denote for simplicity $L = L_{\Sigma}$ and $L_{\Lambda}:= L + \Lambda$.
     By (\ref{One-sided Perturb, Q^h strictly positive}), $-L_{\Lambda}$ is strictly positive, with the first $L^2$-eigenvalue $\geq \Lambda >0 $. Fixed $p_0\in \Sigma\setminus \cB_{4\tau}(Sing(\Sigma))$. For each $p\in Sing(\Sigma)$, let $G_p$ be the Green's function of $L_{\Lambda} $ at $p$ constructed in corollary \ref{Cor_Linear, Global Green's func exists} with $G_p(p_0)=1$. Let $G := \sum_{p\in Sing(\Sigma)}G_p$. By corollary \ref{Cor_Linear, Global Green's func exists} and \ref{Cor_Linear, Growth est for G and vaiphi_1}, $G>0$ satisfies the equation $L G = -\Lambda\cdot G$ on $\Sigma$ and growth estimate \[
      G(x)/\rho(x)^{\gamma_1^-(C_p) + \epsilon}\to + \infty\ \ \ \text{ as }x\to p    \]
     for every $\epsilon > 0$, where recall $\gamma_1^-(C_p)$ is the spectrum of $C_p$ defined in (\ref{Pre, power of cone Jac}). Therefore, there's a sufficiently large $T_0 > 0$ such that \[
      \{x\in \Sigma : G(x)\rho(x)^{-1} > T_0\} \subset \cB_{\tau}(Sing(\Sigma))    \]

     \textbf{From now on in this section, every constant will depend on $M, g, \Sigma, \Lambda, \tau, p_0$.}

     For each $T > T_0$ and $p\in Sing(\Sigma)$, let 
     \begin{align}
      r_p(T):= \sup\{R: G(x)\rho(x)^{-1} >T \text{ on }\cB_R(p) \}   \label{One-sided Perturb, Def r_p(T)}     
     \end{align}
     and \[ 
      \Sigma_T := \Sigma \setminus \bigcup_{p\in Sing(\Sigma)}\cB_{r_p(T)}(p)      \]
     By corollary \ref{Cor_Linear, Unique Asymp for Green's func}, there's a $T_1 > T_0$ such that on $[T_1, +\infty)$, $r_p(T)$ varies continuously in $T$, $\Sigma_T$ are smooth manifolds with boundary; Also by Harnack inequality, for every $p\in Sing(\Sigma)$,  
     \begin{align}
      T\cdot r_p(T) = \inf_{ \partial \Sigma_T \cap \cB_{\tau}(p)} G \leq \sup_{ \partial \Sigma_T \cap \cB_{\tau}(p)} G \leq C(\Sigma, \Lambda, \tau) T\cdot r_p(T)  \label{One-sided Perturb_G/T r_p(T) euqiv 1}
     \end{align}
     $\Sigma_T$ will be the spaces on which we shall build graph.

     For each $T>>T_1$, we shall choose specific small positive function $\varphi^+ \in C^{\infty}(\partial \Sigma_T)$ and construct a graph of some positive function $u^+\in C^{\infty}(\Sigma_T)$ with mean curvature $h$ and boundary $graph_{\Sigma}\varphi^+$. Such a graph will be called an \textbf{exterior hypersurface} with boundary $\varphi^+$; 
     
     Then near each singularity $p$, solve Plateau problem with boundary $graph_{\Sigma}(\varphi^+)$ in $\cB_{\tau}(p)$ to find a minimizing hypersurface, called an \textbf{interior hypersurface} with boundary $\varphi+$ near $p$. The union of these exterior and interior hypersurfaces, denoted by $\Sigma_T^+$, is piecewise smooth and will lie in $M_+$ for sufficiently large $T$. Repeat the same process to construct $\Sigma_T^-$ lying in $M_-$. Then for each $t\in (0, 1)$, a reparametrization $T= T(t)\geq T_1$ will be specified below such that regions $U_t$ bounded by $\Sigma_{T(t)}^{\pm}$ are what we want.
     
     The mean convexity of $\partial U_t$ will be derived by a blow up argument letting $T\to +\infty$ and using the Hardt-Simon foliation as barriers. 
     To make it precise, we need an existence result of exterior graphs over $\Sigma_T$ with uniform growth estimate near each singularity. 
     We assert here that this existence result does \textbf{NOT} require the tangent cones of $\Sigma$ to be strictly stable or minimizing.

     \begin{Thm} \label{Thm_One-sided Perturb, Ext graph with growth est}
      Let $\alpha\in (0,1)$. Then there exists $C=C(\alpha)>1$, $T_2(\alpha)>T_1$ and $\delta_5 = \delta_5(\alpha) >0$ sufficiently small such that, for each $T\in (T_2, +\infty)$, each $0< \delta \leq \delta_5$ and each $\varphi \in C^{2,\alpha}(\partial \Sigma_T)$ with estimate near each singularity $p$ 
      \begin{align}
       \frac{1}{r_p(T)}|\varphi| + |\nabla \varphi| + r_p(T)|\nabla^2 \varphi|  \leq \delta   \label{One-sided Perturb, Bdy value of ext graph}  
      \end{align}
      there's a unique $u\in C^{2,\alpha}(\Sigma_T)$ solving 
      \begin{align}
       \begin{cases}
        \scM_{\Sigma} u(x) = 0\ & \text{ on }\Sigma_T \\
         u = \varphi\ & \text{ on }\partial \Sigma_T
       \end{cases} \label{One-sided Perturb, Ext graph sol of min surf equ}
      \end{align}
      and satisfying the \textbf{pointwise estimate}: for every $x\in \Sigma_T$,
      \begin{align}
       \frac{|u(x)|}{\rho(x)}+|\nabla u(x)|+ \sup\Big\{\rho(x)^{\alpha}\cdot \frac{|\nabla u(x)- \nabla u(y)|}{dist_{\Sigma}(x, y)^{\alpha}}: y\in \Sigma_T, \frac{1}{10}\leq \frac{\rho(x)}{\rho(y)}\leq 10 \Big\} \leq C\delta\cdot \frac{G(x)}{T\rho(x)}  \label{One-sided Perturb, Asymp est near bdy of ext graph}
      \end{align}
      where $\scM_{\Sigma}:=\scM^g$ is the minimal surface operator on $\Sigma$ specified in lemma \ref{Lem_Pre, Vol element and mean curv of graph I}. 
     \end{Thm}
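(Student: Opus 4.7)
}
The plan is to run a Banach fixed-point argument in a $T$-dependent weighted Hölder space built out of the supersolution $G/T$. Concretely, introduce the weight
\[
 w(x) := \frac{G(x)}{T\rho(x)}, \qquad x\in \Sigma_T,
\]
and, for $k\in\{0,1,2\}$, the weighted seminorms
\[
 \|u\|_{k,\alpha;*}^{(w)} := \sup_{x\in\Sigma_T} w(x)^{-1}\Big(\tfrac{|u|}{\rho}+|\nabla u|+\rho|\nabla^2 u|+\rho^{1+\alpha}[\nabla^2 u]_{\alpha;\cB_{\rho/10}(x)}\Big),
\]
truncated in the obvious way for $k<2$. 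Note that by (\ref{One-sided Perturb_G/T r_p(T) euqiv 1}) and the definition of $r_p(T)$, one has $w\sim 1$ on $\partial\Sigma_T$ and $w\lesssim 1/T$ on any fixed compact subset of $\Sigma\setminus Sing(\Sigma)$; the hypothesis (\ref{One-sided Perturb, Bdy value of ext graph}) on $\varphi$ is exactly a bound $\|\varphi\|^{(w)}_{2,\alpha;*}\leq C\delta$ in the corresponding boundary norm.

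Step 1 (linear theory). Since $-L_\Sigma$ is strictly positive on $\scB_0(\Sigma)$, classical Perron/Hilbert-space theory yields unique solvability of $L_\Sigma v=f$ on the smooth domain $\Sigma_T$ with boundary value $v=\psi$. The main point is the \emph{uniform-in-$T$} pointwise bound. Using $\Phi:=G/T$ as a barrier (it satisfies $L_\Sigma\Phi=-\Lambda\Phi<0$), Proposition~\ref{Prop_Linear, Basic prop for L^2-noncon}(4) gives
\[
 |v|\leq \frac{C}{\Lambda}\cdot \frac{G}{T}\cdot\Big(\|f/\Phi\|_{L^\infty(\Sigma_T)}+\|\psi/\Phi\|_{L^\infty(\partial\Sigma_T)}\Big).
\]
For derivative estimates we rescale locally by $\rho(x)$: interior points use standard Schauder; for boundary points near $p\in Sing(\Sigma)$, rescale by $r_p(T)$, so the rescaled equation on $\tilde\Sigma:=r_p(T)^{-1}\Sigma\cap (\text{fixed annulus})$ converges in $C^3$ to the cone-Jacobi equation on $C_p$ (lemma~\ref{Lem_Pre, geom of asymp at 0}) and the rescaled barrier $\Phi(r_p(T)\,\cdot)/\inf_{\partial B_{r_p(T)}}\Phi$ converges to the explicit homogeneous positive Jacobi field on $C_p$ from corollary~\ref{Cor_Linear, Unique Asymp for Green's func}. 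Combining the rescaled boundary Schauder estimate with the pointwise barrier estimate yields
\[
 \|v\|_{2,\alpha;*}^{(w)}\leq C\big(\|f/\Phi\|_{L^\infty}+\rho\|f\|_{0,\alpha;*}^{(w/\rho)}+\|\psi\|_{2,\alpha;*}^{(w),\partial}\big),
\]
with $C$ independent of $T\geq T_2$.

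Step 2 (nonlinear contraction). By lemma~\ref{Lem_Pre, Vol element and mean curv of graph II} the minimal surface equation reads $-L_\Sigma u + N(x,u,\nabla u)\cdot\nabla^2 u + P(x,u,\nabla u)=0$ with
\[
 |N|+|\partial_p N|+\rho|\partial_p P|\leq C'(|u|/\rho+|\nabla u|), \qquad \rho|P|\leq C'(|u|/\rho+|\nabla u|)^2.
\]
Hence for $u$ in the ball $\cB_\delta:=\{\|u\|_{2,\alpha;*}^{(w)}\leq C\delta\}$, the nonlinear term satisfies
\[
 |N\nabla^2 u+P|\lesssim (\delta w)^2/\rho = C''\delta^2\,\frac{G^2}{T^2\rho^3},
\]
together with corresponding $C^{0,\alpha}$ bounds at scale $\rho$. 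Since on $\Sigma_T$ we have $G/\rho\leq T$, one gets $G^2/(T^2\rho^3)\leq G/(T\rho^2)$, and comparing against the barrier $\Phi$ (in the norm of Step~1) gives $\|f\|_{\text{Step 1 norm}}\leq C''\delta^2/T$ for $f=N\nabla^2 u+P$. Define the map $\Psi:\cB_\delta\to C^{2,\alpha}(\Sigma_T)$ by letting $\Psi(u)$ solve
\[
 L_\Sigma\Psi(u)=N(x,u,\nabla u)\nabla^2 u+P(x,u,\nabla u)\quad\text{on }\Sigma_T,\qquad \Psi(u)=\varphi\ \text{on }\partial\Sigma_T.
\]
The linear estimate of Step~1 gives $\|\Psi(u)\|_{2,\alpha;*}^{(w)}\leq C_1(\delta+\delta^2/T)\leq C_1\delta$ for $T\geq T_2(\alpha)$ large, so $\Psi(\cB_\delta)\subset\cB_\delta$. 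The same computation applied to the difference $\Psi(u_1)-\Psi(u_2)$, combined with the pointwise bounds on $\partial_z N,\partial_p N,\partial_z P,\partial_p P$ in lemma~\ref{Lem_Pre, Vol element and mean curv of graph II}, gives a Lipschitz bound of order $C\delta+O(1/T)$, hence contraction once $\delta\leq\delta_5(\alpha)$ is small.

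Step 3 (conclusion and uniqueness). Banach's fixed-point theorem produces the unique $u\in\cB_{C\delta}$ solving (\ref{One-sided Perturb, Ext graph sol of min surf equ}); its weighted-norm bound is precisely (\ref{One-sided Perturb, Asymp est near bdy of ext graph}). Uniqueness among all $u$ satisfying (\ref{One-sided Perturb, Ext graph sol of min surf equ}) with the pointwise bound (\ref{One-sided Perturb, Asymp est near bdy of ext graph}) follows by the maximum principle: if $u_1,u_2$ are two such solutions, the difference $w:=u_1-u_2$ satisfies a linear elliptic equation of the form $L_\Sigma w+Rw=0$ on $\Sigma_T$ with $w|_{\partial\Sigma_T}=0$ and with the zero-order remainder $R$ of size $o(1)$ as $\delta\to 0$, so Proposition~\ref{Prop_Linear, Basic prop for L^2-noncon}(4) applied to a slightly perturbed strictly positive operator forces $w\equiv 0$. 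The main obstacle is Step~1 (boundary Schauder uniform in $T$): the domain $\Sigma_T$ degenerates as $T\to\infty$, and one must match a Schauder estimate on the cone $C_p$ (corollary~\ref{Cor_Pre, sol perturbed Jac equ, prescribed asymp}) with a global pointwise barrier estimate, carefully tracking how the convergence of the rescaled operator and of the barrier $\Phi$ interact near $\partial\Sigma_T$.
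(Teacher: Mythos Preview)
Your overall strategy --- Banach fixed point in a $T$-independent weighted space built on $G/T$, with the map $u\mapsto L^{-1}(\scR u)$ --- is exactly the paper's approach, and your Step~2 computation of the quadratic structure of $\scR$ matches equations (\ref{One-sided Perturb_|Ru_1-Ru_2| bound})--(\ref{One-sided Perturb_|f|_(L^q) < |u_1 - u_2|_X / T}). The genuine gap is in Step~1.

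Your $C^0$ bound comes from the barrier $\Phi=G/T$, which satisfies $-L\Phi=\Lambda\Phi$; this controls $v$ only in terms of $\|f/\Phi\|_{L^\infty}$. But the nonlinear right-hand side has $|f|\lesssim\delta^2\,G/(T\rho^2)$, so $|f/\Phi|\lesssim\delta^2\rho^{-2}$, and on $\partial\Sigma_T$ one has $\rho=r_p(T)\to 0$ as $T\to\infty$. Thus $\|f/\Phi\|_{L^\infty}\gtrsim\delta^2/r_p(T)^2$, not $\delta^2/T$ as you claim, and the contraction does not close uniformly in $T$. The point is that $-L\Phi=\Lambda\Phi$ is much smaller than $\Phi/\rho^2$ near $\partial\Sigma_T$, so $\Phi$ is simply not a supersolution for the inhomogeneity you need. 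Rescaled boundary Schauder cannot repair this: Schauder only converts a $C^0$ bound into a $C^{2,\alpha}$ bound at the same scale, it does not improve the $C^0$ bound itself.

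The paper's Lemma (lemma \ref{Lem_One-sided Perturb_Linear est for Lv=Gf/rho^3}) handles exactly this difficulty, and its proof is the real content of the theorem. Rather than a pure barrier, it works in an $L^q_G$ framework and, near each singularity $p$, \emph{first} solves $Lv_p=\chi_{\Sigma_T}fG/(\rho^3T)$ on the whole punctured ball $\cB_{t_p}(p)$ (not just on $\Sigma_T$) via the weighted $L^2_{\sigma_p}$ theory on the cone from corollary \ref{Cor_Pre, sol perturbed Jac equ, prescribed asymp}, with $\sigma_p<\gamma_1^-(C_p)$. This cone estimate naturally absorbs the bad $\rho^{-2}$ weight (it is a scale-invariant estimate for $L_{C_p}$) and yields $|v_p|\leq C\|f\|_{L^q_G}\cdot G$ pointwise --- see (\ref{One-sided Perturb_|v_p|leq G|f|_L^q ptwsly}). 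Only \emph{after} subtracting $\sum_p\eta_p v_p$ does the remaining right-hand side live away from the singularities, and \emph{then} the barrier $G$ applies. You flagged corollary \ref{Cor_Pre, sol perturbed Jac equ, prescribed asymp} as the ``main obstacle'' in your last paragraph, but it is not an obstacle to the Schauder step --- it is the replacement for the barrier step, and must be invoked before the barrier, not after.
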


     \begin{Rem} \label{Rem_One-sided Perturb_Conti depend of ext graph sol wrt bdy and T}
      By the uniqueness of solution above and the estimate (\ref{One-sided Perturb, Asymp est near bdy of ext graph}), we know further that $u$ depends continuously with respect to $\varphi$ in $C^2$-norm and in $T$.
     \end{Rem}

     We first finish the proof of theorem \ref{Thm_One-sided Perturb, Main Thm, mean convex foliation} assuming theorem \ref{Thm_One-sided Perturb, Ext graph with growth est} to be true:
     \begin{proof}[Proof of theorem \ref{Thm_One-sided Perturb, Main Thm, mean convex foliation}.]
     Let $\delta_5$ be determined in theorem \ref{Thm_One-sided Perturb, Ext graph with growth est}. For each singularity $p$ with strictly minimizing tangent cone $(C_p, \nu_p)$ at $p$, let $S_p$ be the cross section of $C_p$. For sufficiently small $\delta_6< \delta_5/2$, there's a leaf of Hardt-Simon foliation $\Sigma_p^+\subset \RR^{n+1}$ and a positive function $u_p^+ \in C^{\infty}(C_p)$ satisfying 
      \begin{align}
       \begin{split}
        \Sigma_p^+ \setminus \BB^{n+1}_{1/4} & = graph_C (u_p^+) \setminus \BB^{n+1}_{1/4}   \\ 
       \|u_p^+(1,\cdot )\|_{C^2(S_p)} & = \delta_6 \leq C\inf_{S_p} u_p^+(1,\cdot)    
       \end{split} \label{One-sided Perturb, Int graph model near p}
      \end{align}
      We fix such a $\delta_6$ from now on so that (\ref{One-sided Perturb, Int graph model near p}) is true with certain $u_p^+$ for every $p\in Sing(\Sigma)$.
      
      Now for each $T > T_2$, define $\varphi_T^+\in C^2(\partial \Sigma_T)$ to be
      \begin{align}
       \varphi_T^+(r_p(T), \omega) := r_p(T)u_p^+(1,\omega) \ \ \ \forall \omega \in S_p  \label{One-sided Perturb, Prescribe bdy of ext graph foliation} 
      \end{align}
      By a standard compactness argument, for sufficiently large $T$, $\varphi_T^+$ satisfies (\ref{One-sided Perturb, Bdy value of ext graph}) near each singularity $p$ with $\delta = 2\delta_6$, and \[
       \inf_{\partial \Sigma_T} \varphi_T^+/r_p(T) \geq \delta_6/C     \]
      Let $u_T^+$ be the solution to (\ref{One-sided Perturb, Ext graph sol of min surf equ}) with $\varphi = \varphi_T^+$ satisfying the pointwise estimate (\ref{One-sided Perturb, Asymp est near bdy of ext graph}). Then $\Sigma_T^{+,ext} := graph_{\Sigma_T}(u_T^+) $ is a smooth minimal graph over $\Sigma_T$ with boundary $graph_{\Sigma}(\varphi_T^+)$. \\
      \textbf{Claim 1}: $u_T^+>0$ for every $T\in (T_2,+\infty)$.\\
      \textbf{Proof of claim 1}: For fixed $T>T_1$ and every $s\in (0, 1]$, consider the solution $u^+_{T, s}$ of (\ref{One-sided Perturb, Ext graph sol of min surf equ}) on $\Sigma_T$ with $\varphi = s\varphi_T^+$. Then by theorem \ref{Thm_One-sided Perturb, Ext graph with growth est}, $u_{T, s}^+$ is unique and varies continuously in $s$. Moreover, by (\ref{One-sided Perturb, Asymp est near bdy of ext graph}) and the definition of $\varphi_T^+$ we have, $u_{T, s}^+>0$ in some neighborhood of $\partial \Sigma_T$ independent of $s$.

      When $s\to 0_+$, by (\ref{One-sided Perturb, Asymp est near bdy of ext graph}) and classical elliptic estimate \cite{GilbargTrudinger01}, $u^+_{T, s}/s \to u_{T,0}^+$ in $C^2(\Sigma_T)$ for some $u_{T, 0}^+$ satisfying $L \hat{u}^+_{T, 0} = 0$ on $\Sigma_T$ and $\hat{u}^+_{T, 0} = \varphi_T^+ > 0$ on $\partial \Sigma_T$. Then by the strict positivity of $-L$, $\hat{u}^+_{T, 0}>0$ on $\Sigma_T$. Hence $u^+_{T, s}>0$ on $\Sigma_T$ for $s<<1$.
      
      Suppose for contradiction that $\{u^+_T\leq 0\}\neq \emptyset$. Consider $\inf\{s\in (0, 1]: \{u^+_{T, s}\leq 0\}\neq \emptyset\}=:s_0 >0$. By continuous dependence of $u^+_{T, s}$ in $s$, $u^+_{T, s_0}\geq 0$, nonzero on $\partial\Sigma_T$ and vanishes somewhere in $Int(\Sigma_T)$. This contradicts to strong maximum principle since $0$ is also a solution to (\ref{One-sided Perturb, Ext graph sol of min surf equ}) with $\varphi = 0$.\\

      Now for each $p \in Sing(\Sigma)$, let $\Sigma_{T, p}^{+, int}$ be the minimizing hypersurface with boundary $\partial \Sigma_T^{+,ext} \cap \cB_{\tau}(p)$. Such hypersurface is unique and smooth for $T\in (T_3, +\infty)$ for some $T_3>T_2$ by the uniqueness of Hardt-Simon's foliation $\Sigma_p^+$, nondegeneracy of $L_{\Sigma_p^+}$ on bounded subsets, Allard regularity theorem \cite{Allard72, Simon83_GMT} and implicit function theorem. Let \[
       \Sigma^+_T := \Sigma_T^{+,ext} \cup (\bigcup_{p\in Sing(\Sigma)}\Sigma_{T, p}^{+, int}\ )     \]
      with normal field $\nu_T^+$ in $M$ pointing away from $\Sigma$, defined in the interior of each smooth component. \\
      \textbf{Claim 2}: For each $p\in Sing(\Sigma)$, let $\nu_T^{+,ext}:= \nu_T^+|_{\Sigma_T^{+, ext}}$ be the normal field of $\Sigma_T^{+, ext}$ in $M$ (pointing away from $\Sigma$), and $\xi^{+, int}_{T,p}$ be the inward normal field of $\partial \Sigma_{T, p}^{+, int}$ in $\Sigma_{T, p}^{+, int}$. Then there exists $T_4>T_3$ such that for every $T\in (T_4, +\infty)$, 
       \[  \nu_T^{+, ext}\cdot \xi^{+, int}_{T,p} < 0\ \ \ \ \text{ on }\partial \Sigma_{T, p}^{+, int}     \]
      \textbf{Proof of claim 2}: Identify $\Sigma_T^{+, ext}$ as a hypersurface with boundary in $T_p M$ using $exp^M_p$. Suppose up to a subsequence, \[  
        \frac{1}{r_p(T)}\Sigma_T^{+,ext} \to \Sigma_p^{+, ext}\ \ \ \text{ as }T \to +\infty    \]
      By (\ref{One-sided Perturb, Asymp est near bdy of ext graph}), the convergence is in $C^1$ sense as an exterior graph over $C_p$, and $\Sigma_p^{+, ext}$ is a smooth graph with vanishing mean curvature of some $u_p^{+, ext} > 0$ over $C_p\setminus \BB^{n+1}_1$. Moreover, by the boundary condition (\ref{One-sided Perturb, Int graph model near p}) \& (\ref{One-sided Perturb, Prescribe bdy of ext graph foliation}) in (\ref{One-sided Perturb, Ext graph sol of min surf equ}), \[
        u_p^{+, ext}(1, \omega) = u_p^+(1, \omega) \ \ \ \text{ for every }\omega\in S_p    \]
      and by (\ref{One-sided Perturb, Asymp est near bdy of ext graph}), corollary \ref{Cor_Linear, Unique Asymp for Green's func} and the definition of strictly minimizing, we have $u_p^{+, ext} < u_p^+$ near $\infty$. Hence, by considering $\inf\{R>0 : R\cdot \Sigma_p^+ \text{ lies above }\Sigma_p^{+, ext}\}$ and using strong maximum principle for minimal hypersurfaes, 
       \begin{align}
        u_p^{+, ext} < u_p^+\ \ \ \text{ on }C_p \setminus Clos(\BB_1^{n+1})       \label{One-sided Perturb, Blow-up ext graph lie below HS foliation}
       \end{align}
       Since both $u_p^{+,ext}$ and $u_p^+$ satisfies the minimal surface equation on $C_p\setminus Clos(\BB_1^{n+1})$, by (\ref{One-sided Perturb, Blow-up ext graph lie below HS foliation}) and Hopf boundary lemma, \[
        \partial_r u_p^{+, ext}(1, \omega) < \partial_r u_p^+ (1, \omega)\ \ \ \text{ for every }\omega\in S_p    \]
       And since the convergence of $\frac{1}{r_p(T)}\Sigma_T^{+, ext}$ is in $C^1$ sense, the strict inequality in claim 2 is true for sufficiently large $T$.\\

      With claim 2, $(\Sigma^+_T,\nu^+_T)$ is mean convex in $B^M_{2\tau}(p)$ for each $p\in Sing(\Sigma)$ and all $T > T_4$. Let $U_T^+$ be the open domain bounded by $\Sigma^+_T$ and $Clos(\Sigma)$. Using the same blow up argument as claim 2, one can show that there's a $T_5>T_4$ such that for every $T\in (T_5, +\infty)$, \[
       \partial\Sigma^+_{T+s}\subset U_T^+ \ \ \ \forall s>0   \]
      Hence by the uniqueness of solutions to (\ref{One-sided Perturb, Ext graph sol of min surf equ}), the uniqueness of $\Sigma_{T, p}^{+, int}$ and a barrier argument similar to claim 1, we see that for every $T>T'>T_5$, $\Sigma_T^+\subset U_{T'}^+$. In particular, combined with the uniqueness of solutions to (\ref{One-sided Perturb, Ext graph sol of min surf equ}) and $\Sigma^{+,int}_{T, p}$, we have $\{\Sigma^+_{T'}\}_{T>T'}$ forms a foliation of $U_{T'}^+$. 
      
      Similarly, one can construct $\Sigma^-_T$ on the other side of $\Sigma$ satisfying the similar properties for $T>T_6$.
       
      Let $\bar{T} = max\{T_5, T_6\}$ and $U_t$ be the domain bounded by $\Sigma^+_{\bar{T}+1/t -1}$ and $\Sigma^-_{\bar{T} + 1/t -1}$, $t\in (0,1)$. (1)-(4) in Theorem \ref{Thm_One-sided Perturb, Main Thm, mean convex foliation} are easily verified by the constructions above.
     \end{proof}


     Now we start to prove Theorem \ref{Thm_One-sided Perturb, Ext graph with growth est}. We first introduce the following weighted norms for functions $u$ defined on $E\subset \Sigma$. Let $\alpha\in (0,1)$, $q\geq 1$, $k\in \NN$.
     \begin{align}
     \begin{split}
      \|u\|_{C^{k, \alpha}_G(E)} & := \sum_{j=0}^k \|\nabla^j u\cdot \frac{\rho^j}{G}\|_{C^0(E)} + \sup\{\frac{|\nabla^k u(x) -\nabla^k u(y)|}{dist_{\Sigma}(x, y)^{\alpha}}\cdot \frac{\rho(x)^{k+\alpha}}{G(x)}: x\neq y\in E, \frac{1}{10} < \frac{\rho(x)}{\rho(y)} < 10 \} \\  
      \|u\|_{L^q_G(E)} & := \sup_{t>0}\ \big(t^{-n}\cdot\int_{E\cap \{t\leq \rho\leq 100t\}} (u/G)^q\ d\scH^n(x)\big)^{1/q} \\
      \|u\|_{W^{k,q}_G(E)} & :=\sum_{j=0}^k \||\nabla^j u|\rho^j\|_{L^q_G(E)}      
     \end{split} \label{One-sided Perturb_Def C^k,a_G and L^q_G norm} 
     \end{align} 
     By applying Morrey's inequality in every annuli $\cA_{t, 100t}(p)\subset \Sigma_T$, for $q\geq n/(1-\alpha)$, 
     \begin{align}
      \|u\|_{C^{1,\alpha}_G(\Sigma_T)} \leq C(\alpha, q)\|u\|_{W^{2,q}_G(\Sigma_T)}  \label{One-sided Perturb_Morrey est C^(1, alpha)< W^(2, q)}     
     \end{align}
     for every $u$ and $T> T_1$. We emphasis that by Harnack inequality for $G$, the constant here doesn't depend on $T$.
     
     Fix $q=n/(1-\alpha)$ from now on, and define for each $T>T_1$, $\|u\|_{X_T}:= T\cdot\|u\|_{W^{2,q}_G(\Sigma_T)}$. Note that (\ref{One-sided Perturb, Bdy value of ext graph}) hold for all $p\in Sing(\Sigma)$ is equivalent to that $T\cdot \|\varphi\|_{C^2_G(\partial \Sigma_T)} \leq \delta$, and (\ref{One-sided Perturb, Asymp est near bdy of ext graph}) is equivalent to $T\cdot\|u\|_{C^{1,\alpha}_G}\leq C\delta$, hence is implied by that $\|u\|_{X_T}\leq C\delta$ by (\ref{One-sided Perturb_Morrey est C^(1, alpha)< W^(2, q)}).
     
     Let $\delta_2'$ be determined in lemma \ref{Lem_Pre, Vol element and mean curv of graph II}. Let $\scR$ be the error term operator given by \[
      \scM_{\Sigma}u (x)=:  -L u + \scR u   \] 
      By lemma \ref{Lem_Pre, Vol element and mean curv of graph II}, if $\|u_1\|_{X_T}, \|u_2\|_{X_T}\leq \theta\leq \delta_2'$, then the following pointwise inequality holds,
     \begin{align}
      |\scR u_1 - \scR u_2| \leq C_1\theta\cdot \frac{G}{\rho^3 T}\Big[ \big(|u_1-u_2| + \rho|\nabla(u_1 - u_2)| \big)(1+ \frac{\rho^2|\nabla^2 u_2|\cdot T}{G\cdot \theta}) + \rho^2|\nabla^2(u_1 -u_2)|  \Big]  \label{One-sided Perturb_|Ru_1-Ru_2| bound}
     \end{align}
     Note that if write $f:= \big[ \big(|u_1-u_2| + \rho|\nabla(u_1 - u_2)| \big)(1+ \frac{\rho^2|\nabla^2 u_2|\cdot T}{G\cdot \theta}) + \rho^2|\nabla^2(u_1 -u_2)|  \big]$, then by definition and (\ref{One-sided Perturb_Morrey est C^(1, alpha)< W^(2, q)}),
     \begin{align}
     \begin{split}
       \|f\|_{L^q_G(\Sigma_T)} & \leq C  \Big[ \big(\| \frac{|u_1 - u_2|}{G}\|_{C^0} + \|\frac{\rho |\nabla(u_1 - u_2)|}{G}\|_{C^0} \big)\cdot (1+ \|u_2\|_{W_G^{2,q}}\cdot \frac{T}{\theta}) 
         + \|u_1 - u_2\|_{W_G^{2,q}} \Big] \\
       & \leq C_1'\|u_1 - u_2\|_{X_T}\cdot T^{-1}
     \end{split} \label{One-sided Perturb_|f|_(L^q) < |u_1 - u_2|_X / T}
     \end{align}
     
     The following linear estimate is the key part of the proof.
     \begin{Lem} \label{Lem_One-sided Perturb_Linear est for Lv=Gf/rho^3}
      $\exists\ C_2 = C_2(q)>1$, $T_2(q)> T_1$ such that, if $T>T_2$, $v\in W^{2,q}(\Sigma_T)$ is the solution to 
      \begin{align}
      \begin{cases}
       L v = \frac{G}{\rho^3 T}\cdot f\ & \text{ on }\Sigma_T \\
        v = \phi\ & \text{ on }\partial \Sigma_T
      \end{cases}  \label{One-sided Perturb_Equ Lv = f G/rho^3T, v = phi on bdy}
      \end{align}
      Then \[
       \|v\|_{X_T} \leq C_2 T (\|f\|_{L^q_G(\Sigma_T)} + \|\phi\|_{C^2_G(\partial \Sigma_T)})    \]
     \end{Lem}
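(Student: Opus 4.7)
The plan is to reduce the claim to the $T$-independent statement $\|v\|_{W^{2,q}_G(\Sigma_T)} \leq C M_0$, where $M_0 := \|f\|_{L^q_G(\Sigma_T)} + \|\phi\|_{C^2_G(\partial\Sigma_T)}$; the inequality in the lemma then follows from the identity $\|v\|_{X_T} = T\|v\|_{W^{2,q}_G}$. I would prove this in two stages: a pointwise weighted $L^\infty$ bound $\sup_{\Sigma_T}|v|/G \leq C M_0/T$ obtained via the weak maximum principle with a $G$-based barrier, and then a rescaled interior (and, near $\partial\Sigma_T$, boundary) Calder\'on--Zygmund $W^{2,q}$ estimate on each dyadic annulus to upgrade the $L^\infty_G$ bound to $W^{2,q}_G$.

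For the first stage I would start by extending $\phi$ to $\tilde\phi \in C^2(\Sigma_T)$ supported in a collar of $\partial\Sigma_T$ with $\|\tilde\phi\|_{C^2_G(\Sigma_T)} \leq C\|\phi\|_{C^2_G}$, so that $w := v - \tilde\phi$ vanishes on $\partial\Sigma_T$ and $Lw = Gf/(\rho^3 T) - L\tilde\phi$ is controlled by $CGM_0/(\rho^3 T)$ in the appropriate sense. I would then construct a nonnegative supersolution $\Phi$ on $\Sigma_T$ with $\Phi \leq C(M_0/T)\,G$ and $-L\Phi$ pointwise dominating $|Lw|$: near each $p \in Sing(\Sigma)$ the local part of $\Phi$ is produced from lemma \ref{Lem_Pre, Ext sol of Jac, prescribed asymp} by prescribing the appropriate fast-decaying asymptotic, the local pieces are patched by cutoff, and a global correction is added using proposition \ref{Prop_Linear, Basic prop for L^2-noncon}(3). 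The latter is applicable because $-L$ is invertible on $\scB_0(\Sigma)$ (strict positivity of the first eigenvalue being guaranteed by $\Lambda \leq \lambda_1/2$) and the correction's source lies in $L^2$ by the Morrey-type $L^q_G$ hypothesis on $f$ (using $q > 2$). Applying the weak maximum principle proposition \ref{Prop_Linear, Basic prop for L^2-noncon}(4) to $\pm w$ with barrier $\Phi$ then yields $|v| \leq \Phi + |\tilde\phi| \leq C(M_0/T)\,G$.

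For the second stage, for each dyadic scale $t > 0$ let $A_t := \Sigma_T \cap \{t/4 \leq \rho \leq 400t\}$, and rescale by $\eta_{p,t}^{-1}$ in conic coordinates around the nearest singularity (working intrinsically in the bulk). Lemma \ref{Lem_Pre, geom of asymp at 0} supplies uniform smooth convergence of the rescaled metric and Jacobi operator to $(C_p, g_{C_p})$ and $L_{C_p}$ as $t \to 0$, so the rescaled function $\hat v(\hat x) := v(t\hat x)$ satisfies a uniformly elliptic equation on the fixed annulus $\hat A := \{1/4 \leq |\hat x| \leq 400\}$ whose right-hand side has $L^q(\hat A)$-norm bounded by a multiple of $G(tx_*)^2 M_0/(tT)$ (obtained by tracking the scaling of $L^q_G$). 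Combined with the pointwise bound $|\hat v| \leq CG(tx_*) M_0/T$ from Stage 1, standard interior $W^{2,q}$ Calder\'on--Zygmund estimates on $\{1 \leq |\hat x| \leq 100\}$ (and boundary $W^{2,q}$ estimates with $C^{2,\alpha}$ boundary data when $t \sim r_p(T)$) yield $\|\hat v\|_{W^{2,q}} \leq C G(tx_*) M_0$ with constants uniform in $t$ and $T$. Undoing the rescaling and invoking the Harnack inequality for $G$ to replace $G(tx_*)$ by the pointwise $G$ gives $\|v\|_{W^{2,q}_G(\{t \leq \rho \leq 100t\})} \leq C M_0$, and taking the sup over dyadic $t$ yields the Morrey-type bound.

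The main obstacle will be Stage 1: constructing a barrier $\Phi$ that dominates a source whose size is only controlled in Morrey-type $L^q_G$ rather than pointwise. The choice $q = n/(1-\alpha) > n$ is essential at two places: it enables Sobolev embedding to convert local $L^q$ control into pointwise control after rescaling (both in assembling $\Phi$ and in checking that Stage 2 recovers the $C^{1,\alpha}$ estimate (\ref{One-sided Perturb, Asymp est near bdy of ext graph}) via (\ref{One-sided Perturb_Morrey est C^(1, alpha)< W^(2, q)})). Uniformity in $T \geq T_2$ is then automatic, because once $T_2$ is taken sufficiently large the geometry of $\Sigma_T$ near $\partial\Sigma_T$ is a small $C^4$-perturbation of the cone-plus-sphere model by corollary \ref{Cor_Linear, Unique Asymp for Green's func}, and all constants depending on Harnack, Sobolev embedding, and interior Schauder/Calder\'on--Zygmund estimates stabilize in that regime.
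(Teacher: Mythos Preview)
Your two-stage plan matches the paper's three-step proof (local weighted estimate near each singularity, global $C^0_G$ bound via the maximum principle with $G$ as barrier, then rescaled interior/boundary $L^q$ estimates on dyadic annuli). Stage~2 is exactly the paper's Step~3.

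One point in Stage~1 deserves correction. You describe building a single supersolution $\Phi$ with $-L\Phi \geq |Lw|$ \emph{pointwise}, but $|Lw| = |fG/(\rho^3 T) - L\tilde\phi|$ is only controlled in the Morrey-type $L^q_G$ norm, not pointwise, so no such barrier exists directly. What actually works---and what your constituent steps in fact describe---is the paper's mechanism: solve $L v_p = \chi_{\Sigma_T}\,fG/(\rho^3 T)$ exactly on each $\cB_{t_p}(p)$ (via corollary~\ref{Cor_Pre, sol perturbed Jac equ, prescribed asymp}, not lemma~\ref{Lem_Pre, Ext sol of Jac, prescribed asymp}, since $L_\Sigma = L_{C_p} + \scR_p$), then set $\bar v = v - \sum_p \eta_p v_p$. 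Now $L\bar v$ is supported away from the singularities, where it \emph{is} pointwise bounded by $C\|f\|_{L^q_G}\cdot G$; a further subtraction on a fixed smooth domain and the identity $-LG = \Lambda G$ then give the barrier argument with $G$ itself. So the local pieces are subtracted off to relocate the source, not assembled into a supersolution.

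The other point you leave implicit is the choice of weight: the paper takes $\sigma_p \in (-\infty,\gamma_1^-(C_p))\setminus\Gamma_{C_p}$ together with an auxiliary $\sigma_p' \in (\gamma_1^-(C_p),(1+\sigma_p)/2)$, and the two-sided bound $(\rho/r_p(T))^{\sigma_p} \lesssim G/(r_p(T)T) \lesssim (\rho/r_p(T))^{\sigma_p'}$ combined with H\"older (exponent $q' = q/(q-2)$) converts $\|f\|_{L^q_G}$ into control of $\|fG/(\rho^3 T)\|_{L^2_{\sigma_p-2}}$, with the summability over dyadic annuli relying on $2\sigma_p' < 1+\sigma_p$. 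This computation is the technical heart of the $T$-independence and is worth making explicit.
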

     \begin{proof}
      The key is that $C_2$ doesn't depend on $T$. 
      Note that $G/{\rho^3 T} \sim \rho^{-2}$ near $\partial \Sigma_T$. Hence, we need to carefully deal with the estimate near $\partial \Sigma_T$.
      
      \textbf{Step 1 (Local estimate) }
      We shall work near any fixed singularity $p$ of $\Sigma$ and parametrize $\Sigma\cap \cB_{2\tau}(p)$ by the tangent cone $C_p$ or by $(0,+\infty)\times S_p$ as in section \ref{Subsec, Geom of cone}.  By corollary \ref{Cor_Linear, Unique Asymp for Green's func}, $\rho(x)\cdot\partial_r G/G (x)\to \gamma_1^-(C_p)$ as $x\to p$. Hence by Harnack inequality and the definition of $r_p(T)$ (\ref{One-sided Perturb, Def r_p(T)}), for every $\sigma < \gamma_1^-(C_p) <\sigma'$ and every $T>T_1$, 
      \begin{align}
       C(\Sigma, \sigma)(\frac{\rho}{r_p(T)})^{\sigma}\leq \frac{G}{r_p(T) T} \leq C(\Sigma, \sigma') (\frac{\rho}{r_p(T)})^{\sigma'}  \label{One-sided Perturb_rho^sigma < G< rho^sigma'}
      \end{align}
      on $\Sigma_T \cap \cB_{4\tau}(p)$. 
      We now fix a choice of $\sigma_p\in (-\infty, \gamma_1^-(C_p))\setminus \Gamma_{C_p}$ and $\sigma_p' \in (\gamma_1^-(C_p), (1+\sigma_p)/2)$.
      
      Let $t_p = t_p(\sigma_p, \Sigma, M)\in (0, \tau)$ such that $L = L_{C_p} + \scR_p$ satisfies the condition in corollary \ref{Cor_Pre, sol perturbed Jac equ, prescribed asymp}. The existence of such $t_p$ is guaranteed by lemma \ref{Lem_Pre, geom of asymp at 0}. Also let $T_{2, p}>T_1$ such that $r_p(T_{2,p})<t_p/10$. By corollary \ref{Cor_Pre, sol perturbed Jac equ, prescribed asymp}, for every $T>T_{2,p}$, there exists a solution $v_p\in W^{2,2}_{\sigma_p}(\cB_{t_p}(p))$ to the equation $L v_p = \chi_{\Sigma_T}fG/(\rho^3 T)$ on $\cB_{t_p}(p)$ satisfying the weighted estimate 
      \begin{align}
       \sup_{t\in (0, t_p)} \|v_p(t, \cdot)\|_{L^2(S_p)}\cdot t^{-\sigma_p} & \leq C(\sigma_p, t_p)\|\frac{G}{\rho^3 T}\cdot\chi_{\Sigma_T}f\|_{L^2_{\sigma_p-2}(\cB_{t_p}(p))}   \label{One-sided Perturb_|v_p| leq t^-sigma_p ||fG/rho^3T||}
      \end{align}
      And if denote for simplicity $\cA_j:=\cA_{100^{-j}t_p, 100^{-j+1}t_p}(p)$, then by definition,
      \begin{align*}
       \|\frac{G}{\rho^3 T}\cdot\chi_{\Sigma_T}f\|^2_{L^2_{\sigma_p-2}(\cB_{t_p}(p))} 
       & = \int_{\cB_{t_p}(p)} \chi_{\Sigma_T}\cdot(\frac{G}{\rho^3 T}f)^2\rho^{-2\sigma_p + 4 - n}\ dx \\
       & = T^2\cdot\sum_{j\geq 1}\int_{\cA_j} (\frac{f}{G})^2\rho^{-2n/q}\cdot \chi_{\Sigma_T}\cdot (\frac{G}{T})^4 \rho^{-2\sigma_p-2-n+2n/q}\ dx \\
       & \leq C(q)T^2 \|f\|_{L^q_G(\Sigma_T)}^2\cdot\sum_{j\geq 1}\big(\int_{\cA_j}\chi_{\Sigma_T}(\frac{G}{T})^{4q'}\rho^{(-2\sigma_p-2)q' - n}\ dx\ \big)^{1/q'} \\
       & \leq C(q, \sigma_p, \sigma_p', t_p)T^2 \|f\|_{L^q_G(\Sigma_T)}^2\cdot\int_{\cA_{r_p(T),t_p}(p)} r_p(T)^4(\frac{\rho}{r_p(T)})^{4\sigma'_p}\cdot \rho^{-2\sigma_p - 2 - n}\ dx \\
       & \leq C(q, \sigma_p, \sigma_p', t_p)r_p(T)^{2-2\sigma_p}T^2 \|f\|_{L^q_G(\Sigma_T)}^2 
      \end{align*}
      where the first inequality is by Holder inequality with $q' = q/(q-2)$; the second inequality is by RHS of (\ref{One-sided Perturb_rho^sigma < G< rho^sigma'}) and the last inequality is true since we choose $\sigma_p'<(1+\sigma_p)/2$. Combined with (\ref{One-sided Perturb_|v_p| leq t^-sigma_p ||fG/rho^3T||}), the interior $L^q$ estimate for elliptic equations and the Morrey's inequality we get the pointwise estimate on $\cB_{t_p/2}(p)$,
      \begin{align}
       |v_p| + \rho|\nabla v_p|  \leq C(q, \sigma_p, \sigma_p', t_p) (\frac{\rho}{r_p(T)})^{\sigma_p}\cdot r_p(T)T \cdot\|f\|_{L^q_G(\Sigma_T)}
        \leq C \|f\|_{L^q_G(\Sigma_T)}\cdot G \label{One-sided Perturb_|v_p|leq G|f|_L^q ptwsly}
      \end{align}
      where the last inequality above follow from LHS of (\ref{One-sided Perturb_rho^sigma < G< rho^sigma'}). Since we have fixed the choice of $\sigma_p, \sigma_p', t_p$, from now on every constant might depend on them in this proof.\\
      
      \textbf{Step 2 (Global $C^0$ estimate) }
      Now let $T>T_2:=\sup_{p\in Sing(\Sigma)} T_{2,p}$; Let $v$ be the solution to the equation (\ref{One-sided Perturb_Equ Lv = f G/rho^3T, v = phi on bdy}). Let $\bar{v}:= v - \sum_{p\in Sing(\Sigma)} \eta_p v_p$, where $v_p$ is given by step 1 satisfying the estimate (\ref{One-sided Perturb_|v_p|leq G|f|_L^q ptwsly}) in $\cB_{t_p/2}(p)$; $\eta_p \in C_c^\infty(\cB_{t_p/2}(p))$ is a cut-off function which equals to $1$ on $\cB_{t_p/4}(p)$ and has $|\nabla \eta_p|+|\nabla^2 \eta_p|\leq C(t_p)$. 
      
      By definition of $v_p$ we have \[
       L\bar{v} = \frac{G}{\rho^3T}\cdot f(1-\sum_{p\in Sing(\Sigma)}\eta_p) - \sum_{p\in Sing(\Sigma)}(2\nabla \eta_p\cdot \nabla v_p + v_p\Delta \eta_p )  \]
      In particular, $spt(L\bar{v})\subset \Sigma\setminus \bigcup_{p\in Sing(\Sigma)}\cB_{t_p/4}(p)$ and by (\ref{One-sided Perturb_|v_p|leq G|f|_L^q ptwsly}), $\|L \bar{v}\|_{L^q(\Sigma)}\leq C\|f\|_{L^q_G(\Sigma_T)}$, where $C$ is independent of $T$.
      
      Let $\cU_0:= \Sigma\setminus \bigcup_{p\in Sing(\Sigma)}\cB_{t_p/8}(p)$ and $\bar{v}_0\in W_0^{1,2}(\cU_0)$ be the solution of $L\bar{v}_0 = L\bar{v}$ on $\cU_0$. Then by global $L^q$ estimate and Morrey's inequality, 
      \begin{align}
       \|\bar{v}_0\|_{C_G^1(\cU_0)}\leq C\|f\|_{L^q_G(\Sigma_T)}  \label{One-sided Perturb_|bar(v)_0|leq G|f| ptwsly} 
      \end{align}
      
      Let $\eta_0\in C_c^\infty(\cU_0)$ be a cut-off function with value $1$ on $\Sigma\setminus \bigcup_{p\in Sing(\Sigma)}\cB_{t_p/4}(p)$ and $|\nabla \eta_0|+|\nabla^2\eta_0|\leq C(\cU_0)$. Then 
      \begin{align*}
      \begin{cases}
       |L(\bar{v}-\eta_0\bar{v}_0)| = |2\nabla \eta_0\cdot \nabla \bar{v}_0 + \bar{v}_0\Delta\eta_0| \leq  C\|f\|_{L^q_G(\Sigma_T)}\cdot G\  &\text{ on }\Sigma_T \\
       |\bar{v}-\eta_0\bar{v}_0| = |v -\sum_{p\in Sing(\Sigma)}v_p| \leq  C(\|f\|_{L^q_G(\Sigma_T)}+\|\phi\|_{C^0_G(\partial \Sigma_T)})\cdot G\  &\text{ on }\partial \Sigma_T
      \end{cases}
      \end{align*}
      But recall that by definition, $-LG=\Lambda\cdot G$ and $-L$ is strictly positive on $\Sigma$, hence by weak maximum principle for $\bar{v}-\eta_0\bar{v}_0$ and the estimate (\ref{One-sided Perturb_|v_p|leq G|f|_L^q ptwsly}) (\ref{One-sided Perturb_|bar(v)_0|leq G|f| ptwsly}), we have weighted $C^0$ estimate for $v$
      \begin{align}
       |v|\leq C(\|f\|_{L^q_G(\Sigma_T)} + \|\phi\|_{C^0_G(\partial \Sigma_T)})\cdot G \ \ \ \text{ on }\Sigma_T  \label{One-sided Perturb_|v|leq (|f|+|phi|)G}
      \end{align}
      where the constant $C$ is independent of $f, \phi, T$.\\
      
      \textbf{Step 3 (Global $W^{2,q}_G$ estimate) }
      Note that since $0<G/{\rho T}\leq C(\Sigma)$ on $\Sigma_T$, we have $|L v|\leq C(\Sigma) f/\rho^2$. Hence the desired estimate for $\|v\|_{X_T}$ follows from (\ref{One-sided Perturb_|v|leq (|f|+|phi|)G}) and interior and boundary $L^q$ estimate on every domain $\{t<\rho<100t\}\cap \Sigma_T$, $\forall t>0$. This completes the proof of lemma \ref{Lem_One-sided Perturb_Linear est for Lv=Gf/rho^3}.
     \end{proof}

     \begin{proof}[Proof of theorem \ref{Thm_One-sided Perturb, Ext graph with growth est}.]
      Recall that $\delta_2'$ is determined in lemma \ref{Lem_Pre, Vol element and mean curv of graph II}. 
      Let $\bar{C}>1$ and $\delta_5\in (0, \delta_2'/2\bar{C})$ TBD subsequently. For every $T>T_1$, $\delta\in (0, \delta_5]$ and $\varphi\in C^2(\partial \Sigma_T)$ with $T\cdot\|\varphi\|_{C^2_G(\partial\Sigma_T)}\leq \delta$, define $X_{T, \bar{C}\delta}:= \{u\in W^{2,q}(\Sigma_T): \|u\|_{X_T}\leq \bar{C}\delta \}$. 
     
      Consider $\scT: X_{T, \bar{C}\delta}\to W^{2,q}(\Sigma_T)$, $\scT u =:v$ solves 
      \begin{align*}
      \begin{cases}
       L v = \scR u\ & \text{ on }\Sigma_T \\
        v = \varphi\ & \text{ on }\partial \Sigma_T
      \end{cases}
      \end{align*}
      Note that by non-degeneracy of $L$, the equation above has a unique solution. We shall appropriately choose $\bar{C}$ and $\delta_5$ to make $\scT$ a contraction map from $X_{T, \bar{C}\delta}$ to itself. 
      
      For every $u\in X_{T, \bar{C}\delta}$, let $v:=\scT u$. By taking $u_1 = u$ and $u_2 = 0$ in (\ref{One-sided Perturb_|Ru_1-Ru_2| bound}) and applying (\ref{One-sided Perturb_|f|_(L^q) < |u_1 - u_2|_X / T}) and lemma \ref{Lem_One-sided Perturb_Linear est for Lv=Gf/rho^3}, 
      \begin{align}
       \|v\|_{X_T}\leq C_2(C_1C_1'\bar{C}\delta\cdot \|u\|_{X_T} + \delta )\leq C_2(C_1C_1' \bar{C}^2\delta^2 + \delta) \label{One-sided Perturb_ |v|_X_T < C_2 delta}
      \end{align}
      
      For every $u_1, u_2\in X_{T, \bar{C}\delta}$, let $v_i := \scT u_i$. Also by (\ref{One-sided Perturb_|Ru_1-Ru_2| bound}), (\ref{One-sided Perturb_|f|_(L^q) < |u_1 - u_2|_X / T}) and lemma \ref{Lem_One-sided Perturb_Linear est for Lv=Gf/rho^3}, 
      \begin{align}
       \|v_1 - v_2\|_{X_T}\leq C_2(C_1C_1'\bar{C}\delta\cdot \|u_1 - u_2\|_{X_T} + 0 ) \label{One-sided Perturb_ |v_1 - v_2|_X_T < C_2 delta |u_1 - u_2|}
      \end{align}
      Hence, take $\bar{C}:= 4C_2$ and $\delta_5:= 1/(C_1C_1'\bar{C}^2)$, we see from (\ref{One-sided Perturb_ |v|_X_T < C_2 delta}) and (\ref{One-sided Perturb_ |v_1 - v_2|_X_T < C_2 delta |u_1 - u_2|}) that $\scT$ is a contraction map from $X_{T, \bar{C}\delta}$ to itself. Therefore, the theorem follows from Banach fixed point theorem.
     \end{proof}
     \begin{Rem}
      The uniqueness of solution $u$ in the statement of theorem \ref{Thm_One-sided Perturb, Ext graph with growth est} seems slightly stronger since we only assume $\|u\|_{C^{1,\alpha}_G}$ bound. However, by similar argument as step 3 in the proof of lemma \ref{Lem_One-sided Perturb_Linear est for Lv=Gf/rho^3}, for sufficiently small $\delta'$ (not depending on $T$), (\ref{One-sided Perturb, Asymp est near bdy of ext graph}) hold for $u$ with $\delta = \delta'$ implies $\|u\|_{X_T}\leq C'\delta'$. Thus reduce the uniqueness of solutions with (\ref{One-sided Perturb, Asymp est near bdy of ext graph}) estimate to the uniqueness of solutions in $X_{T, \bar{C\delta}}$ above, which comes from Banach fixed point theorem. 
     \end{Rem}
     \begin{Rem} \label{Rem_One-sided Perturb_MainThm Deform true in stable case}
      By theorem \ref{Thm_One-sided Perturb, Main Thm, mean convex foliation} and strong maximum principle \cite{SolomonWhite89_StrongMax}, $\Sigma$ is homologically area minimizing in $Clos(U_1)$ and is the unique stationary integral varifold supported in $Clos(U_1)$. Hence theorem \ref{Thm_Intro, loc minimiz} is proved.
      
      Also notice that,directly by a compactness argument, theorem \ref{Thm_Intro, main thm, deform} is proved in case when $\Sigma$ is nondegenerate stable, even without assuming (3).
     \end{Rem}
     \begin{Rem} \label{Rem_One-sided Perturb_Loc Mininizing near Sing}
      Theorem \ref{Thm_One-sided Perturb, Main Thm, mean convex foliation} also guarantees that a locally stable minimal hypersurface $\Sigma$ must be uniquely area-minimizing near the singular point $p$ at which the tangent cone is regular and strictly minimizing. In fact, by proposition \ref{Prop_Linear, Basic prop for L^2-noncon} (5), one can change the metric outside a small neighborhood of a $p$ to make $\Sigma$ strictly stable. Then theorem \ref{Thm_One-sided Perturb, Main Thm, mean convex foliation} provides a mean convex foliation of $Clos(\Sigma)$, hence contains a small neighborhood of $p$, inside which $[\Sigma]$ is uniquely area minimizing.
     \end{Rem}

    \section{Strong min-max property for singular minimal hypersurfaces} \label{Sec, Strong min-max prop}
     
     The goal of this section is to prove a strong min-max property for certain non-degenerate singular minimal hypersurface. The proof closely follows the argument in \cite{White94}, The only difference is the treatment of singularities, which is based on the results in section \ref{Sec, Asymp & Asso Jac field} and \ref{Sec, One-sided Perturb}.
     
     Let's begin with the basic set up:
     \begin{enumerate}
      \item[(S1)] $(\Sigma, \nu)\subset (M^{n+1}, g)$ be a two-sided closed minimal hypersurface with only strongly isolated singularities; $M\setminus Clos(\Sigma) = M_- \sqcup M_+$ where $\nu$ points into $M_+$. 
      \item[(S2)] Assume the Jacobi operator $L_{\Sigma}$ is nondegenerate in $\scB_0(\Sigma)$ with $I:= ind(\Sigma; M)\geq 1$;
      \item[(S3)] Assume that $\forall p\in Sing(\Sigma)$, the tangent cone $C_p$ of $\Sigma$ at $p$ is strictly minimizing and strictly stable.
      \item[(S4)] Let $\tau \in (0, \tau_{\Sigma}(1)/4)$, $K>1$; Let $\{U_j\}_{j\geq 1}$ be a decreasing family of smooth domains satisfying 
       \begin{enumerate}
        \item[•] $\bigcap_{j\geq 1} U_j = Clos(\Sigma)$.
        \item[•] $\partial U_j$ are mean convex in $B^M_{4\tau}(Sing(\Sigma))$
        \item[•] $\partial U_j\cap M_{\pm}$ are graphs of smooth functions $u_j^{\pm}\in C^2(\Sigma)$ in $M\setminus B_{\tau}^M(Sing(\Sigma))$ such that 
         \begin{align*}
           |\nabla u_j^{\pm}| + |\nabla^2 u_j^{\pm}| & \leq K |u_j^{\pm}| \ \ \ \text{ on }\Sigma\setminus B_{\tau}(Sing(\Sigma)) \\
           \sup_{\Sigma\setminus B_{\tau}(Sing(\Sigma))} |u^{\pm}_j| & \leq K\cdot \inf_{\Sigma\setminus B_{\tau}(Sing(\Sigma))} |u^{\pm}_j| \to 0_+\ \ \ \text{ as }j\to \infty   
         \end{align*}
       \end{enumerate}
     \end{enumerate}
     
     The existence of such $K, \tau$ and $\{U_j\}_{j\geq 1}$ can be seen from the following argument. 
     By remark \ref{Rem_One-sided Perturb_Loc Mininizing near Sing}, for each $p\in Sing(\Sigma)$, let $O_p\subset M$ be a smooth convex neighborhood of $p$ in which $[\Sigma]$ is uniquely area-minimizing; $\Sigma_{p, j}^{\pm}\subset M_{\pm}\cap O_p$ be a family of area-minimizing hypersurfaces in $O_p$ which converges to $\Sigma\cap O_p$ from both side; Such $\Sigma_{p, j}^{\pm}$ exists by solving Plateau problem with boundary to be the one-sided perturbation of $\partial O_p\cap \Sigma$; For every compact subset $Z\subset O_p$, $Sing(\Sigma_{p, j})\cap Z = \emptyset$ for $j>>1$ by Allard regularity \cite{Allard72, Simon83_GMT}, theorem \ref{Thm_Pre, H-S foliation} and a blow-up argument similar to \cite{Smale93}. Hence $U_j$ in (S4) can be constructed by (passing to a subsequence if necessary) gluing these $\Sigma_{p, j}^{\pm}$ with a constant function over a neighborhood of $\Sigma\setminus \bigcup_{p\in Sing(\Sigma)} O_p$ by partition of unity.
     



     Throughout this and next section every constant will depend on the quantities in (S1)-(S4). 
     
     \begin{Thm} \label{Thm_Strong min-max, Main}
      There exists $k_1 \geq 1$, $\Phi_0: Clos(\BB^I_1) \to \cZ_n(Clos(U_{k_1}))$ $\bfF$-continuous and $\epsilon:(0,1]\to (0,1)$ non-decreasing such that 
      \begin{enumerate}
      \item[(i)] $\Phi_0(0^I) = [\Sigma]$;
      \item[(ii)] $\bfM(\Phi_0(z)) \leq \bfM(\Sigma) - \epsilon(r)$ for every $z\in \partial \BB^I_r$;
      \item[(iii)] If $r\in (0, 1]$ and $\{\Phi_i\}_{i\geq 1}: Clos(\BB^I_r) \to \cZ_n(Clos(U_{k_1}))$  is a family of $\bfF$-continuous map such that \[
           \limsup_{i\to \infty} \sup\{\bfF(\Phi_0(z), \Phi_i(z)): z\in \partial \BB_r^I\} = 0   \]
         Then $\liminf_{i\to \infty}\sup\{\bfM(\Phi_i(z)): z\in \BB^I_r\} \geq \bfM(\Sigma)$. 
      \end{enumerate}
     \end{Thm}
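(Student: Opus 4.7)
The plan is to follow the general blueprint of White (\cite{White94}), using the $I$-dimensional unstable eigenspace of $L_{\Sigma}$ to build an explicit sweepout, and then obtaining the strong min-max conclusion by a linking/degree argument. The new difficulty is that all of this must be done in the presence of the singularities $Sing(\Sigma)$, which forces a separate treatment of the neighborhoods of singular points using Hardt--Simon foliations and Smale's error estimates.

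\emph{Step 1: Unstable spectrum.} By (S2) and Lemma~\ref{Linear, L^2 noncon for iso}, Proposition~\ref{Prop_Linear, Basic prop for L^2-noncon} applies, so $-L_{\Sigma}$ has a discrete spectrum on $\scB_0(\Sigma)$. The index assumption (S2) gives exactly $I$ negative eigenvalues $\lambda_1 \leq \cdots \leq \lambda_I < 0$ with an $L^2$-orthonormal eigenbasis $\phi_1,\dots,\phi_I \in \scB_0(\Sigma)\cap C^\infty(\Sigma)$. By Corollary~\ref{Cor_Linear, Growth est for G and vaiphi_1}, each $\phi_i$ has asymptotic rate $\gamma_1^+(C_p)$ at every $p\in Sing(\Sigma)$, and by (S3) $\gamma_1^+(C_p)>-(n-2)/2 > 0$ in the weighted sense needed, so $|\phi_i|/\rho \to 0$ near each $p$. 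This is the regularity needed to feed $\phi_i$ into the graph constructions of Section~\ref{Subsec, Min surface near sing}.

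\emph{Step 2: Construction of $\Phi_0$.} For a small $r_0>0$ to be fixed and $z=(z_1,\dots,z_I)\in Clos(\BB^I_{r_0})$, set $u_z := \sum_i z_i\phi_i$. Away from $Sing(\Sigma)$ define $\Sigma_z$ as the graph of $u_z$ over $\Sigma$; near each singularity $p$, however, the na\"{\i}ve graph may fail to be a well-defined current in $U_{k_1}$, so I interpolate between the exterior graph and an appropriate rescaled leaf of the Hardt--Simon foliation of the side of $C_p$ selected by the sign of the leading coefficient of $u_z$ near $p$ (two leaves glued across $\Sigma$ if the leading coefficient vanishes), using the one-sided perturbation machinery of Section~\ref{Sec, One-sided Perturb} and Theorem~\ref{Thm_One-sided Perturb, Ext graph with growth est} to produce a smooth transition on an annulus $\cA_{\tau_j,2\tau_j}(p)$ with $\tau_j\to 0$. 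This yields a cycle $\Phi_0(z)=[\Sigma_z]\in \cZ_n(Clos(U_{k_1}))$ for $k_1$ large, $\bfF$-continuous in $z$ with $\Phi_0(0)=[\Sigma]$.

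\emph{Step 3: Area estimates (i)--(ii) and local Morse inequality.} A second variation expansion on $\Sigma\setminus B^M_{\tau_{\Sigma}(1)/2}(Sing(\Sigma))$ gives
\begin{equation*}
  \bfM(\Sigma_z) = \bfM(\Sigma) + \tfrac12 \sum_i \lambda_i z_i^2 + O_{\text{sing}} + O(|z|^{2+\alpha}),
\end{equation*}
where $O_{\text{sing}}$ collects the contributions from the neighborhoods of the singularities. Using that $C_p$ is strictly minimizing and strictly stable, the Hardt--Simon leaves have area equal to that of the truncated cone plus a higher-order term (cf.\ Theorem~\ref{Thm_Pre, H-S foliation} and the asymptotic~(\ref{Pre, asymp of one-sided, strict})), and following the argument of \cite[\S3, Lemma~4]{Smale99} combined with the weighted pointwise control of Corollary~\ref{Cor_Ass Jac, Growth rate bd near strictly minimizing singularity} one obtains $|O_{\text{sing}}| \leq C|z|^{2+\alpha}$. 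Choosing $r_0$ small, this forces $\bfM(\Phi_0(z))<\bfM(\Sigma)$ for $z\neq 0$ with quadratic decrease, so (i) and (ii) hold with some $\epsilon(r)\gtrsim r^2$. A parallel computation yields the local Morse inequality
\begin{equation*}
  \bfM(T) \geq \bfM(\Sigma) - C\,\|\Pi T\|^2 \quad\text{whenever } \bfF(T,[\Sigma])<\eta,
\end{equation*}
where $\Pi$ is the projection onto the unstable subspace spanned by the $\phi_i$, realized as integration of $T$ against compactly supported smooth $n$-forms $\omega_1,\dots,\omega_I$ on $M\setminus Sing(\Sigma)$ dual to $\phi_1,\dots,\phi_I$ (so $[\Sigma_z](\omega_i) = z_i + o(|z|)$).

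\emph{Step 4: The strong min-max property (iii).} Define $\Psi(T):=(T(\omega_1),\dots,T(\omega_I))\in \RR^I$, which is $\bfF$-continuous. By construction, $\Psi\circ\Phi_0|_{\partial \BB^I_r}$ is a small perturbation of the identity, hence has degree one onto a ball around $0\in \RR^I$. If $\Phi_i$ is $\bfF$-close to $\Phi_0$ on $\partial \BB^I_r$, then $\Psi\circ\Phi_i|_{\partial \BB^I_r}$ is homotopic to $\Psi\circ\Phi_0|_{\partial \BB^I_r}$ in $\RR^I\setminus\{0\}$ for $i$ large, so $\Psi\circ\Phi_i$ must hit $0$ at some $z^*\in \BB^I_r$. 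For $k_1$ large enough the sweepout remains $\bfF$-close to $[\Sigma]$ throughout $U_{k_1}$, so by an Allard-type compactness argument together with the local Morse inequality applied to $\Phi_i(z^*)$ (where $\Pi(\Phi_i(z^*))=o(1)$) we get $\bfM(\Phi_i(z^*))\geq \bfM(\Sigma)-o(1)$, proving (iii).

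The main obstacle is Step 3: a clean quadratic area expansion through the singular points. The unstable eigenfunctions $\phi_i$ are only in $\scB_0(\Sigma)$, not in $W^{1,2}$ in general, and the near-singularity piece of $\Sigma_z$ is not a graph of $u_z$ but a Hardt--Simon leaf chosen in a $z$-dependent way; one must combine the strict minimizing/strict stability quantitative controls (the coefficient in~(\ref{Pre, asymp of one-sided, strict}) and Corollary~\ref{Cor_Ass Jac, Growth rate bd near strictly minimizing singularity}) with the weighted gluing estimates of Theorem~\ref{Thm_One-sided Perturb, Ext graph with growth est} to keep $O_{\text{sing}}$ strictly subquadratic and not to spoil the matching of the quadratic term with $\tfrac12\sum\lambda_i z_i^2$.
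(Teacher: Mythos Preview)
Your proposal has a genuine gap in Step~4, and Steps~2--3 are unnecessarily complicated compared with the paper's argument.

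\textbf{The gap in Step~4.} Your local Morse inequality $\bfM(T)\geq \bfM(\Sigma)-C\|\Pi T\|^2$ is only claimed for $\bfF(T,[\Sigma])<\eta$, and you then assert that ``for $k_1$ large enough the sweepout remains $\bfF$-close to $[\Sigma]$ throughout $U_{k_1}$''. This is false: the maps $\Phi_i$ are completely arbitrary in the interior of $\BB^I_r$; they need only agree with $\Phi_0$ asymptotically on $\partial\BB^I_r$. A cycle supported in $Clos(U_{k_1})$ and homologous to $[\Sigma]$ can be very far from $[\Sigma]$ in the $\bfF$-metric no matter how thin $U_{k_1}$ is (think of a cycle that oscillates many times, or is area-minimizing but concentrated near the singular set). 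So nothing forces $\Phi_i(z^*)$ into the regime where your local Morse inequality applies. The paper fixes this by introducing the auxiliary functional $\bfA^*(T)=\bfM(T)+\zeta\big(\int_M \vec{f}\,d\|T\|\big)$ and proving (Lemma~\ref{Lem_Strong min-max, Sigma minimize A^*}) that $[\Sigma]$ is the unique \emph{global} $\bfA^*$-minimizer in its homology class in $\cZ_n(Clos(U_{k_1}))$. Then the degree argument produces $z_i$ with $\int_M\vec{f}\,d\|\Phi_i(z_i)\|=0$, hence $\bfM(\Phi_i(z_i))=\bfA^*(\Phi_i(z_i))\geq\bfA^*([\Sigma])=\bfM(\Sigma)$, with no locality hypothesis whatsoever. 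In other words, what you need is not a local Morse inequality but a global minimization statement, and that is exactly the content of Lemma~\ref{Lem_Strong min-max, Sigma minimize A^*} (whose proof contains the delicate singular-error estimates you allude to).

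\textbf{The overcomplication in Steps~2--3.} You build $\Phi_0$ using the genuine eigenfunctions $\phi_i\in\scB_0(\Sigma)$ and then glue in Hardt--Simon leaves near each singularity, with $z$-dependent choices and weighted estimates. The paper avoids all of this: in Lemma~\ref{Lem_Strong min-max, I direction of deform} it replaces the eigenfunctions by $C^\infty_c(\Sigma)$ approximations $\phi_1,\dots,\phi_I$ (density of $C^\infty_c$ in $\scB_0$), chosen close enough that the quadratic form is still negative on their span and that the penalized quadratic form in (ii) of that lemma is coercive. Then $\Phi_0(z)=[\,graph_\Sigma(\bar t\sum_i z_i\phi_i)\,]$ is defined by an honest compactly supported graph, equal to $\Sigma$ near $Sing(\Sigma)$; no interpolation, no Hardt--Simon, no $O_{\text{sing}}$. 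Property~(ii) then follows directly from $Q_\Sigma(u,u)\leq -\lambda\|u\|_{L^2}^2$ on the span. All the hard near-singularity analysis (growth-rate bounds, cutting off at scale $r_j(p)$, Smale-type error terms) is used not in constructing $\Phi_0$ but in proving the global minimization Lemma~\ref{Lem_Strong min-max, Sigma minimize A^*}, where one must control an \emph{unknown} $\bfA^*$-minimizer $T_j$, not a prescribed graph.
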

     
     We start with some analytic preparation.
     \begin{Lem} \label{Lem_Strong min-max, I direction of deform}
      There exists $\phi_1, \phi_2, ... \phi_I\in C^\infty_c(\Sigma)$ and $\lambda, \mu>0$ such that
      \begin{enumerate}
      \item[(i)] For every $u\in span_{1\leq i\leq I}\{\phi_i\}$, \[
       Q_{\Sigma}(u, u) \leq -\lambda\|u\|^2_{L^2(\Sigma)}   \]
      \item[(ii)] For every $v\in C^1_c(\Sigma)$, \[
       Q_{\Sigma}(v, v)+ \mu^2\sum_{i=1}^I (\int_{\Sigma}v\cdot \phi_i\ )^2 \geq \lambda\int_{\Sigma} |\nabla v|^2 + v^2/\rho^2   \]
      \item[(iii)] The matrix $[\langle \phi_i, \phi_j \rangle_{L^2(\Sigma)}]_{(i,j)\in I\times I}$ is nondegenerate.
      \end{enumerate}       
     \end{Lem}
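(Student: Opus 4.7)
The plan is to produce the $\phi_i$'s first as eigenfunctions of $-L_\Sigma$ in the larger space $\scB_0(\Sigma)$ and then approximate them by compactly supported smooth functions, transferring the three conditions via continuity. The key structural input making this work is a weighted Hardy inequality provided by the strict stability assumption (S3).

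First I would establish a global weighted Hardy inequality on $\Sigma$. Under (S3), Lemma \ref{Lem_Pre, Hardy-typed inequ} applied on each tangent cone $C_p$ gives (since strict stability means $\mu_1(C_p) > -(n-2)^2/4$) a strictly positive multiplier for $\int \phi^2/r^2$; transferring via Lemma \ref{Lem_Pre, geom of asymp at 0} and combining with a partition-of-unity argument on $\Sigma$ yields
\[
 \int_\Sigma |\nabla \phi|^2 + \phi^2/\rho^2 \leq C_1 \bigl(Q_\Sigma(\phi, \phi) + \|\phi\|_{L^2}^2\bigr), \qquad \forall \phi \in \scB_0(\Sigma).
\]
In particular $\scB_0(\Sigma) = W_0^{1,2}(\Sigma)$ with equivalent norms and $C_c^\infty(\Sigma)$ is $\scB$-dense. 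Then Proposition \ref{Prop_Linear, Basic prop for L^2-noncon} (applicable by Lemma \ref{Linear, L^2 noncon for iso}) diagonalizes $-L_\Sigma$ on $\scB_0(\Sigma)$; by Corollary \ref{Linear, Finiteness of index assum L^2 noncon} together with (S2), precisely $I$ eigenvalues (counted with multiplicity) are negative, and the nondegeneracy in (S2) forces the next eigenvalue $\lambda_{I+1}$ to be strictly positive. Pick an $L^2$-orthonormal family $\psi_1, \dots, \psi_I$ of corresponding eigenfunctions with eigenvalues $\lambda_1 \leq \dots \leq \lambda_I < 0$, spanning the negative subspace $V_-$.

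Next I would verify (i), (ii), (iii) with $\psi_i$ in place of $\phi_i$. Decomposing $v \in \scB_0(\Sigma)$ along $V_- \oplus V_-^\perp$ in $L^2$ as $v = v_- + v_+$, the spectral estimate gives $Q_\Sigma(v,v) \geq \sum_i \lambda_i (\int_\Sigma v\cdot \psi_i)^2 + \lambda_{I+1}\|v_+\|_{L^2}^2$. Setting $\mu^2 := 1 + 2\max_i |\lambda_i|$ yields the $L^2$-coercivity
\[
 Q_\Sigma(v,v) + \mu^2 \sum_{i=1}^I \Bigl(\int_\Sigma v\cdot \psi_i\Bigr)^2 \geq c_0 \|v\|_{L^2}^2
\]
for some $c_0 > 0$, and feeding this bound into the weighted Hardy inequality to absorb the $\|v\|_{L^2}^2$ term produces some $\tilde\lambda > 0$ with $Q_\Sigma(v,v) + \mu^2 \sum_i (\int_\Sigma v\cdot \psi_i)^2 \geq 2\tilde\lambda \int_\Sigma |\nabla v|^2 + v^2/\rho^2$ for all $v \in \scB_0(\Sigma)$.

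Finally I would pick $\phi_i \in C_c^\infty(\Sigma)$ with $\max_i \|\phi_i - \psi_i\|_\scB < \epsilon$ and invoke continuity. Properties (i) and (iii), being statements about finite-dimensional quadratic and bilinear forms on $\mathrm{span}\{\phi_i\}$, follow immediately for sufficiently small $\epsilon$. For (ii), Cauchy--Schwarz bounds the replacement error by $C\mu^2 \epsilon \|v\|_{L^2}^2$; since $\rho \leq 2\tau$ implies $\|v\|_{L^2}^2 \leq (2\tau)^2 \int_\Sigma v^2/\rho^2$, this error is absorbed into $\tilde\lambda \int_\Sigma v^2/\rho^2$ for $\epsilon$ small enough, giving (ii) with $\lambda := \min(\tilde\lambda, |\lambda_I|/2)$ (the second bound chosen to match the margin required by (i)). The main subtlety is ensuring all three conditions hold simultaneously with the same $\phi_i$'s and constants: (i) demands strict negative definiteness on the finite-dimensional span, while (ii) is a weighted coercivity on all of $\scB_0(\Sigma)$. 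Without (S3) the weighted Hardy inequality would fail and the $v^2/\rho^2$ term in (ii) could not be controlled, which is why strict stability of every tangent cone is essential.
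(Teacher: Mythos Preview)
Your proposal is correct and follows essentially the same route as the paper: take $L^2$-orthonormal eigenfunctions $\psi_i$ spanning the negative subspace, approximate by $\phi_i\in C_c^\infty(\Sigma)$, and combine the spectral gap (S2) with the weighted Hardy inequality coming from strict stability (S3) of every tangent cone. The only cosmetic difference is that the paper interleaves the approximation step with the derivation of the $L^2$- and weighted coercivity estimates (its (6.1)--(6.3)), whereas you first establish the full weighted coercivity for the $\psi_i$ and then approximate at the end; the content is the same.
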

     \begin{proof}
      Let $\psi_1, ..., \psi_I \in \scB_0(\Sigma)$ be the first $I$ pairwise orthogonal unit $L^2$-eigenfunctions of $-L_{\Sigma}$. Since $C^\infty_c(\Sigma)$ is dense in $\scB_0(\Sigma)$, take $\phi_i\in C_c^\infty(\Sigma)$ such that $\sup_{1\leq i\leq I}\|\phi_i - \psi_i\|_{\scB(\Sigma)} \leq \varepsilon$, where $\varepsilon \in (0,1)$ TBD.
      
      To prove (i), notice that for every $a\in \RR^I$, \[
        Q_{\Sigma}(\sum_i a_i \psi_i, \sum_i a_i\psi_i) \leq -|\lambda'|\sum_i a_i^2   \]
      where $\lambda'$ be the largest negative eigenvalue of $-L_{\Sigma}$; Also notice that 
      \begin{align*}
       & |Q_{\Sigma}(\sum_i a_i \phi_i, \sum_i a_i\phi_i) -Q_{\Sigma}(\sum_i a_i \psi_i, \sum_i a_i\psi_i)|  \\
       =\ & |\sum_{1\leq i,j\leq I}a_ia_j(Q_{\Sigma}(\phi_i, \phi_j) - Q_{\Sigma}(\psi_i, \psi_j))|\leq C(\Sigma, I)\varepsilon\cdot \sum_i a_i^2 
      \end{align*}
      Hence by taking $\varepsilon<|\lambda'|/2C(\Sigma, I)$, we have (i) for $\lambda \leq |\lambda'|/2$.
      
      To prove (ii), first notice that for every $v\in \scB_0(\Sigma)$ \[
       Q_{\Sigma}(v, v) + |\mu'|\sum_{i=1}^I (\int_{\Sigma}v\cdot \psi_i\ )^2 \geq \lambda'' \|v\|_{L^2(\Sigma)}^2       \]
      where $\lambda''$ be the smallest positive eigenvalue of $-L_{\Sigma}$ and $\mu'$ be the difference between $\lambda''$ and the first eigenvalue of $-L_{\Sigma}$.
      Also note that \[
       (\int_{\Sigma} v\cdot \psi_i\ )^2 - (\int_{\Sigma} v\cdot \phi_i\ )^2 \leq C'(\Sigma)\varepsilon\cdot \|v\|_{L^2}^2     \]
      Hence if take $\varepsilon < \lambda''/(2I|\mu'|C'(\Sigma))$, $\mu^2\geq |\mu'|$ and $\lambda \leq \lambda''/2$, then we have 
      \begin{align}
       Q_{\Sigma}(v, v)+ \mu^2\sum_{i=1}^I (\int_{\Sigma}v\cdot \phi_i\ )^2 \geq \lambda\|v\|^2_{L^2(\Sigma)}   \label{Strong min-max_Q + <phi, >^2 geq L^2}       
      \end{align}
      
      Also note that since each tangent cone of $\Sigma$ is strictly stable, by lemma \ref{Lem_Pre, Hardy-typed inequ} and lemma \ref{Lem_Pre, geom of asymp at 0}, there is some small neighborhood $\cV$ of $Sing(\Sigma)$ and $\vartheta(\Sigma)>0$ such that \[
       Q_{\Sigma}(v, v) - \vartheta\int_{\cV} v^2/\rho^2 \geq 0\ \ \ \forall v\in C_c^1(\cV)   \]
      Hence by a similar argument as lemma \ref{Linear, essential positivity of Q} we have, for some $C''(\Sigma)>0$,
      \begin{align}
       Q_{\Sigma}(v, v) - \vartheta\int_{\Sigma} v^2/\rho^2 + C''\|v\|^2_{L^2(\Sigma)} \geq 0\ \ \ \forall v\in C_c^1(\Sigma)  \label{Strong min-max_Q + L^2 geq L^2_w}
      \end{align}
      
      Also by lemma \ref{Lem_Pre, geom of asymp at 0}, $\big||A_{\Sigma}|^2+ Ric_M(\nu, \nu)\big| \leq C'''(\Sigma)/\rho^2$ on $\Sigma$, hence $\forall v\in C_c^1(\Sigma)$
      \begin{align}
      \begin{split}
       & (1+a)\big(Q_{\Sigma}(v, v)+ C''\|v\|^2_{L^2(\Sigma)}\big) \\
       \geq\ & a\int_{\Sigma}|\nabla v|^2 + \Big(Q_{\Sigma}(v, v) - \vartheta\int_{\Sigma} v^2/\rho^2 + C''\|v\|^2_{L^2(\Sigma)}\Big) \geq a\int_{\Sigma}|\nabla v|^2
       \end{split}   \label{Strong min-max_Q + L^2 geq |nabla|^2}
      \end{align}
      where $0<a\leq \vartheta/C'''$. Hence lemma \ref{Lem_Strong min-max, I direction of deform} (ii) follows by combining (\ref{Strong min-max_Q + <phi, >^2 geq L^2}), (\ref{Strong min-max_Q + L^2 geq L^2_w}) and (\ref{Strong min-max_Q + L^2 geq |nabla|^2}) and take $0<\lambda \leq min\{1, \lambda''/(2C'')\}\cdot min\{\vartheta, a/(1+a)\}/2$ .
      
      To prove (iii), it suffices to notice that $|\langle \phi_i, \phi_j \rangle_{L^2(\Sigma)} - \delta_{ij}| \leq 10\varepsilon $ and take $\varepsilon < 1/100I^2$.
     \end{proof}     

     \begin{Rem}
       We see from its proof that lemma \ref{Lem_Strong min-max, I direction of deform} (i) and (\ref{Strong min-max_Q + <phi, >^2 geq L^2}) holds for any $\Sigma$ satisfying $L^2$ nonconcentration property; while the proof of (ii) essentially use strictly stability of tangent cones of $\Sigma$.
     \end{Rem}

     Now take $\vec{f}=(f_1, ..., f_I)\in C^{\infty}(M\setminus Sing(\Sigma); \RR^I)$ such that $\vec{f} = 0$ on $\Sigma$ and $\nu(f_i)= \mu\phi_i$ on $\Sigma$; Let $\bar{\tau}\in (0, \tau)$ such that $B^M_{\bar{\tau}}(Sing(\Sigma))\cap spt(\vec{f}) = \emptyset$.
     
     Let $\zeta\in C^2(\RR^I; [0, +\infty))$ such that $\zeta(0) = 0$, $\nabla^2 \zeta(0) = id$ and $\sup_{\RR^I} |\nabla \zeta|\cdot \sup_M|\vec{f}| <1$. Define for $T\in \cZ_n(M)$, \[
      \bfA^*(T):= \bfM(T) + \zeta\big( \int_M \vec{f}(x)\ d\|T\|(x) \big)    \]
     \begin{Lem} \label{Lem_Strong min-max, L.s.c. for A^* in flat top}
      $\bfA^*$ is lower semi continuous in flat topology.
     \end{Lem}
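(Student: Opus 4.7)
The plan is the following. Let $T_i \to T$ in the flat topology; we want to show $\bfA^*(T) \leq \liminf_i \bfA^*(T_i)$. If $\liminf_i \bfM(T_i) = +\infty$ there is nothing to prove since $\zeta \geq 0$, so after passing to a subsequence we may assume $\sup_i \bfM(T_i) < +\infty$ and $\bfA^*(T_i)$ converges to $\liminf_i \bfA^*(T_i)$. By Radon measure compactness, passing to a further subsequence we have $\|T_i\| \rightharpoonup \mu_\infty$ weakly as Radon measures on $M$. Because flat convergence and lower semicontinuity of mass force $\mu_\infty \geq \|T\|$, we may write $\mu_\infty = \|T\| + \sigma$ for some non-negative Radon measure $\sigma$ on $M$, and
\[
  \lim_i \bfM(T_i) = \mu_\infty(M) = \bfM(T) + \sigma(M).
\]

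Next, since each component of $\vec{f}$ is bounded and continuous (we may regard $\vec{f}$ as extended by zero on the complement of its support, which is compactly contained in $M\setminus Sing(\Sigma)$), weak convergence of $\|T_i\|$ to $\mu_\infty$ yields
\[
  \lim_i \int_M \vec{f}\, d\|T_i\| = \int_M \vec{f}\, d\mu_\infty = \int_M \vec{f}\, d\|T\| + \int_M \vec{f}\, d\sigma.
\]
By continuity of $\zeta$,
\[
  \lim_i \bfA^*(T_i) = \bfM(T) + \sigma(M) + \zeta\!\left(\int_M \vec{f}\, d\|T\| + \int_M \vec{f}\, d\sigma\right).
\]

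Set $L := \sup_{\RR^I} |\nabla \zeta|$ and $F := \sup_M |\vec{f}|$, so $LF < 1$ by hypothesis. By the mean value inequality,
\[
  \left|\zeta\!\left(\int \vec{f}\, d\|T\| + \int \vec{f}\, d\sigma\right) - \zeta\!\left(\int \vec{f}\, d\|T\|\right)\right| \leq L \cdot \left|\int_M \vec{f}\, d\sigma\right| \leq LF\cdot \sigma(M).
\]
Consequently,
\[
  \sigma(M) + \zeta\!\left(\int \vec{f}\, d\|T\| + \int \vec{f}\, d\sigma\right) \geq (1-LF)\,\sigma(M) + \zeta\!\left(\int \vec{f}\, d\|T\|\right) \geq \zeta\!\left(\int \vec{f}\, d\|T\|\right),
\]
where the last inequality uses $LF < 1$ and $\sigma(M) \geq 0$. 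Combining with the formula for $\lim_i \bfA^*(T_i)$ gives
\[
  \lim_i \bfA^*(T_i) \geq \bfM(T) + \zeta\!\left(\int \vec{f}\, d\|T\|\right) = \bfA^*(T),
\]
which is the required lower semicontinuity. The only subtle point is the replacement of flat convergence by Radon measure convergence, which costs us an excess mass $\sigma$; the normalization $\sup|\nabla\zeta|\cdot\sup|\vec f|<1$ is precisely what is needed to absorb the possible decrease of $\zeta$ against this excess mass.
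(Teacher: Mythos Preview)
Your proof is correct and is essentially the argument the paper defers to in \cite[Section 3, claim 2]{White94}: pass to a subsequence with bounded mass, use that any weak-$*$ Radon measure limit $\mu_\infty$ of $\|T_i\|$ dominates $\|T\|$, and absorb the defect $\sigma=\mu_\infty-\|T\|$ into the excess mass via the Lipschitz bound $\sup|\nabla\zeta|\cdot\sup|\vec f|<1$. The only point worth making explicit is that flat convergence together with the mass bound gives weak convergence of currents, which in turn yields $\mu_\infty\ge\|T\|$; you assert this but might flag it as the one nontrivial input.
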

     The proof is the same as \cite[Section 3, claim 2]{White94}.
     
     For a general parametric elliptic integrand $F$, the regularity problem of $(F, ct^{\alpha}, \delta)$-almost minimizing surfaces is studied in \cite{Almgren75, SchoenSimon82, Bombieri82}, and is used in \cite{White94} to gain better regularity of convergence of minimizers for $F^*$ in a smaller and smaller neighborhood of the given surface stationary with respect to $F$. $F\equiv 1$ corresponds to the mass functional.
     
     Observe that since $U_j$ are mean convex near $Sing(\Sigma)$ and $\vec{f}$ vanishes near $Sing(\Sigma)$, minimizers of $\bfA^*$ in $Clos(U_t)$ are minimal near each singularity. Hence the same argument in \cite[Section 3, claim 1]{White94} yields the following, 
     \begin{Lem} \label{Lem_Strong min-max, A^* minimizer is Mass almost minimizer}
      For every $\alpha\in (0, 1)$, there exists some constant $c, k_2>0$ such that for $j\geq k_2$, every minimizer $T\in \cZ_n(Clos(U_j))$ of $\bfA^*$ among cycles homologous to $[\Sigma]$ in $U_j$ is $(ct^{\alpha},\delta_7)$-almost minimizing in $M$.
      
      In particular, if $T_j$ is a minimizer of $\bfA^*$ in $Clos(U_j)$ homologous to $[\Sigma]$, then $T_j \to [\Sigma]$ in flat topology as $j\to \infty$ and 
      \begin{enumerate}
      \item[(i)] $T_j$ are homologically area minimizing in $B^M_{\bar{\tau}}(Sing(\Sigma))\cap Clos(U_j)$.
      \item[(ii)] There are $u_j\in C^1(\Sigma)$ such that for every $U\subset\subset M\setminus Sing(\Sigma)$, \[
        T_j\llcorner U = [graph_{\Sigma} (u_j)]\llcorner U\ \ \ \text{ for }j>>1    \]
      \end{enumerate}
     \end{Lem}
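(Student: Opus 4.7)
The plan is to adapt White's argument from \cite[Section~3, Claim~1]{White94} to our singular setting, exploiting two key features: (a) $\vec f\equiv 0$ on $B^M_{\bar\tau}(Sing(\Sigma))$, so $\bfA^*$ coincides with $\bfM$ for variations supported near singularities, and (b) the mean convexity of $\partial U_j$ near $Sing(\Sigma)$ provides a barrier keeping minimizers from pathologically approaching $Sing(\Sigma)$. Existence of minimizers $T_j\in\cZ_n(Clos(U_j))$ homologous to $[\Sigma]$ follows from Lemma~\ref{Lem_Strong min-max, L.s.c. for A^* in flat top} together with integral-cycle compactness on the fixed compact set $Clos(U_{k_2})$; the inequality $\bfA^*(T_j)\leq \bfA^*([\Sigma])=\bfM(\Sigma)$ bounds $\bfM(T_j)$ uniformly, so along a subsequence $T_j\to T_\infty$ in the flat topology with $\spt T_\infty\subset\bigcap_j Clos(U_j)=Clos(\Sigma)$. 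The constancy theorem and preservation of the homology class under flat convergence force $T_\infty=[\Sigma]$, and the uniqueness of the limit upgrades this to convergence of the full sequence.

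To extract the almost-minimizing property, first compute the first variation of $\bfA^*$ at $T_j$ against a normal deformation $\phi\,\nu_{T_j}$ supported in the smooth part of $T_j$: stationarity yields
\begin{equation*}
H_{T_j}\bigl(1+\nabla\zeta(h_j)\cdot\vec f\bigr)=\nabla\zeta(h_j)\cdot\nu_{T_j}(\vec f),\qquad h_j:=\int \vec f\,d\|T_j\|.
\end{equation*}
Since $\nabla\zeta(0)=0$ and $\nabla^2\zeta(0)=\mathrm{id}$, one has $|\nabla\zeta(h_j)|\leq C|h_j|$; and $h_j\to 0$ because $\vec f\equiv 0$ on $\Sigma$ while $T_j\to[\Sigma]$ in flat topology. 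Hence the generalized mean curvature satisfies $|H_{T_j}|\leq M$ uniformly for $j\geq k_2$, with $M\to 0$. A standard isoperimetric comparison now yields, for any competitor $T'=T_j+\partial P$ with $\spt P\subset B_t^M(x)$ and $t\leq\delta_7$,
\begin{equation*}
\bfM(T_j\llcorner B_t)\leq \bfM(T'\llcorner B_t)+\|H_{T_j}\|_\infty\cdot \bfM(P)\leq \bfM(T'\llcorner B_t)+CM\,t^{n+1},
\end{equation*}
using the density bound $\bfM(T_j\llcorner B_t)\leq Ct^n$ (valid uniformly by monotonicity for bounded-mean-curvature varifolds) and the isoperimetric estimate $\bfM(P)\leq Ct\cdot\bfM(T'\llcorner B_t)$. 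For $t\leq\delta_7\leq 1$, $t^{n+1}\leq t^{n+\alpha}$, so the $(ct^\alpha,\delta_7)$-almost-minimizing property holds with $c=CM$ independent of $j$.

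Conclusion~(i) is then immediate: any competitor for $T_j$ differing only in $B^M_{\bar\tau}(Sing(\Sigma))$ leaves $h_j$ unchanged (as $\vec f\equiv 0$ there), so the $\zeta$-contribution cancels and $T_j$ is mass-minimizing in that region. For~(ii), the Almgren--Schoen--Simon--Bombieri regularity theorem for $(ct^\alpha,\delta_7)$-mass-almost-minimizing integral cycles provides a singular set of dimension $\leq n-7$ and $C^{1,\alpha}$ smoothness outside it; combined with Allard's theorem (applicable via the mean-curvature bound) and $T_j\to[\Sigma]$ in flat norm, the convergence upgrades to $C^{1,\alpha}_{loc}$ graphical convergence over $\Sigma$ on every $U\subset\subset M\setminus Sing(\Sigma)$, giving the functions $u_j$. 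The main technical obstacle is uniformity of constants as $j$ varies: the $L^\infty$ mean-curvature bound must hold with $M$ independent of $j$, and the density bound $\bfM(T_j\llcorner B_t)\leq Ct^n$ must hold with $C$ uniform -- both follow from the uniform mass bound on $T_j$ together with monotonicity and the mean convexity of $\partial U_j$ near $Sing(\Sigma)$, which controls density at boundary points when $B_t(x)$ approaches $\partial U_j\cap B^M_{4\tau}(Sing(\Sigma))$.
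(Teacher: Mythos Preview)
Your derivation of the $(ct^\alpha,\delta_7)$-almost minimizing property is flawed at its core: a uniform $L^\infty$ bound on the generalized mean curvature of $T_j$ does \emph{not} imply almost minimizing in the mass sense. Bounded mean curvature is a first-variation condition and says nothing about the mass gap against arbitrary competitors; a non-minimizing minimal surface (e.g.\ a thin catenoidal neck, or any unstable minimal hypersurface) has $H\equiv 0$ yet fails almost minimizing at scales below its instability radius. The displayed inequality
\[
\bfM(T_j\llcorner B_t)\leq \bfM(T'\llcorner B_t)+\|H_{T_j}\|_\infty\cdot \bfM(P)
\]
is not a consequence of the first-variation formula, nor of any isoperimetric inequality---there is no ``standard comparison'' that produces it. In fact your argument is circular: to even speak of $H_{T_j}$ pointwise and write the Euler--Lagrange equation you need prior regularity of $T_j$, which is precisely what almost minimizing is supposed to deliver.

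The correct route (which is White's) bypasses mean curvature entirely and works directly with the $\bfA^*$-minimality of $T_j$. For any competitor $T'=T_j+\partial P$ with $\spt P\subset B_t(x)$ one has $\bfA^*(T_j)\leq\bfA^*(T')$, hence
\[
\bfM(T_j\llcorner B_t)-\bfM(T'\llcorner B_t)\ \leq\ \zeta\Bigl(\int\vec f\,d\|T'\|\Bigr)-\zeta\Bigl(\int\vec f\,d\|T_j\|\Bigr),
\]
and the right-hand side is estimated using the Lipschitz continuity of $\zeta$ and $\vec f$ together with the built-in condition $\sup|\nabla\zeta|\cdot\sup|\vec f|<1$; this is where the $ct^\alpha$ excess comes from. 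You also do not address why competitors in $M$ (rather than in $Clos(U_j)$) are admissible: near $Sing(\Sigma)$ this is handled by the mean convexity of $\partial U_j$, and away from singularities by the $C^2$ graphical structure of $\partial U_j$ over $\Sigma$. Your treatment of the flat convergence $T_j\to[\Sigma]$ and of conclusions (i)--(ii) is fine once almost minimizing is correctly in hand.
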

     
     \begin{Lem} \label{Lem_Strong min-max, Sigma minimize A^*}
      There exists $k_1\geq k_2$ such that $[\Sigma]$ is the only  $\bfA^*$-minimizer in its homology class in $\cZ_n(Clos(U)_{k_1})$.
     \end{Lem}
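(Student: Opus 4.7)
The plan is to argue by contradiction: if for every $k\geq k_2$ there were an $\bfA^*$-minimizer $T_k\in\cZ_n(Clos(U_k))$ in the homology class of $[\Sigma]$ with $T_k\neq [\Sigma]$, one could extract an associated generalized Jacobi field violating the coercivity of lemma \ref{Lem_Strong min-max, I direction of deform} (ii). By lemma \ref{Lem_Strong min-max, A^* minimizer is Mass almost minimizer}, $T_k\to[\Sigma]$ in flat topology, $T_k$ is area-minimizing in $B^M_{\bar\tau}(Sing(\Sigma))\cap Clos(U_k)$, and on compact subsets of $\Sigma\setminus Sing(\Sigma)$ one has $T_k=[graph_\Sigma(u_k)]$ for smooth $u_k\to 0$. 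The first-variation of $\bfA^*$ on graphs takes the form
\begin{align*}
 \scM_\Sigma u_k + \nabla\zeta\Bigl(\textstyle\int_M \vec f\,d\|T_k\|\Bigr)\cdot \vec N(x,u_k,\nabla u_k)=0 \quad\text{on }\Sigma\setminus Sing(\Sigma),
\end{align*}
where $\vec N(x,0,0)=(\mu\phi_i)_{i=1}^I$ since $\vec f|_\Sigma\equiv 0$ and $\nu(f_i)=\mu\phi_i$; and, because $\nabla\zeta(0)=0$, $\nabla^2\zeta(0)=id$, $\vec f|_\Sigma=0$, the input to $\nabla\zeta$ satisfies $\int_M \vec f\,d\|T_k\| = (\mu\int_\Sigma\phi_i u_k)_{i=1}^I + O(\|u_k\|_{C^1}^2)$.

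Next I would normalize. Set $c_k:=\|u_k\|_{L^2(\Sigma\setminus \cB_{\bar\tau}(Sing(\Sigma)))}$; since $T_k\neq [\Sigma]$ and $T_k$ is area-minimizing near each singularity, unique continuation gives $c_k>0$ for $k$ large. The dichotomy of lemma \ref{Lem_Ass Jac, Dichotomy Growth Rate Bd} applies locally near each $p\in Sing(\Sigma)$ (where $T_k$ is stationary, so hypothesis (A) of theorem \ref{Thm_Ass Jac, Main thm} is met by (S3)), and in the interior $u_k$ solves the perturbed Jacobi equation with forcing of size $O(c_k)$; together these give the local-to-global $L^2$ bound (\ref{Ass Jac, global bded by loc}) for $\tilde u_k:=u_k/c_k$. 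Standard elliptic estimates then extract a subsequential limit $\tilde u_\infty\in\scB(\Sigma)\oplus\RR_{L_\Sigma}\langle Sing(\Sigma)\rangle$ with $\tilde u_k\to\tilde u_\infty$ in $C^\infty_{loc}(\Sigma)$, $\|\tilde u_\infty\|_{L^2(\Sigma\setminus\cB_{\bar\tau})}=1$, and
\begin{align*}
 L_\Sigma\tilde u_\infty \;=\; \mu^2\sum_{i=1}^I\Bigl(\int_\Sigma\phi_i\tilde u_\infty\Bigr)\phi_i \quad\text{on }\Sigma.
\end{align*}

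The main obstacle is to promote $\tilde u_\infty$ from $\scB(\Sigma)\oplus\RR_{L_\Sigma}\langle Sing(\Sigma)\rangle$ to $\scB_0(\Sigma)$, so that pairing it against itself is legitimate; this is where the strict-minimizing hypothesis (S3) is essential. By theorem \ref{Thm_Ass Jac, Asymp Thm of ext. min. graph} (1), every $\delta$-minimal blow-up limit of $\{T_k\}$ at a singular point $p$ has asymptotic rate at infinity $\geq\gamma_1^+(C_p)$; the contrapositive of corollary \ref{Cor_Ass Jac, Max growth of ass Jac => smooth} then forces $\cA\cR_p(\tilde u_\infty)\geq\gamma_1^+(C_p)$ at every $p\in Sing(\Sigma)$, and corollary \ref{Cor_Linear, Loc Growth Bd function decomp} (1) applied locally rules out the Green's function component, placing $\tilde u_\infty$ in $\scB(\Sigma)=\scB_0(\Sigma)$ ($\Sigma$ being closed). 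Testing the PDE against $\tilde u_\infty$, where the integration by parts is justified by approximating $\tilde u_\infty$ via lemma \ref{Linear, Equi def of scB} and recalling $-\int \tilde u L_\Sigma\tilde u = Q_\Sigma(\tilde u,\tilde u)$, gives
\begin{align*}
 Q_\Sigma(\tilde u_\infty,\tilde u_\infty) + \mu^2\sum_{i=1}^I\Bigl(\int_\Sigma\phi_i\tilde u_\infty\Bigr)^2 \;=\; 0.
\end{align*}
Lemma \ref{Lem_Strong min-max, I direction of deform} (ii) then yields $\lambda\int_\Sigma(|\nabla\tilde u_\infty|^2+\tilde u_\infty^2/\rho^2)\leq 0$, forcing $\tilde u_\infty\equiv 0$ and contradicting $\|\tilde u_\infty\|_{L^2(\Sigma\setminus\cB_{\bar\tau})}=1$.
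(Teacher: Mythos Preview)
Your approach is genuinely different from the paper's and, as written, has a gap.

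\textbf{The gap.} You assert that the Euler--Lagrange equation
\[
\scM_\Sigma u_k + \nabla\zeta\Bigl(\textstyle\int_M \vec f\,d\|T_k\|\Bigr)\cdot \vec N(x,u_k,\nabla u_k)=0
\]
holds on all of $\Sigma\setminus Sing(\Sigma)$. But $T_k$ minimizes $\bfA^*$ in the \emph{constrained} class $\cZ_n(Clos(U_k))$, and nothing you have cited rules out $spt(T_k)\cap\partial U_k\neq\emptyset$ away from the singular set. At contact points one only has a variational inequality (the $\bfA^*$--mean curvature points out of $U_k$), so the limiting identity $L_\Sigma\tilde u_\infty=\mu^2\sum_i(\int\phi_i\tilde u_\infty)\phi_i$ may acquire an extra signed measure coming from the constraint force, and your clean equality $Q_\Sigma(\tilde u_\infty,\tilde u_\infty)+\mu^2\sum(\cdots)^2=0$ is not justified. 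This can be repaired: arguing as in the proof of lemma~\ref{Lem_Rigidity c-Min} (equation~(\ref{Rigidity c-Min_3 diff in/equ for c-Min})) one has $u_k\cdot\bigl(\scM_\Sigma u_k+\text{forcing}\bigr)\leq 0$ weakly, which after normalization and passage to the limit yields the inequality $Q_\Sigma(\tilde u_\infty,\tilde u_\infty)+\mu^2\sum(\cdots)^2\leq 0$; this suffices for the contradiction via lemma~\ref{Lem_Strong min-max, I direction of deform}~(ii). But you have not done this.

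\textbf{Comparison with the paper.} The paper avoids the constraint issue entirely by working at second order: it expands the energy inequality $0\geq\bfA^*(T_j)-\bfA^*([\Sigma])$ directly as a quadratic form in the graph function $u_j$ (after a cutoff at scale $r_j(p)$ near each singularity), obtaining
\[
0\;\geq\;\tfrac12 Q_\Sigma(\tilde u_j,\tilde u_j)+\tfrac{\mu^2}{2}\Bigl|\int_\Sigma\tilde u_j\vec\phi\Bigr|^2 - o\Bigl(\int_\Sigma|\nabla\tilde u_j|^2+\tilde u_j^2/\rho^2\Bigr),
\]
and then invokes lemma~\ref{Lem_Strong min-max, I direction of deform}~(ii) for large $j$ without ever passing to a limiting function. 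The error control uses strict stability (to pick $\sigma\in(-(n-2)/2,\gamma_1^+)$, making $r_j(p)^n=o(\int u_j^2)$) together with strict minimizing (to invoke corollary~\ref{Cor_Ass Jac, Growth rate bd near strictly minimizing singularity}). Your route instead passes to a limit $\tilde u_\infty$ first and then uses strict minimizing plus theorem~\ref{Thm_Ass Jac, Asymp Thm of ext. min. graph} and corollary~\ref{Cor_Ass Jac, Max growth of ass Jac => smooth} to place $\tilde u_\infty$ in $\scB_0(\Sigma)$. Both strategies reach the same coercivity inequality, but the paper's energy comparison is more robust against the boundary constraint and requires no analysis of the limiting equation.
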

     \begin{proof}
      The proof closely follows \cite[Theorem 2]{White94}. The major extra effort is made to estimate error terms near singularities.
      
      Suppose for contradiction that after passing to a subsequence, there are $[\Sigma]\neq T_j\in \cZ_n(Clos(U_j))$ homologous to $[\Sigma]$, i.e. $T_j - [\Sigma] = \partial P_j$ for some $P_j\in \bfI_{n+1}(M)$ supported in $Clos(U_j)$, which minimize $\bfA^*$ among its homology class in $\cZ_n(Clos(U_j))$.

      Let $0\neq u_j$ be the graphical functions of $T_j$ over $\Sigma$ as described in lemma \ref{Lem_Strong min-max, A^* minimizer is Mass almost minimizer} (ii). Moreover, for each $p\in Sing(\Sigma)$, let \[
        r_j(p):= \inf\{t>0: T_j\llcorner A^M_{t, 2\tau} = [graph_{\Sigma}(u_j)]\llcorner A^M_{t, 2\tau}(p) \text{ and } \|u_j\|^*_{2, \cA_{t, 2\tau}(p)}\leq \delta_3 \}    \]
      where $\delta_3$ is specified in lemma \ref{Lem_Ass Jac, Dichotomy Growth Rate Bd}.
      Clearly, $r_j(p)\to 0_+$ as $j\to \infty$; WLOG $r_j(p)<\tau$.

      Since each $C_p$ is strictly stable, one can take $\sigma \in \big(-(n-2)/2, \inf_{p\in Sing(\Sigma)}\gamma_1^+(C_p) \big)$. By corollary \ref{Cor_Ass Jac, Growth rate bd near strictly minimizing singularity}, there exists $\Omega_0\subset\subset \Sigma$ such that for $j>>1$,
      \begin{align}
       |u_j|/\rho + |\nabla u_j| \leq C(\sigma)\rho^{\sigma -1}\cdot (\int_{\Omega_0} u_j^2\ )^{1/2} \ \ \ \text{ on }\cA_{2r_j(p), \tau}(p) \label{Strong min-max_Growth rate of u_j leq sigma}
      \end{align}
      \textbf{Claim}: $\liminf_{j\to \infty} r_j(p)^{2\sigma -2}\int_{\Omega_0} u_j^2 >0$. \\
      \textbf{Proof of claim}: Otherwise, consider $\hat{V}_j:= (\eta_{p, r_j(p)})_{\sharp} V_j$. By (\ref{Strong min-max_Growth rate of u_j leq sigma}), up to a subsequence, $\hat{V}_j \to \hat{V}_{\infty}$ for some stationary integral varifold $\hat{V}_{\infty}$ equal to $|C_p|$ outside the ball of radius $2$. Since $C_p$ is area minimizing, by a barrier argument using Hardt-Simon foliations, $\hat{V}_{\infty} = |C_p|$ in $\RR^{n+1}$. By Allard regularity, this contradict to the minimality of $r_j(p)$.\\
      
      Let $\tilde{\cU}_j := \Sigma\setminus \bigcup_{p\in Sing(\Sigma)} \cB_{2r_j(p)}(p)$; $\cU_j:= \Sigma\setminus \bigcup_{p\in Sing(\Sigma)} \cB_{4r_j(p)}(p)$. Take $j>>1$ such that $\cU_j \supset \Omega_0\cup (\Sigma \cap spt(\vec{f}) )$.
      
      Let $\xi_j\in C_c^\infty(\tilde{\cU_j})$ be a cut-off function which equal to $1$ on $\cU_j$ and $|\nabla \xi_j|\leq 10/r_j(p)$ on $\cB_{\tau}(p)$; 
      Let $\tilde{u}_j := u_j\cdot \xi_j$. Note that by (\ref{Strong min-max_Growth rate of u_j leq sigma}), 
      \begin{align}
       \int_{\cB_{4r_j(p)}(p)} |\nabla \tilde{u}_j|^2 + \tilde{u}_j^2/\rho^2 \leq C(\sigma) r_j(p)^{2\sigma - 2 + n}\cdot \int_{\Omega_0} u_j^2  \label{Strong min-max_int_(B_rj) u_j^2/rho^2 leq r_j^n-2+2sigma}
      \end{align}

      Now we are ready to estimate $\bfA^*(T_j)- \bfA^*(\Sigma)$. Work under Fermi coordinate near $\Sigma$, let $F(x, z, p)$ be the volume element for graphs over $\Sigma$ as in lemma \ref{Lem_Pre, Vol element and mean curv of graph II}, we have
      \begin{align}
      \begin{split}
       0 &\geq\ \bfA^*(T_j) - \bfA^*(\Sigma)  \\
       &\geq\ \int_{\cU_j} \big( F(x, u_j, \nabla u_j) - 1 \big) - \scH^n(\Sigma \setminus \cU_j)  + 
        \zeta \big(\int_{\Sigma} \vec{f}(x, u_j(x))F(x, u_j, \nabla u_j)\ d\scH^n(x) \big)  \\
       &\geq\ \underbrace{ \Big[\int_{\cU_j}\ dx\int_0^1 \partial_p F(x, su_j, s\nabla u_j)\cdot \nabla u_j + \partial_z F(x, su_j, s\nabla u_j)\cdot u_j\ ds \Big] }_{(I)}
        - \underbrace{\scH^n(\Sigma \setminus \cU_j) }_{(II)} \\  
        &\ +\ \underbrace{\zeta \big(\int_{\Sigma} \vec{f}(x, u_j(x))F(x, u_j, \nabla u_j)\ d\scH^n(x) \big) }_{(III)} 
      \end{split}  \label{Strong min-max_A^*(T_j) - A(Sigma) geq I + II + III}
      \end{align}
      
      To estimate $(I)$, notice that by lemma \ref{Lem_Pre, Vol element and mean curv of graph II}, \[
       |\partial_p F(x, z, p) - p| + \rho\big|\partial_z F(x, z, p) - (|A_{\Sigma}|^2 + Ric_M(\nu, \nu))z \big| \leq C(|z|/\rho + |p|)^2   \]
      Hence, combined with (\ref{Strong min-max_Growth rate of u_j leq sigma}) and (\ref{Strong min-max_int_(B_rj) u_j^2/rho^2 leq r_j^n-2+2sigma}) we have,
      \begin{align}
      \begin{split}
       (I) & \geq \frac{1}{2}\int_{\cU_j}|\nabla u_j|^2 + \big(|A_{\Sigma}|^2 + Ric_M(\nu, \nu)\big)u_j^2 - C(|\nabla u_j| + |u_j|/\rho)^3 \\
       & \geq \frac{1}{2}\int_{\Sigma} |\nabla \tilde{u}_j|^2 - \big(|A_{\Sigma}|^2 + Ric_M(\nu, \nu)\big)\tilde{u}_j^2\ d\scH^n \\
       &\ - C(\sigma, \Omega_0)\int_{\Sigma}(|\nabla \tilde{u}_j| + |\tilde{u}_j/\rho| + r_j(p)^{2\sigma - 2 + n})\cdot(|\nabla \tilde{u}_j|^2 + \tilde{u}_j^2/\rho^2) \\
       & = \frac{1}{2}\int_{\Sigma} |\nabla \tilde{u}_j|^2 - \big(|A_{\Sigma}|^2 + Ric_M(\nu, \nu) \big)\tilde{u}_j^2\ d\scH^n + o(\int_{\Sigma}|\nabla \tilde{u}_j|^2 + \tilde{u}_j^2/\rho^2 )
      \end{split} \label{Strong min-max_I geq Q_Sigma - o(u_j^2/rho^2)}
      \end{align}
      
      To estimate $(II)$, it suffices to see that by claim 1, $r_j(p)^n = o(\int_{\Omega_0} u_j^2)$ as $j\to \infty$. Hence,
      \begin{align}
       (II) \leq \sum_{p\in Sing(\Sigma)} \scH^n(\cB_{4r_j(p)}(p)) \leq C(\Sigma)\cdot \sum_{p} r_j(p)^n = o(\int_{\Sigma}|\nabla \tilde{u}_j|^2 + \tilde{u}_j^2/\rho^2 )   \label{Strong min-max_II leq o(u_j^2/rho^2)}
      \end{align}
      
      To estimate $(III)$, note that by definition, $\zeta(v) \geq |v|^2/2 - C_\zeta |v|^3$ for some $C_\zeta > 0$; and by the choice of $f$,
      \begin{align*}
        |\int_{\Sigma} \vec{f}(x, u_j(x))F(x, u_j, \nabla u_j) - \mu\vec{\phi}\cdot \tilde{u}_j\ d\scH^n(x) | \leq  C(f)\int_{\Sigma} |\nabla \tilde{u}_j|^2 + \tilde{u}_j^2/\rho^2 
      \end{align*}
      Hence, we have 
      \begin{align}
       (III) \geq \frac{\mu^2}{2} \big| \int_{\Sigma} \tilde{u}_j\cdot \vec{\phi}\ d\scH^n \big|^2 - o(\int_{\Sigma} |\nabla \tilde{u}_j|^2 + \tilde{u}_j^2/\rho^2 )  \label{Strong min-max_III geq |<u_j, phi>|^2 + o(u_j^2/rho^2)}
      \end{align}
      
      Now, plug (\ref{Strong min-max_I geq Q_Sigma - o(u_j^2/rho^2)}), (\ref{Strong min-max_II leq o(u_j^2/rho^2)}) and (\ref{Strong min-max_III geq |<u_j, phi>|^2 + o(u_j^2/rho^2)}) into (\ref{Strong min-max_A^*(T_j) - A(Sigma) geq I + II + III}) and apply lemma \ref{Lem_Strong min-max, I direction of deform} (ii), we get, 
      \begin{align*}
       0& \geq Q_{\Sigma}(\tilde{u}_j, \tilde{u}_j) + \mu^2\big| \int_{\Sigma} \tilde{u}_j\cdot \vec{\phi}\ d\scH^n \big|^2 - o(\int_{\Sigma} |\nabla\tilde{u}_j|^2 + \tilde{u}_j^2/\rho^2 ) \\
       & \geq \lambda \int_{\Sigma} |\nabla\tilde{u}_j|^2 + \tilde{u}_j^2/\rho^2\ d\scH^n - o(\int_{\Sigma} |\nabla\tilde{u}_j|^2 + \tilde{u}_j^2/\rho^2 )        
      \end{align*}
      This contradicts to that $\lambda>0$ in lemma \ref{Lem_Strong min-max, I direction of deform} and that $u_j\neq 0$. Thus the proof is completed.
     \end{proof}
     
     \begin{proof}[Proof of theorem \ref{Thm_Strong min-max, Main}.]
      Let $k_1$ be specified in lemma \ref{Lem_Strong min-max, Sigma minimize A^*}.
      Define $\Phi_0 :Clos(\BB^I_1) \to \cZ_n( Clos(U_{k_1}))$ by \[
       \Phi_0(z):= \big[graph_{\Sigma}\big(\bar{t}\cdot(z_1\phi_1+z_2\phi+... + z_I\phi_I) \big) \big]    \]
      where $\bar{t}\in (0, 1)$ small enough such that $Im(\Phi_0) \subset \cZ_n( Clos(U_{k_1}))$ and that (ii) in theorem \ref{Thm_Strong min-max, Main} hold for some $\epsilon(r)>0$. Such $\bar{t}$ and $\epsilon(r)$ exists by lemma \ref{Lem_Strong min-max, I direction of deform} (i).
      
      Also by lemma \ref{Lem_Strong min-max, I direction of deform} (iii), we can take $\bar{t}>0$ even smaller such that \[
       \vec{f}\Phi_0 : Clos(\BB^I_1) \to \RR^I \ \ \ z\mapsto \int_M \vec{f}\ d\|\Phi_0(z)\|   \]
      has non-zero degree at $0^I\in \RR^I$. Fix the choice of $\bar{t}$ and then $\Phi_0$ from now on.
      
      Now if $\{\Phi_i\}_{i\geq 1}$ be a family of $\bfF$-continuous sweepouts described in theorem \ref{Thm_Strong min-max, Main} (iii), then \[
       \vec{f}\Phi_i : Clos(\BB^I_1) \to \RR^I \ \ \ z\mapsto \int_M \vec{f}\ d\|\Phi_i(z)\|   \]
      $C^0$ converges to $\Phi_0$ restricted on $\partial \BB^I_1$, and hence also has non-zero degree at $0^I$ for $i>>1$. In particular, $\exists\ z_i\in \BB_1^I$ such that $\vec{f}\Phi_i (z) = 0^I$. Therefore, by lemma \ref{Lem_Strong min-max, Sigma minimize A^*}, \[
        \sup \{\bfM(\Phi_i(z)): z\in \BB^I_1\} \geq \bfM(\Phi(z_i)) = \bfA^*(\Phi(z_i)) \geq \bfA^*([\Sigma]) = \bfM(\Sigma)   \]
       for $i>>1$, thus proves (iii) of theorem \ref{Thm_Strong min-max, Main}.
     \end{proof}

    \section{A rigidity result for constrained minimal hypersurfaces} \label{Sec, Rigidity of Constraint Minimizer}

     Recall as introduced in \cite{WangZH2020_Obst}, for a smooth domain $U\subset (M, g)$, an integral varifold $V\in \cI\cV_n(Clos(U))$ is called constrained embedded minimal hypersurface in $Clos(U)$ if it's a locally stable minimal hypersurface with optimal regularity inside $U$, locally sum of $C^{1,1}$ graphs with multiplicity near $\partial U$, possibly with self-touching on $\partial U$, and satisfies the variational inequality \[
       \delta V(X) \geq 0    \]
     for every vector field $X\in \scX(M)$ which pointing inward $U$ along $\partial U$. Note that by \cite[Remark 2.9, (1)]{WangZH2020_Obst}, the mean curvature vectors of these $C^{1,1}$ graph comprising $V$ all points outward $U$ along $\partial U$ and equals to the mean curvature of $\partial U$ a.e. on $spt(V)\cap \partial U$. This is the only additional fact we shall use in the proof of lemma \ref{Lem_Rigidity c-Min}.
     
     \cite{WangZH2020_Obst} discusses the existence of such constrained embedded minimal hypersurface through min-max construction, as well as a local rigidity result \cite[Theorem 5.1]{WangZH2020_Obst} near a smooth nondegenerate minimal hypersurface. 
     The goal of this section is to prove an analogue of this local rigidity in the singular setting. The prove is almost the same, but more effort is made to deal with error terms near singularities. 

     We keep the same set up as section \ref{Sec, Strong min-max prop}, i.e. suppose (S1)-(S4) holds.
     Note that by strong maximum principle and mean convexity of $\partial U_j$ in (S4) inside $B^M_{4\tau}(Sing(\Sigma))$, any constrained embedded minimal hypersurface $V$ in $Clos(U_j)$ must be stationary (i.e have vanishing mean curvature) in $B^M_{4\tau}(Sing(\Sigma))$.
     
     \begin{Lem} \label{Lem_Rigidity c-Min}
      Let $\epsilon\in (0,1)$. There exists $k_2=k_2(\epsilon)\geq 1$ such that for every $j\geq k_2(\epsilon)$, if $V$ is a constrained embedded minimal hypersurface in $Clos(U_j)$ with \[
       \scH^n(\Sigma)\leq \|V\|(M)\leq (2-\epsilon)\scH^n(\Sigma)   \]
      then $V = |\Sigma|$. 
     \end{Lem}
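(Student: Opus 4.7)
The plan is to argue by contradiction and extract a nontrivial Jacobi field on $\Sigma$ that cannot exist by the nondegeneracy hypothesis (S2). Suppose no such $k_2(\epsilon)$ exists; then for a subsequence $j\to\infty$ there are constrained embedded minimal hypersurfaces $V_j\neq |\Sigma|$ in $Clos(U_j)$ with $\scH^n(\Sigma)\leq \|V_j\|(M)\leq (2-\epsilon)\scH^n(\Sigma)$. Allard compactness for stable varifolds inside $Clos(U_1)$ gives a subsequential varifold limit $V_\infty$ supported in $\bigcap_j Clos(U_j)=Clos(\Sigma)$; the constancy theorem yields $V_\infty=k|\Sigma|$ for an integer $k\geq 1$, and the upper mass bound forces $k=1$. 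The mean convexity of $\partial U_j$ inside $B^M_{4\tau}(Sing(\Sigma))$ from (S4) and the strong maximum principle prevent $V_j$ from touching $\partial U_j$ near $Sing(\Sigma)$, so $V_j$ is a stable \emph{unconstrained} minimal hypersurface there. Outside $Sing(\Sigma)$, Allard regularity and the multiplicity-$1$ limit give $V_j = [\mathrm{graph}_\Sigma(u_j)]$ for a single $C^{1,1}$ function $u_j$ satisfying the obstacle bound $u_j^-\leq u_j\leq u_j^+$ and $u_j\to 0$ in $C^2_{\mathrm{loc}}(Reg(\Sigma))$.

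Next I would extract the induced Jacobi field. For each $p\in Sing(\Sigma)$ fix some $\sigma\in(\gamma_1^-(C_p),\gamma_1^+(C_p))\setminus\Gamma_{C_p}$ (nonempty by strict stability of $C_p$). By the strict minimizing assumption (S3) and Theorem~\ref{Thm_Ass Jac, Asymp Thm of ext. min. graph}(1), every stationary integral varifold asymptotic to $C_p$ at infinity has $\cA\cR_\infty\geq \gamma_1^+(C_p)>\sigma$; this rules out alternative $(i)$ of Lemma~\ref{Lem_Ass Jac, Dichotomy Growth Rate Bd} applied to $V_j$ near $p$ in case $(B')$. Hence alternative $(ii)$ holds, yielding the uniform local-to-global $L^2$ bound
\[
  \int_{A_{r,Kr}(p)} u_j^2\rho^{-n-2\sigma}\ dx \leq C\int_{A_{\bar s_0,\tau}(p)} u_j^2\rho^{-n-2\sigma}\ dx
\]
in the spirit of the estimate in the proof of Theorem~\ref{Thm_Ass Jac, Main thm}. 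Normalize $\epsilon_j:=\|u_j\|_{L^2(\Omega_0)}$ for a fixed $\Omega_0\subset\subset Reg(\Sigma)$ and set $\hat u_j:=u_j/\epsilon_j$; elliptic interior estimates and Arzel\`a--Ascoli extract a subsequential $C^2_{\mathrm{loc}}(\Sigma)$ limit $\hat u_j\to\phi$ which is nonzero on $\Omega_0$, satisfies $L_\Sigma\phi=0$, and has $\cA\cR_p(\phi)\geq\sigma>\gamma_1^-(C_p)$ at every singularity. By Corollary~\ref{Cor_Linear, Global Growth Bd function decomp}, $\phi\in \scB(\Sigma)\oplus \RR_{L_\Sigma}\langle Sing(\Sigma)\rangle$.

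The contradiction comes from nondegeneracy. Because $\cA\cR_p(\phi)>\gamma_1^-(C_p)$ at each $p$, Corollary~\ref{Cor_Linear, Loc Growth Bd and function decomp}(2) and the uniqueness of the decomposition force the Green's function component at each $p$ to vanish, so $\phi\in \scB(\Sigma)$. Strict stability of every tangent cone together with the Hardy inequality (Lemmas~\ref{Lem_Pre, Hardy-typed inequ} and~\ref{Lem_Pre, geom of asymp at 0}) gives $\|\cdot\|_{\scB}\sim\|\cdot\|_{W^{1,2}}$; the closedness of $\Sigma$ and the vanishing $W^{1,2}$-capacity of the finite singular set in dimension $n\geq 3$ give $\scB(\Sigma)=\scB_0(\Sigma)$. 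Nondegeneracy (S2) then forces $\phi\equiv 0$, contradicting $\phi\not\equiv 0$ on $\Omega_0$.

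The main delicate point is that $V_j$ is only a constrained (not globally stationary) minimal hypersurface, so Theorem~\ref{Thm_Ass Jac, Main thm} does not apply verbatim. This is overcome by two complementary observations: near $Sing(\Sigma)$, the mean-convexity of $\partial U_j$ and the strong maximum principle upgrade $V_j$ to a genuinely stable minimal hypersurface, activating Lemma~\ref{Lem_Ass Jac, Dichotomy Growth Rate Bd} in case $(B')$; away from $Sing(\Sigma)$, the obstacle $u_j^-\leq u_j\leq u_j^+$ with the (S4) estimates $|\nabla^k u_j^\pm|\leq K|u_j^\pm|$ provides the $L^\infty$ control needed to pass from the weighted $L^2$ bound to a $C^{1,1}$ compactness statement for $\hat u_j$, after which elliptic regularity on the non-contact set is standard because the contact set shrinks away in the limit.
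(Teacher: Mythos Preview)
Your argument has a genuine gap at the step ``$L_\Sigma\phi=0$'': you assert this holds on all of $\Sigma$ and explain it by saying ``the contact set shrinks away in the limit,'' but you never justify that, and crucially you never use the hypothesis $\|V_j\|(M)\geq \scH^n(\Sigma)$. Since $V_j$ is only a \emph{constrained} minimal hypersurface, the graphical function $u_j$ satisfies the minimal surface equation only on the non-contact set; on the contact set one has merely the sign condition $u_j\cdot\scM u_j\leq 0$. After normalization there is no a priori reason for the rescaled obstacle $u_j^\pm/\epsilon_j$ to diverge: if it stays bounded (which the paper explicitly allows), the limit $\phi$ can touch the limiting obstacle $\hat u_\infty^\pm$ on a set of positive measure, and there you only inherit the \emph{inequality} $-\phi\cdot L_\Sigma\phi\leq 0$ weakly, not the equation.

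The paper closes exactly this gap with the lower mass bound. Expanding $0\leq \|V_j\|(M)-\scH^n(\Sigma)$ to second order (using Lemma~\ref{Lem_Pre, Vol element and mean curv of graph II}, the growth estimate from Corollary~\ref{Cor_Ass Jac, Growth rate bd near strictly minimizing singularity}, and the bound $r_j(p)^n=o(c_j^2)$ coming from strict stability via $\sigma>-(n-2)/2$) one obtains, after dividing by $c_j^2$ and letting $j\to\infty$, the \emph{opposite} inequality $Q_\Sigma(\phi,\phi)=\int_\Sigma -\phi\,L_\Sigma\phi\geq 0$. Combined with the pointwise inequality $-\phi\,L_\Sigma\phi\leq 0$ this forces $L_\Sigma\phi=0$ everywhere, after which your concluding paragraph (growth rate $>\gamma_1^-$, hence $\phi\in\scB_0(\Sigma)$, hence $\phi=0$ by nondegeneracy) goes through. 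A minor related issue: you claim $C^2_{\mathrm{loc}}$ convergence of $\hat u_j$, but $u_j$ is only $C^{1,1}$ across the free boundary; the paper works with $W^{1,2}_{\mathrm{loc}}\cap C^0_{\mathrm{loc}}$ convergence, which is all that is needed.
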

     
     \begin{proof}
      We shall argue by contradiction as is done in \cite[Theorem 5.1]{WangZH2020_Obst}. 
      
      Suppose for sake of contradiction that after passing to a subsequence, there exist constrained embedded minimal hypersurfaces $|\Sigma|\neq V_j \in \cI\cV_n(Clos(U_j))$, $j\to \infty$. Since by the choice of $\{U_j\}_{j\geq 1}$ in (S4), the mean curvature of $\partial U_j$ on $spt(h)$ tends to $0$ as $j\to \infty$, we see that the mean curvature of $V_j$ converges to $0$ in $L^\infty$.
      Hence by Allard compactness theorem \cite{Allard72, Simon83_GMT}, up to a subsequence, $V_j \to V$ for some stationary integral varifold $V\in \cI\cV_n(M)$ supported in $Clos(\Sigma)$ and has $\scH^n(\Sigma)\leq \|V\|(M)\leq (2-\epsilon) \scH^n(\Sigma)$. This implies $V=|\Sigma|$. Thus by Allard regularity theorem \cite{Allard72, Simon83_GMT}, this convergence is in $C^{1,\alpha}$-graphical sense on each compact subset of $\Sigma$, $\forall \alpha\in (0,1)$.
      
      Let $\alpha\in (0,1)$ be fixed from now on; Let $\delta_3 > 0$ be specified in lemma \ref{Lem_Ass Jac, Dichotomy Growth Rate Bd}, $v_j\in C^{1,\alpha}(\Sigma)$ be the graphical function of $V_j$ over $\Sigma$, $r_j(p)\to 0_+$ for each singularity $p$ be such that \[
       r_j(p)=\inf \{t>0: V_j\llcorner A^M_{t, 2\tau}(p) = |graph_{\Sigma}(v_j)|\llcorner A^M_{t, 2\tau}(p),\ \|v_j\|^*_{2; \cA_{t, 2\tau}(p)}\leq \delta_3 \}    \]
      Take $j>>1$ such that $r_j(p)<\tau/2$. 
      Let $\sigma\in \big(-(n-2)/2, \inf_{p\in Sing(\Sigma)}\gamma_1^+(C_p)\big)$ be fixed. By corollary \ref{Cor_Ass Jac, Growth rate bd near strictly minimizing singularity}, there exists $\Sigma\setminus \cB_{\tau}(Sing(\Sigma)) \subset \Omega_0\subset\subset \Sigma$ such that for $j>>1$, 
      \begin{align}
       |v_j|/\rho + |\nabla v_j| \leq C(\sigma)\rho^{\sigma -1}\cdot (\int_{\Omega_0} v_j^2\ )^{1/2} \ \ \ \text{ on }\cA_{2r_j, \tau}(p)  \label{Rigidity c-Min_|v_j| leq rho^sigma |v_j|_L^2}  
      \end{align}
      Hence by the same argument as in the proof of lemma \ref{Lem_Strong min-max, Sigma minimize A^*} claim, we have 
      \begin{align}
       \liminf_{j\to \infty} r_j(p)^{2\sigma -2} \int_{\Omega_0} v_j^2 > 0    \label{Rigidity c-Min_r_j(p)^(2sigma - 2)leq c_j^2}      
      \end{align}
      
      Let $F = F^g(x, z, p)$ be the area integrand of graph over $\Sigma$ defined in lemma \ref{Lem_Pre, Vol element and mean curv of graph II}; $\scM:= \scM^g = -div_{\Sigma}(\partial_p F(x, u, \nabla u)) + \partial_z F(x, u, \nabla u)$ be the minimal surface operator.
      Also let $\cU_j := \Sigma\setminus \bigcup_{p\in Sing(\Sigma)}\cB_{4r_j(p)}(p)$; $u_j^{\pm}$ be $C^2$ functions over $\Omega_0$ such that under Fermi coordinates of $M$ near $\Sigma$, i.e. \[
       \{(x, t): x\in \Omega_0, u_j^-(x)< t < u_j^+(x) \} \setminus B^M_{2\tau}(Sing(\Sigma)) = U_j\setminus B^M_{2\tau}(Sing(\Sigma))     \]
      Since $V_j$ are constrained embedded minimal hypersurfaces in $Clos(U_j)$, we have
      \begin{align}
      \begin{cases}
       v_j\cdot \scM v_j \leq 0\ & \text{ weakly on }\Omega_0; \\
       \scM v_j = 0\ & \text{ on } \{u_j^- < v_j <u_j^+\} \cup (\cU_j\setminus \Omega_0); \\
       |\scM v_j| \leq C|v_j| \ & \text{ weakly on }\cU_j.
      \end{cases} \label{Rigidity c-Min_3 diff in/equ for c-Min}
      \end{align}
      Let $c_j^2:= \int_{\Omega_0}|\nabla v_j|^2 + v_j^2/\rho^2$, then by (\ref{Rigidity c-Min_|v_j| leq rho^sigma |v_j|_L^2}), (\ref{Rigidity c-Min_3 diff in/equ for c-Min}), lemma \ref{Lem_Pre, Vol element and mean curv of graph II} and standard elliptic estimate \cite{GilbargTrudinger01} we have that up to a subsequence, $v_j / c_j \to \hat{v}_{\infty}$ in $W^{1,2}_{loc}\cap C^0_{loc}(\Sigma)$ for some $0\neq\hat{v}_{\infty}\in W^{1,2}_{loc}(\Sigma)$ satisfying
      \begin{align}
      \begin{cases}
       -\hat{v}_{\infty}\cdot L_{\Sigma} \hat{v}_\infty \leq 0\ & \text{ weakly on }\Sigma; \\
       -L_{\Sigma} \hat{v}_\infty = 0\ & \text{ on }\big(\{\hat{u}_\infty^- < \hat{v}_\infty < \hat{u}_\infty^+\}\cap \Omega_0 \big) \cup \Sigma\setminus \Omega_0.
      \end{cases} \label{Rigidity c-Min_2 diff in/equ for renormalized lim c-Min}
      \end{align}
      where $\hat{u}^{\pm}_{\infty}$ are the subsequential limit of $u_j^{\pm}/c_j$, which is either everywhere $\pm \infty$ or everywhere finite and nonzero by the assumption in (S4) and that $|v_j|\leq max\{|u_j^+|, |u_j^-|\}$ on $\Omega_0$ for $j\geq 1$. 
      Moreover by (\ref{Rigidity c-Min_|v_j| leq rho^sigma |v_j|_L^2}), $\cA\cR_p(\hat{v}_\infty)\geq \sigma> \gamma_1^-(C_p)$ for every singular point $p$. Hence by corollary \ref{Cor_Linear, Loc Growth Bd and function decomp} (2) and corollary \ref{Cor_Linear, Growth est for G and vaiphi_1}, $\hat{v}_\infty \in \scB_0(\Sigma)$. \\
      \textbf{Claim}: \[
        \int_{\Sigma} |\nabla \hat{v}_\infty|^2 - (|A_{\Sigma}|^2 + Ric_M(\nu, \nu))\hat{v}_\infty^2 \ dx  =  \int_{\Sigma} -\hat{v}_\infty\cdot L_{\Sigma}\hat{v}_\infty \geq 0    \]
      Note that combine this claim with (\ref{Rigidity c-Min_2 diff in/equ for renormalized lim c-Min}), we see that $\hat{v}_{\infty}$ actually satisfies the Jacobi field equation $L_{\Sigma} \hat{v}_\infty = 0$ on $\Sigma$. Together with that $\hat{v}_\infty \in \scB_0(\Sigma)$, this contradicts to the non-degeneracy of $L_\Sigma$ and completes the proof of lemma \ref{Lem_Rigidity c-Min}. \\
      
\noindent    \textbf{Proof of the claim}: By lemma \ref{Lem_Pre, Vol element and mean curv of graph II},
      \begin{align}
      \begin{split}
       0& \leq \|V_j\|(M)-\scH^n(\Sigma) \leq \int_{\cU_j} F(x, v_j, \nabla v_j) - 1\ d\scH^n(x)  + \sum_{p\in Sing(\Sigma)} \|V_j\|(B^M_{8r_j(p)}(p)) \\
        & \leq \int_{\cU_j}\ dx \int_0^1 \partial_p F(x, sv_j, s\nabla v_j)\cdot \nabla v_j + \partial_z F(x, sv_j, s\nabla v_j)\cdot v_j\ ds  + \sum_{p\in Sing(\Sigma)} \|V_j\|(B^M_{8r_j(p)}(p)) \\
        & = \underbrace{ \frac{1}{2}\int_{\cU_j} \partial_p F(x, v_j, \nabla v_j)\cdot \nabla v_j + \partial_z F(x, v_j, \nabla v_j)\cdot v_j\ dx }_{(I)} + \ \scE + \underbrace{ \sum_{p\in Sing(\Sigma)} \|V_j\|(B^M_{8r_j(p)}(p)) }_{(II)}
      \end{split} \label{Rigidity c-Min_ 0 leq I + scE + II}
      \end{align}
      where by the fact that $\int_0^1 f(s)\ ds - (f(0)+f(1))/2 = (\int_0^1 f''(s)(s^2-s)\ ds)/2$  and lemma \ref{Lem_Pre, Vol element and mean curv of graph II}, we have
      \begin{align}
      \begin{split}
       |\scE| & \leq  C\int_{\cU_j} \sup_{s\in [0,1]}  \Big[ |\partial^3_{ppp} F(x, sv_j, s\nabla v_j)||\nabla v_j|^3 + |\partial^3_{ppz} F(x, sv_j, s\nabla v_j)||\nabla v_j|^2|v_j|  \\
        & \ \ \ \ \ \ \ \ \ \ \ + |\partial^3_{pzz} F(x, sv_j, s\nabla v_j)||\nabla v_j||v_j|^2 + |\partial^3_{zzz} F(x, sv_j, s\nabla v_j)||v_j|^3 \Big] \ dx \\
        & \leq  C\int_{\cU_j} (|\nabla v_j| + |v_j|/\rho)^3\ dx = o(c_j^2)
      \end{split}  \label{Rigidity c-Min_scE = o(c_j^2)}
      \end{align}
      And by (\ref{Rigidity c-Min_r_j(p)^(2sigma - 2)leq c_j^2}) and volume monotonicity for stationary varifolds in $M$,
      \begin{align}
       (II)\leq \sum_{p\in Sing(\Sigma)} C(\tau)r_j(p)^n\cdot \|V_j\|(B^M_\tau (p))  = o(c_j^2)   \label{Rigidity c-Min_II = o(c_j^2)}
      \end{align}
      
      Now to estimate $(I)$, first observe by \[
       |\partial_p F(x, z, p) - p| + \rho \big|\partial_z F(x, z, p) + \big(|A_{\Sigma}|^2 + Ric_M(\nu, \nu) \big) z \big| \leq C(\tau)(|p| + |z|/\rho(x))^2    \]
      Hence,
      \begin{align}
      \begin{split}
       (I) & \leq \frac{1}{2}\int_{\cU_j} |\nabla v_j|^2 - (|A_{\Sigma}|^2 + Ric_M(\nu, \nu))v_j^2\ dx  + C(\tau)\int_{\cU_j} (|\nabla v_j|+ |v_j|/\rho)^3 \\
        & \leq \frac{1}{2}\int_{\cU_j} |\nabla v_j|^2 - (|A_{\Sigma}|^2 + Ric_M(\nu, \nu))v_j^2\ dx + o(c_j^2) 
      \end{split}  \label{Rigidity c-Min_I leq Q_Sigma(v_j, v_j) + o(c_j^2)}
      \end{align}
      Note that by (\ref{Rigidity c-Min_|v_j| leq rho^sigma |v_j|_L^2}), as $j\to \infty$, \[
       \int_{\cU_j} |\nabla (v_j/c_j)|^2 - \big(|A_{\Sigma}|^2 + Ric_M(\nu, \nu)\big)(v_j/c_j)^2\ dx \to \int_{\Sigma} |\nabla \hat{v}_\infty|^2 - \big(|A_{\Sigma}|^2 + Ric_M(\nu, \nu)\big)\hat{v}_{\infty}\ dx    \]
      Therefore, combined with (\ref{Rigidity c-Min_I leq Q_Sigma(v_j, v_j) + o(c_j^2)}), (\ref{Rigidity c-Min_II = o(c_j^2)}) and (\ref{Rigidity c-Min_scE = o(c_j^2)}), by dividing (\ref{Rigidity c-Min_ 0 leq I + scE + II}) by $c_j^2$ and taking $j\to \infty$, the proof of claim is completed.
     \end{proof}

    \section{Applications and further discussions} \label{Sec, Apps and Discussion}
    
    We first finish the proof of theorem \ref{Thm_Intro, main thm, deform} and \ref{Thm_Intro, generic smooth}.
    
    \begin{proof}[Proof of theorem \ref{Thm_Intro, main thm, deform}.]
     First recall that by remark \ref{Rem_One-sided Perturb_MainThm Deform true in stable case}, theorem \ref{Thm_Intro, main thm, deform} is already proved when $\Sigma$ is non-degenerate stable. We now assume $ind(\Sigma) \geq 1$.
     
     Since $\Sigma \subset (M, g)$ in theorem \ref{Thm_Intro, main thm, deform} satisfies (S1)-(S3) in section \ref{Sec, Strong min-max prop} in this case, we can choose $K, \tau$ and $\{U_j\}_{j\geq 1}$ as in (S4).
     
     Let $k_1 \geq 1$ be given in theorem \ref{Thm_Strong min-max, Main}, $k_2\geq 1$ be given in lemma \ref{Lem_Rigidity c-Min}; Let $\bar{k}\geq max\{k_1, k_2(1/2)\}$ be fixed; $R_\Sigma \in (0, 1)$ is also fixed such that $spt(\Phi_0(x))\subset U_{\bar{k}+1}$ for all $x\in Clos(\BB^I_{R_\Sigma})$, where $\Phi_0$ is specified in theorem \ref{Thm_Strong min-max, Main}.  Also let $\eta\in C^\infty_c(U_{\bar{k}+1}, [0,1])$ be a fixed cut off function with  $\eta|_{U_{\bar{k}+2}}=1$.
     
     Let $\Pi$ be the family of sequences of $\bfF$-continuous maps $S= \{\Phi_i: Clos(\BB^I_{R_\Sigma}) \to \cZ_n(Clos(U_{\bar{k}}))\}_{i\geq 1}$ such that \[
      \sup\{\bfF(\Phi_i(z), \Phi_0(z)): z\in \partial \BB_{R_\Sigma}^I\}\to 0 \ \ \ \text{ as }i\to \infty    \]
     Hence by theorem \ref{Thm_Strong min-max, Main} we have \[
      \bfL(\Pi; g) := \inf_{S = \{\Phi_i\}_{i\geq 1}\in \Pi} \limsup_{i\to \infty} \sup_{z\in \BB^I_{R_\Sigma}} \bfM^g(\Phi_i(z)) = \scH^n(\Sigma) > \bfL_{\Phi_0, \partial ; g} := \sup_{z\in \partial \BB^I_{R_\Sigma}}\bfM^g(\Phi_0(z))     \]
     Observe that $\bfL(\Pi; g)$ and $\bfL_{\Phi, \partial ;g}$ is Lipschitz in $g$ in $C_{loc}^0$ sense. Therefore, $\exists\ \epsilon_1 >0$ such that for every $\|g'-g\|_{C^4} \leq \epsilon_1$, if write $g'_\eta:= \eta g' + (1-\eta)g$, then $\bfL(\Pi; g'_\eta) > \bfL_{\Phi_0, \partial; g'_\eta}$. By \cite[Theorem 4.3]{WangZH2020_Obst}, there exists a constrained embedded minimal hypersurface $V_{g'}$ in $(Clos(U_t), g'_\eta)$ with $\|V_{g'}\|(M) = \bfL(\Pi; g'_{\eta})$. 
     Moreover, by \cite[Remark 4.4]{WangZH2020_Obst}, there's a constant $N_I$ only depending on $I$ such that, we can choose $V_{g'}$ satisfying the following,
     
     \textsl{ For every $p\in M$, if $\{A_l = A^M_{r_l, s_l}(p)\}_{l=1}^{N_I}$ be a family of disjoint annuli in $M$ centered at $p$ with $s_l> r_l > 4s_{l+1}$; Then $V$ is a constrained embedded} stable \textsl{minimal hypersurface in at least one of $\{A_l\}$. } 
     
     From this fact, we see that if $g_j' \to g$ in $C^4$, then $V_{g_j'}$ will subconverges in varifold sense to some constrained stationary integral varifold $V_g$ in $Clos(U_{\bar{k}}, g)$ with the same mass as $\Sigma$, which is stable locally in every sufficiently small annuli. Hence by \cite[Theorem 2.11 \& 3.2]{WangZH2020_Obst}, $V_g$ is constrained embedded stable minimal hypersurface; And by lemma \ref{Lem_Rigidity c-Min}, we see that $V_g = |\Sigma|$. 
     
     Also note that since $V_{g'_j}$ has uniformly bounded mean curvature (bounded by the mean curvature of $\partial U_{\bar{k}}$), $spt(V_{g'_j})\to spt(V_g) = \Sigma$ in Hausdorff distant sense. Therefore, for $g'$ sufficiently close to $g$, $spt(V_{g'})\subset U_{\bar{k}+2}$. In particular, $V_{g'}$ is a locally stable minimal hypersurface in $(M, g')$ and can be arbitrarily close to $\Sigma$ by taking $\|g'-g\|_{C^4}$ small enough. This completes the proof of theorem \ref{Thm_Intro, main thm, deform}.
    \end{proof}
    
    \begin{proof}[Proof of theorem \ref{Thm_Intro, generic smooth}.]
     Note that for an arbitrary $f\in C^4(M)$ such that $q(f\cdot g) := -\nu(f)\cdot n/2$ is not identically $0$ on $\Sigma$, and any family of Riemannian metric $g_t = g(1+tf) + o_4(t)$, let $\Sigma_t$ be the locally stable minimal hypersurfaces constructed in theorem \ref{Thm_Intro, main thm, deform} in $(M, g_t)$ which converges to $\Sigma$ as $t\to 0$.
     
     By theorem \ref{Thm_Ass Jac, Main thm}, each subsequence of $\{(|\Sigma_t|, g_t)\}$ induces a generalized Jacobi field $u$ on $\Sigma$, satisfying $L_\Sigma u = c\cdot q(f g)$ for some $c\geq 0$.     
     Moreover, since each tangent cone of $\Sigma$ is strictly minimizing and strictly stable, by taking $\sigma \in (-(n-2)/2, \inf_{p\in Sing(\Sigma)}\gamma_1^+(C_p))$ in corollary \ref{Cor_Ass Jac, Growth rate bd near strictly minimizing singularity}, we have $\cA\cR_{p}(u)\geq \sigma$ for every $p\in Sing(\Sigma)$. Hence by lemma \ref{Lem_Linear, Growth rate charact}, corollary \ref{Cor_Linear, Growth est for G and vaiphi_1} and \ref{Cor_Linear, Loc Growth Bd and function decomp}, $u\in \scB_0(\Sigma)$. Hence by nondegeneracy of $L_{\Sigma}$ and proposition \ref{Prop_Linear, Basic prop for L^2-noncon} (3), $u = L_{\Sigma}^{-1}(c\cdot q(fg))$ is unique (up to a renormalization $c\neq 0$).
     
     Now let \[
      \scV_{reg}:= \{f\in C^4(M):\cA\cR_p( L^{-1}_\Sigma \big( q(fg) \big)\geq \gamma_1^+(C_p) \text{ for every singularity }p\}   \]
     By lemma \ref{Lem_Genericness of Sol with Top Growth Rate}, $C^4(M)\setminus \scV_{reg}$ is meager in $C^4(M)$; By corollary \ref{Cor_Ass Jac, Max growth of ass Jac => smooth}, for every $f\in \scV_{reg}$, the minimal hypersurfaces in $(M, g(1+tf)+o_4(t))$ are smooth for $|t|<<1$. 
    \end{proof}
    
    We shall also point out that the strict minimizing assumption in theorem \ref{Thm_Intro, main thm, deform} can not be dropped. 
    \begin{Prop} \label{Prop_App, Deform Thm fails without minimizing assump}
     There exists a Riemannian manifold $(M, g)$ and a two-sided strictly stable minimal hypersurface $(\Sigma, \nu)\subset (M, g)$ with only strongly isolated singularities satisfying (1) and (3) but not (2) in theorem \ref{Thm_Intro, main thm, deform}, together with a neighborhood $U$ of $Clos(\Sigma)$ and a family of metrics $g_j \to g$ in $C^\infty $ on $M$, such that there's no stationary integral varifold supported in $U$ under each $g_j$.
    \end{Prop}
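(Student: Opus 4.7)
The strategy is to exploit the failure of the mean-convex foliation of Theorem~\ref{Thm_One-sided Perturb, Main Thm, mean convex foliation} in the absence of strict minimizing: one builds a foliation of a neighborhood of $\Sigma$ by leaves that are $g$-minimal away from a singular leaf, and then uses a first-order metric perturbation to give every leaf a definite-sign mean curvature, preventing any stationary varifold from existing.

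I would first choose a regular strictly stable minimal hypercone $C\subset\RR^{n+1}$ which fails to be strictly minimizing---either a cone which is minimizing only in the weaker sense of Theorem~\ref{Thm_Pre, H-S foliation}, so the Hardt-Simon leaves $\Sigma_\pm$ decay at the slow rate $\gamma_1^-$ rather than $\gamma_1^+$, or a cone which is not minimizing at all but admits one-sided minimizers as in~\cite{LiuZH19_OneSided}. Glue $C\cap\BB^{n+1}_1$ smoothly into a closed Riemannian manifold $(M,g_0)$, matching the cone to a smooth minimal hypersurface outside a small geodesic ball at the cone point; the result is a closed two-sided minimal hypersurface $\Sigma_0$ with one strongly isolated singularity $p$ and tangent cone $C_p=C$, strictly stable in its regular part. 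Proposition~\ref{Prop_Linear, Basic prop for L^2-noncon}~(5)--(6) then lets me perturb the metric away from a neighborhood of $p$ so that $(\Sigma,g)$ is nondegenerate and $-L_\Sigma$ is strictly positive on $\scB_0(\Sigma)$, giving (1) and (3) while violating (2).

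Next I would build a one-parameter family $\{\Sigma^t\}_{t\in(-\delta,\delta)}$ of piecewise smooth hypersurfaces foliating a neighborhood $U$ of $Clos(\Sigma)$, with $\Sigma^0=\Sigma$ and each $\Sigma^t$ with $t\neq 0$ a smooth $g$-minimal hypersurface. The exterior pieces come from Theorem~\ref{Thm_One-sided Perturb, Ext graph with growth est}, prescribing boundary values obtained from rescalings of the Hardt-Simon leaves $\Sigma_\pm$; the interior pieces inside $\cB_\tau(p)$ are these rescaled Hardt-Simon leaves themselves. The essential point, and the reason the construction works in the absence of strict minimizing, is that both the Hardt-Simon leaves and the Green's function of $L_\Sigma$ at $p$ decay at the same rate $\gamma_1^-$ (Corollary~\ref{Cor_Linear, Growth est for G and vaiphi_1} and Theorem~\ref{Thm_Pre, H-S foliation}), so the transition through $t=0$ can be absorbed by adding a signed multiple of the Green's function to the exterior solution, producing a $C^0$ foliation of $U$ by $g$-minimal leaves. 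Pick a unit normal $\nu^t$ along $\Sigma^t$ varying continuously in $t$.

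Finally, I would choose a symmetric $2$-tensor $\beta$ supported in $U$ with $q(\beta)>0$ on $\Sigma$, and set $g_j:=g+c_j\beta$ with $c_j\searrow 0_+$. By Lemma~\ref{Lem_Pre, Vol element and mean curv of graph I}, the mean curvature of $\Sigma^t$ under $g_j$ has the expansion $H_{g_j}(\Sigma^t)\cdot\nu^t= c_j\cdot q(\beta)|_{\Sigma^t}+o(c_j)$ uniformly on compact subsets of the regular parts of $\Sigma^t$, which for $j\gg 1$ is strictly positive throughout the foliation. Then any stationary integral varifold $V$ in $Clos(U_j)$ under $g_j$ (where $U_j\subset U$ is bounded by $\Sigma^{\pm\delta_j}$ for $\delta_j$ chosen compatibly) must touch some innermost leaf $\Sigma^{t^*}$ of the foliation, contradicting the strong maximum principle of~\cite{SolomonWhite89_StrongMax}. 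The main obstacle is constructing the foliation $\{\Sigma^t\}$ through the singular leaf in a monotone, uniformly controlled way; this requires a two-sided refinement of Theorem~\ref{Thm_One-sided Perturb, Ext graph with growth est} and uses essentially the equality of the Hardt-Simon decay rate with the Green's function rate at $p$. In the strictly minimizing case, the rate mismatch $\gamma_1^- < \gamma_1^+$ obstructs precisely this matching, which is why Theorem~\ref{Thm_Intro, main thm, deform} holds there.
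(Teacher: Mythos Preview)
Your proof has a genuine gap at the foliation step. You assert that the leaves $\Sigma^t$ for $t\neq 0$ can be taken to be \emph{smooth} $g$-minimal hypersurfaces, with the exterior graph of Theorem~\ref{Thm_One-sided Perturb, Ext graph with growth est} glued to a rescaled Hardt--Simon leaf and ``the transition through $t=0$ absorbed by adding a signed multiple of the Green's function.'' But the exterior graph and the interior cap each solve the nonlinear minimal-surface equation $\scM_\Sigma u=0$; the Green's function only solves the linearized equation $L_\Sigma G=-\Lambda G$, and adding it destroys minimality. What the construction of Theorem~\ref{Thm_One-sided Perturb, Main Thm, mean convex foliation} actually produces is a \emph{piecewise} minimal surface meeting along a wedge, and the entire force of that theorem lies in the \emph{sign} of the wedge (Claim~2 in its proof). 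The rate equality $\gamma_1^-=\gamma_1^-$ you invoke only says the leading asymptotics of the exterior blow-up and of the interior cap are of the same order; it does not force their normal derivatives to agree on the gluing sphere, so the wedge persists and you give no argument for its direction. Your metric-perturbation step then also breaks down: even if $c_j\,q(\beta)>0$ on the smooth parts of each leaf, the delta-measure mean curvature concentrated on the wedges is of order one in $c_j$ and could point either way, so the Solomon--White maximum principle does not apply.

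The paper's route isolates exactly this point. It takes the specific cone $C^{5,1}\subset\RR^8$, which is strictly stable but minimizing only on one side $E_+$, and proves Lemma~\ref{Lem_App, H-S foliation for stable cones}: the Plateau minimizer $S$ with boundary $C\cap\partial\BB_1^{n+1}$ is smooth, lies in the non-minimizing side $Clos(E_-)$, foliates $E_-$ under rescaling, and near its boundary is the graph of some $h\le 0$ with $h(1,\cdot)=0$ and $\partial_r h(1,\cdot)>0$. That last inequality is the missing ingredient: using $S$ (in place of a Hardt--Simon leaf) as the interior cap on the $E_-$ side, one reruns the argument of Theorem~\ref{Thm_One-sided Perturb, Main Thm, mean convex foliation} and obtains a piecewise-smooth foliation $\{\Sigma_t\}_{t\in(-1,1)}$ of a neighborhood of $\Sigma$ in which the generalized mean curvature vector of every $\Sigma_t$, $t\neq 0$, points in the \emph{same} direction. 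A suitable choice of $g_j\to g$ then rules out any stationary varifold in $U$ by the strong maximum principle. The rate-matching heuristic you propose plays no role; what matters is the one-sided failure of minimizing and the Hopf-type sign $\partial_r h(1,\cdot)>0$ established in Lemma~\ref{Lem_App, H-S foliation for stable cones}.
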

     Recall that by \cite{Simons68, Lin87, Simoes74, Lawlor91}, the Simons cone $C^{5,1}\subset \RR^8$ over $\SSp^5\times \SSp^1$ is strictly stable but only minimizing in one side. 
     Using this cone as singularity model and applying the following lemma \ref{Lem_App, H-S foliation for stable cones}, one can derive proposition \ref{Prop_App, Deform Thm fails without minimizing assump} using the same argument as in theorem \ref{Thm_One-sided Perturb, Main Thm, mean convex foliation} to construct piecewise smooth mean convex hypersurfaces $\{\Sigma_t\}_{t\in (-1,1)}$ foliating $U$ with $\Sigma_0 = \Sigma$, and such that the mean curvature vector of $\Sigma_t$ points into the same direction for $t\neq 0$.
    We leave the detail of this proof to readers.
    
    \begin{Lem} \label{Lem_App, H-S foliation for stable cones}
     Let $C\subset \RR^{n+1}$ be a stable minimal hypercone which is minimizing only in one-side. Let $E_-$ be the component of $\RR^{n+1}\setminus C$ in which $C$ is not minimizing, $\nu_C$ be the normal field pointing away from $E_-$.
     
     Then the area-minimizer $S$ among all integral currents with boundary $\partial [C\cap \BB_1^{n+1}]$ is unique and is a smooth minimal hypersurface supported in $Clos(E_-)$ with boundary $C\cap \partial \BB^{n+1}_1$.
     Moreover,
     \begin{enumerate}
     \item[(1)] $\{r\cdot S\}_{r>0}$ foliates $E_-$;
     \item[(2)] $S$ is a minimal graph over $C$ near $\partial S$ of some function $h$ satisfying 
      \begin{align}
        h(r, \omega)\leq 0 , \ \ \  h(1, \omega) = 0;\ \ \   \partial_r h(1,\omega) >0 \ \ \ \ \forall  \omega\in C\cap \partial \BB_1^{n+1}   \label{App_partial_r h >0}     
      \end{align}
     \end{enumerate}  
    \end{Lem}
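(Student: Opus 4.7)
The proof follows the template of Hardt-Simon's foliation theorem (Theorem \ref{Thm_Pre, H-S foliation}), adapted to the one-sided minimizing setting. The plan is to establish, in order: (i) existence of a minimizer $S$ supported in $Clos(E_-)$; (ii) smoothness of $S$ and the graph representation near $\partial S$ with $h\leq 0$, $h(1,\omega)=0$, and $\partial_r h(1,\omega)>0$; (iii) the foliation $\{r\cdot S\}_{r>0}$ of $E_-$ together with uniqueness of $S$.

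For (i), a global minimizer $S$ exists by standard compactness and lower semicontinuity in integral currents \cite{Federer69, Simon83_GMT}. Since $C$ is not minimizing on the $E_-$ side, there is a competitor supported in $Clos(\BB_1^{n+1})\cap Clos(E_-)$ with boundary $\partial[C\cap \BB_1^{n+1}]$ whose mass is strictly less than $\bfM([C\cap \BB_1^{n+1}])$; hence $\bfM(S)<\bfM([C\cap \BB_1^{n+1}])$. To force $spt(S)\subset Clos(E_-)$, I would use a folding argument: writing $S-[C\cap \BB_1^{n+1}]=\partial P$ for some $P\in \bfI_{n+1}(\RR^{n+1})$, splitting $P=P_+ + P_-$ along $C$ via the indicator decomposition across $C$, and forming the competitor $T_-:=[C\cap \BB_1^{n+1}]+\partial P_-\subset Clos(E_-)$. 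One-sided minimality of $C$ on $E_+$ bounds $\bfM([C\cap \BB_1^{n+1}]+\partial P_+)$ from below by $\bfM([C\cap \BB_1^{n+1}])$, and a mass comparison tracking the $C$-interface yields $\bfM(T_-)\leq \bfM(S)$. Hence the minimum is attained on some $S$ with $spt(S)\subset Clos(E_-)$, after which the strong maximum principle of \cite{SolomonWhite89_StrongMax, Ilmanen96} ensures $spt(S)\cap C\cap \BB_1^{n+1}=\emptyset$ in the interior.

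For (ii), interior regularity of $S$ away from $\partial S$ follows from the codimension-one theory for area-minimizing integral currents, giving smoothness up to a singular set of dimension at most $n-7$ \cite{Federer69, SchoenSimon81}. Boundary regularity at $\partial S=C\cap\partial\BB_1^{n+1}$ (a smooth submanifold of $\partial\BB_1^{n+1}$) comes from Allard's boundary regularity theorem \cite{Allard72, Simon83_GMT} after checking the unit-density hypothesis, which follows from the boundary monotonicity formula and a comparison with the density of $[C\cap \BB_1^{n+1}]$. Thus $S=graph_C(h)$ for a smooth function $h$ on a one-sided neighborhood of $C\cap \partial\BB_1^{n+1}$ in $C$; the containment $S\subset Clos(E_-)$ gives $h\leq 0$ and $\partial S\subset C$ gives $h(1,\omega)=0$. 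The strict inequality $\partial_r h(1,\omega)>0$ then follows from the Hopf boundary lemma applied to the quasilinear elliptic minimal surface equation for $h$: since $h\leq 0$, $h\equiv 0$ on the boundary arc $\{r=1\}$, and $h\not\equiv 0$ (as $S\neq [C\cap \BB_1^{n+1}]$ by the strict mass inequality), the outward normal derivative is strictly positive.

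For (iii), by scale invariance of $C$, every $r\cdot S$ is the analogous minimizer in $\BB_r^{n+1}$ with boundary $C\cap\partial\BB_r^{n+1}$; hence each $r\cdot S$ is smooth, supported in $Clos(E_-)$, disjoint from $C\cap \BB_r^{n+1}$ in the interior, and meets $C\cap \partial\BB_r^{n+1}$ transversally by (ii). The strong maximum principle of \cite{SolomonWhite89_StrongMax, Ilmanen96} prevents $r_1\cdot S$ and $r_2\cdot S$ from touching in $E_-$ when $r_1\neq r_2$, and combined with the boundary transversality this produces a foliation of $E_-$, exactly as in Theorem \ref{Thm_Pre, H-S foliation}. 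Uniqueness follows: any second global minimizer $S'$ lies in $Clos(E_-)$ by (i), hence coincides with a leaf of the foliation, and matching the boundary on $\partial\BB_1^{n+1}$ forces $r=1$, i.e.\ $S'=S$. The most delicate step will be the folding argument in (i), which requires a careful current-theoretic splitting along $C$ and a mass comparison that correctly tracks the interface terms; a secondary technical issue is verifying the unit-multiplicity hypothesis at the boundary in order to invoke Allard's boundary regularity in (ii).
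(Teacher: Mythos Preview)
Your proposal has a genuine gap: you never establish that $S$ is \emph{entirely} smooth, only that its interior singular set has Hausdorff dimension at most $n-7$. The lemma asserts full smoothness, and this is essential for the application in Proposition~\ref{Prop_App, Deform Thm fails without minimizing assump}. There is no obvious barrier available here to rule out interior singularities: since $C$ is \emph{not} minimizing in $E_-$, Lemma~\ref{Lem_Ass Jac, Reg of infty one-side perturb} shows there is no Hardt--Simon leaf in $E_-$ to compare against, so the standard ``slide the foliation'' argument for smoothness does not apply.

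The paper closes this gap by reversing your order of operations. It first establishes the foliation $\{r\cdot S\}_{r>0}$ of $E_-$ using the strong maximum principle of \cite{Simon87_Max, Ilmanen96}, which is valid for \emph{singular} stationary hypersurfaces, so no smoothness is needed at that stage. The foliation then gives a sign on the Jacobi field $u:=\nu_S\cdot\nabla_{\RR^{n+1}}|x|^2/2=x\cdot\nu_S$: one gets $0\le u\le 1$ on $S$, with $u\not\equiv 0$. Strong maximum principle for $L_S$ forces $u>0$ on the regular part, and then Lemma~\ref{Lem_Pre, Diverge of pos Jac field} (positive super\-solutions of $L_S^s$ diverge at singularities) combined with the bound $|u|\le |x|$ forces $Sing(S)=\emptyset$. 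The same field $u$ then does the work of your Hopf argument: $u<1$ on $S$ gives the graph representation near $\partial S$, and Hopf applied to $u$ (rather than to $h$) gives $u>0$ on $\partial S$, which is exactly $\partial_r h(1,\omega)>0$.

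Two smaller remarks. Your folding argument in (i) is more elaborate than necessary: once one knows $C$ is minimizing in $Clos(E_+)$, replacing $S\llcorner E_+$ by the corresponding piece of $C$ is a competitor of no greater mass, so one may take $spt(S)\subset Clos(E_-)$ directly; the paper does this in one line. For boundary regularity the paper invokes \cite{HardtSimon79_Bdy_Reg} rather than Allard; either works here.
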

    \begin{proof}
     First note that since $C$ is area-minimizing in the other side, by comparing mass, $spt(S) \subset Clos(E_-)$. 
     Hence by strong maximum principle and that $C$ isn't minimizing in $E_-$, we have $spt(S) \cap C = C\cap \partial \BB_1^{n+1}$. In particular, $dist(0, spt(S))>0$.
     Also, by \cite{HardtSimon79_Bdy_Reg}, $S$ is smooth near boundary. 
     We may abuse the notation and do not distinguish between $S$ and $spt(S)$.
     
     The fact that $\{r\cdot S\}_{r>0}$ foliates $E_-$ follows by considering $\sup\{r\in (0 ,1): r\cdot S\cap S = \emptyset\}$ and using strong maximum principle \cite{Simon87_Max, Ilmanen96}. The uniqueness of such $S$ also follows from the foliation structure.
     
     Consider the Jacobi field $u:= \nu_{S}\cdot \nabla_{\RR^{n+1}}|x|^2/2$, where $\nu_{S}$ is the normal field of $S_-$ pointing away from $C$. Then since $\{r\cdot S\}_{r>0}$ foliates $E_-$, $0\leq u \leq 1$ and is not identically $0$. By strong maximum principle for $L_{S}$, $u>0$ on $int(S)$, hence by lemma \ref{Lem_Pre, Diverge of pos Jac field}, $Sing(S)=\emptyset$.
     
     Also by weak maximum principle for $\Delta_{S}$, $u<1$ on $\partial S$. Hence $S$ is a minimal graph over $C$ near $\partial S$. By Hopf boundary lemma, $u>0$ on $\partial S$ and hence (\ref{App_partial_r h >0}) holds.
    \end{proof}

     

    \paragraph{\textbf{List of Open Problems}}
    To finish this article, we list some open problems related with singular minimal hypersurface. Many of them are well known. \\
    
\noindent    \textbf{$\bullet$ Singularity Model.}
     Let $C\subset \RR^{n+1}$ be a stable minimal hypercone. Recall that $C$ is called \textsl{regular} if it has smooth cross section. Let 
     \begin{align*}
      \cM_k(C):= \{\Sigma \subset \RR^{n+1} & \text{ locally stable minimal hypersurface, with codim}\geq 7 \text{ singular set}\\
      &\text{ and the tangent varifold at }\infty \text{ is k-multiples of }C \}  
     \end{align*}
     and let 
     \begin{align*}
      \cM^s(C) & := \{\Sigma\in \bigcup_{k\geq 1}\cM_k(C): \Sigma \text{ is stable in }\RR^{n+1}\} \\
      \cM^a(C) & := \{\Sigma\in \cM^s(C): \Sigma \text{ is area-minimizing in }\RR^{n+1}\}  \\
      \cM^{a, loc}(C) & := \{\Sigma\in \cM^s(C): \Sigma \text{ is area-minimizing in }U \text{ for some open subset }U\supset \Sigma\}         
     \end{align*}
     Here are some problems regarding these classes of minimal hypersurfaces.
     \begin{enumerate}      
      \item[(P1.1)] Find finite index minimal hypersurfaces in $\cM_k(C)$ for $k\geq 2$. It seems nontrivial even when $C=\RR^n$. When $n=2$, this has long been studied. See \cite{ChodoshKetoverMaximo17_Ind_Top_End_I, ChodoshMaximo18_Ind_Top_End_II} for a great survey and recent developments; When $n\geq 3$, the only connected example the author know is the higher dimensional catenoid. \cite{Schoen83_Sym_Uniq_Emb} show that this is unique up to scaling in $\cM_2(\RR^n)$; \cite{LiChao17_Ind} studied the relations between k (number of ends) and the index for a locally stable minimal hypersurface in $\cM_k(\RR^n)$. Similar index bound in terms of numbers of ends is expected for minimal hypersurfaces in $\cM_k(C)$.
      
      \cite{Mazet14_MS_Asym_Simons, ABPRS05_Sym_MS} construct catenoidal example in $\cM_2(C)$ for quadratic cones $C$. Can we compute its index? Can we find similar examples for arbitrary stable/minimizing cones? Are they unique among the connected minimal hypersurfaces in $\cM_2(C)$ up to scaling? 
      
      \cite{CaoShenZhu97_Stab_One_End} shows that a connected stable minimal hypersurface in $\RR^{n+1}$ has only one end. In particular, this implies for regular cone $C$, every $\Sigma\in \cM_{\geq 2}(C)$ is unstable. Is this true when $C$ has more singularities? 
      
      \item[(P1.2)] Find $\Sigma\in \cM_1(C)$ with index $\geq 1$? 
      
      When $C = C^{5,1}\subset \RR^8$ be the Simons cone over $\SSp^5\times \SSp^1$, \cite{Mazet14_MS_Asym_Simons} constructed two families of $O(6)\times O(2)$-invariant smooth minimal hypersurfaces asymptotic to $C$ near infinity from different side, each is generated by homothety. Recall that by \cite{Simoes74, Lawlor91}, $C^{5,1}$ is minimizing only in one side, hence by theorem \ref{Thm_Ass Jac, Asymp Thm of ext. min. graph}, one of the two families above is unstable. Does it have index $1$?

      Can we find such index $\geq 1$ example asymptotic to an arbitrary one-sided minimizing cone (In particular, for the isoparametric cone in $\RR^{16}$ mentioned in (1.1)) or more generally, a stable cone? Is it unique (up to scaling)? 
      Note that such higher index minimal hypersurface provides an example of a family of higher index minimal hypersurface multiplicity one converges to a strictly stable singular minimal hypersurface, which never happen in smooth case.
      
      \item[(P1.3)] In theorem \ref{Thm_Ass Jac, Asymp Thm of ext. min. graph}, we bound the asymptotic rate of any $\Sigma\in \cM_1^s(C)$ towards infinity provide $C$ is regular.
       In general, if $\Sigma\in \cM_1(C)$, can we bound its asymptotic rate at infinity toward $C$ in terms of its index? A precise and more ambitious conjecture is that when $C$ is regular, the asymptotic rate is bounded from below by $\gamma_1^-(C)$. This is related with the finiteness of associated Jacobi field when we allow index drop. See (P2.2) below.
            
      \item[(P1.4)] If $C$ is regular and strictly minimizing, with cross section $S= C\cap \SSp^n$, then by \cite{Chan97} and \cite{White89}, there's an $I:= ind(L_S)$ dimensional family (but probably not continuously parametrized) of minimal hypersurfaces in $\cM^a(C)$, where $L_S$ be the Jacobi operator of $S \subset \SSp^n$. 
      More precisely, the method in \cite{Chan97} actually shows that for strictly minimizing cones, there's an $\RR^I$ parametrized space of "ends", to each of which one can associate a closed subset of $\RR^{n+1}$, being either a minimizing hypersurface or a domain bounded by 2 disjoint minimizing hypersurfaces; Moreover, every $\Sigma\in \cM^a(C)$ with polynomial decaying rate near infinity towards $C$ lies in exactly one of these closed subset.
      Can we show that each of these closed sets is simply one minimizing hypersurfaces? A less ambitious conjecture may be that if one of such close subset has nonempty interior, show that it can be foliated by minimizing hypersurfaces. 
      
      Examples of area-minimizing hypercone has been constructed in \cite{Simons68, HsiangLawson71, FerusKarcherMunzner81, FerusKarcher85}, which are all isoparametric minimal cones; By \cite{HardtSimon85, SterlingWang94, TangZhang20}, every area minimizing hypercone in the family above is strictly minimizing and strictly stable.
      Can we determine these family of closed set in the examples above? \cite{SimonSolomon86, EdelenSpolaor19, Mazet14_MS_Asym_Simons} determine $\cM_1(C^{p,q})$ for Simons' cones $C^{p,q}$; \cite{Solomon90_Cubic_I, Solomon90_Cubic_II, Solomon92_Quartic} study the index for the cross section of cubic and quartic isoparametric hypercones, which is the first step toward this problem.
            
      \item[(P1.5)] When a minimizing hypercone $C$ has singularities other than the origin, is it expected that in general an analogue of \cite{HardtSimon85} is also valid (i.e. existence and uniqueness of smooth minimizing foliation with $C$ to be a leaf)? Perhaps one should first look at cylindrical hypercones and try to prove the uniqueness. \cite{Lohkamp18_Smoothing} proposed an argument towards this.
     \end{enumerate}

\noindent     \textbf{$\bullet$ First Order Model.}
     \begin{enumerate}
      \item[(P2.1)] Does every closed locally stable minimal hypersurface $\Sigma$ satisfies the $L^2$-nonconcentration property? This should be the starting point of linear theory for Jacobi operators.
      
      \cite{Lohkamp19_Potential_I, Lohkamp20_Potential_II} studied the Martin boundary for a $\cS$-adapted Schr\"odinger operator, can we do the same thing for the Jacobi operator to a stable minimal hypersurface? In particular, can we get similar asymptotic behavior of Green's functions like in section \ref{Subsec, Linear asymp near sing} when $Sing(\Sigma)$ is not necessarily isolated?
      
      \item[(P2.2)] Consider a family of pairwise different locally stable minimal hypersurfaces $\Sigma_j$ converges in varifold sense to a multiplicity one $\Sigma$ in $(M, g)$. Theorem \ref{Thm_Ass Jac, Main thm} tells us that when $ind(\Sigma_j) = ind(\Sigma)$, there exists a non-trivial Jacobi field $u\in C^\infty(\Sigma)$ associated to $\{\Sigma_j\}$.
       Is this true when $\Sigma$ has higher dimensional singularities?  Is this true when $\Sigma_j$ are of higher index than $\Sigma$? (This may be the first step to construct infinitely many closed embedded smooth minimal hypersurface in a generic eight manifold; And this is closely related with (P1.3) above.)
       When $Sing(\Sigma)$ is of higher dimension, this seems non-trivial even for edge-typed singularities.
     \end{enumerate}

\noindent     \textbf{$\bullet$ Neighborhood Foliation.}
     \begin{enumerate}
     \item[(P3.1)] Let $(\Sigma, \nu)\subset (M, g)$ be a closed two-sided locally stable minimal hypersurfaces. Can we find a (piecewise smooth) foliation of some neighborhood $U$ of $Clos(\Sigma)$ by hypersurfaces with mean curvature of fixed sign (may be different between different leaves)? 

      Theorem \ref{Thm_Intro, loc minimiz} confirms this under strong assumptions on singularities. It is expected that when some singularity of $\Sigma$ is not minimizing in one side, then there exists a family of mean concave collar neighborhood with boundary foliates some collar neighborhood of $\Sigma$.
      
      In \cite{BrayBrendleNeves10_Rigid_2Sphere, Song18_YauConj}, when $\Sigma$ is smooth and uniquely locally minimizing in one side, a smooth mean convex foliation for some collar neighborhood of $\Sigma$ is constructed and used to show the min-max solution in the manifold with cylindrical end must have portions inside the core manifold. Does similar smooth mean convex foliation exist when $Sing(\Sigma)\neq \emptyset$?
      
     \item[(P3.2)] A natural approach to (3.1) is by mean curvature flow. Can we construct mean convex ancient flow from or immortal flow towards $\Sigma$ when $Sing(\Sigma) \neq \emptyset$? Are they smooth when sufficiently close to $\Sigma$? See also \cite[Problem 5.7]{Brothers86_OpenProblems} and \cite{Ding20_MeanConvex_SelfExpander}. 
     \end{enumerate}

\noindent     \textbf{$\bullet$ Deformations and Local Moduli.}
     Let $(\Sigma, \nu)\subset (M, g)$ be a two-sided locally stable minimal hypersurface with only strongly isolated singularities.
      \begin{enumerate}
       \item[(P4.1)] In theorem \ref{Thm_Intro, main thm, deform}, we see the existence of nearby minimal hypersurfaces under perturbed metrics if (1)-(3) is satisfied by $\Sigma$. Assumption (3) is conjectured to be dropped.
        
        Are these nearby minimal hypersurfaces unique in a fixed metric?
        A local version of this problem seems also unclear, even for Simons cones: Given a regular area minimizing hypercone $C\subset \RR^{n+1}$, for sufficiently small $C^4$ perturbation $S'$ of $\partial \BB^{n+1} \cap C$, is the area-minimizing hypersurface with boundary $S'$ in $\RR^{n+1}$ unique? What about strictly minimizing or strictly stable cones? What about Simons cones?
       \item[(P4.2)] A less ambitious problem is, if $\{g_t\}_{t\in [-1,1]}$ is a smooth family of Riemannian metric sufficiently close to $g$, can we find a continuous family of locally stable minimal hypersurface $\{\Sigma_t \subset (M ,g_t)\}_{t\in [-1,1]}$ close to $\Sigma$?   
      \end{enumerate}

\bibliographystyle{alpha}

\bibliography{GMT}

\newcommand{\etalchar}[1]{$^{#1}$}
\begin{thebibliography}{BDGG69}

\bibitem[ABP{\etalchar{+}}05]{ABPRS05_Sym_MS}
Hil{\'a}rio Alencar, Abd{\^e}nago Barros, Oscar Palmas, J~Guadalupe Reyes, and
  Walcy Santos.
\newblock $o(m)\times o(n)$-invariant minimal hypersurfaces in
  $\mathbb{R}^{m+n}$.
\newblock {\em Annals of Global Analysis and Geometry}, 27(2):179--199, 2005.

\bibitem[All72]{Allard72}
William~K. Allard.
\newblock On the first variation of a varifold.
\newblock {\em Annals of Mathematics}, 95(3):417--491, 1972.

\bibitem[Alm75]{Almgren75}
F.~J. Almgren.
\newblock Existence and regularity almost everywhere of solutions to elliptic
  variational problems with constraints.
\newblock {\em Bull. Amer. Math. Soc.}, 81(1):151--154, 01 1975.

\bibitem[AST00]{Almgren00_BigReg}
Frederick~J Almgren, Vladimir Scheffer, and Jean~E Taylor.
\newblock {\em Almgren's big regularity paper: Q-valued functions minimizing
  Dirichlet's integral and the regularity of area-minimizing rectifiable
  currents up to codimension 2}, volume~1.
\newblock World scientific, 2000.

\bibitem[BBN10]{BrayBrendleNeves10_Rigid_2Sphere}
Hubert Bray, Simon Brendle, and Andr{\'e} Neves.
\newblock Rigidity of area-minimizing two-spheres in three-manifolds.
\newblock {\em Communications in analysis and geometry}, 18(4):821--830, 2010.

\bibitem[BDGG69]{BombieriDeGiorgiGiusti69_Bernstein}
Enrico Bombieri, Ennio De~Giorgi, and Enrico Giusti.
\newblock Minimal cones and the bernstein problem.
\newblock {\em Inventiones mathematicae}, 7(3):243--268, 1969.

\bibitem[BG72]{BombieriGiusti72_Harnack}
Enrico Bombieri and E~Giusti.
\newblock Harnack's inequality for elliptic differential equations on minimal
  surfaces.
\newblock {\em Inventiones mathematicae}, 15(1):24--46, 1972.

\bibitem[Bom82]{Bombieri82}
Enrico Bombieri.
\newblock Regularity theory for almost minimal currents.
\newblock {\em Archive for Rational Mechanics and Analysis}, 78(2):99--130,
  1982.

\bibitem[Bro86]{Brothers86_OpenProblems}
JE~Brothers.
\newblock Some open problems in geometric measure theory and its applications
  suggested by participants.
\newblock {\em Geometric Measure Theory and the Calculus of Variations},
  44:441, 1986.

\bibitem[CES17]{ColomboEdelenSpolaor17}
Maria Colombo, Nick Edelen, and Luca Spolaor.
\newblock The singular set of minimal surfaces near polyhedral cones.
\newblock {\em arXiv preprint arXiv:1709.09957}, 2017.

\bibitem[Cha97]{Chan97}
Claire~C Chan.
\newblock Complete minimal hypersurfaces with prescribed asymptotics at
  infinity.
\newblock {\em Journal fur die Reine und Angewandte Mathematik}, pages
  163--181, 1997.

\bibitem[CHS84]{CaffHardtSimon84}
Luis Caffarelli, Robert Hardt, and Leon Simon.
\newblock Minimal surfaces with isolated singularities.
\newblock {\em manuscripta mathematica}, 48(1-3):1--18, 1984.

\bibitem[CKM17]{ChodoshKetoverMaximo17_Ind_Top_End_I}
Otis Chodosh, Daniel Ketover, and Davi Maximo.
\newblock Minimal hypersurfaces with bounded index.
\newblock {\em Inventiones mathematicae}, 209(3):617--664, 2017.

\bibitem[CLS20]{ChodoshLioukumovichSpoloar20_GenericReg}
Otis Chodosh, Yevgeny Liokumovich, and Luca Spolaor.
\newblock Singular behavior and generic regularity of min-max minimal
  hypersurfaces.
\newblock {\em arXiv preprint arXiv:2007.11560}, 2020.

\bibitem[CM18]{ChodoshMaximo18_Ind_Top_End_II}
Otis Chodosh and Davi Maximo.
\newblock On the topology and index of minimal surfaces ii.
\newblock {\em arXiv preprint arXiv:1808.06572}, 2018.

\bibitem[CN13]{CheegerNaber13_Stra_Min_Surf}
Jeff Cheeger and Aaron Naber.
\newblock Quantitative stratification and the regularity of harmonic maps and
  minimal currents.
\newblock {\em Communications on Pure and Applied Mathematics}, 66(6):965--990,
  2013.

\bibitem[CSZ97]{CaoShenZhu97_Stab_One_End}
Huai-Dong Cao, Ying Shen, and Shunhui Zhu.
\newblock The structure of stable minimal hypersurfaces in $\mathbb{R}^n$.
\newblock {\em arXiv preprint dg-ga/9709001}, 1997.

\bibitem[Dey19]{Dey19_Cptness}
Akashdeep Dey.
\newblock Compactness of certain class of singular minimal hypersurfaces.
\newblock {\em arXiv preprint arXiv:1901.05840}, 2019.

\bibitem[Din20]{Ding20_MeanConvex_SelfExpander}
Qi~Ding.
\newblock Minimal cones and self-expanding solutions for mean curvature flows.
\newblock {\em Mathematische Annalen}, 376(1-2):359--405, 2020.

\bibitem[EH91]{EckerHuisken91_Int_Reg}
Klaus Ecker and Gerhard Huisken.
\newblock Interior estimates for hypersurfaces moving by mean curvature.
\newblock {\em Inventiones mathematicae}, 105(1):547--569, 1991.

\bibitem[ES19]{EdelenSpolaor19}
Nick Edelen and Luca Spolaor.
\newblock Regularity of minimal surfaces near quadratic cones.
\newblock {\em arXiv preprint arXiv:1910.00441}, 2019.

\bibitem[Fed69]{Federer69}
Herbert Federer.
\newblock {\em Geometric Measure Theory}, volume 153 of {\em Grundlehren der
  Math. Wiss.}
\newblock Springer-Verlag, New York, 1969.

\bibitem[FK85]{FerusKarcher85}
Dirk Ferus and Hermann Karcher.
\newblock Non-rotational minimal spheres and minimizing cones.
\newblock {\em Commentarii Mathematici Helvetici}, 60(1):247--269, 1985.

\bibitem[FKM81]{FerusKarcherMunzner81}
Dirk Ferus, Hermann Karcher, and Hans-Friedrich M{\"u}nzner.
\newblock Cliffordalgebren und neue isoparametrische hyperfl{\"a}chen.
\newblock {\em Mathematische Zeitschrift}, 177(4):479--502, 1981.

\bibitem[GT01]{GilbargTrudinger01}
David Gilbarg and Neil~S. Trudinger.
\newblock {\em {{E}}lliptic {{P}}artial {{D}}ifferential {{E}}quations of
  {{S}}econd {{O}}rder}.
\newblock Springer-Verlag, New York, 2001.
\newblock reprint of the 1998 edition.

\bibitem[HLJ71]{HsiangLawson71}
Wu-yi Hsiang and H~Blaine Lawson~Jr.
\newblock Minimal submanifolds of low cohomogeneity.
\newblock {\em Journal of Differential Geometry}, 5(1-2):1--38, 1971.

\bibitem[HS79]{HardtSimon79_Bdy_Reg}
Robert Hardt and Leon Simon.
\newblock Boundary regularity and embedded solutions for the oriented plateau
  problem.
\newblock {\em Annals of Mathematics}, 110(3):439--486, 1979.

\bibitem[HS85]{HardtSimon85}
Robert Hardt and Leon Simon.
\newblock Area minimizing hypersurfaces with isolated singularities.
\newblock {\em Journal f{\"u}r die reine und angewandte Mathematik},
  1985(362):102--129, 1985.

\bibitem[Ilm96]{Ilmanen96}
Tom Ilmanen.
\newblock A strong maximum principle for singular minimal hypersurfaces.
\newblock {\em Calculus of Variations and Partial Differential Equations},
  4(5):443--467, 1996.

\bibitem[IMN18]{IrieMarquesNeves18}
K.~Irie, F.~C. Marques, and A.~Neves.
\newblock Density of minimal hypersurfaces for generic metrics.
\newblock {\em Ann. of Math. (2)}, 187:963--972, 2018.

\bibitem[Law72]{Lawson72}
Herbert~Blaine Lawson.
\newblock The equivariant plateau problem and interior regularity.
\newblock {\em Trans. A.M.S.}, 1972.

\bibitem[Law91]{Lawlor91}
Gary~Reid Lawlor.
\newblock {\em A sufficient criterion for a cone to be area-minimizing}, volume
  446.
\newblock American Mathematical Soc., 1991.

\bibitem[Li17]{LiChao17_Ind}
Chao Li.
\newblock Index and topology of minimal hypersurfaces in $\mathbb{R}^n$.
\newblock {\em Calculus of Variations and Partial Differential Equations},
  56(6):180, 2017.

\bibitem[Lin87]{Lin87}
Fang~Hua Lin.
\newblock Minimality and stability of minimal hypersurfaces in $\mathbb{R}^n$.
\newblock {\em Bulletin of the Australian Mathematical Society},
  36(2):209--214, 1987.

\bibitem[Liu19]{LiuZH19_OneSided}
Zhenhua Liu.
\newblock Stationary one-sided area-minimizing hypersurfaces with isolated
  singularities.
\newblock {\em arXiv preprint arXiv:1904.02289}, 2019.

\bibitem[Loh18]{Lohkamp18_Smoothing}
Joachim Lohkamp.
\newblock Minimal smoothings of area minimizing cones.
\newblock {\em arXiv preprint arXiv:1810.03157}, 2018.

\bibitem[Loh19]{Lohkamp19_Potential_I}
Joachim Lohkamp.
\newblock Potential theory on minimal hypersurfaces i: Singularities as martin
  boundaries.
\newblock {\em Potential Analysis}, pages 1--36, 2019.

\bibitem[Loh20]{Lohkamp20_Potential_II}
Joachim Lohkamp.
\newblock Potential theory on minimal hypersurfaces ii: Hardy structures and
  schr{\"o}dinger operators.
\newblock {\em Potential Analysis}, pages 1--40, 2020.

\bibitem[Maz14]{Mazet14_MS_Asym_Simons}
Laurent Mazet.
\newblock Minimal hypersurfaces asymptotic to simons cones.
\newblock {\em arXiv preprint arXiv:1407.2474}, 2014.

\bibitem[MN14]{MarquesNeves14}
Fernando~C. Marques and André Neves.
\newblock Min-max theory and the {W}illmore conjecture.
\newblock {\em Ann. of Math. (2)}, 179(2):683--782, 2014.

\bibitem[MN16]{MarquesNeves16_Index}
Fernando~C. Marques and André Neves.
\newblock Morse index and multiplicity of min-max minimal hypersurfaces.
\newblock {\em Camb. J. Math.}, 4(4):463--511, 2016.

\bibitem[MN17]{MarquesNeves17_Infinite}
Fernando~C. Marques and André Neves.
\newblock Existence of infinitely many minimal hypersurfaces in positive
  {R}icci curvature.
\newblock {\em Invent. Math.}, 209(2):577--616, 2017.

\bibitem[MNS19]{MarquesNevesSong19}
Fernando~C Marques, Andr{\'e} Neves, and Antoine Song.
\newblock Equidistribution of minimal hypersurfaces for generic metrics.
\newblock {\em Inventiones mathematicae}, 216(2):421--443, 2019.

\bibitem[NV20]{NaberValtorta20_MS_Rect}
Aaron Naber and Daniele Valtorta.
\newblock The singular structure and regularity of stationary varifolds.
\newblock {\em Journal of the European Mathematical Society},
  22(10):3305--3382, 2020.

\bibitem[Sch83]{Schoen83_Sym_Uniq_Emb}
Richard~M. Schoen.
\newblock Uniqueness, symmetry, and embeddedness of minimal surfaces.
\newblock {\em J. Differential Geom.}, 18(4):791--809, 1983.

\bibitem[Sha17]{Sharp17}
Ben Sharp.
\newblock Compactness of minimal hypersurfaces with bounded index.
\newblock {\em J. Differential Geom.}, 106(2):317--339, 2017.

\bibitem[Sim68]{Simons68}
James Simons.
\newblock Minimal varieties in riemannian manifolds.
\newblock {\em Annals of Mathematics}, pages 62--105, 1968.

\bibitem[Sim74]{Simoes74}
Plinio Simoes.
\newblock On a class of minimal cones in $\mathbb{R}^n$.
\newblock {\em Bull. Am. murh. Sot. 80, 488}, 489, 1974.

\bibitem[Sim83a]{Simon83Asym}
Leon Simon.
\newblock Asymptotics for a class of non-linear evolution equations, with
  applications to geometric problems.
\newblock {\em Annals of Mathematics}, pages 525--571, 1983.

\bibitem[Sim83b]{Simon83_GMT}
Leon Simon.
\newblock {\em Lectures on Geometric Measure Theory}, volume~3 of {\em Proc.
  Centre for Mathematical Analysis, Australian National University}.
\newblock Australian National University, Centre for Mathematical Analysis,
  Canberra, 1983.

\bibitem[Sim87]{Simon87_Max}
Leon Simon.
\newblock A strict maximum principle for area minimizing hypersurfaces.
\newblock {\em Journal of Differential Geometry}, 26(2):327--335, 1987.

\bibitem[Sim93a]{Simon93_Cylin}
Leon Simon.
\newblock Cylindrical tangent cones and the singular set of minimal
  submanifolds.
\newblock {\em Journal of Differential Geometry}, 38(3):585--652, 1993.

\bibitem[Sim93b]{Simon93_SurveyRect}
Leon Simon.
\newblock Rectifiability of the singular sets of multiplicity 1 minimal
  surfaces and energy minimizing maps.
\newblock {\em Surveys in differential geometry}, 2(1):246--305, 1993.

\bibitem[Sma93]{Smale93}
Nathan Smale.
\newblock Generic regularity of homologically area minimizing hypersurfaces in
  eight dimensional manifolds.
\newblock {\em Communications in Analysis and Geometry}, 1(2):217--228, 1993.

\bibitem[Sma99]{Smale99}
Nathan Smale.
\newblock Singular homologically area minimizing surfaces of codimension one in
  riemannian manifolds.
\newblock {\em Inventiones mathematicae}, 135(1):145--183, 1999.

\bibitem[Sol90a]{Solomon90_Cubic_I}
Bruce Solomon.
\newblock The harmonic analysis of cubic isoparametric minimal hypersurfaces i:
  Dimensions 3 and 6.
\newblock {\em American Journal of Mathematics}, 112(2):157--203, 1990.

\bibitem[Sol90b]{Solomon90_Cubic_II}
Bruce Solomon.
\newblock The harmonic analysis of cubic isoparametric minimal hypersurfaces
  ii: dimensions 12 and 24.
\newblock {\em American Journal of Mathematics}, 112(2):205--241, 1990.

\bibitem[Sol92]{Solomon92_Quartic}
Bruce Solomon.
\newblock Quartic isoparametric hypersurfaces and quadratic forms.
\newblock {\em Mathematische Annalen}, 293(1):387--398, 1992.

\bibitem[Son18]{Song18_YauConj}
Antoine Song.
\newblock Existence of infinitely many minimal hypersurfaces in closed
  manifolds.
\newblock {\em arXiv preprint arXiv:1806.08816}, 2018.

\bibitem[SS81]{SchoenSimon81}
Richard Schoen and Leon Simon.
\newblock Regularity of stable minimal hypersurfaces.
\newblock {\em Comm. Pure Appl. Math.}, 34(6):741--797, 1981.

\bibitem[SS82]{SchoenSimon82}
Richard Schoen and Leon Simon.
\newblock A new proof of the regularity theorem for rectifiable currents which
  minimize parametric elliptic functionals.
\newblock {\em Indiana University Mathematics Journal}, 31(3):415--434, 1982.

\bibitem[SS86]{SimonSolomon86}
Leon Simon and Bruce Solomon.
\newblock Minimal hypersurfaces asymptotic to quadratic cones in
  $\mathbb{R}^{n+1}$.
\newblock {\em Inventiones mathematicae}, 86(3):535--551, 1986.

\bibitem[SSY75]{SchoenSimonYau75}
Richard Schoen, Leon Simon, and Shing-Tung Yau.
\newblock Curvature estimates for minimal hypersurfaces.
\newblock {\em Acta Mathematica}, 134(1):275--288, 1975.

\bibitem[SW89]{SolomonWhite89_StrongMax}
Bruce Solomon and Brian White.
\newblock A strong maximum principle for varifolds that are stationary with
  respect to even parametric elliptic functionals.
\newblock {\em Indiana University Mathematics Journal}, 38(3):683--691, 1989.

\bibitem[SY17]{SchoenYau17_PSC}
Richard Schoen and Shing-Tung Yau.
\newblock Positive scalar curvature and minimal hypersurface singularities.
\newblock {\em arXiv preprint arXiv:1704.05490}, 2017.

\bibitem[TZ20]{TangZhang20}
Zizhou Tang and Yongsheng Zhang.
\newblock Minimizing cones associated with isoparametric foliations.
\newblock {\em Journal of Differential Geometry}, 115(2):367--393, 2020.

\bibitem[Wan20]{WangZH2020_Obst}
Zhihan Wang.
\newblock Min-max minimal hypersurfaces with obstacle.
\newblock {\em arXiv preprint arXiv:2010.13305}, 2020.

\bibitem[Whi89]{White89}
Brian White.
\newblock New applications of mapping degrees to minimal surface theory.
\newblock {\em J. Differ. Geom.}, 1989.

\bibitem[Whi91]{White91}
Brian White.
\newblock The space of minimal submanifolds for varying {R}iemannian metrics.
\newblock {\em Indiana Univ. Math. J.}, 40(1):161--200, 1991.

\bibitem[Whi94]{White94}
Brian White.
\newblock A strong minimax property of nondegenerate minimal submanifolds.
\newblock {\em Journal für die reine und angewandte Mathematik}, 457:203--218,
  1994.

\bibitem[Whi97]{White97}
Brian White.
\newblock Stratification of minimal surfaces, mean curvature flows, and
  harmonic maps.
\newblock {\em Journal f{\"u}r die reine und angewandte Mathematik},
  1997(488):1--36, 1997.

\bibitem[Whi17]{White17}
Brian White.
\newblock On the bumpy metrics theorem for minimal submanifolds.
\newblock {\em Amer. J. Math.}, 139(4):1149--1155, 2017.

\bibitem[WS94]{SterlingWang94}
Qi-Ming Wang and I~Sterling.
\newblock On a class of minimal hypersurfaces inr n.
\newblock {\em Mathematische Annalen}, 298(1):207--251, 1994.

\bibitem[Zha18]{ZhangYS18_Loc_Minimizing}
Yongsheng Zhang.
\newblock On realization of tangent cones of homologically area-minimizing
  compact singular submanifolds.
\newblock {\em Journal of Differential Geometry}, 109(1):177--188, 2018.

\end{thebibliography}

\end{document}